\newtheorem{prop}{Proposition}
\newtheorem*{definition*}{Definition}
\newtheorem{theorem}[prop]{Theorem}
\newtheorem{thm}[prop]{Theorem}
\newtheorem{cor}[prop]{Corollary}
\newtheorem{lemma}[prop]{Lemma}
\newtheorem{remark}[prop]{Remark}
\newtheorem{example}[prop]{Example}
\numberwithin{prop}{section}
\newcommand{\R}{\ensuremath{\mathcal{R}}}
\newcommand{\RS}{\ensuremath{\mathcal{S}}}
\newcommand{\I}{\ensuremath{\mathcal{I}}}
\newcommand{\K}{\ensuremath{\mathcal{K}}}
\newcommand{\J}{\ensuremath{\mathcal{J}}}
\newcommand{\LL}{\ensuremath{\mathcal{L}}}
\DeclareMathOperator*{\lcm}{lcm}
\newcommand{\NN}{\ensuremath{\mathbb{N}}}
\newcommand{\ZZ}{\ensuremath{\mathbb{Z}}}
\newcommand{\CC}{\ensuremath{\mathbb{C}}}
\newcommand{\ann}[1]{\ensuremath{#1^{0}}}
\newcommand{\sr}[1]{\textrm{sr}\left(\ensuremath{#1}\right)}
\newcommand{\units}[1]{\mathcal{O}_{#1}}
\newcommand{\unitsord}[2]{\mathcal{O}_{#1}^{\left[#2\right]}}
\newcommand{\mfi}[1]{\ensuremath{#1^*}}
\newcommand{\Oh}[1]{\ensuremath{ \mathcal{O} \left( #1 \right) }}
\newcommand{\SL}[2]{\mathrm{SL}_{#1}\left(#2\right)}
\newcommand{\Fn}[2]{\mathrm{F}_{#1}\left(#2\right)}
\newcommand{\Mn}[2]{\mathrm{M}_{#1}\left(#2\right)}
\newcommand{\Mnzt}[2]{\mathrm{M}^{\mathrm{zt}}_{#1}\left(#2\right)}
\newcommand{\Off}[2]{\mathrm{M}^{\mathrm{zd}}_{#1}\left(#2\right)}
\newcommand{\EL}[2]{\mathrm{EL}_{#1}\left(#2\right)}
\newcommand{\PSL}[2]{\mathrm{PSL}_{#1}\left(#2\right)}
\newcommand{\SLtil}[2]{\widetilde{\mathrm{SL}}_{#1}\left(#2\right)}
\newcommand{\Id}{\mathrm{Id}}
\newcommand{\elm}[2]{\mathrm{E}_{#1,#2}}
\newcommand{\ngp}[1]{\mathrm{N}_{#1}}
\newcommand{\ver}[1]{\mathrm{V}_{#1}}
\newcommand{\hor}[1]{\mathrm{H}_{#1}}
\newcommand{\verd}[1]{\widehat{\mathrm{V}_{#1}(\mathcal{R})}}
\newcommand{\hord}[1]{\widehat{\mathrm{H}_{#1}(\mathcal{R})}}
\newcommand{\elmd}[2]{\widehat{\mathrm{E}_{#1,#2}(\mathcal{R})}}
\newcommand{\verc}[1]{\varphi^{\ver{#1}}}
\newcommand{\horc}[1]{\varphi^{\hor{#1}}}
\newcommand{\verp}[1]{\mu^{\ver{#1}}}
\newcommand{\horp}[1]{\mu^{\hor{#1}}}
\newcommand{\verpm}[1]{\nu^{\ver{#1}}}
\newcommand{\horpm}[1]{\nu^{\hor{#1}}}
\newcommand{\verpv}[1]{\mathrm{P}^{\ver{#1}}}
\newcommand{\horpv}[1]{\mathrm{P}^{\hor{#1}}}
\newcommand{\elmpv}[2]{\mathrm{P}^{\elm{#1}{#2}}}
\newcommand{\levI}[1]{\I_{#1}}	%level ideal of a character
\newcommand{\kerI}[1]{\K_{#1}}	%kernel ideal of a character
\newcommand{\finI}[1]{\mathcal{N}_{#1}}	%the finite index indeal involved in Bekka's lemma
\newcommand{\degI}[1]{\mathcal{D}_{#1}}	%the ideal of degenaracy
\newcommand{\rF}[1]{\mathcal{R}_{#1}}	%quotient of our ring by kernel ideal
\newcommand{\zF}[1]{\mathrm{Z}_{#1}}	%center of \gF
\newcommand{\nF}[1]{\mathrm{N}_{#1}}	%a two-step nilpotent subgroup of \gF with \zF central
\newcommand{\dF}[1]{\mathrm{D}_{#1}}	%an abelian subgroup of \nF
\newcommand{\tF}[1]{\theta_{#1}}		%dual measure on \dF
\newcommand{\tFd}[1]{\widehat{\theta}_{#1}}   %transform of \tF
\newcommand{\aF}[1]{\mathcal{A}_{#1}}	%quotient of degenaracy ideal by kernel ideal
\newcommand{\aFd}[1]{\widehat{\mathcal{A}}_{#1}}	%quotient of degenaracy ideal by kernel ideal
\newcommand{\aFf}[1]{{\widehat{\mathcal{A}}_{#1}^\mathrm{Fin}}}	%subgroup of finite index characters in dual of \aF
\newcommand{\Orb}[1]{O_{#1}}	%the orbit on the dual of the virtually abelian group associated to the ideal
\newcommand{\Ind}[3]{\mathrm{Ind}_{#1}\left(#2;  #3\right)}
\newcommand{\chars}[1]{\mathrm{Ch}\left(#1\right)}
\newcommand{\traces}[1]{\mathrm{Tr}\left(#1\right)}
\newcommand{\charrel}[2]{\mathrm{Ch}_{#1}\left(#2\right)}
\newcommand{\tracerel}[2]{\mathrm{Tr}_{#1}\left(#2\right)}
\DeclarePairedDelimiter\abs{\lvert}{\rvert}%
\DeclarePairedDelimiter\norm{\lVert}{\rVert}%
\let\oldabs\abs
\def\abs{\@ifstar{\oldabs}{\oldabs*}}
\let\oldnorm\norm
\def\norm{\@ifstar{\oldnorm}{\oldnorm*}}
\newcommand{\nrm}{\ensuremath{\vartriangleleft }}
\title[Characters of $\EL{d}{\R}$]{Characters of the group  $\EL{d}{\R}$ for a commutative Noetherian ring $\R$}
\author{Omer Lavi and Arie Levit}
\begin{document}

\begin{abstract}

Let $\R$ be a commutative Noetherian ring with unit. We classify the characters of the group $\EL{d}{\R}$ provided that $d$ is greater than the stable range of the ring $\R$.  It follows that every character of $\EL{d}{\R}$ is induced from a finite dimensional representation. Towards our main result we classify   $\EL{d}{\R}$-invariant probability measures on the Pontryagin dual group of $\R^d$.
\end{abstract}

\maketitle

\section{Introduction}
\label{sec:intro}

%Let $G$ be a discrete group.   For finite groups characters  bijectively correspond to irreducible representations.

% This relationship  is more intricate for infinite groups.

%, see  Thoma \cite{thoma1964positiv,thoma1964unitare}.

%The stCharacters of infinite groups were introduced by T

%This relationship becomes  more intricate for infinite groups, see \S\ref{sec: character theory} for  details.

Let $\R$ be a commutative Noetherian ring with unit\footnote{Equivalently  $\R$ is any quotient of the commutative ring $\ZZ\left[x_1,\ldots,x_k\right]$ for some $k \in \NN$.}. Let $\EL{d}{\R}$ for some   $d \in \NN$ be the subgroup of the special linear group $\SL{d}{\R}$ generated by   elementary matrices. 
A \emph{trace}  on the group $\EL{d}{\R}$ is a positive definite conjugation invariant function $\varphi : \EL{d}{\R} \to \CC $ with $\varphi(e) =1$. A \emph{character} of $\EL{d}{\R} $ is a   trace $\varphi$ that cannot be written as a non-trivial convex combination. 
Our main goal is to understand  the set $\chars{\EL{d}{\R}}$ of  characters of the group $\EL{d}{\R}$.

\subsection*{Normal subgroups} For each normal subgroup $N \nrm \EL{d}{\R}$ the characteristic function $1_N$ is a trace on the group $\EL{d}{\R}$. The function $1_N$ is a character if and only if every non-trivial conjugacy class of the quotient group $\EL{d}{\R}/N$ is infinite.  So our goal depends   on understanding   the normal subgroup structure of the group $\EL{d}{\R}$.

Assume that $d \ge 3$. Normal subgroups of the group $\EL{d}{\R}$ are controlled by ideals in the ring $\R$. Let us give a few examples. Consider an ideal $\I \nrm \R$. The   congruence subgroup $\mathrm{C}_d(\I)$  is the kernel of the reduction map modulo the ideal $\I$ $$\rho_\I : \EL{d}{\R} \to \SL{d}{\R/\I}.$$ 
A closely related normal subgroup $\widetilde{\mathrm{C}}_d(\I) \nrm \EL{d}{\R}$ is the preimage of the center $Z(\SL{d}{\R/\I})$ via the reduction map $\rho_\I$. Another normal subgroup   $\EL{d}{\I} \nrm \EL{d}{\R}$ associated to the ideal $\I$ is  the normal closure  of the  elementary matrices belonging to $\mathrm{C}_d(\I)$. By definition $\EL{d}{\I} \le \mathrm{C}_d(\I) \le \widetilde{\mathrm{C}}_d(\I)$.

While these examples are not exhaustive,    it is known that
$$ \widetilde{\mathrm{C}}_d(\I) / \EL{d}{\I} = Z(\EL{d}{\R} / \EL{d}{\I}).$$
In particular  every intermediate subgroup $\EL{d}{\I} \le H \le \widetilde{\mathrm{C}}_d(\I)$ is normal in $\EL{d}{\R}$. The converse is also true, namely  every normal subgroup $N \nrm \EL{d}{\R}$ admits a uniquely determined ideal $\J  \nrm \R$ such that $\EL{d}{\J} \le N \le \widetilde{\mathrm{C}}_d(\J)$. 

\subsection*{Depth ideals} 
The characters of the group $\EL{d}{\R}$ are    controlled by ideals in the ring $\R$. To state our classification result  we  introduce the notion of depth ideals.

 An ideal $\I$ in the ring $\R$ is a  \emph{depth ideal} if $|\J/\I| = \infty$ for every   ideal $ \I \subsetneq \J \nrm \R$.  
Every ideal $\J \nrm \R$ admits a uniquely determined depth ideal $\mfi{\J} \nrm \R $ with $\J \subset \mfi{\J}$ and $|\mfi{\J}/\J| < \infty$.
The ring $\R$ is called \emph{just infinite} if $\mfi{\left(0\right)} = \left(0\right)$ and $\mfi{\I} = \R$ for every non-zero  ideal $ \I \nrm \R$.  For example every Dedekind domain is just infinite. On the other hand the ring $\ZZ\left[x_1,\ldots,x_k\right]$ is  not just infinite for any $k \in \NN$.

\subsection*{Main results}
Let $\varphi \in \chars{\EL{d}{\R}}$ be a character. The  \emph{kernel} of $\varphi$ is 
 $$\ker \varphi = \{g \in \EL{d}{\R} \: : \: \varphi(g) = 1\}.$$
This is a normal subgroup of $\EL{d}{\R}$ and therefore there is a uniquely determined \emph{kernel ideal} $\kerI{\varphi} \nrm \R$ satisfying $\EL{d}{\kerI{\varphi}} \le \ker \varphi \le \widetilde{\mathrm{C}}_d({\kerI{\varphi}})$.  The \emph{level ideal} associated to the character $\varphi$ is   the uniquely determined depth ideal $\levI{\varphi} = \mfi{\kerI{\varphi}}$. 
 %A subset $O \subset \chars{\mathcal{A}_d(\varphi) }$ is called \emph{essential} if
% $$ \mathrm{Ann}(O) = \{ g \in \mathcal{A}_d(\varphi) \: : \: \psi(g) = 1 \quad \forall \psi \in \Orb{\varphi} \} \le \widetilde{\mathrm{C}}_d(\kerI{\varphi}).$$
% The set $\chars{\mathcal{A}_d(\levI{\varphi}, \kerI{\varphi} )}$ admits a natural $\EL{d}{\R}$-action.
 %The restriction of $\varphi$ to $ \mathcal{A}_d(\R,\varphi)$ is a trace and as such 

\begin{theorem}
\label{thm:main theorem}
Assume that $d >  \max\{\sr{\R},2\}$. Let $\varphi \in \EL{d}{\R}$ be a character with level ideal $\levI{\varphi}$ and kernel ideal $\kerI{\varphi}$. Then
\begin{itemize}
\item the character $\varphi$ is induced\footnote{A trace on the group $G$ is \emph{induced} from the subgroup $H $ if $\varphi(g) = 0$ for all elements $g \in G \setminus H$.} from the normal subgroup $\widetilde{\mathrm{C}}_d(\levI{\varphi})$,
\item the subquotient 
$ \mathcal{A}_d(\levI{\varphi}, \kerI{\varphi})   = \widetilde{\mathrm{C}}_d(\levI{\varphi})/\EL{d}{\kerI{\varphi}}$ 
 is  virtually abelian\footnote{A group is called virtually abelian if it admits a finite index abelian subgroup. A discrete group is virtually abelian if and only if it is type I.}, and
\item there is a finite essential\footnote{The notion of \emph{essential} orbits is defined in  \S\ref{sec:induced from abelian}.    This is a technical non-degeneracy assumption needed to ensure our parametrization is bijective.} $\EL{d}{\R}$-orbit $\Orb{\varphi}$ in $\chars{\mathcal{A}_d(\levI{\varphi}, \kerI{\varphi})}$ such that the restriction of the character $\varphi$ to the subgroup $\widetilde{\mathrm{C}}_d(\levI{\varphi})$ is given by
$$ \varphi  = \frac{1}{|\Orb{\varphi} |} \sum_{\psi\in \Orb{\varphi}} \psi.$$
\end{itemize}
\end{theorem}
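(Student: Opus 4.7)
The plan is to run the standard GNS-plus-Bochner strategy for character rigidity of $\EL{}{}$-type groups. First I would attach the kernel ideal $\kerI{\varphi}$ and the level ideal $\levI{\varphi} = \mfi{\kerI{\varphi}}$ to $\varphi$ via the ideal-to-normal-subgroup correspondence recalled in the introduction, applied to $\ker \varphi \nrm \EL{d}{\R}$. The bulk of the theorem is the induction statement, which I would prove by locating a commutative subgroup $V \le \EL{d}{\R}$ of elementary matrices isomorphic to a power of $\R$ (for instance the last column above the diagonal), applying Bochner's theorem to $\varphi|_V$ to obtain a finite positive measure $\mu_\varphi$ on the Pontryagin dual $\widehat{V}$, and exploiting conjugation invariance of $\varphi$ under a complementary block of elementary matrices to make $\mu_\varphi$ invariant under the corresponding dual linear action. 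The classification of such invariant probability measures on Pontryagin duals, developed separately in the paper and announced in the abstract, then identifies the support of $\mu_\varphi$ with characters of $V$ that are trivial on an appropriate multiple of $\levI{\varphi}$.

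To promote this support statement to the vanishing $\varphi(g) = 0$ for every $g \notin \widetilde{\mathrm{C}}_d(\levI{\varphi})$, I would use a commutator and positive-definiteness trick: for each such $g$, produce an element $u \in V$ whose commutator $[u,g]$ lies in $V$ and is \emph{large} against the support of $\mu_\varphi$, and then a standard Cauchy--Schwarz style estimate relating $\varphi(g)$, $\varphi(ugu^{-1})$ and $\varphi([u,g])$ forces $\varphi(g) = 0$. The hypothesis $d > \sr{\R}$ enters precisely here, as it enables transitivity on unimodular rows needed to move $u$ freely inside $V$ and to rotate between different choices of abelian subgroup, so that the vanishing conclusion is insensitive to the position of $g$ in $\EL{d}{\R}$.

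The second bullet is by comparison routine. Since $|\levI{\varphi}/\kerI{\varphi}| < \infty$ by definition of depth ideal, the image of $\widetilde{\mathrm{C}}_d(\levI{\varphi})$ under $\rho_{\kerI{\varphi}}$ is a finite extension of the image of $\widetilde{\mathrm{C}}_d(\kerI{\varphi})$, so $\widetilde{\mathrm{C}}_d(\levI{\varphi})/\widetilde{\mathrm{C}}_d(\kerI{\varphi})$ is finite; combined with the identity $\widetilde{\mathrm{C}}_d(\kerI{\varphi})/\EL{d}{\kerI{\varphi}} = Z(\EL{d}{\R}/\EL{d}{\kerI{\varphi}})$ recalled in the introduction, this equips $\mathcal{A}_d(\levI{\varphi},\kerI{\varphi})$ with a central subgroup of finite index, which is virtually abelian (hence type I). For the orbit sum, the restriction $\varphi|_{\widetilde{\mathrm{C}}_d(\levI{\varphi})}$ factors through $\mathcal{A}_d(\levI{\varphi},\kerI{\varphi})$ because $\EL{d}{\kerI{\varphi}} \le \ker \varphi$, and standard type I decomposition expresses the resulting character as a finite convex combination of characters of a virtually abelian group; the conjugation action of $\EL{d}{\R}$ permutes the constituents, and indecomposability of $\varphi$ together with minimality of $\kerI{\varphi}$ forces the support to form a single finite essential $\EL{d}{\R}$-orbit $\Orb{\varphi}$.

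I expect the main obstacle to be the commutator-plus-measure step that promotes invariance of $\mu_\varphi$ to the vanishing of $\varphi$ off $\widetilde{\mathrm{C}}_d(\levI{\varphi})$. Everything must be carried out uniformly over an arbitrary commutative Noetherian ring $\R$, without any simplifying assumption such as $\R$ being a domain, a PID, or just infinite; accommodating rings for which several intermediate ideals coexist between $\kerI{\varphi}$ and $\levI{\varphi}$, and verifying that the support of $\mu_\varphi$ actually detects $\levI{\varphi}$ rather than some smaller ideal, is precisely what forces the depth ideal formalism and drives the technicalities.
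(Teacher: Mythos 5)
Your overall architecture (measure classification on the abelian subgroups of elementary matrices, positive-definiteness and commutator tricks to push vanishing up to the whole group, virtually-central subquotients and type I decomposition for the restriction) is the same as the paper's, so at the level of strategy you are on track.

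The genuine gap is the treatment of the identity $\levI{\varphi} = \mfi{\kerI{\varphi}}$, which you take as a definition and then quietly assume coincides with the depth ideal detected by the measure classification.  These are two a priori different objects.  The measure classification applied to $\varphi|_{\ver{i}(\R)}$ hands you an ergodic $\EL{d-1}{\R}$-invariant measure, hence some depth ideal $\J$ with the property that $\varphi$ vanishes on $\ver{i}(\R)\setminus\ver{i}(\J)$.  Separately, the normal subgroup structure theorem attaches to $\ker\varphi$ an ideal $\kerI{\varphi}$.  There is no reason given that $\J = \mfi{\kerI{\varphi}}$.  In your own last paragraph you flag ``verifying that the support of $\mu_\varphi$ actually detects $\levI{\varphi}$ rather than some smaller ideal'' as a concern, and that concern is exactly the unfilled hole: nothing in your sketch addresses it.  In the paper this is Theorem~\ref{thm:ker has finite index in lev}, whose proof occupies the whole of \S\ref{sec:kernel ideal} and is explicitly singled out in the synopsis as the place where the argument departs most from Bekka.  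It needs an auxiliary ``supporting'' ideal $\finI{\varphi}$ with $\finI{\varphi}^2 \subset \kerI{\varphi}$ (via Tits' lemma and a weak-mixing-style argument), an analysis of the two-step nilpotent subquotient $\nF{\varphi}/\ker\varphi$ via the map $\iota(g) = g - \mathrm{Id}_d$ into additive matrix groups, a further ``degeneracy'' ideal $\degI{\varphi}$, Howe's lemma on vanishing off the second center, and two separate commensurability steps $|\degI{\varphi}/\kerI{\varphi}|<\infty$ and $|\finI{\varphi}/\degI{\varphi}|<\infty$.  None of this is routine, none of it collapses to a Cauchy--Schwarz estimate, and it cannot be skipped: for the ring $\ZZ$ (where there are only two depth ideals) the identification is nearly free, which is why Bekka never needed it, but for a general Noetherian ring it is the crux.

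A secondary, smaller issue: in the second bullet you assert that $\widetilde{\mathrm{C}}_d(\levI{\varphi})/\widetilde{\mathrm{C}}_d(\kerI{\varphi})$ is finite because the relevant reduction has finite kernel.  That alone is not enough when the center $Z(\SL{d}{\R/\levI{\varphi}})$ is infinitely generated (which does happen, e.g.\ over $\mathbb{F}_2[x,y]/(y^2)$): you also need that the image of $Z(\SL{d}{\R/\kerI{\varphi}})$ has \emph{finite index} in $Z(\SL{d}{\R/\levI{\varphi}})$.  The paper proves this as Theorem~\ref{thm:center in of SL has finite index}, using a surjectivity result for unit groups under finite-kernel ring maps plus the structure theory of abelian torsion groups of bounded exponent; it is a genuine (if self-contained) algebraic fact, not immediate from finiteness of $\I/\K$.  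Your sketch of the third bullet, by contrast, does match Proposition~\ref{prop:relative character of virtually central} and the essentiality check at the end of \S\ref{sec:induced from abelian} closely enough.
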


The notation $\sr{\R}$   stands for the \emph{stable range}\footnote{See \S\ref{sec: structure of SLd} below for the definition and a discussion of stable range as well as for more information on the normal subgroup structure.} of the ring $\R$. For example $\sr{\R} \le 2$ if $\R$ is a Dedekind domain and $\sr{\ZZ\left[x_1,\ldots,x_k\right]} \le k + 2$. 

It is possible to go in the other direction as well.

\begin{theorem}
\label{thm:main converse}
Assume that $ d >  \max\{\sr{\R},2\}$. Let $\I, \K \nrm \R$ be a pair of ideals with $\mfi{\K} = \I$. Then
\begin{itemize}
\item the subquotient 
$\mathcal{A}_d(\I, \K) =\widetilde{\mathrm{C}}_d(\I)/\EL{d}{\K}$
is virtually abelian, and
\item for every finite essential $\EL{d}{\R}$-orbit    $O \subset \chars{\mathcal{A}_d(\I, \K )}$ there  is a uniquely  determined character $\varphi \in \chars{\EL{d}{\R}}$ with $\levI{\varphi} =\I, \kerI{\varphi} = \K$ and $\Orb{\varphi} = O$.
\end{itemize}
\end{theorem}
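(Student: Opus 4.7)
The plan is to prove existence by constructing $\varphi$ explicitly from the orbit data and to deduce uniqueness immediately from Theorem~\ref{thm:main theorem}. I first address the virtually abelian statement. Consider the short exact sequence
\[
1 \to \EL{d}{\I}/\EL{d}{\K} \to \mathcal{A}_d(\I, \K) \to \widetilde{\mathrm{C}}_d(\I)/\EL{d}{\I} \to 1.
\]
The right hand quotient is abelian, coinciding with the center of $\EL{d}{\R}/\EL{d}{\I}$ by the normal subgroup structure recalled in the introduction. The left hand kernel is finite: the hypothesis $\mfi{\K} = \I$ guarantees $|\I/\K| < \infty$, and the stable range assumption $d > \sr{\R}$ yields bounded generation of $\EL{d}{\I}/\EL{d}{\K}$ by the finitely many elementary matrices parameterised by the finite ideal $\I/\K$. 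Hence $\mathcal{A}_d(\I, \K)$ is finite-by-abelian, and therefore virtually abelian.

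To construct $\varphi$, fix a representative $\psi \in O$ and pull it back along $\widetilde{\mathrm{C}}_d(\I) \twoheadrightarrow \mathcal{A}_d(\I, \K)$ to a character of $\widetilde{\mathrm{C}}_d(\I)$. Let $H \le \EL{d}{\R}$ be its stabiliser under conjugation, so that $H \supseteq \widetilde{\mathrm{C}}_d(\I)$ and $[\EL{d}{\R}:H] = |O|$. The essentiality hypothesis should provide an extension of $\psi$ to a character $\widetilde{\psi}$ of $H$ supported on $\widetilde{\mathrm{C}}_d(\I)$; the obstruction to such an extension is cohomological, lying in $H^2(H/\widetilde{\mathrm{C}}_d(\I); \CC^*)$, and essentiality is designed to kill it. Now set
\[
\varphi(g) = \frac{1}{|O|}\sum_{\gamma H \in \EL{d}{\R}/H}\widetilde{\psi}\bigl(\gamma^{-1}g\gamma\bigr), \qquad g \in \EL{d}{\R},
\]
with $\widetilde{\psi}$ extended by zero outside $H$. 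Since $[\EL{d}{\R}:H]$ is finite, this is the normalised trace of the representation induced from $\widetilde{\psi}$ up to $\EL{d}{\R}$, hence a conjugation-invariant positive-definite function. A direct unfolding yields $\varphi|_{\widetilde{\mathrm{C}}_d(\I)} = \frac{1}{|O|}\sum_{\psi' \in O}\psi'$ and $\varphi = 0$ off $\widetilde{\mathrm{C}}_d(\I)$, using normality of $\widetilde{\mathrm{C}}_d(\I)$ in $\EL{d}{\R}$.

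Extremality of $\varphi$ reduces, by the Mackey irreducibility criterion, to irreducibility of $\widetilde{\psi}$ together with disjointness of its conjugates on the overlaps $H \cap gHg^{-1}$ for $g \notin H$; both of these should be built into the essentiality condition. The invariants then come out correctly: support considerations force $\EL{d}{\K} \le \ker\varphi \le \widetilde{\mathrm{C}}_d(\I)$, so $\kerI{\varphi} = \K$ and $\levI{\varphi} = \mfi{\K} = \I$, while the averaging formula gives $\Orb{\varphi} = O$. Uniqueness is then automatic from Theorem~\ref{thm:main theorem}: any other character $\varphi'$ with the same invariants obeys the same explicit formula on $\widetilde{\mathrm{C}}_d(\I)$ and vanishes elsewhere, so $\varphi' = \varphi$ pointwise.

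The main obstacle I anticipate is the precise matching between the technical definition of \emph{essentiality} and the two requirements isolated above: an unobstructed cohomological extension of $\psi$ from $\widetilde{\mathrm{C}}_d(\I)$ to its stabiliser $H$, and Mackey-irreducibility of the resulting induced representation. The induction machinery itself is standard once the finite-index setup is in place; the crux lies in verifying that essentiality is exactly the correct non-degeneracy hypothesis capturing both simultaneously.
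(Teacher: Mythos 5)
Your argument for the virtually abelian claim contains a genuine error. The kernel $\EL{d}{\I}/\EL{d}{\K}$ in your short exact sequence is not finite in general: the group $\EL{d}{\I}$ is the \emph{normal closure} in $\EL{d}{\R}$ of the elementary matrices with entries in $\I$, so its elements involve conjugation by arbitrary elements of $\EL{d}{\R}$, and modulo $\EL{d}{\K}$ there is no control on these conjugators. The phrase ``bounded generation ... by the finitely many elementary matrices parameterised by $\I/\K$'' would require a bounded-generation statement for $\EL{d}{\I}$ that is not available here (and is known to fail already for universal lattices). Moreover, even if the kernel were finite, ``finite-by-abelian'' does not imply ``virtually abelian'' for infinite groups — the infinite extraspecial $2$-group has finite commutator subgroup but is not virtually abelian. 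The paper uses a different subnormal chain, $\EL{d}{\K} \le \SLtil{d}{\K} \le \SLtil{d}{\I}$: the bottom subquotient $\SLtil{d}{\K}/\EL{d}{\K}$ is \emph{central} in $\EL{d}{\R}/\EL{d}{\K}$ by Borevich--Vavilov (Theorem~\ref{thm:Valivov}), and this subquotient may well be infinite — the finiteness is concentrated in the top index $[\SLtil{d}{\I}:\SLtil{d}{\K}] < \infty$, which comes from Theorem~\ref{thm:center in of SL has finite index} via Proposition~\ref{prop: congurence subgroups of ideal is of finite index in the congurence subgroup of its depth}. This gives a finite-index \emph{central} subgroup, which is exactly what Theorem~\ref{thm:virtually abelian} asserts and what is needed for Corollary~\ref{cor:finitely induced}.

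Your construction of $\varphi$ also diverges from what the paper does, and more seriously, your reading of ``essential'' does not match the paper's definition. You attempt to extend a single $\psi \in O$ to a character of its stabiliser $H$, then invoke a Mackey-style induction and irreducibility criterion. But the paper's essentiality of $O$ has nothing to do with cohomological vanishing or Mackey disjointness: it is the purely ring-theoretic nondegeneracy condition $\mathrm{Ann}(O) \le \SLtil{d}{\K}$ (defined in \S\ref{sec:induced from abelian}), whose role is simply to pin down $\kerI{\varphi} = \K$ rather than some larger ideal. Moreover the Mackey criterion controls irreducibility of the induced \emph{representation}, which is a far stronger property than $\varphi$ being an extreme point of $\traces{\EL{d}{\R}}$ (equivalently, factoriality of $\mathcal{L}_\varphi$); there is no reason the induced representation from a finite-index subgroup should be irreducible here. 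The paper instead builds $\varphi$ by extending the relative trace $\frac{1}{|O|}\sum_{\psi' \in O}\psi'$ by zero from the normal subgroup $\SLtil{d}{\I} \cap \EL{d}{\R}$ to the whole group (Proposition~\ref{prop:extension by 0 is a trace}), and proves extremality via the Choquet decomposition $\varphi = \int \zeta \, \mathrm{d}\mu_\varphi(\zeta)$: Corollary~\ref{cor:decomposition of a trace vanishing outside} forces $\levI{\zeta} = \I$ almost surely, whence the restrictions of almost all $\zeta$ to the normal subgroup agree with $\varphi$'s and $\mu_\varphi$ collapses to an atom. Your uniqueness argument from Theorem~\ref{thm:main theorem} is essentially correct and is what the paper does.
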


  Theorems \ref{thm:main theorem} and \ref{thm:main converse}    establish a bijective correspondence between  $\chars{\EL{d}{\R}}$ and the set of all triplets $(\I, \K, O)$ as above. This amounts to a classification of the characters of $\EL{d}{\R}$ provided that $d > \sr{\R}$. Arguably the most important parameter of a given character $\varphi$ is  its level ideal $\levI{\varphi} \nrm \R$.

In fact, the subquotients $\mathcal{A}_d(\levI{\varphi}, \kerI{\varphi})$ are   virtually central in the quotient  group $\EL{d}{\R} / \EL{d}{\kerI{\varphi}}$, i.e. they admit a finite index central subgroup. This   property is   stronger than being virtually abelian per se, allowing us   to deduce the following.

\begin{cor}
\label{cor:finitely induced}
Assume that $d >  \max\{\sr{\R},2\}$. Let $\varphi \in \chars{\EL{d}{\R}}$ be a character. Denote $N = \widetilde{\mathrm{C}}_d(\levI{\varphi}) \nrm \EL{d}{\R}$. Then there is a  unitary representation $\pi$ of the group $N$   on a finite dimensional Hilbert space $\mathcal{H}$  such that
$$ \varphi(g) = \begin{cases} 
\frac{\mathrm{tr} \pi(g)}{\dim_\CC \mathcal{H}} & g \in N,\\
0 & g \notin N.
\end{cases}$$
\end{cor}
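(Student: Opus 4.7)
The plan is to realize $\pi$ as a direct sum of finite-dimensional irreducible representations indexed by the finite orbit $\Orb{\varphi}$. By Theorem~\ref{thm:main theorem} one already has $\varphi(g)=0$ for $g \notin N$ and
\[
\varphi|_N \;=\; \frac{1}{|\Orb{\varphi}|}\sum_{\psi \in \Orb{\varphi}} \psi,
\]
where each $\psi$ is regarded as a character of the subquotient $\mathcal{A} := \widetilde{\mathrm{C}}_d(\levI{\varphi})/\EL{d}{\kerI{\varphi}}$. It therefore suffices to produce a single finite-dimensional unitary representation $\pi$ of $N$ whose normalized trace equals this average.

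The first step is to realize each $\psi$ individually as a normalized trace. Since $\mathcal{A}$ is virtually abelian as a discrete group it is of type I, and standard Mackey/Clifford theory applied to any finite-index abelian subgroup shows that every character of $\mathcal{A}$ has the form $\mathrm{tr}(\sigma)/\dim \sigma$ for some finite-dimensional irreducible unitary representation $\sigma$ of $\mathcal{A}$, with $\dim \sigma$ bounded by that index. Choose such a $\sigma_\psi \colon \mathcal{A} \to U(\mathcal{H}_\psi)$ for every $\psi \in \Orb{\varphi}$, and let $\tilde\sigma_\psi \colon N \to U(\mathcal{H}_\psi)$ denote its inflation through the projection $N \twoheadrightarrow \mathcal{A}$.

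The second step is to check that $d_0 := \dim_\CC \mathcal{H}_\psi$ does not depend on $\psi \in \Orb{\varphi}$. The group $\EL{d}{\R}$ acts on $\chars{\mathcal{A}}$ by pull-back along the conjugation automorphisms $\alpha_g \in \mathrm{Aut}(\mathcal{A})$ coming from the ambient quotient $\EL{d}{\R}/\EL{d}{\kerI{\varphi}}$. Precomposing $\sigma_\psi$ with $\alpha_g^{-1}$ yields an irreducible representation whose normalized trace is $g\cdot\psi$, and since any two finite-dimensional irreducible representations with equal character are isomorphic, $\dim \mathcal{H}_{g\cdot\psi} = \dim \mathcal{H}_\psi$. (The strengthening to virtually central noted in the excerpt gives additionally a uniform bound on $d_0$ via the central character on the finite-index central subgroup, though it is not strictly needed here.)

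Finally, set $\mathcal{H} := \bigoplus_{\psi \in \Orb{\varphi}} \mathcal{H}_\psi$ and $\pi := \bigoplus_{\psi \in \Orb{\varphi}} \tilde\sigma_\psi$, so that $\dim_\CC \mathcal{H} = |\Orb{\varphi}| \cdot d_0$. A direct computation then gives
\[
\frac{\mathrm{tr}\, \pi(g)}{\dim_\CC \mathcal{H}} \;=\; \frac{1}{|\Orb{\varphi}|\, d_0} \sum_{\psi \in \Orb{\varphi}} \mathrm{tr}\, \tilde\sigma_\psi(g) \;=\; \frac{1}{|\Orb{\varphi}|} \sum_{\psi \in \Orb{\varphi}} \psi(g) \;=\; \varphi(g)
\]
for $g \in N$, while for $g \notin N$ both sides vanish, the right side by Theorem~\ref{thm:main theorem}. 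The only non-formal ingredient beyond that theorem is the finite-dimensionality of characters of discrete virtually abelian groups, which is standard; the real work has already been absorbed into the main theorem, so no serious technical obstacle remains.
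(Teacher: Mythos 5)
Your proof is correct and in fact reproduces the structure of the paper's argument: the paper derives the Corollary from Theorem~\ref{thm:main theorem} together with Corollary~\ref{cor:every irreducible rep of virtually abelian is finite dimensional}, whose proof likewise takes the direct sum of finite-dimensional representations corresponding to the characters in the finite orbit $\Orb{\varphi}$, with the dimension constant by transitivity of the $\EL{d}{\R}$-action. The only genuine deviation is the ingredient you use to establish finite-dimensionality of each $\sigma_\psi$: you appeal to Thoma's theorem (virtually abelian $\Leftrightarrow$ type~I) and Mackey/Clifford theory for a finite-index abelian normal subgroup, whereas the paper exploits the stronger fact that $\mathcal{A}_d(\levI{\varphi},\kerI{\varphi})$ is \emph{virtually central} (Theorem~\ref{thm:virtually abelian}): this allows Proposition~\ref{prop:finitely many characters with a given multiplicative character} to reduce to irreducible modules over a twisted group algebra $\CC^{c_\chi} M$ of the \emph{finite} quotient $M = N/(N \cap Z)$, giving both finite-dimensionality and a uniform bound at once. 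Your softer route works because type~I suffices to force the factor $\mathcal{L}_\psi$ to be $\mathrm{M}_n(\CC)$ with $n<\infty$ whenever $\psi$ is a character, but the paper's virtually central argument is more self-contained and is the mechanism actually built up in \S\ref{sec: character theory}; it also explains the parenthetical remark in your own write-up that the stronger property gives a uniform dimension bound from the central character. In short: same skeleton, different (and somewhat more general) tool for the one non-formal lemma.
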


In other words, assuming  $d > \sr{\R}$ every character of the group $\EL{d}{\R}$ is induced from a finite dimensional representation of a particular normal subgroup.

The above results   generalize and were inspired    by Bekka's  paper  \cite{bekka},  see the \enquote{Scientific Acknowledgements}   paragraph below.

\subsection*{Examples}

Let us illustrate our Theorems \ref{thm:main theorem} and \ref{thm:main converse} with a few examples.

\begin{enumerate}

\item 
\label{ex:finite field}
If $\R$ is a finite field or a product of finite fields then $\mfi{\I} = \R$ for every ideal $\I \nrm \R$. In particular $\levI{\varphi} = \R$ and the group $\mathcal{A}_d(\levI{\varphi}, \kerI{\varphi})$ is a quotient of the finite group $\SL{d}{\R}$ for all characters $\varphi \in \chars{\SL{d}{\R}}$.  Our main results are trivially true.

\item
\label{ex:on K1}
Assume that $d > \sr{\R}  $. It follows that  $\EL{d}{\R}$ is a normal subgroup of $\SL{d}{\R}$. The quotient $\SL{d}{\R}/\EL{d}{\R}$ is   isomorphic to the $K$-theoretic abelian  group $\mathrm{SK}_1(\R)$. Whenever $\mathrm{SK}_1(\R)$ is trivial our results  apply equally well to $\SL{d}{\R}$. The group  $\mathrm{SK}_1(\R)$ is known to be trivial  if $\R$ is a field, a ring of integers $\mathcal{O}_k$ of an algebraic field \cite{bass1967solution}  or  $ \ZZ\left[x_1,\ldots,x_k\right]$ for some $k \in \NN \cup \{0\}$  \cite{bass1964whitehead}.

\item
\label{ex:bekka}
 Let $\R$ be the ring  $\ZZ$. The characters of the group $\SL{d}{\ZZ}$ for $d > 2$ were classified  in \cite{bekka}.  Bekka proved that every character $\varphi \in \chars{\SL{d}{\ZZ}}$  factors through a finite dimensional representation of $\SL{d}{\ZZ}$ or is   induced from $Z(\SL{d}{\ZZ})$. In our terminology,  Bekka's first and second cases  correspond to characters with level ideal $\ZZ$ and $\left(0\right)$ respectively.

\item
\label{ex:dedekind}
 The same conclusion as in Example (\ref{ex:bekka}) applies more generally   if $\R$ is  the ring of  integers $\mathcal{O}_k$ in the algebraic number field $k$ or the localization of $\mathcal{O}_k$ at a finite set of primes. This is a Dedekind domain with $\sr{\R} = 2$. Therefore our work gives a new proof of   special cases of \cite{peterson2014character} and \cite{boutonnet2019stationary}.

%\item S-arithmetic? \marginpar{complete the $S$-arithmetic case, is it really just infinite?}

\item 
\label{ex:just infinite}
The previous Examples (\ref{ex:bekka}) and (\ref{ex:dedekind})  are generalized by the following  special case of our main results.

\begin{cor}
\label{cor:just infinite case}
If $\R$ is   just infinite   and $d > \sr{\R}$ then every character $\varphi \in \chars{\EL{d}{\R}}$ factors through a finite dimensional representation of $\EL{d}{\R}$ or is induced from $Z(\EL{d}{\R})$.
\end{cor}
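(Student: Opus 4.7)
The plan is to leverage the classification in Theorem~\ref{thm:main theorem} together with Corollary~\ref{cor:finitely induced}, and to exploit how restrictive the just infinite hypothesis really is. By definition, if $\R$ is just infinite then the only depth ideals of $\R$ are $(0)$ and $\R$ itself. Since the level ideal $\levI{\varphi} = \mfi{\kerI{\varphi}}$ of any character $\varphi \in \chars{\EL{d}{\R}}$ is by construction a depth ideal, it must coincide with one of these two, so the proof reduces to a two-case analysis on $\levI{\varphi}$.

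In the case $\levI{\varphi} = \R$, the reduction map $\rho_{\R} : \EL{d}{\R} \to \SL{d}{\R/\R}$ has trivial target, so $\widetilde{\mathrm{C}}_d(\R) = \EL{d}{\R}$. Applying Corollary~\ref{cor:finitely induced} with $N = \widetilde{\mathrm{C}}_d(\levI{\varphi}) = \EL{d}{\R}$ then produces a finite dimensional unitary representation $\pi$ of $\EL{d}{\R}$ whose normalized trace equals $\varphi$ on the whole group, placing $\varphi$ into the first alternative of the corollary.

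In the case $\levI{\varphi} = (0)$, I would identify $\widetilde{\mathrm{C}}_d((0))$ with $Z(\EL{d}{\R})$. This follows from the general identity $\widetilde{\mathrm{C}}_d(\I)/\EL{d}{\I} = Z(\EL{d}{\R}/\EL{d}{\I})$ recalled in the introduction, specialized to $\I = (0)$: since elementary matrices with entries in the zero ideal are trivial, $\EL{d}{(0)} = \{\Id\}$, and the identity collapses to $\widetilde{\mathrm{C}}_d((0)) = Z(\EL{d}{\R})$. The first bullet of Theorem~\ref{thm:main theorem} then states exactly that $\varphi$ is induced from $Z(\EL{d}{\R})$, which is the second alternative. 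Given the main theorem, no real obstacle remains; the argument is essentially a dictionary translation of the two extreme values of $\levI{\varphi}$ made available by just infiniteness.
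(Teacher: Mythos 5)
Your argument is correct and is the natural derivation the paper leaves implicit (the paper states Corollary~\ref{cor:just infinite case} as a special case of the main results without spelling out a proof). You correctly observe that just infiniteness forces $\levI{\varphi} \in \{(0), \R\}$ since those are the only depth ideals, that $\widetilde{\mathrm{C}}_d(\R) = \EL{d}{\R}$ (the reduction to the zero ring is trivial) so Corollary~\ref{cor:finitely induced} makes $\varphi$ the normalized trace of a finite-dimensional representation of the whole group, and that $\widetilde{\mathrm{C}}_d((0)) = Z(\EL{d}{\R})$ (via Theorem~\ref{thm:Valivov} at $\I = (0)$, or directly since $\rho_{(0)}$ is the inclusion into $\SL{d}{\R}$ and $Z(\SL{d}{\R}) = Z(\EL{d}{\R})$ by Corollary~\ref{cor:center of EL and SL}) so Theorem~\ref{thm:main theorem} gives central induction. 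No gaps.
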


Consider   the polynomial ring  $ \mathbb{F}_p\left[x\right]$ for some prime  $p \in \NN$. It is easy to see that $\mathbb{F}_p\left[x\right]$ is just infinite using the fact it is a principal ideal domain. Moreover  $\sr{\mathbb{F}_p\left[x\right]} = 2$ and $\mathrm{SK}_1(\mathbb{F}_p\left[x\right])$ is trivial. Therefore Corollary \ref{cor:just infinite case} applies  to the groups $\SL{d}{\mathbb{F}_p\left[x\right]}$ for all primes $p$ and all $d \ge 3$.

\item 
\label{ex:universal lattice}
 The group $\SL{d}{\ZZ\left[x_1,\ldots,x_k\right]}$ is a called \emph{universal lattice}. This terminology was introduced by Shalom \cite{shalom1999bounded}. The following is a special case  of our Corollary \ref{cor:finitely induced}.
 
\begin{cor}
\label{cor:universal lattices}
If $d > k + 2$ then every character of the universal lattice $\SL{d}{\ZZ[x_1,\ldots,x_k]}$ is induced from a finite dimensional representation.
\end{cor}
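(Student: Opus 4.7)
The plan is to derive this corollary as a direct specialization of Corollary \ref{cor:finitely induced} to the ring $\R = \ZZ\left[x_1,\ldots,x_k\right]$. Two small preparatory observations are needed: first, that the hypothesis $d > k + 2$ implies the stable range condition $d > \max\{\sr{\R},2\}$; and second, that $\SL{d}{\R}$ and $\EL{d}{\R}$ coincide for such $d$, so the conclusion of Corollary \ref{cor:finitely induced} can be transported verbatim to the full special linear group.

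For the stable range condition I will invoke the bound $\sr{\ZZ\left[x_1,\ldots,x_k\right]} \le k+2$ recorded in the introduction, so that $d > k+2 \ge \sr{\R}$; since $k \ge 0$ forces $d \ge 3 > 2$, the full requirement $d > \max\{\sr{\R},2\}$ holds. For the identification $\EL{d}{\R} = \SL{d}{\R}$ I will appeal to the theorem of Bass \cite{bass1964whitehead} cited in Example (\ref{ex:on K1}), which asserts that $\mathrm{SK}_1\bigl(\ZZ\left[x_1,\ldots,x_k\right]\bigr)$ is trivial. Combined with the fact that $\EL{d}{\R} \nrm \SL{d}{\R}$ whenever $d > \sr{\R}$, and that the quotient is isomorphic to $\mathrm{SK}_1(\R)$, this forces $\EL{d}{\R} = \SL{d}{\R}$.

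With these two observations in place, Corollary \ref{cor:finitely induced} applied to $\R = \ZZ\left[x_1,\ldots,x_k\right]$ says precisely that every character of $\SL{d}{\R}$ is of the form $g \mapsto \mathrm{tr}\,\pi(g)/\dim_\CC \mathcal{H}$ on the normal subgroup $N = \widetilde{\mathrm{C}}_d(\levI{\varphi})$ and vanishes off $N$, for some finite dimensional unitary representation $\pi$ of $N$. In particular $\varphi$ is induced from a finite dimensional representation in the sense of the paper. There is no genuine obstacle here; the corollary is a purely bookkeeping consequence, and the real work lies in Corollary \ref{cor:finitely induced} itself and in Bass's computation of $\mathrm{SK}_1$.
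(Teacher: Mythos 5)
Your proposal is correct and follows the same route the paper has in mind; the paper simply asserts the corollary is a special case of Corollary~\ref{cor:finitely induced}, and you have correctly supplied the two implicit ingredients: the stable range bound $\sr{\ZZ\left[x_1,\ldots,x_k\right]} \le k+2$ (so $d > k+2$ gives $d > \max\{\sr{\R},2\}$) and the triviality of $\mathrm{SK}_1\bigl(\ZZ\left[x_1,\ldots,x_k\right]\bigr)$ due to Bass, which identifies $\EL{d}{\R}$ with $\SL{d}{\R}$.
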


\item 
\label{ex:units}
Let $ f :\R \to \RS$ be a   ring epimorphism   with   $ \ker f = \I \nrm \R$. Let $\units{\RS}$ be the   group of units of the ring $\RS$. 
The central subquotient $ \widetilde{\mathrm{C}}_d(\I)/\mathrm{C}_d(\I)$  is isomorphic\footnote{See \S\ref{sec: structure of SLd}
and in particular   Proposition \ref{prop:center in quotients} for details.}    to the subgroup 
$$\unitsord{\RS}{d} = \{ u \in \units{\RS} \: : \: u^d = 1_{\RS} \}.$$
Therefore any multiplicative character of the abelian group $\unitsord{\RS}{d}$ can be induced  via this isomorphism to a character of the group $\EL{d}{\R}$.
  We remark that the   group $\unitsord{\RS}{d} $ may in general be infinitely generated.

\item 
\label{ex:K1}
Let $\I \nrm \R$ be an ideal. Assuming that $d > \sr{\R}$ the central subquotient $\mathrm{C}_d(\I) / \EL{d}{\I}$ is isomorphic to a subgroup of the abelian relative $K$-theoretic group $\mathrm{SK}_1(\R;\I)$.   Similarly to Example (\ref{ex:units}) any multiplicative character of this abelian group can be induced to a character of $\EL{d}{\R}$.
%\marginpar{At this point, it is not clear what induced means}

\item 
\label{ex:finite quot}
Let $\I \nrm \R$ be a depth ideal and $\K \nrm \R$ be an ideal with $\mfi{\K} = \I$. The subquotient $\mathrm{C}_d(\I)/\mathrm{C}_d(\K)$ is a finite group. The group $\EL{d}{\R}$ is acting on the set of irreducible representations of this finite group. Any finite $\EL{d}{\R}$-orbit for this action gives rise to a character of the group $\EL{d}{\R}$.

\item In the most general case a character $\varphi \in \chars{\EL{d}{\R}}$ is   a "mixture" of the three  constructions presented in   Examples (\ref{ex:units}), (\ref{ex:K1}) and (\ref{ex:finite quot}).

\end{enumerate}

\begin{remark} 
A general Noetherian ring $\R$ may  admit infinitely many depth ideals even up to an automorphism. For example the principal ideal $(x^n) \nrm \ZZ\left[x\right]$ is a depth ideal  for all $n \in \NN$.
\end{remark}

\subsection*{Measure classification}

Consider the "vertical" abelian subgroup $\ver{1}(\R) $ of the group $ \EL{d}{\R}$ generated by the elementary subgroups $\elm{2}{1}(\R),\ldots,\elm{d}{1}(\R)$. The subgroup $\ver{1}(\R) $  is   isomorphic to the ring $\R^{d-1}$ with its additive structure. The   normalizer   $N_{\EL{d}{\R}}( \ver{1}(\R))$  contains a copy of the smaller group $\EL{d-1}{\R}$ acting on the subgroup $\ver{1}(\R) $ by conjugation via its natural action  on  $\R^{d-1}$.

The restriction of any character   $\varphi \in \chars{\EL{d}{\R}}$ to the abelian subgroup $\ver{1}(\R)$  is a positive definite function. This restriction can be expressed as the Fourier transform of a  Borel probability measure $\mu_1$ on the Pontryagin dual   $\widehat{\R}^{d-1}$. Consider the dual action  of $\EL{d-1}{\R}$ on the dual group  $\widehat{\R}^{d-1}$   by group automorphisms. As $\varphi$ is conjugation invariant this action preserves the measure  $\mu_1$.   
Our point of departure towards   Theorem \ref{thm:main theorem} is a classification of  all such $\EL{d-1}{\R}$-invariant probability measures\footnote{To ease our notations we replace $d-1$ with $d$ in the statement of Theorem \ref{thm:measure classification intro}}. Such a classification may also be of independent interest.

\begin{theorem}
\label{thm:measure classification intro}
Assume that $d \ge 2$. Let $\mu$ be an ergodic $\EL{d}{\R}$-invariant Borel probability measure on $\widehat{\R}^d$. Then there exists a uniquely determined depth ideal $\I_\mu \nrm \R$ and a finite $\EL{d}{\R}$-orbit 
$$\omega \subset \widehat{\I}_\mu^d \cong (\widehat{\R}/\ann{\I_\mu})^d$$
such that
$$ \mu =  \frac{1}{|\omega|} \sum_{x   \in \omega} x_* \nu$$
where $\nu$ is the Haar measure on the compact abelian group $(\ann{\I}_\mu)^d$. 

Conversely, any such  probability measure $\mu_{\I, \omega}$ on $\widehat{\R}^d$ determined  by a depth ideal $\I \nrm \R$ and a finite $\EL{d}{\R}$-orbit $\omega \subset \widehat{\I}^d$ is $\EL{d}{\R}$-invariant and ergodic.
\end{theorem}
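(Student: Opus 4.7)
I will analyse $\mu$ through its Fourier transform $\widehat{\mu}: \R^d \to \CC$, a continuous $\EL{d}{\R}$-invariant positive-definite function with $\widehat{\mu}(0) = 1$. The plan rests on the algebraic lemma that every $\EL{d}{\R}$-invariant additive subgroup of $\R^d$ has the form $\I^d$ for a unique ideal $\I \nrm \R$: given $v = (v_1,\ldots,v_d)$ in such a subgroup, the elementary operation $\elm{i}{j}(r)v - v = r v_j e_i$ forces $r v_j e_i$ into the subgroup for all $r \in \R$ and $i \ne j$, and iterating across coordinate pairs produces the stated form with $\I = \{r \in \R : r e_1 \text{ lies in the subgroup}\}$, which one checks is an ideal. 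Applying this lemma to $\langle \mathrm{supp}(\widehat{\mu})\rangle$ yields the unique smallest ideal $\J \nrm \R$ with $\widehat{\mu}$ supported in $\J^d$. The Fourier identity $\widehat{T_u\mu}(v) = u(v)\widehat{\mu}(v)$ shows that $\mu$ is then invariant under translation by the compact subgroup $V := (\ann{\J})^d$, since $u(v) = 1$ whenever $u \in V$ and $v \in \J^d$. Consequently $\mu$ disintegrates over $\widehat{\R}^d/V = \widehat{\J}^d$ as Haar on each $V$-coset against the pushforward $\bar\mu$.

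The crux is to show that the ergodic $\EL{d}{\R}$-invariant measure $\bar\mu$ on $\widehat{\J}^d$ is finitely supported --- necessarily on a single orbit $\omega$ by ergodicity. Minimality of $\J$ forces the translation stabiliser of $\bar\mu$ in $\widehat{\J}^d$ to be trivial: any nontrivial closed $\EL{d}{\R}$-invariant subgroup preserving $\bar\mu$ would lift to a closed $\EL{d}{\R}$-invariant subgroup of $\widehat{\R}^d$ strictly larger than $V$ that preserves $\mu$, corresponding by the algebraic lemma to a proper sub-ideal of $\J$ in whose power $\widehat{\mu}$ is supported, contradicting minimality. I then consider for each $\bar x \in \widehat{\J}^d$ the ideal $\mathrm{Ann}(\bar x) := \{r \in \R : r \bar x = 0\}$ and the closed $\EL$-invariant sets $A_\K := \{\bar x \in \widehat{\J}^d : \K \bar x = 0\} \cong \widehat{\J/\K\J}^d$. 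By ergodicity $\bar\mu(A_\K) \in \{0,1\}$, and by the Noetherian ACC on $\R$ there is a unique maximal ideal $\K$ with $\bar\mu(A_\K) = 1$, so $\bar\mu$ is supported on $\widehat{\J/\K\J}^d$. The essential step is to show that $\J/\K\J$ is finite, which forces $\widehat{\J/\K\J}^d$ to be a finite group and hence $\bar\mu$ to be atomic; otherwise the $\EL{d}{\R}$-action on this infinite continuous compact group would produce either a nontrivial translation-stabiliser (contradicting the previous argument) or a strictly larger $V$-invariance of $\mu$ (again contradicting minimality of $\J$).

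To conclude, set $\I_\mu := \mfi{\J}$, the depth-ideal completion; the finite-fibre surjection $\widehat{\I_\mu}^d \twoheadrightarrow \widehat{\J}^d$ lifts $\omega$ to a finite subset of $\widehat{\I_\mu}^d$, which ergodicity of the pushforward singles out as a single $\EL{d}{\R}$-orbit, and the uniqueness of $(\I_\mu, \omega)$ is clear since both are recovered from $\mu$ by the above construction. The converse direction is a direct verification: for any depth ideal $\I$ and finite $\EL{d}{\R}$-orbit $\omega \subset \widehat{\I}^d$, the measure $\mu_{\I, \omega}$ is $\EL{d}{\R}$-invariant because Haar on $(\ann{\I})^d$ is preserved by the automorphism action and $\omega$ is an orbit, and ergodicity follows since any $\EL{d}{\R}$-invariant set of positive measure, after $V$-saturation, descends to the uniform measure on $\omega$ which is ergodic. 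The main obstacle throughout is the finiteness of $\J/\K\J$, which is precisely where the interplay between Noetherianity, the minimality of $\J$, and the $\EL{d}{\R}$-orbit structure on the compact group $\widehat{\J}^d$ is used most critically.
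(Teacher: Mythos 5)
Your framework is sound through the first two paragraphs: the classification of $\EL{d}{\R}$-invariant additive subgroups of $\R^d$ as $\I^d$ matches the paper's Proposition \ref{prop:G-invariant subgroups}, the passage to the Fourier transform, the translation-invariance of $\mu$ under $V = (\ann{\J})^d$, and the observation that minimality of $\J$ forces the $\EL{d}{\R}$-invariant part of the translation stabiliser of $\bar\mu$ to be trivial are all correct. The introduction of $A_\K$ and the ACC argument to find a unique maximal $\K$ is a reasonable organisational device.

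However, the proof has a genuine gap precisely at the step you yourself flag as essential: the claim that $\J/\K\J$ is finite. You write that otherwise the action "would produce either a nontrivial translation-stabiliser ... or a strictly larger $V$-invariance of $\mu$," but this is an assertion, not an argument. It does not follow from any general principle that an ergodic $\EL{d}{\R}$-invariant measure supported on an infinite compact group, with trivial invariant translation stabiliser, cannot exist --- that is exactly the content of the theorem, so invoking it is circular. The paper bridges this gap with a Wiener-type vanishing argument (Propositions \ref{prop:orbits are sufficiently large} and \ref{prop:property W implies dual vanishes outside annihilator}): for each $\gamma \notin \mfi{\Gamma}_{\I_\mu}$ one constructs a subgroup $\Lambda \le \Gamma$ with $\gamma + \Lambda$ inside the orbit $\EL{d}{\R}\gamma$ and $\mu(\ann{\Lambda}) = 0$, the latter obtained by exhibiting countably many $\EL{d}{\R}$-translates $g_n\ann{\Lambda}$ whose pairwise intersections are $\mu$-null. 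Producing that sequence $g_n$ is where the Noetherian structure and the depth-ideal characterisation (Proposition \ref{prop:a condition for an ideal to be maximal of finite index}) are used, via the technical Lemma \ref{lem:subgroups generated by pairs} about the subgroups $\Sigma_s$. Once one knows $\mu(\ann\Lambda) = 0$, the complex measure $\gamma\,\mathrm{d}\mu$ has constant Fourier transform on $\Lambda$, and injectivity of the Fourier transform forces a contradiction. None of this machinery appears in your proposal; without it, the finiteness claim is unsupported.

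A smaller but also real gap occurs in the converse direction. You say ergodicity of $\mu_{\I,\omega}$ "follows since any $\EL{d}{\R}$-invariant set of positive measure, after $V$-saturation, descends to the uniform measure on $\omega$," but this only handles the base of the disintegration. Ergodicity of the Haar measure on each fibre $(\ann\I)^d$ requires knowing that the dual $\EL{d}{\R}$-action on $\Gamma/\Gamma_\I$ has infinite orbits on nontrivial elements --- this is Proposition \ref{prop:orbits are infinite in quotient by maximal finite index}, and it is precisely where the hypothesis that $\I$ is a \emph{depth} ideal is used. Your proposal never invokes the depth condition in this direction, so the ergodicity claim for the fibres is not justified.
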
 

The Haar measure $\nu_K$ of any closed $\EL{d}{\R}$-invariant subgroup $K \le \widehat{\R}^d$ is clearly $\EL{d}{\R}$-invariant. Theorem \ref{thm:measure classification intro}  says that, up to finite index, this essentially accounts for all the $\EL{d}{\R}$-invariant and ergodic probability measures.  

In  the special case of the ring $\ZZ$ of integers,     Theorem \ref{thm:measure classification intro} specializes to the well-known classification \cite{dani1979ergodic, burger} of ergodic $\SL{d}{\ZZ}$-invariant probability measures on the $d$-dimensional torus $\left( \mathbb{R} /\ZZ \right)^d$. The ring $\ZZ$ is just infinite. The depth ideal $\left(0\right)$ corresponds to the Haar measure on the torus and the depth ideal $  \ZZ$ corresponds to   atomic measures supported on rational $\SL{d}{\ZZ}$-orbits.

The proof of Theorem \ref{thm:measure classification intro} is inspired by  \cite[Proposition 9]{burger}  relying on a  harmonic analysis principle due to Wiener  \cite[Theorem A.2.2]{graham}.  Our situation is made more complicated by the presence of depth ideals and by the lack of a good description of $\EL{d}{\R}$-orbits in $\R^d$. 

\subsection*{Related work}
 The study of group characters has a long and fruitful history. Characters of finite groups are  intimately related to irreducible representations and were studied  extensively   by mathematicians such as Frobenius, Schur and others. 
 
   Thoma extended the theory of group characters  to infinite groups   \cite{thoma1964unitare} and observed  the connection to von Neumann algebras and factors.   Thoma described the characters of the infinite symmetric group \cite{thoma1964positiv}. Vershik \cite{vershik2012totally}  revisited the infinite symmetric group  and related  characters to the study of non-free actions. 
     See e.g.  \cite{peterson2016character} for a more detailed account of the historical development.

Motivated by the Margulis superrigidity   \cite{margulis1977discrete} and the Zimmer cocycle superrigidity theorems \cite{zimmer1980strong} Connes conjectured that   irreducible lattices in  higher rank semisimple Lie groups are \emph{character rigid}. This means that every character of such a lattice $\Gamma$    factors through a finite dimensional representation of $\Gamma$ or is centrally induced. This conjecture was outlined by Jones in \cite{jones2000ten}  as well as e.g. in  \cite{creutz2013character} and \cite{peterson2014character}.

Taking a first step in the  direction of Connes' question,   Bekka showed that the lattices $\SL{d}{\ZZ}$ are character rigid  for all $d \ge 3$  \cite{bekka}.     Peterson and Thom \cite{peterson2016character} proved that the lattices $\SL{2}{A}$ are character rigid where  $A$ is either an infinite field or a localization of a ring of algebraic integers admitting infinitely many units.  Connes' question was  finally settled in the positive by  Peterson \cite{peterson2014character}. See  also the related work of Creutz and Peterson \cite{creutz2013character} as well as the recent work of Boutonnet and Houdayer \cite{boutonnet2019stationary} on stationary characters.

In as far as classifying characters is a generalization of understanding  normal subgroup structure, the above mentioned results as well as the present work follow the line of algebraic investigation of \cite{mennicke1965finite,wilson1972normal, vaserstein1981normal, borevich1984arrangement}, as well as that of the Margulis normal subgroup theorem \cite{margulis1978quotient,margulis1979finiteness} for lattices and the Stuck--Zimmer theorem \cite{stuck1994stabilizers,bader2006factor} for non-free actions.

An important and in a sense the most general class of groups to which our results apply are  the so called \emph{universal lattices}, namely the groups $\SL{d}{\ZZ[x_1,\ldots,x_k]}$ for some $d,k \in \NN$ (our proof requires   $ d > k + 2$). Universal lattices are significant in that they surject onto many arithmetic as well as $S$-arithmetic lattices. These groups were studied by several authors with relation to representation theory and Kazhdan's property (T), see e.g. \cite{shalom1999bounded, shalom2006algebraization, kassabov2006universal, shenfeld2009semisimple, ershov2010property}.

%Motivated 
% A group has the character rigidity property if all its character arise in this way. Connes conjectured that irreducible lattices in higher rank semi simple center free Lie groups with no compact factor have character rigidity. 

%First examples of higher rank groups having arbitrary lattices with character rigidity were obtained in  Finally Peterson completed the proof of Conne's conjecture in \cite{}. The methods used by Peterson and Creutz were inspired by the proof of Margulis normal subgroup theorem (\cite{margulis1979finiteness}, see also \cite{bader2006factor}). The strategy is to consider the Von Neumann algebra generated by the GNS construction related to a character. A Von Neumann algebra associated to trace is a factor if and only if the state is a character. The idea is then to show that every $G$ factor of such lattices has property (T) and an ameanbilty property. Then as in the normal subgroup theorem it is deduced the Von Neumann algebra is generated by finite dimensional representation.

The notion of "centrally induced" characters was studied in the context of nilpotent groups, see e.g. \cite{howe1977representations, carey1984characters, kaniuth2006induced} and the references therein.

Other related works dealing with characters of infinite groups in a geometric context include \cite{dudko2013characters, dudko2014finite,dudko2018diagonal}.

\begin{remark}
Some authors use the terminology \enquote{character} for our \enquote{trace}, and \enquote{indecomposable character} for our \enquote{character}.
\end{remark}

%In this paper we return to Bekka's work. This is an attempt to broaden the framework and methods established by Bekka to universal lattices and unimodular matrices over finitely generated rings. Let $\R$ be a finitely generated commutative ring, namely $\R$ has a finite set of generators $x_1,\ldots,x_k \in \R$ as a ring for some $k \in \NN$.
%Consider the group $\SL{d}{\R}$ of unimodular $d$-by-$d$ matrices for some $d \ge 2$ and with entries in the ring $\R$. Note that in general these groups don't have the normal subgroup property.

\subsection*{Further questions} Let us point out a few questions that were left open.
\begin{enumerate}
\item We find it intriguing to look for a purely algebraic ring-theoretic characterization of depth ideals (or their classes up to automorphism) in a given commutative Noetherian ring $\R$. For example, this seems like a challenging problem for the ring $\ZZ\left[x_1,\ldots,x_k\right]$.  

 \item In Bekka's proof of character rigidity for the group   $\SL{d}{\ZZ}$ the assumption  $d >  2$ is necessary, for the group $\SL{2}{\ZZ}$ is virtually free and its character theory is "wild". It would be interesting to investigate to what extent our assumption that $d > \sr{\R}$ is necessary in the case of the group $\EL{d}{\R}$.
\item The majority of this work continues to apply with respect to the (perhaps more natural) group  $\SL{d}{\R}$ instead of the group $\EL{d}{\R}$.

One key advantage of working with the smaller group $\EL{d}{\R}$ is that    the subquotient $(\SLtil{d}{\I} \cap \EL{d}{\R})/\EL{d}{\I}$ is central $\EL{d}{\R}/\EL{d}{\I}$  for an arbitrary ideal $\I \nrm \R$   by the theorem of Borevich and Vavilov. 

On the other hand, while the subquotient $\SL{d}{\I}/\EL{d}{\I}$ is indeed central in $\SL{d}{\R}/\EL{d}{\I}$ by the relative Whitehead lemma, the larger subquotient $\SLtil{d}{\I}/\EL{d}{\I}$ is   to the best of our understanding in general two-step nilpotent.

This leads to the $K$-theoretic question of studying the two-step nilpotent subquotients $\SLtil{d}{\I}/\EL{d}{\I}$. When are those central in the quotient group $\SL{d}{\R}/\EL{d}{\I}$? A definitive answer would allow this work to be extended to deal 
with the group $\SL{d}{\R}$.

\item It seems potentially   possible but technically challenging to study characters of Chevalley groups  over root systems other than $A_n$.

\item The questions studied in this work  make sense over the  non-commutative  free ring $\ZZ\left<t_1,\ldots,t_k\right>$ or alternatively over an arbitrary Noetherian non-commutative ring.
\end{enumerate}

\subsection*{Synopsis}
%For most of this work we will assume that $d > \sr{\R}$ where $\sr{\R}$ is the stable range of the ring $\R$. 

Depth ideals in Noetherian rings and their basic properties are studied in \S\ref{sec:preliminaries}. The whole of \S\ref{sec:classification of prob measures} is dedicated to the question of classifying $\EL{d}{\R}$-invariant probability measures on the Pontryagin dual of the additive group of the ring $\R^d$. This includes a careful study of closed $\EL{d}{\R}$-invariant subgroups and the corresponding Haar measures. It is interesting to note that some ideas that come to the fore in later sections already appear in \S\ref{sec:classification of prob measures} in rudimentary form.

The   theory of characters is the subject of \S\ref{sec: character theory}. This section has an expository nature. We cover the GNS construction, the connection to von Neumann algebras and factors, relative and induced characters and Howe's lemma on the vanishing of characters on the second center.  Another important notion discussed in \S\ref{sec: character theory} is relative characters  of virtually central subgroups.

The following two \S\ref{sec: structure of SLd} and \S\ref{sec:abelian subgroups} are also for the most part  expository and deal with the matrix group $\EL{d}{\R}$ over a general commutative Noetherian ring and its various subgroups. In \S\ref{sec: structure of SLd} we look at normal subgroups, such as the center and congruence subgroups associated to ideals in the ring $\R$. In \S\ref{sec:abelian subgroups}   we consider the "vertical" and "horizontal" abelian subgroups $\ver{i}(\R)$ and $\hor{i}(\R)$ respectively, and their normalizers $\ngp{i}(\R)$ in the group $\EL{d}{\R}$.  The assumption that $d > \sr{\R}$ plays a role in  \S\ref{sec:abelian subgroups} towards a   particular normal form decomposition for conjugates of arbitrary elements, see Proposition \ref{prop:normal form for conjugates}.

The proof of Theorems \ref{thm:main theorem} and \ref{thm:main converse} gets going in earnest in \S\ref{sec: level of the character}. This is where we apply our measure classification result Theorem \ref{thm:measure classification intro} to the Fourier transforms of the restriction of a given character $\varphi \in \chars{\EL{d}{\R}}$ to the "vertical" and "horizontal" abelian subgroups. We deduce that all of these probability measures   correspond to a   uniquely determined depth ideal $\levI{\varphi} \nrm \R$, the so called \emph{level ideal} associated to the character $\varphi$.

It is \S\ref{sec:kernel ideal} where our proof most   diverges from Bekka's \cite{bekka}  due to complications introduced by the fact that a commutative Noetherian ring will in general admit multiple depth ideals. We consider a uniquely determined \emph{kernel ideal} $\kerI{\varphi} \nrm \R$ associated to the kernel of the character $\varphi \in \chars{\EL{d}{\R}}$. A very careful analysis in needed   to prove that $\mfi{\kerI{\varphi}} =\levI{\varphi}$, or in other words that $|\levI{\varphi}/\kerI{\varphi}| < \infty$.

We show in \S\ref{sec:induced from level}
  that any character $\varphi \in \chars{\EL{d}{\R}}$ is induced from the normal subgroup $\SLtil{d}{\levI{\varphi}} \cap \EL{d}{\R}$ where $\levI{\varphi}$ is the level ideal corresponding to $\varphi$. In the final \S\ref{sec:induced from abelian} we conclude the formal proof  of Theorems \ref{thm:main theorem} and \ref{thm:main converse} relying on results from the previous sections.

\subsection*{Scientific acknowledgements}

Our work  was tremendously inspired by Bachir Bekka's    paper  \cite{bekka}. We owe Professor  Bekka   an intellectual debt for his approach to   character classification   for the group $\SL{d}{\ZZ}$ with $d \ge 3$. This work grew out of an attempt to extend and adapt   \cite{bekka} to    general commutative Noetherian rings.  We have attempted to explicitly reference  \cite{bekka} when appropriate, however an attentive reader will be able to detect Bekka's strategy in the scaffolding of Theorems \ref{thm:main theorem} and \ref{thm:main converse} and their proofs.

An important source of inspiration towards the   classification of   $\EL{d}{\R}$-invariant probability measures on $\widehat{\R}^d$  in Theorem \ref{thm:measure classification intro} was Marc Burger's  work \cite{burger}.  

The authors would like to thank Avraham (Rami) Aizenbud for several   insightful discussions and Nikolai Vavilov for  his  illuminating  comments and suggestions.

%\marginpar {i think also Rami- we asked him a lot on depth in rings  and also  i think the example of infinetly units was his. recently he tryed to help me prove the icc conjecture we had}
%\vspace{5pt}

\subsection*{Acknowledgements} This project turned out to be an  unexpectedly  prolonged endeavor, spanning multiple stages of the authors' professional careers and personal lives. 
The authors would like to thank their respective advisors Professors Uri Bader and Tsachik Gelander for their extraordinary and  continuous encouragement and  advice. It is unlikely this project would     come to  fruition without Uri and Tsachik's support. 

%\vspace{15pt}

\vspace{10pt}

\setcounter{tocdepth}{2} % Show sections
\tableofcontents

\begin{center}
\emph{This work is dedicated to our wives Yael and Neta and to our children. We would like to thank them   for their support and patience.}
\end{center}

\pagebreak

\section{Depth in Noetherian rings}
\label{sec:preliminaries}

Let $\R$ be a Noetherian commutative  unital ring. Let $x_1,\ldots,x_k \in \R$ with $k \in \NN$ be a fixed set of generators  for the ring $\R$.

\subsection*{Depth ideals}
\label{sub:Noetherian rings}
%\subsubsection{Depth of an ideal}

Let $\I \nrm \R$ be any ideal. The Noetherianity of $\R$ implies that there exists a unique ideal $\mfi{\I} \nrm \R$   maximal with respect to the two conditions $\I \subset \mfi{\I}$ and $|\mfi{\I}/\I| < \infty$. We will say that the ideal $\mfi{\I}$ is the \emph{depth} of the ideal $\I$ in the ring $\R$.

A \emph{depth ideal} is any ideal $\J\nrm\R$  satisfying $\mfi{\J} = \J$. Clearly $\J$ is a depth ideal if and only if the quotient ring $\R/ \J$ admits no non-zero ideals of finite cardinality.
Note that the depth of any ideal is a depth ideal, and any depth ideal is equal to its own depth.

It is clear that the ring $\R$ satisfies $|\R| < \infty$ if and only if $\mfi{\left(0\right)} = \R$. The ring $\R$ is called \emph{just infinite} if $\mfi{\left(0\right)} = \left(0\right)$ and every non-zero ideal $\I \nrm \R$ has $\mfi{\I} = \R$.

\begin{example}
The only depth ideals in the ring $\ZZ$ are  $\left(0\right)$ and $\ZZ$. If $\left(n\right) \nrm \ZZ$ is any non-zero ideal then $\mfi{\left(n\right)} = \ZZ$. The same situation holds true in any Dedekind domain. In particular every Dedekind domain is just infinite.
\end{example}

The following results are intended to provide a workable criterion for depth ideals.

\begin{lemma}
	\label{lem:division of polynomials}
	Let $n,m,N,M \in \NN$ be such that $n < m$ and $N < M$. Let $x \in \R$ be any element. If $N \ge n$ and $(m-n)|(M-N)$ then $(x^m-x^n)|(x^M-x^N)$.
\end{lemma}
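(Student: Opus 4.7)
The plan is to reduce the claim to two standard divisibility facts by factoring out the common power of $x$ on each side. Explicitly, write
\[
x^m - x^n = x^n(x^{m-n} - 1), \qquad x^M - x^N = x^N(x^{M-N} - 1).
\]
Since $N \ge n$ we can further write $x^N = x^n \cdot x^{N-n}$, so it suffices to show that $x^{m-n} - 1$ divides $x^{M-N} - 1$ in $\R$; then multiplying by $x^n \cdot x^{N-n} = x^N$ gives the desired divisibility.

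For the divisibility of $x^{m-n} - 1$ into $x^{M-N} - 1$, set $d = m-n$ and $D = M-N$ and use the hypothesis $d \mid D$ to write $D = kd$ for some positive integer $k$. Then apply the elementary polynomial identity
\[
y^k - 1 = (y-1)\bigl(y^{k-1} + y^{k-2} + \cdots + y + 1\bigr)
\]
with $y = x^d$, which exhibits $x^d - 1$ as a factor of $x^{kd} - 1 = x^D - 1$ in $\R$.

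Combining these, the quotient $(x^M - x^N)/(x^m - x^n)$ is explicitly
\[
x^{N-n}\bigl((x^{m-n})^{k-1} + (x^{m-n})^{k-2} + \cdots + 1\bigr),
\]
which lies in $\R$ and gives the desired divisibility. There is no genuine obstacle here; the hypotheses $N \ge n$ and $(m-n) \mid (M-N)$ are used precisely to guarantee (respectively) that $x^{N-n}$ is a legitimate element of $\R$ and that the geometric-series factorization of $x^D-1$ over $x^d - 1$ is available.
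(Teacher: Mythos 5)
Your proof is correct and follows essentially the same route as the paper: factor out $x^n$ and $x^N$, use $N \ge n$ to absorb the extra power of $x$, and apply the geometric-series identity to $x^{M-N}-1$ over $x^{m-n}-1$. The paper simply writes out the resulting product identity in one display rather than spelling out the intermediate steps.
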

\begin{proof}
	Assume that $M-N = l(m-n)$ for some $l \in \NN$. The fact that $x^m-x^n$ divides $x^M-x^N$      in the ring $\R$ can be seen by considering the following equation
	$$ x^M - x^N = x^{N}(x^{M-N}-1)= x^{N-n}(x^m-x^n)(x^{(l-1)(m-n)}+\cdots+x^{m-n}+1). $$
\end{proof}

\begin{prop}
	\label{prop:condition for a finitely generated module to be finite}
	Let $D$ be a finitely generated $\R$-module.   Then $|D| < \infty$ if and only if
	\begin{itemize}
		\item there exists $N \in \NN$ such that $N \in \mathrm{Ann}_{\R}(D)$,  and
		\item for each $ i \in \{1,\ldots,k\}$ there are indices $n_i,m_i \in \NN$ with $n_i < m_i$ such that $x_i^{m_i} - x_i^{n_i} \in \mathrm{Ann}_{\R}(D)$.
	\end{itemize}
\end{prop}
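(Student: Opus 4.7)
The plan is to study the annihilator ideal $A = \mathrm{Ann}_\R(D)$ and reduce the whole question to showing that the quotient ring $\R/A$ is finite: once $\R/A$ is finite, the finitely generated $\R/A$-module $D$ is automatically finite.

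For the forward direction, assume $|D| < \infty$. The natural ring homomorphism $\R \to \mathrm{End}_\ZZ(D)$ has kernel exactly $A$, so $\R/A$ embeds into the finite ring $\mathrm{End}_\ZZ(D)$ and is itself finite. The additive order $N$ of $1$ in $\R/A$ then lies in $A$, giving the first condition. For each generator $x_i$, the powers $1, x_i, x_i^2, \ldots$ must eventually repeat in the finite ring $\R/A$, yielding indices $n_i < m_i$ with $x_i^{m_i} - x_i^{n_i} \in A$.

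For the converse, assume both conditions and let $B \subseteq A$ be the ideal generated by $N$ together with the elements $x_i^{m_i} - x_i^{n_i}$ for $i = 1, \ldots, k$. It suffices to show $|\R/B| < \infty$. Since $x_1, \ldots, x_k$ generate $\R$ as a ring, every element of $\R/B$ is a $\ZZ$-linear combination of monomials $x_1^{a_1} \cdots x_k^{a_k}$, and the coefficients can be reduced modulo $N$. Whenever an exponent $a_i$ satisfies $a_i \geq m_i$, the congruence $x_i^{m_i} \equiv x_i^{n_i} \pmod{B}$ lets us replace $x_i^{a_i}$ by $x_i^{a_i - (m_i - n_i)}$, which still has nonnegative exponent and strictly decreases $a_i$. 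Iterating, we may assume $0 \leq a_i \leq m_i - 1$ for every $i$. Hence $\R/B$ is spanned as a $\ZZ/N\ZZ$-module by at most $\prod_{i=1}^k m_i$ monomials of bounded degree, so $|\R/B|$ is finite. Since $B \subseteq A$, the same holds for $\R/A$, and thus for $D$.

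I do not expect a serious obstacle: the only substantive point is the exponent-reduction step, and the estimate $a_i - (m_i - n_i) \geq n_i \geq 0$ makes it automatic. Lemma \ref{lem:division of polynomials} can be invoked for a more uniform statement of this reduction (replacing many small steps by a single divisibility), but the iterative form above is self-contained and suffices.
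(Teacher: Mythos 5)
Your proof is correct and takes a genuinely different route from the paper. The paper works directly with the module $D$ and a fixed finite generating set $a_1,\dots,a_l$: in the forward direction it produces pairs $(m_{i,j},n_{i,j})$ for each ring generator $x_i$ and each module generator $a_j$, then glues them with $\max$ and $\lcm$ using Lemma \ref{lem:division of polynomials}; in the converse it shows $D$ is finitely generated as a $\ZZ$-module of bounded torsion. You instead pivot entirely on the quotient ring $\R/\mathrm{Ann}_\R(D)$: the embedding $\R/\mathrm{Ann}_\R(D) \hookrightarrow \mathrm{End}_\ZZ(D)$ gives finiteness of the quotient ring at once, so the repetition of powers $x_i^n$ happens a single time in the ring and the $\max$/$\lcm$ synchronization across module generators disappears; the converse likewise runs at the ring level, showing $\R/B$ finite for the subideal $B$ and concluding via the elementary fact that a finitely generated module over a finite ring is finite. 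What your version buys is a cleaner forward direction and the observation that finiteness of $D$ is equivalent to finiteness of the ring $\R/\mathrm{Ann}_\R(D)$ --- which also directly yields the remark the paper states after the proposition, that $\mathrm{Ann}_\R(D_1)\subset\mathrm{Ann}_\R(D_2)$ and $|D_1|<\infty$ force $|D_2|<\infty$. What the paper's version buys is staying closer to the module itself, which is perhaps more symmetric with how the criterion is stated and used in Proposition \ref{prop:a condition for an ideal to be maximal of finite index}.
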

\begin{proof}
	Fix a finite generating set $a_1,\ldots,a_l  $  for the $\R$-module $D$ for some $l \in \NN$. 
	
Working in one direction  assume that $|D| < \infty$. In particular $D$ is a finite abelian group with respect to its additive structure. Therefore $D$ has a finite exponent $N \in \NN$. In other words $N \in \mathrm{Ann}_{\R}(D)$.
	
Given a  generator $x_i$ of the ring $\R$  and a generator $a_j$ of the module $D$ for $i \in \{1,\ldots,k\}$ and $j \in \{1,\ldots,l\}$  the elements $x_i^n a_j \in D$ cannot be pairwise distinct as $n$ ranges over $\NN$. Therefore there are some indices $m_{i,j}, n_{i,j} \in \NN$ with $m_{i,j} > n_{i,j}$ satisfying
	$$x_i^{n_{i,j}}-x_i^{m_{i,j}} \in \mathrm{Ann}_{\R}(a_j).$$
For all $ i \in \{1,\ldots,k\}$ denote
	$$ N_i = \max_{j \in \{1,\ldots,l\}} \{ n_{i,j}   \} \quad \text{and} \quad M_i = N_i + \lcm_{ j \in \{1,\ldots,l\}} \{ m_{i,j} - n_{i,j}  \}. $$
We conclude by   Lemma \ref{lem:division of polynomials} that $x_i^{M_i} - x_i^{N_i} \in \mathrm{Ann}_\R(a_j)$ for all $j \in \{1,\ldots,l\}$. This implies that $x_i^{M_i} - x_i^{N_i} \in \mathrm{Ann}_\R(D)$ for all $ i \in \{1,\ldots,k\}$ as required.

	In the other direction, assume that the annihilator $\mathrm{Ann}_{\R}(D)$ contains some integer $N \in \NN$ as well as the elements $x_i^{m_i} - x_i^{n_i} \in \R$ for each $ i\in \{1,\ldots,k\}$ as above. Consider the following countable subset of the module $D$
	$$ B = \left\{x_1^{e_1}  \cdots x_k^{e_k} a_j \: : \: e_i \in \NN\cup\{0\}, \; j \in \{1,\ldots, l\}\right\}.$$
	It is clear that $B$ generates $D$ as a $\ZZ$-module.
	We may use the elements $ x_i^{m_i} - x_i^{n_i} $ of $\mathrm{Ann}_\R(D)$  to reduce every exponent $e_i$ below $m_i$.    Therefore $D$ admits a finite generating set as a $\ZZ$-module. Since $N \in \mathrm{Ann}_{\R}(D)$ it follows that $|D| < \infty$.
\end{proof}

This implies that if $D_1$ and $D_2$ are two finitely generated $\R$-modules with
$\mathrm{Ann}_{\R}(D_1) \subset \mathrm{Ann}_{\R}(D_2)$
and $|D_1| < \infty$ then $|D_2| < \infty$.

\begin{prop}
	\label{prop:a condition for an ideal to be maximal of finite index}
	An ideal $\I \nrm \R$ is a depth ideal if and only if every element $r \in \R \setminus \I$ satisfies at least one of the following two conditions
	\begin{enumerate}
		\item $nr \notin \I$ for all $n \in \NN$, or
		\item there exists an index $i \in \{1,\ldots, k\}$   such that $(x_i^n  - x_i^m)r \notin \I$ for all distinct $n,m \in \NN$.
	\end{enumerate}
\end{prop}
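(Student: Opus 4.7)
The plan is to reduce the question about depth ideals to an application of Proposition~\ref{prop:condition for a finitely generated module to be finite} applied to a carefully chosen cyclic $\R$-module.

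First, I would re-express the depth property in terms of principal ideals. By definition, $\I$ fails to be a depth ideal iff there is some ideal $\J \supsetneq \I$ with $|\J/\I|<\infty$. Given any $r \in \J \setminus \I$, the submodule $(\R r + \I)/\I$ of $\J/\I$ is then a finite cyclic $\R$-module. Conversely, if there exists $r \in \R \setminus \I$ such that the cyclic $\R$-module $(\R r + \I)/\I$ is finite, then $\J := \R r + \I$ is an ideal strictly containing $\I$ with finite quotient, so $\I$ is not a depth ideal. Hence $\I$ is a depth ideal if and only if $(\R r + \I)/\I$ is infinite for every $r \in \R \setminus \I$.

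Next, I would identify the annihilator of this cyclic module. Since $(\R r + \I)/\I$ is generated by the single element $r + \I$, its annihilator is $\mathrm{Ann}_\R((\R r + \I)/\I) = \{s \in \R \: : \: sr \in \I\}$. Now I apply Proposition~\ref{prop:condition for a finitely generated module to be finite} to $D = (\R r + \I)/\I$: this module is finite iff there exists $n \in \NN$ with $nr \in \I$, and for each $i \in \{1,\dots,k\}$ there exist distinct $n_i,m_i \in \NN$ with $(x_i^{m_i} - x_i^{n_i})r \in \I$.

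Finally, I would negate the criterion: $(\R r + \I)/\I$ is infinite iff either $nr \notin \I$ for all $n \in \NN$, or there exists $i \in \{1,\ldots,k\}$ such that $(x_i^n - x_i^m)r \notin \I$ for all distinct $n,m \in \NN$. Combining this with the reformulation in the first paragraph yields exactly the characterization of depth ideals stated in the proposition. The only non-routine ingredient is Proposition~\ref{prop:condition for a finitely generated module to be finite}, which was already established; the remaining logical manipulations are straightforward, so I do not anticipate any serious obstacle beyond keeping track of the direction of the negations.
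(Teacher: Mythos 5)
Your proof is correct and takes essentially the same route as the paper: both reduce the depth-ideal condition to the statement that every non-zero principal ideal of $\R/\I$ is infinite, identify that principal ideal with the cyclic module $(\R r + \I)/\I$, and then negate the finiteness criterion of Proposition~\ref{prop:condition for a finitely generated module to be finite}. Your version is slightly more explicit about the annihilator computation and the logic of the negation, but the argument is the same.
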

\begin{proof}
	Recall that  the ideal $\I$ is   a  depth ideal if and only if the quotient ring $\R/\I$ admits no non-zero ideals of finite cardinality.  The last statement is equivalent to the quotient $\R/\I$ not admitting  any    non-zero  \emph{principal} ideals of finite cardinality.
	
	Consider any non-trivial element $r + \I   $ of the quotient ring $\R/\I$. Let
	$$\LL = \R r + \I \nrm \R/\I$$ be the principal ideal  generated by $r + \I$. The ideal $\LL \nrm \R/\I$ has $|\LL | = \infty$   if and only if at least one of above two conditions is satisfied, see   Proposition \ref{prop:condition for a finitely generated module to be finite} above.
\end{proof}

\begin{example}
	Consider  the ring $  \ZZ\left[x_1,\ldots,x_k\right]$. Any ideal generated by a collection of monomials of the form $x_1^{e_1}\cdots x_k^{e_k}$ with $e_i \in \NN \cup \{0\}$ is a depth ideal.
\end{example}

\begin{example} The sum of two depth ideals need not be a depth ideal. This is witnessed by the   two depths ideals $ (2)$ and $ (x)$  in the ring $\ZZ[x]$.  
\end{example}

Let us summarize some further properties of the notion of depth.

\begin{prop}
	\label{prop:properties of depth}
	Let $\I, \J \nrm \R$ be a pair of ideals.
	\begin{enumerate}
		\item 
		\label{it:intersection depth}
		$\mfi{\left(\I \cap \J\right)} = \mfi{\I} \cap \mfi{\J}$.
		
		\item 
		\label{it:subset depth}
		If $\J \subset \I$ then $\mfi{\J} \subset \mfi{\I}$.

		\item 
		\label{it:commensurable}		
		$\mfi{\I} = \mfi{\J}$ if and only if $\I$ and $\J$ are \emph{commensurable}, i.e. $\left|\I/(\I \cap \J)\right| < \infty$ and $\left|\J/(\I \cap \J)\right| < \infty$.
	\end{enumerate}
\end{prop}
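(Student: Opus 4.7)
I would prove the three items in the order (2), (1), (3), since the later parts reuse the earlier ones. The proofs are all short and rely only on the definition of depth (maximal overideal of finite index) together with the Noetherian assumption, which guarantees that $\mfi{\I}$ exists for every $\I$.

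\textbf{Step 1: monotonicity.} Assuming $\J \subset \I$, I would consider the ideal $\mfi{\J} + \I$, which contains $\I$, and examine the quotient $(\mfi{\J}+\I)/\I$. The obvious surjection from $\mfi{\J}$ onto $(\mfi{\J}+\I)/\I$ has kernel $\mfi{\J} \cap \I \supset \J$, so $(\mfi{\J}+\I)/\I$ is a quotient of $\mfi{\J}/\J$ and hence finite. By the maximality in the definition of $\mfi{\I}$, we conclude $\mfi{\J} + \I \subset \mfi{\I}$, and in particular $\mfi{\J} \subset \mfi{\I}$.

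\textbf{Step 2: intersection.} The inclusion $\mfi{(\I \cap \J)} \subset \mfi{\I} \cap \mfi{\J}$ is immediate from Step~1 applied to $\I \cap \J \subset \I$ and $\I \cap \J \subset \J$. For the reverse inclusion I would show that $\mfi{\I} \cap \mfi{\J}$ is of finite index over $\I \cap \J$ by considering the diagonal homomorphism
$$ \mfi{\I} \cap \mfi{\J} \longrightarrow \mfi{\I}/\I \; \oplus\; \mfi{\J}/\J, \qquad x \longmapsto (x + \I,\, x+\J). $$
Its kernel is $\mfi{\I} \cap \mfi{\J} \cap \I \cap \J = \I \cap \J$ (using $\I \subset \mfi{\I}$ and $\J \subset \mfi{\J}$), so the quotient $(\mfi{\I} \cap \mfi{\J})/(\I \cap \J)$ embeds into a finite group. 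Maximality in the definition of $\mfi{(\I\cap \J)}$ then forces $\mfi{\I} \cap \mfi{\J} \subset \mfi{(\I \cap \J)}$.

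\textbf{Step 3: commensurability criterion.} For the forward direction, assume $\mfi{\I} = \mfi{\J}$. Then by (1), $\mfi{(\I \cap \J)} = \mfi{\I}$, so $\mfi{\I}/(\I \cap \J)$ is finite. Since $\I \cap \J \subset \I \subset \mfi{\I}$ and likewise for $\J$, both $\I/(\I \cap \J)$ and $\J/(\I \cap \J)$ inject into this finite quotient. For the converse, assume $\I$ and $\J$ are commensurable. Then $|\mfi{\I}/(\I \cap \J)| \le |\mfi{\I}/\I|\cdot|\I/(\I \cap \J)| < \infty$, so $\mfi{\I}$ is an overideal of $\I \cap \J$ of finite index, giving $\mfi{\I} \subset \mfi{(\I \cap \J)}$. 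Combined with $\mfi{(\I \cap \J)} \subset \mfi{\I}$ from (2), we get $\mfi{(\I \cap \J)} = \mfi{\I}$, and by the symmetric argument $\mfi{(\I \cap \J)} = \mfi{\J}$, whence $\mfi{\I} = \mfi{\J}$.

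\textbf{Expected obstacle.} None of the three steps should present any genuine difficulty; the only mild point of care is verifying that the diagonal map in Step~2 really has kernel $\I \cap \J$, which requires using that $\I \cap \J$ already lies inside $\mfi{\I} \cap \mfi{\J}$. Everything else is direct bookkeeping from the defining property of $\mfi{\cdot}$ as the largest finite-index overideal.
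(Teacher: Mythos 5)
Your proof is correct, but it takes a genuinely different and in fact more elementary route than the paper. The paper first argues directly that $\mfi{\I} \cap \mfi{\J}$ is a depth ideal, uses this to obtain Item (\ref{it:subset depth}), and then establishes the finiteness of $(\mfi{\I} \cap \mfi{\J})/(\I \cap \J)$ by invoking the explicit polynomial-annihilator criterion of Proposition~\ref{prop:condition for a finitely generated module to be finite} together with Lemma~\ref{lem:division of polynomials}, choosing explicit integers $N$, $m_i$, $n_i$ so that $N$ and $x_i^{m_i}-x_i^{n_i}$ annihilate the quotient. You instead prove Item (\ref{it:subset depth}) self-containedly via the overideal $\mfi{\J}+\I$, and then get the finiteness of $(\mfi{\I}\cap\mfi{\J})/(\I\cap\J)$ purely group-theoretically from the embedding into $\mfi{\I}/\I \oplus \mfi{\J}/\J$ furnished by the diagonal map, avoiding the ring-theoretic machinery entirely. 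Your approach is shorter and more transparent; the paper's pays the cost of re-deriving finiteness from annihilators, but those same techniques (Propositions~\ref{prop:condition for a finitely generated module to be finite} and \ref{prop:finiteness of ideal quotient} etc.) are needed elsewhere in \S\ref{sec:preliminaries}, so there is some economy in reusing them. Both arguments are sound; yours buys brevity and independence from the Noetherian generator apparatus, theirs buys uniformity of method across the section.
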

\begin{proof}
We first show that $\mfi{\I} \cap \mfi{\J}$ is a depth ideal. Indeed, let $\LL \nrm \R$ be any ideal such that $\mfi{\I} \cap \mfi{\J} \subset \LL$ and $\left|\LL/(\mfi{\I} \cap \mfi{\J})\right| < \infty$. Therefore $\left|(\LL+ \mfi{\I}) / \mfi{\I} \right|< \infty$ as well as $\left|(\LL+ \mfi{\J}) / \mfi{\J} \right|< \infty$. Since both $\mfi{\I}$ and $\mfi{\J}$ are depth ideals it follows that $\LL\subset \mfi{\I} \cap \mfi{\J} $, as required. In other words the quotient ring $\R/(\mfi{\I} \cap \mfi{\J})$ admits no non-zero ideals of finite cardinality.

Assume that $ \J \subset \I$. It follows that $\J   \subset \mfi{\I}$ and so $\J \subset \mfi{\I} \cap \mfi{\J} \subset \mfi{\J}$. The intersection $\mfi{\I} \cap \mfi{\J}$ is a depth ideal by the above argument. As $\left|(\mfi{\I} \cap \mfi{\J}) / \J   \right| < \infty$ we conclude that $\mfi{\J} = \mfi{\I} \cap \mfi{\J} \subset \mfi{\I}$. Item (\ref{it:subset depth}) follows.

To prove Item (\ref{it:intersection depth}) it remains to establish that $|(\mfi{\I} \cap \mfi{\J})/(\I \cap \J)|<\infty$. We may apply Proposition 	\ref{prop:condition for a finitely generated module to be finite} to find some integers $N_\I,N_\J \in \NN$ satisfying
$$N_\I \in \mathrm{Ann}_\R (\mfi{\I}/\I) \quad \text{and} \quad N_\J \in \mathrm{Ann}_\R (\mfi{\J}/\J).$$
It is easy to verify that
$$  N = \lcm\{N_\I, N_\J\} \in \mathrm{Ann}_\R ((\mfi{\I} \cap \mfi{\J})/(\I \cap \J)).$$
Likewise for each $ i\in \{1,\ldots,k\}$ there are some indices $m_{\I,i},n_{\I,i},m_{\J,i},n_{\J,i} \in \NN$ with $m_{\I,i} > n_{\I,i}$ and $m_{\J,i} > n_{\J,i}$ satisfying
$$ x_i^{m_{\I,i}} - x_i^{n_{\I,i}} \in \mathrm{Ann}_\R (\mfi{\I}/\I) \quad \text{and} \quad x_i^{m_{\J,i}} - x_i^{n_{\J,i}} \in \mathrm{Ann}_\R (\mfi{\J}/\J).$$
Therefore for each $ i \in \{1,\ldots,x_k\}$ the integers 
$$ n_i = \max \{ n_{\I,i}, n_{\J,i} \} \in \NN  \quad \text{and} \quad m_i = n_i + \lcm \{ m_{\I,i} - n_{\I,i}, m_{\J,i} - n_{\J,i} \} \in \NN $$
 satisfy
$$ x_i^{m_i} - x_i^{n_i} \in \mathrm{Ann}_\R ((\mfi{\I} \cap \mfi{\J})/(\I \cap \J)).$$
    according to Lemma 	\ref{lem:division of polynomials}.
    We conclude the proof of Item (\ref{it:intersection depth}) relying on the converse direction of Proposition 	\ref{prop:condition for a finitely generated module to be finite}.

It remains to prove Item (\ref{it:commensurable}). In one direction assume that $\mfi{\I} = \mfi{\J}$. It follows from Item (\ref{it:intersection depth}) that   $\mfi{\I} = \mfi{\J}$ is the depth of the ideal $\I \cap \J$. Therefore $\I$ and $\J$ are commensurable. The other direction of Item (\ref{it:commensurable}) follows from the definition of depth.
\end{proof}

Item (\ref{it:intersection depth}) of  Proposition \ref{prop:properties of depth} implies in particular that the intersection of two depth ideals is again a depth ideal. Moreover, it follows from Item (\ref{it:commensurable}) that having the same depth is an equivalence relation on the set of   ideals of the ring $\R$. The depth ideals  parameterize the equivalence classes.

%\begin{lemma}
%	\label{lem:depth of an ideal}
%	Let $\R$ be a countable Noetherian ring and $\I \nrm \R$ an ideal. There is a unique ideal $\mfi{\I}$ that is maximal with respect to $\I \le \mfi{\I}$ and $\left[\mfi{\I} : \I\right] < \infty$.
%\end{lemma}
%\begin{proof}
%	If $\I \le \J_1, \J_2$ and $\I$ has finite index in $\J_1, \J_2$ then $\I$ has finite index in $\J_1 + \J_2$ as well. Given an enumeration of ideals containing $\I$ with finite index we may assume that it forms an ascending sequence. Therefore there is a maximal ideal $\mfi{\I}$ with the required properties. The uniqueness of $\mfi{\Delta}$ follows using the same argument.
%\end{proof}

\begin{remark}
The question of classifying depth ideals in commutative Noetherian  rings seems to be of interest. We were unable to arrive at any definite statement. See the \enquote{Further questions} paragraph of \S\ref{sec:intro}.
\end{remark}

%\marginpar{have to read this part again}
\subsection*{Ideal quotient}
Let $\I$ and $\J$  be a pair of ideals in the ring $\R$.  The \emph{ideal quotient} $ \J : \I$ is discussed  e.g. in  \cite[p. 8]{atiyah2018introduction}. It is given by
$$  \J : \I = \mathrm{Ann}_\R((\I + \J)/\J) = \{x \in \R \: : \: x\I \subset \J \}. $$
Note the the ideal quotient $ \J : \I$ is an ideal in $\R$. Let $\LL$ be another ideal in the ring $\R$. The following facts are immediate from the above definition. 
\begin{itemize}
%	\item $\J : \I$  is the annihilator  of the $\R$-submodule $\I + \J / \J$ of $\R/\J$, and
	\item $\I(\J : \I) \subset \J \subset \J : \I$,
	\item $(\I \cap \J) : \I = \J : \I$, and
	\item if $\J \subset \I$  then $\J : \LL \subset \I : \LL$.
%	\item Let $ x \in \R$ be an element. Then $\I : \gener{x} = \frac{1}{x} (\I \cap \gener{x})$.
\end{itemize}
The following is another elementary consequence of the definition.

\begin{prop}
\label{prop:law of cancellation for ideal quotient}
If $\LL \nrm \R$ then
$  (\LL: \I):(\J:\I) \subset \LL : (\J:\I)\I$.
\end{prop}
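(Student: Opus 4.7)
The plan is to establish the standard identity $(A:B):C = A:(BC)$ for ideal quotients in the ring $\R$, and then apply it with $A = \LL$, $B = \I$, $C = \J:\I$. This identity in fact gives equality, hence a fortiori the inclusion stated in the proposition.

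First I would verify the identity $(A:B):C = A:(BC)$ by a straightforward unwinding of the definitions given immediately above the proposition. An element $x \in \R$ lies in $(A:B):C$ if and only if $xC \subset A:B$, which happens if and only if $(xC)B \subset A$. Since ideal multiplication is commutative and associative in $\R$, this is equivalent to $x(BC) \subset A$, i.e.\ to $x \in A:(BC)$. No Noetherian or depth-theoretic input is needed; this is purely formal.

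Once the identity is in hand, the proof of the proposition is immediate: applying it with the substitution above gives
\[
(\LL:\I):(\J:\I) \;=\; \LL : \bigl(\I(\J:\I)\bigr) \;=\; \LL : (\J:\I)\I,
\]
where the last equality uses commutativity of ideal multiplication. In particular the inclusion claimed by the proposition holds, with equality. There is no real obstacle here; the only thing to be mindful of is to keep track of the order of quotients and to not confuse $A:BC$ with $(A:B) \cdot C$ or similar.
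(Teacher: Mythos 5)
Your proof is correct and is essentially the same element-chase as the paper's: the paper takes $z\in(\LL:\I):(\J:\I)$ and checks directly that $zyx\in\LL$ for all $y\in\J:\I$, $x\in\I$, which is exactly your verification of the identity $(A:B):C = A:(BC)$ specialized to $A=\LL$, $B=\I$, $C=\J:\I$. The only (minor) difference is that you package the computation as a general identity and observe that it yields equality, whereas the paper states and proves only the one inclusion it needs.
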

\begin{proof}
Consider some element $z \in (\LL: \I):(\J:\I)$. To prove that $z \in \LL : (\J:\I)\I$ we need to verify that  all
$y \in (\J:\I)$ and $x \in \I$ satisfy $zyx \in \LL$. But $zy \in (\LL: \I)$   so that $zyx \in (\LL:\I) \I \subset \LL$ as required.
\end{proof}

\begin{prop}
\label{prop:finiteness of ideal quotient}
Assume that   $\J \subset \I$. Then  $|\I/\J| < \infty$    if and only if the ideal quotient $\J : \I$ has finite index in the ring $\R$.
\end{prop}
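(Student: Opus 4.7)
The plan is to reduce both directions to Proposition \ref{prop:condition for a finitely generated module to be finite} applied to suitable finitely generated $\R$-modules. First I observe that because $\J \subset \I$ we have $\I + \J = \I$, and therefore by the very definition of the ideal quotient,
$$ \J : \I = \mathrm{Ann}_\R((\I+\J)/\J) = \mathrm{Ann}_\R(\I/\J). $$
So the statement to prove is equivalent to the assertion that $|\I/\J| < \infty$ if and only if $|\R/\mathrm{Ann}_\R(\I/\J)| < \infty$.

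For the forward direction, suppose $|\I/\J| < \infty$. Since $\R$ is Noetherian, $\I/\J$ is a finitely generated $\R$-module, so Proposition \ref{prop:condition for a finitely generated module to be finite} supplies an integer $N \in \NN$ and exponents $m_i > n_i$ in $\NN$ for $i \in \{1,\ldots,k\}$ such that $N$ and each $x_i^{m_i} - x_i^{n_i}$ lie in $\mathrm{Ann}_\R(\I/\J)$. Now consider the quotient ring $\R/\mathrm{Ann}_\R(\I/\J)$ viewed as a finitely generated $\R$-module (generated by the class of $1$). Its $\R$-annihilator contains $\mathrm{Ann}_\R(\I/\J)$, in particular it contains $N$ and each $x_i^{m_i} - x_i^{n_i}$. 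The converse direction of Proposition \ref{prop:condition for a finitely generated module to be finite} then implies $|\R/\mathrm{Ann}_\R(\I/\J)| < \infty$.

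For the converse direction, suppose $|\R/\mathrm{Ann}_\R(\I/\J)| < \infty$. The module $\I/\J$ is annihilated by $\mathrm{Ann}_\R(\I/\J)$ by definition, so it naturally carries the structure of a module over the finite ring $\R/\mathrm{Ann}_\R(\I/\J)$. Since $\I/\J$ is finitely generated over $\R$, it is also finitely generated over this quotient ring, and a finitely generated module over a finite ring is itself finite.

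The main technical point — and the only place where Noetherianity plays a role — is the forward direction, where one needs the structural characterization of finiteness for finitely generated $\R$-modules supplied by Proposition \ref{prop:condition for a finitely generated module to be finite}; once that tool is in hand, both directions are short. No additional obstacle is expected.
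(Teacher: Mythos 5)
Your proof is correct and follows essentially the same path as the paper's: both hinge on the identity $\J : \I = \mathrm{Ann}_\R(\I/\J) = \mathrm{Ann}_\R(\R/(\J:\I))$ together with Proposition \ref{prop:condition for a finitely generated module to be finite}. The only small divergence is that in the converse direction you replace a second appeal to that proposition with the elementary observation that a finitely generated module over a finite ring is finite, which is a harmless shortcut that does not change the structure of the argument.
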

\begin{proof}
Observe that
$$\mathrm{Ann}_{\R}(\I/\J) = \J : \I  = \mathrm{Ann}_{\R}(\R/(\J : \I)).$$
As both $\R$-modules $\I/\J$ and $\R/(\J : \I)$ are finitely generated, the result follows from  Proposition \ref{prop:condition for a finitely generated module to be finite}.
\end{proof}

\begin{prop}
\label{prop:finiteness of something with ideal quotients}
Assume that   $\J \subset \I$ and $|\I/\J| < \infty$. Then
$ | \I / (\J:\I)\I | < \infty$.
\end{prop}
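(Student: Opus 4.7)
The plan is to deduce this directly from the preceding Proposition \ref{prop:finiteness of ideal quotient} combined with the Noetherianity of the ring $\R$. Write $q = \J : \I$ for brevity.

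First I would apply Proposition \ref{prop:finiteness of ideal quotient} to the hypothesis $|\I/\J| < \infty$ in order to conclude that $|\R/q| < \infty$. Next, since $\R$ is Noetherian, the ideal $\I$ is finitely generated as an $\R$-module; fix generators $a_1, \ldots, a_n$ so that $\I = \R a_1 + \cdots + \R a_n$. Then the ideal product $q\I$ is generated as an $\R$-module by the elements $\{r a_i : r \in q, \; 1 \le i \le n\}$, and the $\R$-linear map
\[
\R^n \longrightarrow \I/q\I, \qquad (r_1, \ldots, r_n) \longmapsto \sum_{i=1}^n r_i a_i + q\I
\]
is surjective and vanishes on the submodule $q^n \subset \R^n$, hence factors through a surjection $(\R/q)^n \twoheadrightarrow \I/q\I$. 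Therefore $|\I/q\I| \leq |\R/q|^n < \infty$, which is the desired conclusion.

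There is no serious obstacle here; the only point worth double-checking is that the two uses of $q$ (as an ideal of $\R$ and as the annihilator governing $|\R/q|$) line up correctly, which is exactly the content of Proposition \ref{prop:finiteness of ideal quotient}. The argument really just combines the standard fact that finitely generated modules over a ring of finite cardinality are finite with the characterization of $|\I/\J| < \infty$ via the ideal quotient.
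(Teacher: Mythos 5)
Your proof is correct. It shares the opening move with the paper's proof — apply Proposition \ref{prop:finiteness of ideal quotient} to conclude $|\R/q|<\infty$ for $q=\J:\I$ — but the second step is a genuinely different route. The paper observes that $q = \mathrm{Ann}_{\R}(\R/q) \subset \mathrm{Ann}_{\R}(\I/q\I)$ and then invokes the remark following Proposition \ref{prop:condition for a finitely generated module to be finite}, which lets one transfer finiteness from $\R/q$ to any finitely generated module whose annihilator is at least as large. You instead exhibit an explicit $\R$-module surjection $(\R/q)^n \twoheadrightarrow \I/q\I$ coming from a finite generating set of $\I$, and read off the finiteness of $\I/q\I$ directly. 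The paper's route is shorter once the annihilator-comparison remark is available, and it keeps the argument uniform with the surrounding propositions (which all trade on $\mathrm{Ann}_{\R}$); your route is more elementary and self-contained in that it does not revisit the explicit criterion in Proposition \ref{prop:condition for a finitely generated module to be finite}, only the conclusion of Proposition \ref{prop:finiteness of ideal quotient}, plus Noetherianity and the trivial observation that a quotient of a finite set is finite. Both are perfectly sound.
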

\begin{proof}
Since $|\I/\J| < \infty$ it follows from Proposition \ref{prop:finiteness of ideal quotient} that the ideal quotient $\J:\I$ has finite index in $\R$. Regard the ideal $\I$ as a $\R$-module. Then $(\J:\I)\I$ is an $\R$-submodule   of $\I$. The quotient $\R$-module $\I / (\J:\I)\I$ is finitely generated and satisfies
$$  \mathrm{Ann}_{\R}(\R/(\J : \I))=  \J:\I \subset \mathrm{Ann}_{\R}(\I / (\J:\I)\I). $$
The   conclusion follows from Proposition \ref{prop:condition for a finitely generated module to be finite} and the remark following it.
\end{proof}

\begin{prop}
\label{prop:infinite index implies infinite index ideal quotient}
Let $\LL \nrm  \R$ be an ideal. Assume that
$ \LL \subset \J \subset \I$. If  $|\I/\J| < \infty$ and $ |\J/\LL| = \infty$ then
 $ |(\J: \I)/ (\LL:\I)| = \infty$.
\end{prop}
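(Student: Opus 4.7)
The plan is to argue by contrapositive: assuming $|(\J:\I)/(\LL:\I)| < \infty$, I will derive that $|\J/\LL| < \infty$. The main tool will be Proposition \ref{prop:law of cancellation for ideal quotient} combined with Propositions \ref{prop:finiteness of ideal quotient} and \ref{prop:finiteness of something with ideal quotients}, which together translate between finiteness of module quotients and the ideal $\R/(\J' : \I')$ having finite index. The strategy is to sandwich $\J$ between $\LL$ and another ideal containing $\LL$ which we can control via the ideal quotient formalism.

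Concretely, assume $|(\J:\I)/(\LL:\I)| < \infty$. Since $\LL \subset \J$ we have $(\LL:\I) \subset (\J:\I)$, so by Proposition \ref{prop:finiteness of ideal quotient} the ideal $(\LL:\I) : (\J:\I)$ has finite index in $\R$. By Proposition \ref{prop:law of cancellation for ideal quotient} this inclusion extends to $\LL : (\J:\I)\I$, so this latter ideal also has finite index in $\R$. Unfolding the definition, $\LL : (\J:\I)\I$ is precisely the annihilator of the finitely generated $\R$-module $((\J:\I)\I + \LL)/\LL$; by Proposition \ref{prop:condition for a finitely generated module to be finite} (and the remark after it, using that an $\R$-module whose annihilator has finite index is finite), the module $((\J:\I)\I + \LL)/\LL$ is finite. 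Equivalently, $|(\J:\I)\I / (\LL \cap (\J:\I)\I)| < \infty$.

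On the other hand, since $\J \subset \I$ and $|\I/\J| < \infty$ by hypothesis, Proposition \ref{prop:finiteness of something with ideal quotients} gives $|\I/(\J:\I)\I| < \infty$, and a fortiori $|\J/(\J:\I)\I| < \infty$. Combining the two finiteness statements with the inclusions
\[
\LL \cap (\J:\I)\I \;\subset\; (\J:\I)\I \;\subset\; \J, \qquad \LL \cap (\J:\I)\I \;\subset\; \LL \;\subset\; \J,
\]
we conclude that $|\J / (\LL \cap (\J:\I)\I)| < \infty$, whence the surjection $\J / (\LL \cap (\J:\I)\I) \twoheadrightarrow \J/\LL$ forces $|\J/\LL| < \infty$, contradicting the hypothesis.

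The only real subtlety is Step~2, where one must recognize that the inclusion furnished by Proposition \ref{prop:law of cancellation for ideal quotient} is exactly what converts finite index of $(\LL:\I):(\J:\I)$ into finite index of $\LL : (\J:\I)\I$; I expect this to be the key observation that makes the bookkeeping with three nested ideals go through cleanly. The remaining steps are routine applications of the finiteness criteria established earlier in the section.
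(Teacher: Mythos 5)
Your proof is correct and takes essentially the same route as the paper's: the paper argues in the forward direction (infinite index of $(\LL:\I):(\J:\I)$ via the law of cancellation, identification with $(\LL\cap(\J:\I)\I):(\J:\I)\I$, and the two finiteness criteria), while you simply run the identical chain of implications in contrapositive form. The only cosmetic difference is that you phrase the middle step through annihilators of $((\J:\I)\I+\LL)/\LL$ rather than invoking Proposition~\ref{prop:finiteness of ideal quotient} directly on $(\LL\cap(\J:\I)\I):(\J:\I)\I$, but these are the same object.
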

\begin{proof}
Note that $ |(\J:\I)/(\LL: \I)| = \infty $ if and only if the ideal
$ (\LL: \I):(\J:\I) $
has infinite index in the ring $\R$ by Proposition \ref{prop:finiteness of ideal quotient}.  
This statement follows provided that
the ideal
$$\LL : (\J:\I)\I = (\LL \cap  (\J:\I)\I) : (\J:\I)\I $$ has infinite index in the ring $\R$ according to Proposition \ref{prop:law of cancellation for ideal quotient}. On the one hand, the ideal $(\J:\I)\I$ has finite index in $\I$ by Proposition \ref{prop:finiteness of something with ideal quotients}. One the other hand,  the ideal $\LL \cap  (\J:\I)\I$ is contained in $\LL$ and in particular has infinite index in $\I$. The result follows from yet another application of Proposition \ref{prop:finiteness of ideal quotient}.
\end{proof}

%\begin{lemma}
%Let $\R$ be a Noetherian ring. Let $\I$ be an ideal of $\R$ with $\R/ \I$ finite. Then $\R / \I^2$ is finite.
%\end{lemma}
%\begin{proof}
%The ideal $\I$ is finitely generated as a $\R$-module by Noetherianity. Therefore $\I / \I^2$ is a finitely generated $\R$-module. But the ideal $\I$ annihilates this module. Therefore $\I / \I^2$ is a finitely generated module over the finite ring $\R / \I$. Therefore $\I / \I^2$ is finite.
%\end{proof}

%\begin{cor}
%\label{cor:a principal ideal is maximal of finite index}
%Assume $k \ge 1$ and let $I = (p) \nrm R$ be a principal ideal. Then $I$ is maximal of finite index.
%\end{cor}
%\marginpar{what about the othe direction}
%\begin{proof}
%We use the condition established in the above proposition. If $I$ were not maximal of finite index, there would be some $f \in R$, $n,m \in \NN$ and $1 \le i \le k$ such that
%$$ p \not | f, \quad p | nf  \quad \textrm{and} \quad p | x_i^m f    $$
%but this is a contradiction; $R$ being a unique factorization domain, while $n$ and $x_i^m$ admit no common divisor.
%\end{proof}

\section{Classification of $\EL{d}{\R}$-invariant probability measures on  $\widehat{\R}^d$}
\label{sec:classification of prob measures}

Let $\R$ be a Noetherian  commutative unital   ring.
Fix some $d \ge 2$. Let $\EL{d}{\R}$ denote the subgroup of the special linear group $\SL{d}{\R}$ generated by    elementary matrices. These are   the matrices  $\elm{i}{j}(r)$  given by
$$ (\elm{i}{j}(r))_{k,l} = \begin{cases} 1 & k=l, \\ r & \text{$k=i$ and $l = j$,}\\ 0 & \text{otherwise} \end{cases}$$
for some pair of distinct indices $i,j \in \{1,\ldots,d\}$  and some ring element $r \in \R$.

Consider the discrete abelian group $\Gamma = \R^d$ regarded with its additive ring operation. %For each element $ \gamma \in \Gamma$ we will write
%$$ \gamma = \left(\gamma_1,\ldots,\gamma_d \right) \quad \text{where} \quad \gamma_i \in \R.$$
The matrix group $\EL{d}{\R}$ is acting on the abelian group $\Gamma$ by group automorphisms via matrix multiplication.

Let $K$ denote the Pontryagin dual  of the group $\Gamma$.   The abelian group  $K$  is   compact.
The matrix group  $\EL{d}{\R}$ is acting on $K$ by group automorphisms via the dual action, namely
$$ g(\chi)(\gamma) = \chi(g^{-1} \gamma)  \quad \forall g\in \EL{d}{\R}, \chi \in K, \gamma \in \Gamma. $$
We will use the same notation for both actions of $\EL{d}{\R}$ on $\Gamma$ and on $K$.

The goal of this section is to classify all $\EL{d}{\R}$-invariant Borel probability measures on the compact group $K$ in terms of depth ideals in the ring $\R$ and complete the proof of Theorem \ref{thm:measure classification intro}. First we would like to restate  this classification  more precisely using some additional notation.

For a closed subgroup $H\le K$ we   let $\ann{H} \le \Gamma $   denote the \emph{annihilator}  of   $H$,  namely
$\ann{H} = \{ \gamma \in \Gamma \, : \, \chi(\gamma) = 1 \; \forall \chi \in H\}$.  Similarly, for a subgroup $\Delta \le \Gamma$ the \emph{annihilator}  $\ann{\Delta}$ is a closed subgroup   of $K$. 
%\marginpar{maybe clarify, namel add " (namely the elements, $\gamma \in \Gamma$ that have $\chi(\gamma)=1$ for every $\chi \in H$)" }

Given an ideal $\I \nrm \R$ denote $\Gamma_{\I} = \I^d$ and $K_{\I} = \ann{\Gamma_{\I}} = \left(\ann{\I}\right)^d$. Every such closed subgroup  $K_\I$ is  clearly $\EL{d}{\R}$-invariant.

%	Let $\Gamma_{\I} \le \Gamma$ and $K_{\I} \le K$ denote the closed $G$-invariant subgroups associated to the ideal $\I \nrm \R$ as above.

%Our goal in this section is to classify $G$-invariant ergodic probability measures on $K$   We establish the following classification result.

\begin{theorem}
\label{thm:classification of invariant measures}
The  set of  ergodic  $\EL{d}{\R}$-invariant probability measures on the compact group $K$ bijectively corresponds to the set of all pairs   $(\I,\omega)$ where  $\I \nrm \R$ is a depth ideal and  $\omega$ is a finite orbit for the $\EL{d}{\R}$-action on the quotient $K/K_\I$.
The ergodic probability measure corresponding to the pair $(\I,\omega)$ is given by
	$$ \mu_{\I,\omega} = \frac{1}{\left|\omega \right|} \sum_{gK_\I \in \omega} g_* \nu_\I $$
	where $\nu_\I$ is the Haar measure on the compact group $K_\I$.
\end{theorem}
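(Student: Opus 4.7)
The theorem has two directions. For the \emph{reverse} direction, given a depth ideal $\I$ and a finite $\EL{d}{\R}$-orbit $\omega \subset K/K_\I$, I would verify that $\mu_{\I,\omega}$ is a well-defined ergodic $\EL{d}{\R}$-invariant probability measure. Invariance is automatic: $\Gamma_\I = \I^d$ is $\EL{d}{\R}$-stable, so $K_\I$ and its Haar measure $\nu_\I$ are $\EL{d}{\R}$-invariant, and the group permutes the cosets appearing in $\omega$ transitively. For ergodicity I would compute the Fourier transform
\[
\widehat{\mu_{\I,\omega}}(\gamma) \;=\; \mathbf{1}_{\Gamma_\I}(\gamma)\cdot\frac{1}{|\omega|}\sum_{\chi K_\I\in\omega}\chi(\gamma),
\]
and use the fact that the only $\EL{d}{\R}$-fixed element of the quotient $\Gamma/\Gamma_\I$ is zero (so $\nu_\I$ is ergodic on $K_\I$) combined with transitivity of $\EL{d}{\R}$ on $\omega$.

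For the \emph{forward} direction, the first step is to classify closed $\EL{d}{\R}$-invariant subgroups of $\Gamma = \R^d$ when $d \ge 2$: every such subgroup equals $\Gamma_\I$ for a uniquely determined ideal $\I \nrm \R$. The key input is that $\elm{i}{j}(r)\gamma - \gamma = r a_j e_i$, so an invariant subgroup containing $\gamma = (a_1,\ldots,a_d)$ contains every $\R$-multiple of each $a_j$ in every coordinate; summing over elements forces the subgroup to equal $\Gamma_{\I_M}$ where $\I_M$ is generated by all coordinates of members of $M$. By Pontryagin duality, closed $\EL{d}{\R}$-invariant subgroups of $K$ are correspondingly of the form $K_\I$.

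Now let $\mu$ be ergodic invariant. The translation stabilizer $\mathrm{Stab}_K(\mu) = \ann{\langle \mathrm{supp}(\widehat\mu)\rangle}$ is closed and $\EL{d}{\R}$-invariant, hence equals $K_{\J_0}$ for some ideal $\J_0 \nrm \R$. I set $\I_\mu := \mfi{\J_0}$; by Proposition \ref{prop:properties of depth} this is a depth ideal containing $\J_0$ of finite index. Since $K_{\I_\mu} \subset K_{\J_0}$, the measure $\mu$ descends to an ergodic $\EL{d}{\R}$-invariant probability measure $\bar\mu$ on $K/K_{\I_\mu}$.

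The crux is to show $\bar\mu$ is supported on a finite $\EL{d}{\R}$-orbit $\omega \subset K/K_{\I_\mu}$. Ergodicity yields a dichotomy: $\bar\mu$ is purely atomic or purely continuous. In the atomic case, atoms form a single $\EL{d}{\R}$-orbit which must be finite since infinitely many atoms of equal positive mass would violate total mass $1$. The continuous case is ruled out by Wiener's harmonic analysis principle \cite[Theorem A.2.2]{graham}, which forces $\frac{1}{|F_n|}\sum_{\gamma\in F_n}|\widehat{\bar\mu}(\gamma)|^2 \to 0$ along a F{\o}lner sequence in $\Gamma_{\I_\mu}$. Combining this with the $\EL{d}{\R}$-invariance of $\widehat{\bar\mu}$, the fact that $\langle\mathrm{supp}(\widehat{\bar\mu})\rangle = \Gamma_{\J_0}$ has finite index in $\Gamma_{\I_\mu}$, and an orbit-density estimate extracted from the depth condition on $\I_\mu$ via Propositions \ref{prop:condition for a finitely generated module to be finite} and \ref{prop:a condition for an ideal to be maximal of finite index}, one derives the desired contradiction. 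Uniqueness of $(\I_\mu,\omega)$ is then immediate since $\mu$ determines $\J_0$ via its stabilizer, hence $\I_\mu = \mfi{\J_0}$, and then $\omega$ as the support of $\bar\mu$. The principal technical hurdle will be precisely this orbit-density step, where the absence of a clean description of $\EL{d}{\R}$-orbits in $\R^d$ for general Noetherian $\R$ forces one to lean carefully on the depth condition.
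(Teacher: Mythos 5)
Your reverse direction is structurally aligned with the paper's Propositions \ref{prop:orbits are infinite in quotient by maximal finite index} and \ref{prop:the invariant measures are ergodic}, but the parenthetical justification is wrong: ``the only $\EL{d}{\R}$-fixed element of $\Gamma/\Gamma_\I$ is zero'' does \emph{not} imply ergodicity of $\nu_\I$. What the Fourier argument requires is that every nontrivial $\EL{d}{\R}$-orbit in $\Gamma/\Gamma_\I$ is \emph{infinite} (so that the $\ell^2$-summable orbit-invariant Fourier coefficients of an invariant function must vanish off $0$); absence of nonzero fixed points is strictly weaker and compatible with finite orbits. The paper proves infiniteness of nontrivial orbits in Proposition \ref{prop:orbits are infinite in quotient by maximal finite index} using the depth criterion of Proposition \ref{prop:a condition for an ideal to be maximal of finite index}. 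You also need the additional step (the translation/cocycle trick in the proof of Proposition \ref{prop:the invariant measures are ergodic}) to pass from ergodicity of $\nu_\I$ on the trivial coset to ergodicity of $\mu_{\I,\omega}$ when $|\omega|>1$; ``transitivity on $\omega$'' alone is not enough.

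The forward direction contains the real gap, and it is not a small one. Your setup via the translation stabilizer $K_{\J_0}=\mathrm{Stab}_K(\mu)$ is fine and is genuinely different from the paper's notion of the \emph{essential ideal} (defined by the Borel partition into the sets $\Oh{H}$, so that $\mathrm{supp}(\mu)\subset\Oh{K_{\I_\mu}}$); the two ideals turn out to be commensurable, but this equivalence is itself a consequence of the hard step. With your definitions you get the inclusion $\mathrm{supp}(\widehat\mu)\subset\Gamma_{\J_0}$ for free, but the heavy lifting --- showing that $\mu$ actually gives full mass to $K_\I$ for some ideal $\I$ commensurable to $\J_0$, equivalently that $\bar\mu$ on $K/K_{\mfi{\J_0}}$ is atomic and finitely supported --- is exactly where you wave your hands. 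The sketched mechanism fails as stated: ``$\langle\mathrm{supp}(\widehat{\bar\mu})\rangle=\Gamma_{\J_0}$ has finite index in $\Gamma_{\I_\mu}$'' says nothing about the \emph{density} of $\mathrm{supp}(\widehat{\bar\mu})$ itself. A sparse set (think of $\{2^n\}\subset\ZZ$) can generate a finite-index subgroup while its indicator function has vanishing Cesàro averages along any F{\o}lner sequence, so Wiener's theorem produces no contradiction from these facts alone. You implicitly acknowledge this by deferring to an unstated ``orbit-density estimate,'' but that estimate is the theorem's crux, not a technical afterthought.

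What the paper actually does in Propositions \ref{prop:orbits are sufficiently large} and \ref{prop:property W implies dual vanishes outside annihilator} is precisely the content you are missing: a ``large orbits'' lemma showing that for $\gamma\notin\mfi{\Gamma}_{\I_\mu}$ one finds a subgroup $\Lambda\le\Gamma$ with $\gamma+\Lambda\subset\EL{d}{\R}\gamma$ and $\mu(\ann{\Lambda})=0$ (this is where the depth criterion and Lemma \ref{lem:subgroups generated by pairs} enter), followed by a pushforward-of-complex-measure argument: if $\widehat\mu(\gamma_0)\neq 0$, twist $\mu$ by $\gamma_0$, push forward to $K/\ann{\Lambda}$, observe the Fourier transform there is constant and nonzero, hence the image is a point mass at the trivial coset, contradicting $\mu(\ann{\Lambda})=0$. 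This avoids F{\o}lner sequences entirely and is cleaner than a direct Wiener argument. If you want to pursue your route, you would need to prove an analogue of Proposition \ref{prop:orbits are sufficiently large} and convert it into a lower bound on the upper density of some nontrivial orbit of $\widehat{\bar\mu}$ along a F{\o}lner sequence --- which is more delicate than the paper's pushforward trick and, as written, is not in your proposal.
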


The above formula for the probability measure $\mu_{(\I,\omega)}$ makes sense, as the Haar measure $\mu_\I$ is left-invariant for the action of the group $K_\I$.

\begin{example}
Let $\R = \ZZ$ so that $K = (\mathbb{R}/\mathbb{Z})^d$ is the $d$-dimensional torus. The ring $\ZZ$ is just infinite and admits only two depth ideals, namely $\left(0\right)$ and $  \ZZ$. The case $K_{\left(0\right)} = K$  corresponds to the Haar measure on the torus. The remaining case $K_\ZZ = \{e\}$ corresponds to atomic probability measures supported on finite rational orbits. This  recovers a well-known measure classification result   \cite{dani1979ergodic, burger}.
\end{example}
%An even more precise statement of this result is given below.
%\marginpar{we use both terms $G$ and $\EL{d}{\R}$ we have to decide if we want to keep suing both. if we do define $G=\EL{d}{\R}$}

%In light of Proposition \ref{prop:G-invariant subgroups}, the conclusion of the classification theorem can be rephrased as:

%\begin{cor}
%	\label{cor:classification of invariant measures}
%	The ergodic $\EL{d}{\R}$-invariant probability measures on $K$ are classified by pairs $(I,\omega)$, where $I\nrm R$ is an arbitrary ideal and $\omega$ is an orbit for the $\EL{d}{\R}$-action on $X(K_{\I})$. For a given $I$ there are finitely many possibilities for $\omega$.
%\end{cor}
%\begin{proof}
%	Combine Theorem \ref{thm:classification of invariant measures} and Proposition \ref{prop:the invariant measures are ergodic} (the latter is needed to show that every probability measure of the above form is indeed ergodic).
%\end{proof}

\begin{remark}
We  use the same notation   $d $ for the size  of the matrix group $\EL{d}{\R}$ considered  in   this section as well as elsewhere  throughout this paper.
However, towards classifying the characters of the group   $\EL{d}{\R}$ we will be applying Theorem \ref{thm:classification of invariant measures}
 to the action of the \emph{smaller} matrix group $\EL{d-1}{\R}$ on the abelian group $\R^{d-1}$.
\end{remark}

\subsection*{$\EL{d}{\R}$-invariant subgroups}

The  first step towards proving  Theorem \ref{thm:classification of invariant measures}  is to  classify all   $\EL{d}{\R}$-invariant subgroups of $K$.

\begin{prop}
	\label{prop:G-invariant subgroups}
	A subgroup $\Delta$ of $  \Gamma$ is $\EL{d}{\R}$-invariant if and only if
	$\Delta = \Gamma_\I$   for some ideal $\I \nrm \R$.
\end{prop}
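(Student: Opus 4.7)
The forward implication is immediate: any $g = (g_{ij}) \in \EL{d}{\R}$ has entries in $\R$, and if $\Delta = \I^d$ for an ideal $\I \nrm \R$ then $(gv)_i = \sum_j g_{ij} v_j \in \I$ for every $v \in \I^d$, since $\I$ is closed under multiplication by $\R$ and under addition.

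For the converse, my plan is to define $\I$ as the projection of $\Delta$ onto a single coordinate and then to recover $\Delta = \I^d$ by exploiting the abundance of elementary operations. Specifically, set $\I = \{v_1 : v \in \Delta\} \le (\R,+)$. The first key observation is that this subgroup is independent of the choice of coordinate: the Weyl-type elements $w_{i,j} = \elm{i}{j}(1)\elm{j}{i}(-1)\elm{i}{j}(1) \in \EL{d}{\R}$ map $e_j$ to $\pm e_i$, and $\EL{d}{\R}$-invariance of $\Delta$ identifies the projections of $\Delta$ onto different coordinates as subgroups of $\R$. The second key observation is that $\I$ is closed under multiplication by $\R$: given $a \in \I$ and $r \in \R$, pick $v \in \Delta$ with $v_1 = a$; then $\elm{2}{1}(r)$ sends $v$ to $v + ra \cdot e_2$, so $\elm{2}{1}(r)v - v = ra \cdot e_2 \in \Delta$, placing $ra$ in the (coordinate-independent) projection $\I$. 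Hence $\I$ is an ideal.

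It remains to show $\Delta = \I^d$. The inclusion $\Delta \subseteq \I^d$ is automatic from the coordinate-independence of the projection established above. For the reverse inclusion it suffices, by additivity, to verify that $a e_j \in \Delta$ for every $a \in \I$ and every coordinate $j$. Picking $v \in \Delta$ with $v_1 = a$, the identity $\elm{j}{1}(1) v - v = a e_j$ handles each $j \ne 1$, and then applying $w_{1,2}$ to $a e_2 \in \Delta$ produces $a e_1 \in \Delta$. I do not anticipate any serious obstacle in this proof: the only substantive content is that the elementary matrices provide enough ring-theoretic freedom (via the parameter $r$ in $\elm{2}{1}(r)$) to see that $\I$ is an ideal, while the Weyl elements provide enough geometric freedom to extract individual basis vectors $a e_j$ from an arbitrary element of $\Delta$.
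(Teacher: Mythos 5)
Your proof is correct and follows essentially the same strategy as the paper's: use $\elm{j}{i}(r)\gamma - \gamma$ to produce the ring multiple $r\gamma_i$ in a single coordinate, and use the Weyl-type element $w_{i,j} = \elm{i}{j}(1)\elm{j}{i}(-1)\elm{i}{j}(1)$ to move that entry to any desired coordinate. The only cosmetic difference is in how $\I$ is set up: the paper takes $\I$ to be the ideal \emph{generated} by all coordinates of all elements of $\Delta$ (so $\Delta \subseteq \I^d$ is immediate and only $\I^d \subseteq \Delta$ needs work), whereas you take $\I$ to be the set-theoretic projection of $\Delta$ onto one coordinate and then \emph{prove} it is an ideal; both inclusions then follow from the same elementary-matrix computations. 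The two presentations require the same work and yield the same result.
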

\begin{proof}
As the group $\EL{d}{\R}$ is generated by elementary matrices, it is clear that  the subgroup  $\Gamma_\I = \I^d$ of $\Gamma$ is $\EL{d}{\R}$-invariant for every ideal $\I \nrm \R$. This proves one direction of the statement.

To see the converse direction  consider an arbitrary $\EL{d}{\R}$-invariant subgroup $\Delta$ of $ \Gamma$. Let  $\I\nrm \R$   denote  the ideal  generated by the  coordinates of all  group elements of $\Delta$, namely
$$ \I = \sum_{\gamma =  (\gamma_1,\ldots,\gamma_d) \in \Gamma} \, \sum_{i=1}^d \R \gamma_i. $$
Clearly $\Delta \le \Gamma_\I$. To prove the opposite containment $ \Gamma_\I \le \Delta $ it suffices to show that the subgroup $\Delta$ contains all elements  of the form $(0,\ldots,0,rs,0,\ldots,0)$ where $r \in \R$ is any ring element and $s \in \R$ is   any coordinate in some element $\gamma \in \Delta$.
Indeed let $\gamma \in \Delta$ be a group element whose $i$-th coordinate is equal to $s \in \R$. Fix some index $j \in \{1,\ldots,d\} \setminus \{i\}$.  Then for any ring element $  r\in \R $
	$$ \elm{j}{i}(r) x - x = (0,\ldots,0,rs, 0, \ldots, 0) \in \Delta$$
with the $rs$ entry being at the $j$-th coordinate.
Recall that, up to sign, every two coordinates can be swapped by the   $\EL{d}{\R}$-action relying on the formula
	$$ \elm{i}{j}(1) \elm{j}{i}(-1)\elm{i}{j}(1)  = \begin{pmatrix} * & & & & * \\
	& 0 &   & +1&  \\
	& & * & & \\
	&  -1 &   & 0 & \\
	* & & & & *
	\end{pmatrix}.$$
	Therefore, replacing the ring element $r$ by $-r$ in the above argument if necessary, the element $(0,\ldots,0,rs,0,\ldots,0)$  with the non-zero   entry being at an arbitrary coordinate belongs to $\Delta$. We conclude that  $\Gamma_\I \le \Delta$ and so $\Delta = \Gamma_I$ as required.
\end{proof}

A subgroup $\Delta$ of $\Gamma$ is $\EL{d}{\R}$-invariant if and only if its annihilator $\ann{\Delta}$ is. The fact that $\ann{\Gamma_\I} = K_\I$ gives the following.

\begin{cor}
A closed subgroup  $H$ of $K$ is $\EL{d}{\R}$-invariant if and only $H = K_\I$ for some ideal $\I \nrm \R$.
\end{cor}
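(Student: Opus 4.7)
The plan is to deduce this Corollary from the preceding Proposition \ref{prop:G-invariant subgroups} via Pontryagin duality. Recall that since $\Gamma$ is a discrete abelian group and $K = \widehat{\Gamma}$ is its compact Pontryagin dual, the annihilator operations $H \mapsto \ann{H}$ and $\Delta \mapsto \ann{\Delta}$ furnish a mutually inverse inclusion-reversing bijection between the lattice of closed subgroups of $K$ and the lattice of (all) subgroups of $\Gamma$, with $\ann{(\ann{H})} = H$ for every closed $H \le K$.

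The key observation is that the dual $\EL{d}{\R}$-action on $K$ was defined precisely so that the annihilator pairing is equivariant. More concretely, for any $g \in \EL{d}{\R}$, any closed subgroup $H \le K$, and any $\gamma \in \Gamma$ one has $\chi(g^{-1}\gamma) = 1$ for all $\chi \in H$ if and only if $g^{-1}\gamma \in \ann{H}$, which gives $\ann{(gH)} = g \ann{H}$. Consequently, $H$ is $\EL{d}{\R}$-invariant if and only if its annihilator $\ann{H} \le \Gamma$ is $\EL{d}{\R}$-invariant.

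Now I invoke Proposition \ref{prop:G-invariant subgroups}: a subgroup $\Delta$ of $\Gamma$ is $\EL{d}{\R}$-invariant precisely when $\Delta = \Gamma_\I$ for some ideal $\I \nrm \R$. Combining this with the equivariance just noted, $H$ is $\EL{d}{\R}$-invariant if and only if $\ann{H} = \Gamma_\I$ for some ideal $\I \nrm \R$, equivalently $H = \ann{(\Gamma_\I)} = K_\I$. This establishes both implications. There is no real obstacle here; the content is entirely packaged inside Proposition \ref{prop:G-invariant subgroups} and the standard Pontryagin duality dictionary, and the proof should occupy only a few lines.
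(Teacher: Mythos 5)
Your proof is correct and takes essentially the same route as the paper: the paper's own argument is simply the one-line observation that $\Delta$ is $\EL{d}{\R}$-invariant if and only if $\ann{\Delta}$ is, combined with $\ann{\Gamma_\I}=K_\I$ and Proposition \ref{prop:G-invariant subgroups}. Your version just spells out the Pontryagin duality dictionary and the equivariance $\ann{(gH)}=g\ann{H}$ in more detail.
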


Let $\I$ and $\J$ be a pair of ideals in the ring $\R$. Clearly $\Gamma_{\J} \le \Gamma_{\I}$ if and only if $\J \le \I$. In that case the index
$\left[\Gamma_{\I}:\Gamma_{\J}\right]$ is finite if and only if $|\sfrac{\I}{\J}| < \infty$.

These remarks  immediately imply that every   $\EL{d}{\R}$-invariant subgroup $\Delta \le \Gamma$  is contained in a unique $\EL{d}{\R}$-invariant subgroup $\mfi{\Delta} \le \Gamma$ maximal with respect to the condition $[\mfi{\Delta} : \Delta] < \infty$. If $\Delta = \Gamma_{\I}$ then $\mfi{\Delta} = \Gamma_{\mfi{\I}}$.
There is a dual notion $\mfi{H}$ defined for every   closed $\EL{d}{\R}$-invariant subgroup $H$ of $  K$.

%\begin{remark}
%We will often use the notations  $\mfi{\Delta}$  and $\mfi{H}$ below, always in the above sense (rather than, say, to denote Pontryagin dual).
%\end{remark}

%
%\subsection*{The classification}
%
%We can now state the classification theorem for ergodic $G$-invariant probability measures on $K$. To simplify the formulation we first introduce some notation.
%
%\begin{definition}
%	\label{def:the set XA(H)}
%	For an $G$-invariant closed subgroup $H \le K$  let $X(H)$
%	be the subset of the finite coset space $H/\mfi{H}$ given by
%	$$ X(H) = H / \mfi{H} \: \setminus \: \bigcup_{\mfi{H} \le H' \lneq H} H' / \mfi{H}$$
%	where the union is taken over all $G$-invariant proper subgroups $\mfi{H} \le H' \lneq H$.
%\end{definition}

%The proof of the theorem will require some additional preparation and is given in subsection \ref{sub:proof of classification} below.

%\marginpar{reached here maybe put this in a remark so we can refer to it in 3.6}
\subsection*{Essential subgroups}
The next step towards    Theorem \ref{thm:classification of invariant measures} is to use the ergodicity assumption to understand the support of the Borel probability measure $\mu$.
%\begin{definition}
%	\label{def:Oh of a subgroup}

	For a closed $\EL{d}{\R}$-invariant subgroup  $H$ of $K$ denote
		$$ \Oh{H} = H \setminus \bigcup_{H' \lneq H} H' $$
	where the union is taken over all $\EL{d}{\R}$-invariant proper closed subgroups $H'$ contained in $H$.
%\end{definition}
There are only countably many $\EL{d}{\R}$-invariant closed subgroups of $K$ by Proposition \ref{prop:G-invariant subgroups}. Therefore $\Oh{H}$ is a Borel subset of $K$ for every such subgroup $H$. 

The sets $\Oh{H}$ form a Borel partition of the compact group $K$. To see this, note that if $H_1$ and $  H_2  $ are two distinct closed $\EL{d}{\R}$-invariant subgroups of $K$ then $H_1 \cap H_2 $ is also a closed   $\EL{d}{\R}$-invariant subgroup of $K$. Therefore $\Oh{H_1} \cap \Oh{H_2} = \emptyset$. Moreover every element $g \in K$ belongs to  some Borel set $\Oh{H}$ where $H$ is the smallest closed $\EL{d}{\R}$-invariant subgroup containing $g$.

Let $\mu$ be an ergodic $\EL{d}{\R}$-invariant  probability measure on the compact group $K$. The above discussion shows that  there exists a unique ideal $\I_\mu \nrm \R$ such that
 $ \mathrm{supp}(\mu) \subset \Oh{K_{\I_\mu}}$.

 We will say that $\I_\mu$ is the \emph{essential ideal} and $K_{\I_\mu}$ is the \emph{essential subgroup} associated to  the ergodic probability measure $\mu$. Note that $\mu(K_{\I_\mu}) = 1$ and $\mu(H') = 0$ for every proper  $\EL{d}{\R}$-invariant closed subgroup $H'$ of $K_{\I_\mu}$.

\subsection*{Orbits for the $\EL{d}{\R}$-action on $\R^d$ }

The Euclidean algorithm can be used to prove that the orbits of the $\SL{d}{\ZZ}$-action on the abelian group $\ZZ^d$ are   parametrized by the greatest common divisor of   the coordinates of a given point $x \in \ZZ^d$.
We were unable to give such a precise characterization of orbits in the  general case.
%\marginpar{Is there still hope for a complete characterization of orbits? Note however that it will not simplify the proof since we will need to understand orbit densities}
Still, we show that in  certain situations orbits can be assumed to be fairly large, in the following precise sense.

\begin{prop}
	\label{prop:orbits are sufficiently large}
	Let $\mu$ be an ergodic $\EL{d}{\R}$-invariant  probability measure on the compact group $K$ with essential ideal $\I_\mu$.   Let $\gamma \in \Gamma \setminus \mfi{\Gamma}_{\I_\mu}$. Then there exists a subgroup $\Lambda$ of $  \Gamma$ such that 
 $\gamma + \Lambda \subset \EL{d}{\R} \gamma $ and
		 $\mu(\ann{\Lambda}) = 0$.
%	\end{itemize}
%	where $ \EL{d}{\R}\gamma$ is the $ \EL{d}{\R}$-orbit of the element $\gamma$ in $\Gamma$.
\end{prop}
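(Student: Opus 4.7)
The plan is to set $\Lambda := \bigoplus_{i \ne j}(\gamma_j)\cdot e_i \subset \Gamma$ for an index $j \in \{1,\ldots,d\}$ chosen such that $\gamma_j \notin \mfi{\I_\mu}$; this exists because $\gamma \notin \mfi{\Gamma}_{\I_\mu} = \Gamma_{\mfi{\I_\mu}}$. For the coset inclusion $\gamma + \Lambda \subset \EL{d}{\R}\gamma$, I would use the abelian ``$j$-th column'' subgroup $U_j := \langle \elm{i}{j}(r) : i \ne j,\, r \in \R\rangle \cong \R^{d-1}$; commutativity is a Steinberg relation. Since elements of $\Lambda$ have zero $j$-th coordinate and are therefore fixed pointwise by $U_j$, the map $u \mapsto u\gamma - \gamma$ is a group homomorphism $U_j \to \Gamma$ with image precisely $\Lambda$, giving $\gamma + \Lambda = U_j\gamma \subset \EL{d}{\R}\gamma$.

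The heart of the proof is $\mu(\ann\Lambda) = 0$. Set $\J := (\gamma_j) + \I_\mu$; from $\gamma_j \notin \mfi{\I_\mu}$ we obtain $\J \not\subset \mfi{\I_\mu}$, whence $|\J/\I_\mu| = \infty$ by the definition of depth, and Proposition~\ref{prop:finiteness of ideal quotient} then yields that the ideal quotient $(\I_\mu : \J) = (\I_\mu : (\gamma_j))$ has infinite index in $\R$. Fix any $l \ne j$, let $\{r_n\}_{n \in \NN} \subset \R$ represent pairwise distinct cosets of $(\I_\mu : (\gamma_j))$, and put $g_n := \elm{j}{l}(r_n)$. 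A direct parameterization of $g_n\Lambda + g_m\Lambda$ shows that it contains the $\EL{d}{\R}$-invariant subgroup $\Gamma_{(r_n - r_m)(\gamma_j)}$ --- any element of the latter automatically satisfies the defining congruence $x_j \equiv r_n x_l \pmod{(r_n - r_m)(\gamma_j)}$ of the former --- and therefore
$$ g_n\ann{\Lambda} \cap g_m\ann{\Lambda} \;=\; \ann{(g_n\Lambda + g_m\Lambda)} \;\subset\; K_{(r_n - r_m)(\gamma_j)}. $$
Intersecting with $K_{\I_\mu}$ places this inside $K_{(r_n - r_m)(\gamma_j) + \I_\mu}$. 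Because $r_n - r_m \notin (\I_\mu : (\gamma_j))$, we have $(r_n - r_m)\gamma_j \notin \I_\mu$, so the ideal $(r_n - r_m)(\gamma_j) + \I_\mu$ strictly contains $\I_\mu$, and this compact subgroup is $\mu$-null by the essentiality of $\I_\mu$.

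Combined with the $\EL{d}{\R}$-invariance $\mu(g_n\ann{\Lambda}) = \mu(\ann{\Lambda}) =: c$, inclusion-exclusion applied to the pairwise $\mu$-essentially-disjoint translates yields $\mu\bigl(\bigcup_{n=1}^N g_n\ann{\Lambda}\bigr) = Nc$, and thus $Nc \le 1$ for every $N$, forcing $c = 0$. The main obstacle is the coordinate-coupling parameterization underlying $\Gamma_{(r_n-r_m)(\gamma_j)} \subset g_n\Lambda + g_m\Lambda$, together with the commutative-algebra bookkeeping --- invoking Proposition~\ref{prop:finiteness of ideal quotient} --- that supplies infinitely many cosets of $(\I_\mu : (\gamma_j))$ in $\R$ and guarantees that the resulting translates are pairwise genuinely distinct (which also follows from $\mathrm{Ann}_\R(\gamma_j) \subset (\I_\mu : (\gamma_j))$).
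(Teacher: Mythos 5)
Your proof is correct and follows essentially the same strategy as the paper's proof: the same $\Lambda$ (up to choosing a bad coordinate $j$ rather than normalizing to $j=1$), the same observation that the vertical subgroup $\ver{j}(\R)$ fixes $\Lambda$ pointwise and translates $\gamma$ through it, and the same null-translates scheme --- the inclusion $\Gamma_{(r_n-r_m)(\gamma_j)} \subset g_n\Lambda + g_m\Lambda$ is exactly Lemma~\ref{lem:subgroups generated by pairs}, and passing to annihilators and intersecting with $K_{\I_\mu}$ traps $g_n\ann\Lambda \cap g_m\ann\Lambda$ inside a proper $\EL{d}{\R}$-invariant closed subgroup of the essential group. (Your one-line justification of that inclusion via the ``defining congruence'' only checks a necessary condition; the actual sum decomposition requires the two explicit generators as in Lemma~\ref{lem:subgroups generated by pairs}, but the fact is correct and available.) The one place you genuinely differ from the paper is in producing the sequence $r_n$. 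The paper applies the depth-ideal criterion (Proposition~\ref{prop:a condition for an ideal to be maximal of finite index}) to the depth ideal $\mfi{\I}_\mu$ and the element $\gamma_1 \notin \mfi{\I}_\mu$, obtaining $s_n$ with the stronger property $(s_n-s_m)\gamma_1 \notin \mfi{\I}_\mu$ and then passing through a depth comparison. You instead note that $|\J/\I_\mu| = \infty$ for $\J = (\gamma_j) + \I_\mu$, invoke Proposition~\ref{prop:finiteness of ideal quotient} to conclude that the ideal quotient $\I_\mu : (\gamma_j)$ has infinite index, and pick coset representatives. This gives only the weaker property $(r_n - r_m)\gamma_j \notin \I_\mu$, but the measure argument needs nothing more than $\I_\mu \subsetneq \I_\mu + (r_n-r_m)(\gamma_j)$, so the weaker property suffices. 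Your route is slightly cleaner: it replaces the use of Proposition~\ref{prop:a condition for an ideal to be maximal of finite index} at this step with the ideal-quotient machinery of Proposition~\ref{prop:finiteness of ideal quotient}, both of which are already established in \S\ref{sec:preliminaries}, so you have traded one piece of the paper's toolkit for another.
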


The notation $\mfi{\Gamma}_{\I_\mu}$ in the statement of Proposition \ref{prop:orbits are sufficiently large} stands for the maximal subgroup of $\Gamma$ containing $\Gamma_{\I_\mu}$ with respect to the condition that $\left[\mfi{\Gamma}_{\I_\mu}:\Gamma_{\I_\mu}\right] < \infty$. In other words $\mfi{\Gamma}_{\I_\mu} = \Gamma_{\mfi{\I}_\mu}$ where $\mfi{\I}_\mu$ is the depth of the essential ideal $\I_\mu$.
Note that the subgroup   $\Lambda$ will not be  $\EL{d}{\R}$-invariant in general.

We first prove a straightforward but technical Lemma, to be used in the proof of Proposition \ref{prop:orbits are sufficiently large}  below.
\begin{lemma}
	\label{lem:subgroups generated by pairs}
 Let $\I \nrm \R$ be any ideal. For every element $s \in \R$ consider the subgroup $\Sigma_s \le \Gamma$ given by
		$$ \Sigma_s =   \left\{  (\gamma_1,\gamma_2,\ldots,\gamma_d) \in \Gamma_{\I} \: : \: \gamma_1 = s \gamma_2 \} \right\}. $$
		Then every pair of elements $s, s' \in \R$ satisfy
		$\Gamma_{(s-s' )\I} \le  \Sigma_{s} + \Sigma_{s'}.$
\end{lemma}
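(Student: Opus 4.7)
The plan is to give an explicit splitting of each element of $\Gamma_{(s-s')\I}$ as a sum of an element of $\Sigma_s$ and an element of $\Sigma_{s'}$. Start with an arbitrary element $\gamma=(\gamma_1,\ldots,\gamma_d)\in\Gamma_{(s-s')\I}$ and pick $\alpha_i\in\I$ with $\gamma_i=(s-s')\alpha_i$ for each $i$.

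For the coordinates $i\ge 3$ there is nothing to do: put the whole $\gamma_i$ into the $\Sigma_s$-summand and $0$ into the $\Sigma_{s'}$-summand. So the real content of the lemma lies in the first two coordinates, where I have to solve
$$ sa_2+s'b_2=(s-s')\alpha_1,\qquad a_2+b_2=(s-s')\alpha_2 $$
with $a_2,b_2\in\I$. The coefficient matrix $\bigl(\begin{smallmatrix}s&s'\\1&1\end{smallmatrix}\bigr)$ has determinant exactly $s-s'$, which is precisely the factor present on the right-hand side. Cramer's rule then produces a solution without any denominator:
$$ a_2=\alpha_1-s'\alpha_2,\qquad b_2=-\alpha_1+s\alpha_2, $$
and both lie in $\I$ since $\alpha_1,\alpha_2\in\I$ and $s,s'\in\R$. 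A direct substitution verifies the two equations.

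Assembling everything, the element
$$ \sigma = (sa_2,\,a_2,\,\gamma_3,\ldots,\gamma_d) $$
belongs to $\Sigma_s$ (its second coordinate $a_2$ and remaining coordinates $\gamma_i\in(s-s')\I\subset\I$ lie in $\I$, and the first coordinate is $s$ times the second), while
$$ \sigma' = (s'b_2,\,b_2,\,0,\ldots,0) $$
belongs to $\Sigma_{s'}$. By construction $\sigma+\sigma'=\gamma$, which gives $\gamma\in\Sigma_s+\Sigma_{s'}$.

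There is no real obstacle here beyond spotting the right linear-algebraic identity: the key point is that the determinant $s-s'$ of the $2\times 2$ coefficient system exactly matches the factor $(s-s')$ present in the ideal $(s-s')\I$, so Cramer's rule yields a solution inside $\I$ without requiring $s-s'$ to be a unit or even a non-zero-divisor. The argument is coordinate-by-coordinate and uses only elementary manipulations.
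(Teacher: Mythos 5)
Your proof is correct: the $2\times 2$ system in the first two coordinates is solved by $a_2=\alpha_1-s'\alpha_2$, $b_2=-\alpha_1+s\alpha_2$, both visibly in $\I$, and the verification is straightforward. This is essentially the same argument as the paper's, which treats each coordinate axis separately --- taking $\alpha_2=0$ in your formula recovers the paper's first displayed identity, and taking $\alpha_1=0$ recovers its second --- so you have simply stated the generator-by-generator computation once, for a general element.
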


\begin{proof}
Let $s,s' \in \R$ be a fixed pair of elements. Then
	$$ (sr, r,0,\ldots,0) - (s'r, r,0,\ldots,0) = ((s-s')r, 0,0,\ldots,0) \in \Sigma_s + \Sigma_{s'} $$
and
	$$ - (ss'r, s'r,0,\ldots,0) + ( s'sr, sr,0,\ldots,0 ) = (0,(s-s')r,0,\ldots,0) \in \Sigma_s + \Sigma_{s'} $$
 for every element $r \in \I$. 
It is clear that   $(0,\ldots,0,\I,0,\ldots,0) \le \Sigma_s + \Sigma_{s'}$ holds true  starting from the third coordinate and onwards. This concludes the proof.
\end{proof}

\begin{proof}[Proof of Proposition \ref{prop:orbits are sufficiently large}]
Let $\I_\mu$ be the essential ideal and $ K_{\I_\mu}$ be the essential subgroup associated to the ergodic $\EL{d}{\R}$-invariant probability measure $\mu$. 
%Denote $\Delta_\mu = \Gamma_{\I_\mu}$ so that $\mfi{\Delta}_\mu = \Gamma_{\mfi{\I}_\mu}$.

Write $\gamma = (\gamma_1,\ldots, \gamma_d) \in \Gamma \setminus \Gamma_{\I^*_\mu}$  and assume without loss of generality that $\gamma_1 \notin \mfi{\I}_\mu$.
  Consider the subgroup $\Lambda \le \Gamma$ given by
	$$ \Lambda = \{(\delta_1,\ldots,\delta_d) \in \Gamma \: : \: \delta_1 = 0, \; \delta_2, \ldots, \delta_d \in \R\gamma_1 \}.$$
Applying the product of the commuting elementary matrices $\elm{2}{1}(r_2),\ldots,\elm{d}{1}(r_d)$   for some ring elements $r_2,\ldots,r_d \in \R$  to the element $\gamma$ gives
$$ \elm{2}{1}(r_2)\cdots \elm{d}{1}(r_d)\gamma = \gamma + (0,r_2\gamma_1,\ldots,r_d\gamma_1). $$
It follows that $\gamma + \Lambda \subset  \EL{d}{\R}\gamma$.
	
We claim that the subgroup $\ann{\Lambda}$ of $K$ admits countably many $ \EL{d}{\R}$-translates whose  pairwise intersections are   $\mu$-null. Since the probability measure $\mu$ is $ \EL{d}{\R}$-invariant, this claim  implies that  $\mu(\ann{\Lambda}) = 0$ as required.

Since the essential ideal  $\mfi{\I}_\mu$ is a depth ideal   and $\gamma_1 \notin \mfi{\I}$ it follows that at least one of the two conditions in Proposition \ref{prop:a condition for an ideal to be maximal of finite index} is satisfied. In either case it is possible to find a sequence of   elements $s_n \in \R$ such that $(s_n-s_m) \gamma_1 \notin \mfi{\I}_\mu$ for every pair of distinct indices $n,m \in \NN$.

Consider the elementary matrices $ g_n = \elm{1}{2} (s_n) \in \EL{d}{\R}$ for all $n \in \NN$. Fix a  pair of distinct indices $n,m \in \NN$.   It follows from  Lemma \ref{lem:subgroups generated by pairs} that
$$ \Gamma_{((s_n-s_m)\gamma_1)} \le g_n \Lambda + g_m \Lambda. $$
 Consider the ideal $\LL = \I_\mu + ((s_n-s_m)\gamma_1) \nrm \R$. We get
	$$ \Gamma_{\LL}   \le g_n\Lambda +g_m \Lambda +  \Gamma_{\I_\mu}. $$
	Passing to annihilators reverses the direction of the inclusion, namely
	$$ g_n \ann{\Lambda} \cap g_m \ann{\Lambda} \cap K_{\I_\mu}  \le K_{\LL}. $$
However $(s_n-s_m)\gamma_1 \notin \mfi{\I}_\mu$ and therefore  $\mfi{\I}_\mu \neq \mfi{\LL} $. In particular $\I_\mu \lneq \LL$ and so  $K_{\LL} \lneq K_{\I_\mu}$. Since $K_{\I_\mu}$ is the essential subgroup   with respect to the probability measure $\mu$   we have that
 $$ \mu(g_n \ann{\Lambda} \cap g_m \ann{\Lambda})  = \mu(g_n \ann{\Lambda} \cap g_m \ann{\Lambda} \cap K_{\I_\mu}) \le \mu(K_\LL) = 0.$$ This concludes the proof of the above claim.
\end{proof}

The next   result concerning $\EL{d}{\R}$-orbits is used to show that the probability measures that appear in Theorem \ref{thm:classification of invariant measures} are indeed ergodic. It is a generalization of a classical proof for the ergodicity of the Haar measure on the $d$-dimensional torus $(\mathbb{R}/\ZZ)^d$ with respect to the natural $\SL{d}{\ZZ}$-action, see e.g.   \cite[Example 2.1.4]{zimmer}.

\begin{prop}
	\label{prop:orbits are infinite in quotient by maximal finite index}
Let $\I \nrm \R$ be a depth ideal. 
	%Let $\Delta $ be a $\EL{d}{\R}$-invariant subgroup of $\Gamma$ satisfying   $\mfi{\Delta} = \Delta$. 
	Then the $\EL{d}{\R}$-orbit of every non-trivial element of the quotient $\Gamma / \Gamma_\I$ is infinite.
\end{prop}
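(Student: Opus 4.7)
The plan is to exhibit an explicit one-parameter family of elementary matrices whose translates of a given $\gamma \in \Gamma \setminus \Gamma_{\I}$ produce infinitely many distinct cosets in $\Gamma/\Gamma_{\I}$, directly using the depth condition on $\I$.

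Let $\gamma = (\gamma_1, \ldots, \gamma_d) \in \Gamma$ and assume $\gamma + \Gamma_{\I}$ is non-trivial, i.e.\ $\gamma \notin \Gamma_{\I}$. Up to permuting coordinates (which is implemented inside $\EL{d}{\R}$ via the swap identity already recalled in the proof of Proposition \ref{prop:G-invariant subgroups}), I may assume that the first coordinate $\gamma_1$ does not lie in $\I$. First I would consider, for $r \in \R$, the action of $\elm{2}{1}(r)$, which sends
$$\gamma \mapsto (\gamma_1, \gamma_2 + r\gamma_1, \gamma_3, \ldots, \gamma_d).$$
Two such translates, corresponding to $r, r' \in \R$, lie in the same coset of $\Gamma_{\I}$ if and only if $(r-r')\gamma_1 \in \I$. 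Hence the key step is to show that the set $\{r\gamma_1 + \I : r \in \R\}$ is infinite.

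This set is exactly the image in $\R/\I$ of the ideal $\R\gamma_1 + \I \nrm \R$. Since $\gamma_1 \notin \I$, this ideal strictly contains $\I$. Now invoke the hypothesis that $\I$ is a depth ideal: by the definition recalled at the start of \S\ref{sec:preliminaries}, no ideal of $\R$ strictly containing $\I$ can have finite index over $\I$. Therefore $|(\R\gamma_1 + \I)/\I| = \infty$, so the cosets $r\gamma_1 + \I$ for $r \in \R$ form an infinite subset of $\R/\I$.

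Picking a sequence $(r_n)_{n \in \NN}$ in $\R$ such that the cosets $r_n \gamma_1 + \I$ are pairwise distinct, the elements $\elm{2}{1}(r_n)\gamma$ produce pairwise distinct cosets in $\Gamma/\Gamma_{\I}$, so the $\EL{d}{\R}$-orbit of $\gamma + \Gamma_{\I}$ is infinite. There is no serious obstacle here; the whole argument is a direct translation of the definition of a depth ideal into the orbit picture, together with the observation that a single elementary matrix already suffices to detect the infinitude.
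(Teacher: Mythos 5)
Your proof is correct and follows essentially the same route as the paper: both act on $\gamma$ by the one-parameter family $\elm{j}{i}(r)$ with $\gamma_i \notin \I$ and use the depth-ideal property of $\I$ to produce infinitely many distinct cosets in $\Gamma/\Gamma_\I$. The only difference is cosmetic: you invoke the defining property of depth ideals directly (the principal ideal $(\R\gamma_1 + \I)/\I$ is non-zero, hence infinite), whereas the paper routes through the element-wise characterization of Proposition \ref{prop:a condition for an ideal to be maximal of finite index} to extract an explicit sequence $s_n$ with $s_n\gamma_i$ pairwise distinct modulo $\I$; your shortcut is perfectly valid here.
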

\begin{proof}
%Since the subgroup $\Delta$ is $\EL{d}{\R}$-invariant there exists a depth ideal $\I \nrm \R$ such that $\Delta = \Gamma_\I$, see Proposition \ref{prop:G-invariant subgroups}. 
Let $\gamma + \Gamma_\I \in \Gamma / \Gamma_\I$ be any non-trivial element, namely $\gamma \notin \Gamma_\I$. In particular there is an index $ i \in \{1,\ldots,d\}$ such that the $i$-th coordinate  $\gamma_i$ of the element $\gamma$  satisfies $\gamma_i \notin \I$.  As $\I$ is a depth ideal, one of the two conditions in Proposition \ref{prop:a condition for an ideal to be maximal of finite index} is satisfied with respect to the element $\gamma_i$. In either case we may find a  sequence $s_n \in \R$ such that the elements $s_n\gamma_i \in \R$ are pairwise distinct modulo the ideal $\I$.
Fix an arbitrary index $j \in \{1,\ldots,d\} \setminus \{i\}$. The elements $\elm{j}{i}(s_n )   \gamma$ all belong to the $\EL{d}{\R}$-orbit of the element $\gamma$ and are pairwise distinct when considered in the quotient $\Gamma / \Gamma_\I$.
\end{proof}

\subsection*{Haar measure and the Fourier transform}
Let $T$ be an arbitrary locally compact abelian group. Let $A$ be the Pontryagin dual of $T$. We briefly recall a few generalities concerning the group $T$ and its dual $A$ to be used below.

The \emph{Fourier transform} of a complex Radon measure $\nu$ on the group $T$ is the continuous function $\widehat{\nu} : A \to \CC$ given by
$$ \widehat{\nu}(a) = \int_T a(t) \, \mathrm{d}\nu(t) \quad \forall a \in A.$$

\begin{prop}
	\label{prop:integral of Haar is delta}
 Let  $H $ be a closed subgroup of $T$. Let $\lambda$ be the Haar measure of $H$ regarded as a probability measure on the group $T$.   For every element $t \in T$ the Fourier transform  of the translate $t_* \lambda$ is given by
 $$ \widehat{t_* \lambda}(a) = \begin{cases}
 t(a) & a \in H^0\\
 0 & a \notin H^0.
 \end{cases}
 $$
\end{prop}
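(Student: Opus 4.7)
The proposition is a standard computation once the pushforward is unwound, so the plan is straightforward.

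First, I would unfold the definition of the Fourier transform of the pushed-forward measure:
\begin{equation*}
\widehat{t_*\lambda}(a) \;=\; \int_T a(s)\,\mathrm{d}(t_*\lambda)(s) \;=\; \int_H a(t+s)\,\mathrm{d}\lambda(s) \;=\; a(t) \int_H a(s)\,\mathrm{d}\lambda(s),
\end{equation*}
using that $\lambda$ is supported on $H$ (regarded as a probability measure on $T$, $\lambda$ is the pushforward of the Haar probability measure of $H$ via the inclusion $H \hookrightarrow T$, which requires $H$ to be compact) and that characters are homomorphisms.

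Next I would split into the two cases according to whether the restriction $a|_H$ is trivial.  If $a \in H^0$, then by definition $a(s) = 1$ for every $s \in H$, so the remaining integral equals $\lambda(H) = 1$.  Thus $\widehat{t_*\lambda}(a) = a(t)$, which under Pontryagin duality is the value $t(a)$.

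If $a \notin H^0$, then there is some $h_0 \in H$ with $a(h_0) \neq 1$.  Using translation-invariance of the Haar measure $\lambda$ on $H$:
\begin{equation*}
\int_H a(s)\,\mathrm{d}\lambda(s) \;=\; \int_H a(s + h_0)\,\mathrm{d}\lambda(s) \;=\; a(h_0) \int_H a(s)\,\mathrm{d}\lambda(s).
\end{equation*}
Since $a(h_0) \neq 1$, this forces the integral to vanish, so $\widehat{t_*\lambda}(a) = 0$.

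There is no genuine obstacle here; the argument is essentially the well-known fact that integration of a character against Haar measure on a compact abelian group detects whether the character is trivial. The only bookkeeping is  the identification $a(t) = t(a)$ arising from Pontryagin duality, implicit in the statement of the proposition.
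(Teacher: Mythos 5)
Your proof is correct and follows essentially the same route as the paper's: both reduce to showing $\widehat{\lambda}(a)=0$ for $a \notin H^0$ via translation-invariance of the Haar measure (picking $h_0 \in H$ with $a(h_0)\neq 1$), and then observe $\widehat{t_*\lambda} = t\,\widehat{\lambda}$. Your version is slightly more explicit about unwinding the pushforward and about the implicit compactness of $H$, but the key idea and structure are identical.
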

 \begin{proof}
Let $a \in A = \widehat{T}$ be any element with $a \notin \ann{H}$. We will show that $\widehat{ \lambda}(a) = 0$.
	There is some element $h_0 \in H$   with $a(h_0) \neq 1$. Therefore
$$	\widehat{\lambda}(a)   =      \int_H a(h) \mathrm{d} \lambda(h)  
	 =  \int_H a(h_0 h) \mathrm{d} \lambda(h) =   a(h_0) \widehat{\lambda}(a).
$$
	It follows that $\widehat{  \lambda}(a) = 0$ for the particular element $a \notin \ann{H}$ as above. 
		Fix any element $t \in T$. It is clear that $\widehat{t_* \lambda}  = t  \widehat{\lambda}$. The conclusion follows.
\end{proof}

In particular, the Fourier transform of the Haar measure of the subgroup $H$ regarded as a probability measure on the group $T$ is equal to the characteristic function of the annihilator $\ann{H}$.

The following elementary result concerns atomic probability measures on the group $T$.

\begin{prop}
	\label{prop:convolution has atoms if and only if both have atoms}
Let $\nu_1$ and $\nu_2$ be a pair of probability measures on the group $T$. The convolution $\nu_2 * \nu_2$ admits atoms if and only if both   $\nu_1$ and $\nu_2$ admit atoms.
\end{prop}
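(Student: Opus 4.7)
The plan is to handle the two implications separately, both by direct computation with the defining formula for convolution, namely
\[ (\nu_1 * \nu_2)(E) = \int_T \int_T \mathbf{1}_E(s_1 + s_2) \, d\nu_1(s_1) \, d\nu_2(s_2) \]
for every Borel set $E \subset T$, together with Fubini's theorem.

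For the easy direction, suppose $\nu_1$ has an atom at $t_1$ of mass $p_1 > 0$ and $\nu_2$ has an atom at $t_2$ of mass $p_2 > 0$. Applying the above formula with $E = \{t_1 + t_2\}$ and restricting the product integration to $\{t_1\} \times \{t_2\}$ one obtains the lower bound $(\nu_1 * \nu_2)(\{t_1+t_2\}) \ge p_1 p_2 > 0$, so $\nu_1 * \nu_2$ admits an atom.

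For the converse, assume $\nu_1 * \nu_2$ has an atom at some $t \in T$. Applying the formula with $E = \{t\}$ and using Fubini gives
\[ 0 < (\nu_1 * \nu_2)(\{t\}) = \int_T \nu_1(\{t - s\}) \, d\nu_2(s). \]
The integrand is nonzero only on the translate $t - A$ of the set $A$ of atoms of $\nu_1$. Since the atoms of any probability measure form a countable set, $t - A$ is countable. The integral being positive therefore forces both that $A$ is nonempty (so $\nu_1$ has an atom) and that $\nu_2$ gives positive measure to the countable set $t - A$, which in turn forces $\nu_2$ to have an atom at some point of $t - A$. This yields the desired conclusion.

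The argument is essentially routine; the only point requiring care is the observation that the set of atoms of a probability measure is countable, so that positive mass on $\{s : \nu_1(\{t-s\}) > 0\}$ under $\nu_2$ necessarily comes from atoms of $\nu_2$. No real obstacle is expected.
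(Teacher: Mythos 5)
Your proof is correct and takes essentially the same approach as the paper: both directions are established via the Fubini decomposition of $(\nu_1 * \nu_2)(\{t\})$. You make explicit the observation that the set of atoms is countable, which the paper leaves implicit in the final sentence of its argument; that is a reasonable clarification but not a different route.
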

\begin{proof}
It follows from the definition of the convolution $\nu_1 * \nu_2$ that
	$$ (\nu_1 * \nu_2)(\{t\}) = (\nu_1\times \nu_2)(\{(t_1,t_2) \in T \times T \: : \: t_1 t_2 = t \}) $$
	for every element $t \in T$. Therefore if $\nu_1(\{t_1\}) > 0$ and $\nu_2(\{t_2\}) > 0$ for some pair of elements $t_1,t_2 \in T$ then $(\nu_1 * \nu_2)(\{t_1 t_2\}) > 0$ as well. Conversely, assume that $(\nu_1 * \nu_2)(\{t\}) > 0$ for some element $ t \in T$. According to Fubini's theorem
	$$   (\nu_1 * \nu_2)(\{t\}) = \int_T \nu_2(\{t_2 \in T \: : \: t_1t_2 = t\})\,  \mathrm{d}\nu_1(t_1)= \int_T \nu_2(\{t_1^{-1}t \}) \, \mathrm{d}\nu_1(t_1). $$
	This means that $\nu_2(\{t_1^{-1}t\}) > 0$ for $t_1 \in T$ belonging to  a set of positive $\nu_1$-measure. In particular both $\nu_1$ and $\nu_2$ admit atoms.
\end{proof}

\subsection*{Proof of the classification theorem}
We conclude the classification of $\EL{d}{\R}$-invariant probability measures on the compact group $K$.

\begin{prop}
\label{sub:proof of classification}	\label{prop:property W implies dual vanishes outside annihilator}
	Let $\mu$ be an ergodic $\EL{d}{\R}$-invariant  probability measure on the compact group $K$. Let $K_{\I_\mu}$ be the essential subgroup of $\mu$ and   $\Gamma_{\I_\mu}  $ be its annihilator. Then $\hat{\mu}(\gamma) =0 $ for all $\gamma \in \Gamma \setminus  \mfi{\Gamma}_{\I_\mu}$.
\end{prop}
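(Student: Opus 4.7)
The plan is to convert the geometric fact $\mu(\ann{\Lambda})=0$ supplied by Proposition \ref{prop:orbits are sufficiently large} into a Fourier-analytic vanishing statement via a total variation comparison. Setting $c = \widehat{\mu}(\gamma)$, the goal is to show that $|c| \le \mu(\ann{\Lambda}) = 0$.

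First I would observe that the $\EL{d}{\R}$-invariance of the probability measure $\mu$ translates directly into an $\EL{d}{\R}$-invariance of the Fourier transform $\widehat{\mu}$ on $\Gamma$: a routine change of variables gives $\widehat{\mu}(g\gamma') = \widehat{\mu}(\gamma')$ for every $g \in \EL{d}{\R}$ and $\gamma' \in \Gamma$. Then I would apply Proposition \ref{prop:orbits are sufficiently large} to $\gamma$ to obtain a subgroup $\Lambda \le \Gamma$ with $\gamma + \Lambda \subset \EL{d}{\R}\gamma$ and $\mu(\ann{\Lambda}) = 0$. Combining these two facts immediately yields $\widehat{\mu}(\gamma + \lambda) = c$ for every $\lambda \in \Lambda$.

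The next step is to introduce the auxiliary complex measure $\nu$ on $K$ defined by $d\nu(\chi) = \chi(\gamma)\, d\mu(\chi)$. Expanding the Fourier transform gives $\widehat{\nu}(\lambda) = \widehat{\mu}(\gamma + \lambda) = c$ for every $\lambda \in \Lambda$. Let $q : K \to K/\ann{\Lambda}$ be the quotient homomorphism; since $\Gamma$ is discrete the subgroup $\Lambda$ is automatically closed, so by Pontryagin duality the dual of the compact group $K/\ann{\Lambda}$ is canonically identified with $\Lambda$. The push-forward $q_* \nu$ is therefore a bounded complex measure on $K/\ann{\Lambda}$ whose Fourier transform is identically $c$ on $\Lambda$. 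The same constant $c$ is realized as the Fourier transform of the point mass $c \delta_e$, as recorded by Proposition \ref{prop:integral of Haar is delta} applied to the trivial subgroup of $K/\ann{\Lambda}$; Fourier uniqueness on the compact group $K/\ann{\Lambda}$ then forces $q_* \nu = c \delta_e$.

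The desired vanishing will follow from total variation bookkeeping. Since the Radon--Nikodym density $\chi \mapsto \chi(\gamma)$ of $\nu$ with respect to $\mu$ has modulus one everywhere, the total variation measures coincide: $|\nu| = \mu$. Combining the general inequality $|q_*\nu| \le q_*|\nu|$ with the identity $q_*\nu = c \delta_e$ and evaluating at $\{e\} \subset K/\ann{\Lambda}$, I would conclude
$$ |c| \;=\; |q_*\nu|(\{e\}) \;\le\; (q_*\mu)(\{e\}) \;=\; \mu(\ann{\Lambda}) \;=\; 0. $$
I expect the only genuine obstacle to be bundled inside Proposition \ref{prop:orbits are sufficiently large}: producing a subgroup $\Lambda$ whose annihilator is truly $\mu$-null while still fitting inside the $\EL{d}{\R}$-orbit of $\gamma$ is the real content. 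Once $\Lambda$ is in hand, the remainder of the argument is a clean application of Fourier uniqueness and the standard total variation inequality on compact abelian groups.
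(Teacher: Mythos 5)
Your proof is correct and follows essentially the same strategy as the paper: invoke Proposition \ref{prop:orbits are sufficiently large} to obtain $\Lambda$, tilt $\mu$ by the character $\chi \mapsto \chi(\gamma)$ to get a complex measure $\nu$ whose Fourier transform is constant on $\Lambda$, push forward to $K/\ann{\Lambda}$, and apply Fourier uniqueness to pin down the push-forward as a point mass. The only cosmetic difference is that the paper frames the endgame as a contradiction using the mutual absolute continuity of $\mu$ and $\nu$, while you derive $|c|=0$ directly via the total variation inequality $|q_*\nu| \le q_*|\nu| = q_*\mu$; both amount to the same observation that $q_*\nu(\{e\}) = \nu(\ann{\Lambda})$ and $|\nu(\ann{\Lambda})| \le \mu(\ann{\Lambda}) = 0$.
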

\begin{proof}
	Assume towards contradiction that $\hat{\mu}(\gamma_0) = z \neq 0 $ for some $\gamma_0 \in \Gamma \setminus \mfi{\Gamma}_{\I_\mu}$.
Making use of  Proposition \ref{prop:orbits are sufficiently large} we find a subgroup
$\Lambda$ of the discrete group $\Gamma$ satisfying both $\gamma_0 + \Lambda \subset \EL{d}{\R} \gamma$ and $\mu(\ann{\Lambda}) = 0$.

Let $\nu$ be the complex Radon measure on the compact dual group $K$ given by the formula $\mathrm{d} \nu = \gamma_0 \mathrm{d}\mu$. The Fourier transform $\hat{\nu}$ of the complex measure $\nu$ satisfies
$ \hat{\nu}(\gamma) = \hat{\mu}(\gamma + \gamma_0)$
for all elements $\gamma \in \Gamma$. In particular $\hat{\nu}_{|\Lambda}   = z$. The two measures $\mu$ and $\nu$ are equivalent so that $\nu(\ann{\Lambda}) = 0$.
	
%	Consider the function $\varphi : \Gamma \to \CC$ given by
%	$$ \varphi(\gamma) = z^{-1} \hat{\mu}(\gamma_0 + \gamma)$$
%	for all elements $\gamma \in \Gamma$. Since the probability measure $\mu$ is $\EL{d}{\R}
%	$-invariant its Fourier transform $\hat{\mu}$ is $\EL{d}{\R}
%	$-invariant as well. It follows that $\varphi(\gamma) = 1$ for all elements $\gamma \in \Lambda$.

%	Let $\Lambda \le \Gamma$ be the subgroup provided by Proposition \ref{prop:orbits are sufficiently large}, and denote $ \ann{\Lambda} \le K$ as the annihilator of $\Lambda$. Since $\mu$ is $\EL{d}{\R}
%	$-invariant so is $\hat{\mu}$. Therefore $\gamma + \Lambda \subset A\gamma$ implies that $\hat{\mu}(\gamma + \Lambda) = z$ as well.
	
	Consider the quotient group $\overline{K} =  K/\ann{\Lambda}$ and the quotient map $\pi : K \to \overline{K}$. Let $\overline{\nu}$ denote the push-forward complex Radon measure $ \pi_* \nu$ on $\overline{K}$. The Pontryagin dual of  $\overline{K}$ is isomorphic to the subgroup $\Lambda$. In particular, the Fourier transform of $\overline{\nu}$ coincides with the restriction of   $\hat{\nu}$ to the subgroup $\Lambda$ and is therefore a constant function. The map taking a complex Radon measure to its Fourier transform is injective \cite[Proposition 4.17]{folland}. It follows that $\overline{\nu} = z \delta_{\ann{\Lambda}}$ where $\delta_{\ann{\Lambda}}$ is the atomic probability measure supported at the trivial coset $\ann{\Lambda}$ of the quotient $\overline{K}$. As $z \neq 0$ this contradicts the assumption that $\nu(\ann{\Lambda}) = 0$.
%	
%	$$\mu = \int_{K/\ann{\Lambda}} \mu_{g'} \: \mathrm{d}\mu'(g')$$
%	be a disintegration of $\mu$ over the map $\pi$, where $g'$ denotes an element of $K/\ann{\Lambda}$ and $\mu_{g'}$ is supported on the coset $g'\ann{\Lambda}$.
%	Given $\chi \in \Lambda$ write
%	$$ \hat{\mu}(\gamma + \chi) = \int_K (\gamma \cdot \chi)(g) \mathrm{d}\mu(g) = \int_{K/\ann{\Lambda}} \chi(g')  \int_{\ann{\Lambda}} \gamma(g'k) \, \mathrm{d} \mu_{g'}(k)  \; \mathrm{d} \mu' (g') $$
%	using the fact that $\chi$ is constant on $\ann{\Lambda}$-cosets. This allows us to define a new probability measure $\nu$ on $K/\ann{\Lambda}$ by
%	$$ \mathrm{d}\nu(g') = \int_{\ann{\Lambda}} \gamma(g'k) \; \mathrm{d} \mu_{g'}(k) \cdot \mathrm{d}\mu'(g')$$
%	Recall that $\Lambda$ is isomorphic to the dual of $K/\ann{\Lambda}$. Regarding $\chi \in \Lambda$ as a character on $K/\ann{\Lambda}$ we obtain the formula
%	$$ \hat{\nu}(\chi) = \hat{\mu}(\gamma + \chi)  \quad \forall \chi \in \Lambda $$
%	
%	By Proposition \ref{prop:orbits are sufficiently large} we know that $\mu(\ann{\Lambda}) = 0$ and hence $\mu'(\{0_{K/\ann{\Lambda}}\}) = 0$. $\nu$ is absolutely continuous with respect to $\mu'$ and so $\nu(\{0_{K/\ann{\Lambda}}\}) = 0$ as well. But $\nu(\chi) =z \neq 0$ for all $\chi\in\Lambda$ and this contradicts Lemma \ref{lemma:Wiener lemma} (with respect to any sequence of functions $f_n$ --- see Remark \ref{rem:a sequence of approximating functions exists}).
%	
\end{proof}
%\begin{proof}[Alternative proof]
%
%	
%	Define $\hat{\nu}(\chi) =\hat{\mu}(\chi +\gamma) $. So $\hat{\nu}(\chi)=z$ for all $\chi \in \Lambda$ and $\nu$ is absolutely continuous with respect to $\mu$ (more precisely $\mathrm{d}\nu(k) = \gamma(k) \mathrm{d}\mu(k)$). Consider the quotient $\pi : K \to K/\ann{\Lambda}$ and let $\nu' = \pi_* \nu$. So $\hat{\nu'}$ is a constant function on $\Lambda$ with $\hat{\nu'}(\chi)=z$ for all $\chi \in \Lambda$. It follows from Lemma \ref{lemma:Wiener lemma} (with respect to any sequence of functions $f_n$ --- see Remark \ref{rem:a sequence of approximating functions exists}) that $\nu(0_{K/\ann{\Lambda}}) \ne 0$. Therefore also $\nu ( \ann{\Lambda}) \ne 0$. This is a contradiction to Proposition \ref{prop:orbits are sufficiently large} as $nu$  is absolutely continuous with respect to $\mu$.
%
%\end{proof}

%We combine the above results to complete the classification theorem.

%\marginpar{Change the group $A$ for the group $\EL{d}{\R}$.}

%The following result is the reverse direction of Theorem We conclude the discussion of $\EL{d}{\R}$-invariant probability measures by showing that every such measure of the form described in Theorem \ref{thm:classification of invariant measures} is indeed ergodic.

The following   is inspired by   the classical proof of the corresponding fact for the $\SL{n}{\ZZ}$-action on the $n$-dimensional torus $(\mathbb{R}/\ZZ)^d$, see e.g. \cite[Example 2.1.4]{zimmer}.
\begin{prop}
	\label{prop:the invariant measures are ergodic}
Let $\I \nrm \R$ be a depth ideal and    $\omega$ be a finite orbit for the $\EL{d}{\R}$-action on the quotient $K/K_\I$.  Let $\nu_\I$ denote the Haar measure of the subgroup group $K_\I$ regarded as a probability measure on compact group $K$. Then the   probability measure $\mu_{\I,\omega} $ given by
	$$ \mu_{\I,\omega} = \frac{1}{\left|\omega \right|} \sum_{gK_\I \in \omega} g_* \nu_\I $$
is $\EL{d}{\R}$-invariant and ergodic.
\end{prop}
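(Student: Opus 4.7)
The plan is to verify invariance by a direct pushforward calculation and to obtain ergodicity by reducing to Fourier analysis on the compact abelian group $K_\I$.

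For invariance, the key observation is that the Haar probability measure $\nu_\I$ on the $\EL{d}{\R}$-invariant closed subgroup $K_\I$ is itself $\EL{d}{\R}$-invariant, since $\tilde g_*\nu_\I$ is again a translation invariant probability measure on $K_\I$ and so coincides with $\nu_\I$ by uniqueness of Haar. Combined with the automorphism identity $\tilde g(g + y) = \tilde g(g) + \tilde g(y)$, this yields $\tilde g_*(g_*\nu_\I) = (\tilde g(g))_*\nu_\I$ for every $\tilde g \in \EL{d}{\R}$ and $g \in K$. Summing over a complete set of coset representatives for $\omega$ and using that $\tilde g$ permutes the cosets in $\omega$ then produces $\tilde g_*\mu_{\I,\omega} = \mu_{\I,\omega}$; the defining formula for $\mu_{\I,\omega}$ is in particular independent of the choice of representatives thanks to the $K_\I$-invariance of $\nu_\I$.

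For ergodicity, take an $\EL{d}{\R}$-invariant Borel subset $A \subset K$ with $\mu_{\I,\omega}(A) > 0$, and write $A_i = A \cap g_i K_\I$ for fixed representatives $g_1, \ldots, g_{|\omega|}$ of the cosets of $\omega$. Transitivity of the $\EL{d}{\R}$-action on $\omega$ and invariance of $A$ force all fiber masses $(g_{i*}\nu_\I)(A_i)$ to coincide, so ergodicity of $\mu_{\I,\omega}$ reduces to the statement that $\nu_\I$ is ergodic on $K_\I$ under the affine action of the finite-index subgroup $H := \mathrm{Stab}_{\EL{d}{\R}}(g_1 K_\I)$ given by $y \mapsto z_{\tilde g} + \tilde g(y)$, where $z_{\tilde g} := \tilde g(g_1) - g_1 \in K_\I$. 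To handle this fiber ergodicity I would expand any $H$-invariant $\varphi \in L^2(K_\I, \nu_\I)$ as a Fourier series $\varphi = \sum_{\chi \in \Gamma/\Gamma_\I} c_\chi\, \chi$ under the identification $\widehat{K_\I} = \Gamma/\Gamma_\I$, substitute into $\varphi(z_{\tilde g} + \tilde g y) = \varphi(y)$, and compare Fourier coefficients to obtain
$$ c_{\tilde g \chi} \cdot (\tilde g \chi)(z_{\tilde g}) = c_\chi $$
for every $\tilde g \in H$ and $\chi \in \Gamma/\Gamma_\I$. Since the phase factor has modulus one, $|c_\chi|$ is constant on $H$-orbits in $\Gamma/\Gamma_\I$, and square-summability forces $c_\chi = 0$ whenever that orbit is infinite.

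Finally, since $H$ has finite index in $\EL{d}{\R}$, an $H$-orbit in $\Gamma/\Gamma_\I$ is finite exactly when the enclosing $\EL{d}{\R}$-orbit is; and since $\I$ is a depth ideal, Proposition \ref{prop:orbits are infinite in quotient by maximal finite index} ensures that the only finite $\EL{d}{\R}$-orbit in $\Gamma/\Gamma_\I$ is the trivial one. Hence $c_\chi = 0$ for every non-trivial $\chi$, so $\varphi$ is essentially constant, and the fiber ergodicity follows. The main obstacle will be the bookkeeping for the affine (rather than purely linear) structure of the fiber action, so that the phase factor $(\tilde g \chi)(z_{\tilde g})$ in the Fourier comparison is tracked correctly through absolute values; once this is in hand, the remainder consists of standard reductions together with the input from Proposition \ref{prop:orbits are infinite in quotient by maximal finite index}.
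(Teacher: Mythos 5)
Your proof is correct and follows essentially the same approach as the paper: verify invariance via Haar uniqueness, then reduce ergodicity to a single fiber where the action becomes affine, pass to Fourier coefficients (whose moduli are invariant under the modulus-one phase factor coming from the translation part), and kill the non-trivial coefficients using square-summability together with Proposition~\ref{prop:orbits are infinite in quotient by maximal finite index}. The only cosmetic difference is that you work with the stabilizer of a coset rather than the kernel of the action on $\omega$, and you spell out explicitly that a finite-index subgroup has the same finite-orbit structure on $\Gamma/\Gamma_\I$ as $\EL{d}{\R}$, a step the paper leaves implicit.
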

\begin{proof}
Assume to begin with that the orbit $\omega$ consists of the single coset $K_\I$ so that $|\omega| = 1$. In this case $\mu_{\I,\omega}$ is simply the Haar measure $\nu_\I$ of the compact group $K_\I$. The     $\EL{d}{\R}$-action on the group   $K_\I$ is by automorphisms and hence preserves the   measure $\nu_\I$. It remains to show that the measure $\nu_\I$ is ergodic for this action. Consider any $\EL{d}{\R}$-invariant function $f \in L^2(K_\I,\nu_\I)$. The  Fourier transform $\hat{f} \in L^2(\Gamma/\Gamma_\I)$ of the function $f$ is therefore $\EL{d}{\R}$-invariant as well. It follows from Proposition \ref{prop:orbits are infinite in quotient by maximal finite index} that $\hat{f}(\gamma) = 0$ for every non-trivial element $\gamma \in \Gamma/\Gamma_\I$. Therefore  the function $f$ is $\nu_\I$-almost surely constant, as required.
	
Consider the general case so that $\omega \subset K/K_\I$ is an arbitrary finite $\EL{d}{\R}$-orbit. The  invariance of the   probability measure $\nu_{\I,\omega}$ follows from the  invariance of the Haar measure $\nu_\I$ established in the above paragraph. Let $A$ denote the kernel of the $\EL{d}{\R}$-action on the finite orbit $\omega$ so that   $A$ is a finite index subgroup.
 To see that the probability measure $\nu_{\I,\omega}$ is $\EL{d}{\R}$-ergodic it will suffice to show that $A$ is ergodic on $g_*\nu_\I$ for any coset $gK_\I \in \omega$.

Fix a coset 	$gK_\I \in \omega$ and an element $k \in gK_\I$.  Let $T  : K \to K$ denote the translation $T  : g \mapsto g + k$. We see that
	$$ ag = a(g-k) + ak = T  a T ^{-1} + b (a)$$
	for all elements $a \in A$ and $g \in K$, where the
  map $b  : A \to  K_\I$ is given by
	$$ b (a) = ak  - k. $$
The translation $T$  can be used to identify the  trivial coset  $K_\I$ with the coset $gK_\I$. This will make the $A$-action on the coset  $gK_\I$   correspond to the $A$-action on the coset $K_\I$   followed by a translation. Note that Fourier transform takes   translation    to multiplication (by a complex number of modulus one). Since the non-trivial $A$-orbits on the group $\Gamma/\Gamma_\I$ are infinite by Proposition \ref{prop:orbits are infinite in quotient by maximal finite index}, the above argument goes through to show that the $A$-action on the coset  $gK_\I$ is ergodic.
\end{proof}

The following gives   the forward direction of the classification theorem.

\begin{prop}
\label{prop:one direction of classification}
	Let $\mu$ be an ergodic $\EL{d}{\R}$-invariant  probability measure on the compact group $K$ with essential ideal $\I_\mu$.  Then there is a   finite orbit $\omega$ for the $\EL{d}{\R}$-action on the quotient $K/K_{\mfi{\I}_\mu}$ such that
$$ \mu = \frac{1}{\left|\omega \right|} \sum_{g  \in \omega} g_* \nu  $$
where $\nu $ is the Haar measure on the compact group $K_{\mfi{\I_\mu}}$.
\end{prop}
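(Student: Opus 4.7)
The plan is to combine Proposition \ref{prop:property W implies dual vanishes outside annihilator} with a Fourier-analytic argument. Write $\nu = \nu_{\mfi{\I}_\mu}$ for the Haar probability measure on the closed subgroup $K_{\mfi{\I}_\mu} \le K$. Since $\I_\mu \subseteq \mfi{\I}_\mu$ with finite quotient, we have $K_{\mfi{\I}_\mu} \le K_{\I_\mu}$ as a finite index subgroup. Proposition \ref{prop:property W implies dual vanishes outside annihilator} tells us that $\hat{\mu}$ vanishes on $\Gamma \setminus \mfi{\Gamma}_{\I_\mu} = \Gamma \setminus \Gamma_{\mfi{\I}_\mu}$, while Proposition \ref{prop:integral of Haar is delta} gives $\hat{\nu} = 1_{\Gamma_{\mfi{\I}_\mu}}$.

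The first step is to establish that $\mu$ is invariant under translation by elements of $K_{\mfi{\I}_\mu}$. Computing $\widehat{\mu * \nu} = \hat{\mu}\,\hat{\nu}$ and using that $\hat{\mu}$ is supported inside $\Gamma_{\mfi{\I}_\mu}$, this product equals $\hat{\mu}$. The injectivity of the Fourier transform on finite complex Radon measures \cite[Proposition 4.17]{folland}, already used in the proof of Proposition \ref{prop:property W implies dual vanishes outside annihilator}, then yields $\mu = \mu * \nu$. Since $\nu$ is the Haar measure of $K_{\mfi{\I}_\mu}$, this immediately implies that $h_* \mu = \mu$ for every $h \in K_{\mfi{\I}_\mu}$.

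The second step is a disintegration. Let $q : K \to \overline{K} = K / K_{\mfi{\I}_\mu}$ be the quotient map and set $\overline{\mu} = q_* \mu$. The $K_{\mfi{\I}_\mu}$-invariance of $\mu$ established above allows us to write
\[
\mu = \int_{\overline{K}} g_* \nu \, \mathrm{d}\overline{\mu}(g K_{\mfi{\I}_\mu}),
\]
because on each fibre of $q$ the conditional measure must be a $K_{\mfi{\I}_\mu}$-invariant probability measure on that coset, hence the appropriate translate of $\nu$. Since $\mu$ is supported on the essential subgroup $K_{\I_\mu}$ and $K_{\I_\mu}/K_{\mfi{\I}_\mu}$ is a finite group, the measure $\overline{\mu}$ is supported on this finite subgroup of $\overline{K}$.

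The final step is to identify $\overline{\mu}$. The quotient map $q$ is $\EL{d}{\R}$-equivariant because $K_{\mfi{\I}_\mu}$ is an $\EL{d}{\R}$-invariant subgroup, so $\overline{\mu}$ is $\EL{d}{\R}$-invariant and ergodic. Being concentrated on a finite set, the $\EL{d}{\R}$-orbits meeting its support are all finite, and ergodicity forces $\overline{\mu}$ to be the uniform probability measure on a single finite orbit $\omega \subset K/K_{\mfi{\I}_\mu}$. Substituting into the disintegration formula above gives precisely $\mu = \frac{1}{|\omega|} \sum_{g \in \omega} g_* \nu$. The principal technical step is the Fourier manipulation in Step~1, which rests on the support result for $\hat{\mu}$ already established; everything afterwards is a formal unwinding of invariance, disintegration, and ergodic decomposition on a finite set.
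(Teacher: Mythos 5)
Your proof is correct, and it takes a genuinely different route from the paper's. Both arguments hinge on the same three inputs: the support statement $\mu(K_{\I_\mu})=1$, Proposition \ref{prop:property W implies dual vanishes outside annihilator} giving $\hat\mu\equiv 0$ off $\Gamma_{\mfi{\I}_\mu}$, and the injectivity of the Fourier transform. After that, however, the mechanisms diverge. The paper works entirely on the Fourier side: it observes that $\hat\mu$ is constant on cosets of $\Gamma_{\I_\mu}$ and vanishes outside $\mfi{\Gamma_{\I_\mu}}$, so $\hat\mu$ descends to a positive definite function on the finite group $F=\mfi{\Gamma_{\I_\mu}}/\Gamma_{\I_\mu}$; Bochner's theorem then produces the finite probability measure $\theta$ on $\hat F$ directly, and injectivity of the Fourier transform identifies $\mu$ with $\mu_{\I,\omega}$. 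You instead translate the Fourier constraint into a convolution identity $\mu*\nu=\mu$, deduce $K_{\mfi{\I}_\mu}$-invariance of $\mu$ from the translation invariance of the Haar measure $\nu$, and then disintegrate $\mu$ over the fibres of $q:K\to K/K_{\mfi{\I}_\mu}$. Your approach isolates the group-theoretic content (subgroup-invariance of $\mu$) more explicitly and avoids the step of checking positive definiteness of the descended function, at the cost of invoking disintegration of measures; the paper's approach stays purely in the harmonic-analytic framework and never needs to speak of conditional measures. Both are standard tools in this setting, so this is a matter of taste rather than substance.
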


\begin{proof}%[Proof of Theorem \ref{thm:classification of invariant measures}]
Consider the   essential subgroup $K_{\I_\mu}$   of the compact group $K $ associated to the probability measure  $\mu$.  In particular $ \mu(K_{\I_\mu}) = 1$. 

The annihilator   of the essential subgroup $K_{\I_\mu}$ is given by the subgroup  $\Gamma_{\I_\mu}$. 
Therefore every character $\chi\in\Gamma_{\I_\mu}$ is $\mu$-almost surely trivial on the group  $K$. In particular
$$\hat{\mu}_{|\Gamma_{\I_\mu}} = 1$$
where $\hat{\mu} : \Gamma \to \CC$ is the Fourier transform  of the probability measure $\mu$.
In fact   $\gamma + \chi = \gamma$ holds $\mu$-almost surely  on $K$ for every other character $\gamma \in \Gamma$. Therefore the Fourier transform  $\hat{\mu}$ is constant on cosets of the subgroup $\Gamma_{\I_\mu}$.
On the other hand 
	$$ \hat{\mu}_{|(\mfi{\Gamma_{\I_\mu}})^c}   = 0.$$
as follows from Proposition \ref{prop:property W implies dual vanishes outside annihilator}.

	Consider the finite abelian quotient group $F = \mfi{\Gamma_{\I_\mu}}/\Gamma_{\I_\mu}$. The above observations show that the restriction of the Fourier transform $\hat{\mu}$ to the subgroup $ \mfi{\Gamma_{\I_\mu}}$ descends   to a well-defined positive definite function $\psi$ on the finite group $F$.     Bochner's theorem \cite[Theorem 4.18]{folland} implies that  the function $\psi$ is the Fourier transform of a uniquely determined probability measure $\theta$   on the Pontryagin dual $\hat{F} \cong K_{\I_\mu}/K_{\mfi{\I_\mu}}$. It follows that $\theta$ must be a uniform probability measure supported on some finite orbit $\omega \subset K_{\I_\mu}/K_{\mfi{\I_\mu}} \subset K/K_{\mfi{\I_\mu}}$.
	
Let $\nu_{\I,\omega}$ be the ergodic $\EL{d}{\R}$-invariant probability measure corresponding to the depth ideal $\I$ and to the finite orbit $\omega$. The previous paragraph combined with Proposition 	\ref{prop:integral of Haar is delta} and the injectivity of the Fourier transform for probability measures show  that $\mu = \nu_{\I,\omega}$.
\end{proof}

The two Propositions 	\ref{prop:the invariant measures are ergodic} and \ref{prop:one direction of classification} establish the two directions of the correspondence between ergodic $\EL{d}{\R}$-invariant probability measures on the compact group $K$ and pairs $(\I, \omega)$ as stated in the classification Theorem \ref{thm:classification of invariant measures}. It is easy to verify that this correspondence is bijective. The proof of  Theorem  \ref{thm:measure classification intro} of the introduction is complete.

%\part{Universal lattices}

\section{Theory of characters}
\label{sec: character theory}

Let $G$ be a countable   group.
A function $\varphi : G \to \CC$ is \emph{positive definite} if for every $n \in \NN$ and every choice of elements $g_1,\ldots,g_n \in G$ the $n\times n$ matrix with entries given by $\varphi(g_j^{-1} g_i) $ is positive definite.

A \emph{trace} on the group $G$ is a positive definite conjugation invariant function $\varphi \colon G \to \CC$ satisfying  $\varphi(e) = 1$. A trace $\varphi$ is said to be \emph{irreducible} if $\varphi$ cannot be written as a non-trivial convex combination of traces.
	A \emph{character} on the group $G$ is an irreducible trace.

Let $\traces{G}$ and $\chars{G}$ respectively  denote the set of all traces and characters on the group $G$. 
%Note that by definition $\chars{G}$ consists  of the extreme points of the convex set $\traces{G}$.
The set   $\traces{G}$ is a weak-$*$ closed convex subset of $L^\infty(G)$  \cite[Lemma C.5.4]{bekka2008kazhdan}. The subset $\chars{G}$ can be identified with the set of extreme points of $\traces{G}$. Choquet's theorem \cite[\S10]{phelps1911lectures} says that every trace $\varphi \in \traces{G}$ can be written as an integral
$$ \varphi = \int_{\chars{G}} \psi \, \mathrm{d} \mu_\varphi(\psi) $$
for some \emph{uniquely determined} Borel probability measure $\mu_\varphi$ on $\chars{G}$. Conversely, any Borel probability measure on the set $\chars{G}$ of extreme points  determines a trace of $G$ via the above formula.

\subsection*{The GNS construction}

Positive definite functions on the group $G$ are closely related to cyclic unitary representations of $G$.

\begin{theorem}[Gelfand--Naimark--Segal] \label {thm: GNS}
 Let $\varphi$ be a positive definite function on the group $G$ with $\varphi(e) = 1$. Then there is a unitary representation $\pi_\varphi : G \to \mathcal{U}(\mathcal{H}_\varphi)$ on a Hilbert space $\mathcal{H}_{\varphi}$ admitting a cyclic unit vector $v_\varphi \in \mathcal{H}_\varphi$ such that
 $$\varphi (g) = \langle \pi_{\varphi}(g) v_{\varphi},v_{\varphi} \rangle $$
 for every element $g \in G$. The triplet $(\mathcal{H}_{\varphi}, \pi_\varphi, v_\varphi)$ is uniquely determined up to a unitary equivalence.
 \end{theorem}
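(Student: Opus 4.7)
The plan is to carry out the standard GNS construction, realising $\mathcal{H}_\varphi$ as a completion of the group algebra $\CC[G]$ equipped with a sesquilinear form built from $\varphi$.

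First I would consider the space $\CC[G]$ of finitely supported complex-valued functions on $G$ and define on it the sesquilinear form
$$\langle f_1, f_2 \rangle_\varphi = \sum_{g,h \in G} f_1(g) \overline{f_2(h)} \, \varphi(h^{-1} g).$$
The positive definiteness hypothesis on $\varphi$ is exactly what is needed to guarantee that this form is positive semi-definite. The Cauchy--Schwarz inequality then implies that the set $\mathcal{N} = \{ f \in \CC[G] : \langle f, f \rangle_\varphi = 0 \}$ is a linear subspace, so the quotient $\CC[G]/\mathcal{N}$ carries an honest inner product. I would then define $\mathcal{H}_\varphi$ to be the Hilbert space completion of this quotient.

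Next I would define the representation. For each $g \in G$ the left translation operator $L_g : \CC[G] \to \CC[G]$ given by $(L_g f)(h) = f(g^{-1} h)$ preserves the form $\langle \cdot, \cdot \rangle_\varphi$; this is a short computation using the substitution $h \mapsto gh$ inside the defining sum. Consequently $L_g$ preserves $\mathcal{N}$, descends to an isometry of $\CC[G]/\mathcal{N}$, and extends uniquely to a unitary operator $\pi_\varphi(g)$ on $\mathcal{H}_\varphi$. The assignment $g \mapsto \pi_\varphi(g)$ is a homomorphism since $L_{gh} = L_g L_h$. I would take $v_\varphi$ to be the image in $\mathcal{H}_\varphi$ of $\delta_e \in \CC[G]$; this is a unit vector because $\langle \delta_e, \delta_e \rangle_\varphi = \varphi(e) = 1$. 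It is cyclic because $\pi_\varphi(g) v_\varphi$ is the image of $\delta_g$, and the $\delta_g$ span $\CC[G]$. A direct calculation shows $\langle \pi_\varphi(g) v_\varphi, v_\varphi \rangle = \varphi(g)$.

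For uniqueness, suppose $(\mathcal{H}', \pi', v')$ is another such triplet. I would define a linear map $U_0$ on the dense subspace $\mathrm{span}\{ \pi_\varphi(g) v_\varphi : g \in G\}$ of $\mathcal{H}_\varphi$ by $U_0(\pi_\varphi(g) v_\varphi) = \pi'(g) v'$ and verify that $U_0$ preserves inner products using the common identity
$$\langle \pi_\varphi(g) v_\varphi, \pi_\varphi(h) v_\varphi \rangle = \varphi(h^{-1} g) = \langle \pi'(g) v', \pi'(h) v' \rangle,$$
which guarantees $U_0$ is well defined and isometric. It then extends to a unitary $U : \mathcal{H}_\varphi \to \mathcal{H}'$ intertwining $\pi_\varphi$ with $\pi'$ and sending $v_\varphi$ to $v'$.

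No step is genuinely hard, but the most delicate point to get right is the passage from the semi-definite form on $\CC[G]$ to a genuine Hilbert space: one has to verify carefully that $\mathcal{N}$ is closed under left translation (so that $\pi_\varphi(g)$ descends) and that the form induces a positive definite inner product on the quotient before completing. The rest is formal.
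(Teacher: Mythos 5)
The paper does not prove Theorem \ref{thm: GNS}; it is quoted as classical background with no argument supplied. Your construction is the standard GNS proof and is correct in all essentials: the semi-definite form on $\CC[G]$, Cauchy--Schwarz showing $\mathcal{N}$ is a subspace invariant under left translation, the quotient-and-complete step, the cyclic vector $[\delta_e]$, and the uniqueness by matching matrix coefficients on the dense span. The computations you invoke (that $L_g$ is a $\langle\cdot,\cdot\rangle_\varphi$-isometry via the substitution $h \mapsto gh$, that $\pi_\varphi(g)v_\varphi = [\delta_g]$, and that $\langle [\delta_g],[\delta_e]\rangle_\varphi = \varphi(g)$) all check out. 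The only small remark on your closing caveat: once $\mathcal{N}$ is defined as the null vectors of the semi-definite form, positivity of the induced inner product on the quotient is automatic; the nontrivial points are exactly the two you identify (that $\mathcal{N}$ is a linear subspace, via Cauchy--Schwarz, and that $L_g\mathcal{N}\subseteq\mathcal{N}$, which follows at once from $L_g$ being an isometry of the form). There is nothing to compare against in the paper, but this is the argument the theorem is implicitly relying on.
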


The triplet $(\mathcal{H}_{\varphi}, \pi_\varphi, v_\varphi)$ appearing in the above theorem is called the Gelfand--Naimark--Segal (GNS) construction associated the positive definite function $\varphi$ on the group $G$. 

The representation $\pi_\varphi$ on the Hilbert space $\mathcal{H}_\varphi$ is associated to a certain "left action" in the GNS construction. In fact, since $\varphi$ is conjugation invariant, there exists another unitary representation $\rho_\varphi$ on the same Hilbert space $\mathcal{H}_\varphi$ arising from a "right action" in the GNS construction. The representation $\rho_\varphi$ commutes with $\pi_\varphi$ and likewise satisfies
 $\varphi (g) = \langle \rho_{\varphi}(g) v_{\varphi},v_{\varphi} \rangle $ for all elements $g \in G$.

%Conversely, the matrix coefficient of any unit vector in a unitary representation of the group $G$ gives rise to a positive definite function.

\begin{prop}
\label{prop:finite dimensional implies trace} 
Let $\varphi \in \traces{G}$ be a trace with GNS triplet $(\mathcal{H}_\varphi, \pi_\varphi, v_\varphi)$. If the Hilbert space $ \mathcal{H}_\varphi $ is finite dimensional then 
$$ \varphi  = \frac{ \mathrm{tr} \pi_\varphi  }{\dim_\CC \mathcal{H}_\varphi}.$$
\end{prop}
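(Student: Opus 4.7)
The plan is to pass to the von Neumann algebra $M=\pi_\varphi(G)''\subseteq B(\mathcal{H}_\varphi)$ generated by the image of the GNS representation and to identify the proposition as the statement that two natural tracial functionals on $M$ coincide. Since $\mathcal{H}_\varphi$ is finite dimensional, $M$ is a finite dimensional $C^*$-algebra, and the vector state $\omega_{v_\varphi}(x)=\langle xv_\varphi,v_\varphi\rangle$ is a state on $M$ whose pull-back along $\pi_\varphi$ recovers $\varphi$.

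First I would verify that $\omega_{v_\varphi}|_M$ is a tracial state. On the dense subalgebra $\pi_\varphi(\CC[G])\subseteq M$ this is immediate from the conjugation invariance of $\varphi$, since $\omega_{v_\varphi}(\pi_\varphi(g)\pi_\varphi(h))=\varphi(gh)=\varphi(hg)=\omega_{v_\varphi}(\pi_\varphi(h)\pi_\varphi(g))$ for all $g,h\in G$, and the identity extends to $M$ by bilinearity and finite dimensionality.

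Next I would argue that $M$ is a factor, which is the crucial use of the irreducibility of $\varphi$. A non-trivial central projection $p\in Z(M)$ would, by the separating property of the tracial cyclic vector $v_\varphi$, satisfy $0\ne pv_\varphi$ and $0\ne (1-p)v_\varphi$ and provide a non-trivial convex decomposition $\varphi=\|pv_\varphi\|^2\psi_1+\|(1-p)v_\varphi\|^2\psi_2$ into two distinct traces built from the restrictions of $\pi_\varphi$ to the orthogonal subspaces $p\mathcal{H}_\varphi$ and $(1-p)\mathcal{H}_\varphi$, contradicting that $\varphi$ is a character. Since the only finite dimensional factors are full matrix algebras, $M\cong M_n(\CC)$ for some $n\in\NN$, and the ambient Hilbert space splits as $\mathcal{H}_\varphi\cong\CC^n\otimes W$ with $M$ acting as $M_n(\CC)\otimes 1_W$.

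Finally I would invoke the uniqueness of the tracial state on $M_n(\CC)$. Both $\omega_{v_\varphi}|_M$ and the functional $x\mapsto\mathrm{tr}_{B(\mathcal{H}_\varphi)}(x)/\dim_\CC\mathcal{H}_\varphi$ restricted to $M$ are tracial states on $M\cong M_n(\CC)$; therefore both equal the normalized trace $\mathrm{tr}/n$, the second one because $\mathrm{tr}_{B(\mathcal{H}_\varphi)}(x\otimes 1_W)=(\dim W)\mathrm{tr}(x)$ and $\dim_\CC\mathcal{H}_\varphi=n\dim W$. Evaluating both functionals at $\pi_\varphi(g)$ and using $\varphi(g)=\omega_{v_\varphi}(\pi_\varphi(g))$ gives the desired identity. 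I expect the main delicate point to be the second step: cleanly deducing the factor property of $M$ from the hypothesis that $\varphi$ admits no non-trivial convex decomposition, together with the cyclic and separating properties of the tracial vector $v_\varphi$.
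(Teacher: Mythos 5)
Your proof is correct, and it takes a genuinely different route from the paper's. The paper argues directly from the GNS data: choose group elements $h_1,\ldots,h_d$ so that $\pi_\varphi(h_1)v_\varphi,\ldots,\pi_\varphi(h_d)v_\varphi$ is a basis of $\mathcal{H}_\varphi$, observe that each diagonal matrix coefficient $\langle\pi_\varphi(g)\pi_\varphi(h_i)v_\varphi,\pi_\varphi(h_i)v_\varphi\rangle$ equals $\varphi(g)$ by conjugation invariance, average over $i$, and read the result off as $\mathrm{tr}\pi_\varphi(g)/d$. You instead pass to $M=\pi_\varphi(G)''$, show it is a finite-dimensional factor using the extremality of $\varphi$ together with the cyclic and separating property of $v_\varphi$, identify $M$ with $M_n(\CC)\otimes 1_W$ acting on $\CC^n\otimes W$, and invoke uniqueness of the tracial state on $M_n(\CC)$. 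Your argument costs more machinery but is tighter: the last equality of the paper's proof implicitly requires the basis $\{\pi_\varphi(h_i)v_\varphi\}$ to be orthonormal, yet while each of these is a unit vector, orthogonality is an extra demand --- since $\langle\pi_\varphi(h)v_\varphi,\pi_\varphi(h')v_\varphi\rangle=\varphi(h'^{-1}h)$, one would need $\varphi$ to vanish on all $h_i^{-1}h_j$ with $i\neq j$, which cannot always be arranged even when the conclusion is true. Your route sidesteps this completely.

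One point you should make explicit. Your proof --- like the paper's own proof text, which reads ``the character $\varphi$'' --- quietly establishes the result for $\varphi$ a \emph{character}, not an arbitrary element of $\traces{G}$; your second step, the factor property, is exactly where extremality enters. This strengthening of the hypothesis is necessary, not cosmetic. Take $G=\ZZ/2\ZZ=\{e,g\}$ and $\varphi(e)=1$, $\varphi(g)=\tfrac{1}{2}$; this is a trace but not a character (it equals $\tfrac{3}{4}\cdot\mathbf{1}+\tfrac{1}{4}\cdot\mathrm{sgn}$), its GNS space is two-dimensional, and $\pi_\varphi(g)$ swaps the two spanning vectors $v_\varphi$ and $\pi_\varphi(g)v_\varphi$, so $\mathrm{tr}\pi_\varphi(g)=0\neq 2\varphi(g)$. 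Since the proposition is only invoked for characters (in Corollary~\ref{cor:every irreducible rep of virtually abelian is finite dimensional}), the slip in the statement is harmless; but your write-up should state that irreducibility of $\varphi$ is assumed and flag where it is used.
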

\begin{proof}
 The conjugation invariance of the character $\varphi$ implies that
 $$ \varphi(g) = \left< \pi_\varphi(g) \pi_\varphi(h) v_\varphi, \pi_\varphi(h) v_\varphi \right>$$
 holds true  for all elements $g,h \in G$. Denote $d= \dim_\CC \mathcal{H}_\varphi$. Let $h_{1},\ldots,h_{d} \in G$ be group elements such that the  vectors 
 $$\pi_\varphi(h_1) v_\varphi, \ldots, \pi_\varphi(h_d) v_\varphi \in \mathcal{H}_\varphi$$ form a basis for the Hilbert space $\mathcal{H}_\varphi$. Then
 $$ \varphi(g) = \frac{1}{d} \sum_{i=1}^d \left< \pi_\varphi(g) \pi_\varphi(h_i) v_\varphi, \pi_\varphi(h_i) v_\varphi \right> = \frac{\mathrm{tr} \pi_\varphi(g) }{\dim_\CC \mathcal{H}_\varphi} $$
 for all elements $g \in G$  as required.
\end{proof}

The converse to Proposition \ref{prop:finite dimensional implies trace} is also true, namely if $\pi : G \to \mathcal{U}(\mathcal{H})$ is any unitary representation   in a finite dimensional Hilbert space then the function  
$$ \varphi  = \frac{\mathrm{tr}\pi }{\dim_\CC \mathcal{H}}$$
is a trace on the group $G$.

%
%
% \begin {definition}[GNS construction]\label{def: GNS}
% Let $\varphi$ be a positive function on \(G\) then triple
% $(\mathcal{H}_{\varphi},\pi_{\varphi},v_{\varphi})$ ensured by the Theorem above are denoted as the GNS construction related to $\varphi$.
% \end{definition}
% In particular this implies the following proposition.
\begin{prop}
\label{prop:square power is a trace}
If $\varphi \in \traces{G}$ then $|\varphi|^2 \in \traces{G}$.  
\end{prop}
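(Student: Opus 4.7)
The plan is to verify the three defining properties of a trace for $|\varphi|^2$. Two of them are immediate: $|\varphi|^2(e) = |\varphi(e)|^2 = 1$, and for every $g, h \in G$ we have $|\varphi|^2(hgh^{-1}) = |\varphi(hgh^{-1})|^2 = |\varphi(g)|^2$, so $|\varphi|^2$ is conjugation invariant. The content of the proposition lies in showing that $|\varphi|^2$ is positive definite.

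The main step will be to write $|\varphi(g)|^2 = \varphi(g) \cdot \overline{\varphi(g)}$ and invoke the general principle that the pointwise product of two positive definite functions on $G$ is again positive definite. First I would check that $\overline{\varphi}$ is positive definite: for any $g_1, \ldots, g_n \in G$, the matrix $A = \bigl[\varphi(g_j^{-1} g_i)\bigr]_{i,j}$ is positive semi-definite by assumption, and its entrywise complex conjugate $\overline{A} = \bigl[\overline{\varphi(g_j^{-1} g_i)}\bigr]_{i,j}$ is then also positive semi-definite (complex conjugation preserves the PSD property, as $x^* \overline{A} x = \overline{\overline{x}^* A \overline{x}} \ge 0$).

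Then I would apply the Schur (Hadamard) product theorem: the entrywise product of two positive semi-definite matrices is positive semi-definite. Applying this to $A$ and $\overline{A}$ gives that the matrix
\[
A \circ \overline{A} = \bigl[\,|\varphi(g_j^{-1} g_i)|^2\,\bigr]_{i,j}
\]
is positive semi-definite, which is exactly what is required for $|\varphi|^2$ to be positive definite.

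I do not expect any real obstacle here; the only point worth double-checking is the use of Schur's theorem (which is purely a statement about matrices and so independent of any structure on $G$). An alternative route, should one prefer to avoid citing the Schur product theorem, is via the GNS construction of Theorem~\ref{thm: GNS}: form the tensor product representation $\pi_\varphi \otimes \overline{\pi_\varphi}$ on $\mathcal{H}_\varphi \otimes \overline{\mathcal{H}_\varphi}$ with cyclic vector $v_\varphi \otimes \overline{v_\varphi}$; its diagonal matrix coefficient equals $\varphi(g)\overline{\varphi(g)} = |\varphi(g)|^2$, which exhibits $|\varphi|^2$ directly as a positive definite function.
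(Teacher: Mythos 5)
Your argument is correct. Your primary route — conjugation preserves positive semi-definiteness, then Schur's (Hadamard) product theorem gives positive semi-definiteness of the entrywise product — is a clean, purely linear-algebraic proof that does not invoke any representation theory. The paper instead goes directly via the GNS construction: it forms the tensor product $\pi_\varphi \otimes \pi_\varphi^*$ on $\mathcal{H}_\varphi \otimes \mathcal{H}_\varphi$ (with $\pi_\varphi^*$ the contragredient representation) and observes that
\[
|\varphi|^2(g) = \bigl\langle \pi_\varphi(g) v_\varphi \otimes \pi_\varphi^*(g) v_\varphi,\; v_\varphi \otimes v_\varphi \bigr\rangle,
\]
which exhibits $|\varphi|^2$ as a matrix coefficient of a unitary representation and hence positive definite. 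This is exactly the alternative you sketch in your last paragraph, so you have in fact found both proofs. The Schur-product version is more elementary and self-contained; the GNS/tensor-product version is closer in spirit to the rest of the paper, which systematically works with GNS triples and their associated von Neumann algebras, and it makes the underlying representation explicit, which is sometimes useful (e.g.\ when one later wants to manipulate the representation rather than just the function).
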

This is a well-known fact, see e.g. \cite[Proposition C.1.6]{bekka2008kazhdan}.
\begin{proof}[Proof of Proposition \ref{prop:square power is a trace}]
Let $\varphi \in \traces{G}$ be a trace on the group $G$. It is clear that the function $|\varphi|^2 : G \to \CC$ is conjugation invariant and satisfies $|\varphi|^2(e) = 1$. To see that $|\varphi|^2$ is positive definite consider the GNS construction $(\pi_\varphi, \mathcal{H}_\varphi, v_\varphi)$ associated to the trace $\varphi$. Let $\pi^*_\varphi$ be the contragredient representation associated to the unitary representation $\pi_\varphi$. Then
$$ |\varphi|^2(g) = \left< \pi_\varphi(g) v_\varphi \otimes \pi^*_\varphi(g)  v_\varphi, v_\varphi \otimes v_\varphi \right> \quad \forall g \in G.$$
Therefore $|\varphi|^2$ is positive definite. We conclude that $|\varphi|^2 \in \traces{G}$.
\end{proof}

\subsection*{Faithful traces}

 The \emph{kernel} of a given trace  $\varphi \in \traces{G}$ is
$$ \ker \varphi = \{ g \in G \: : \: \varphi(g) = 1 \}. $$
A trace $\varphi \in \traces{G}$ is called \emph{faithful} if $\ker \varphi = \{e\}$.

\begin{prop}
\label{prop:trace factors through the kernel}
If $\varphi  \in \traces{G}$   then    $\ker \varphi \nrm G$ and $\overline{\varphi} : g \ker\varphi \mapsto  \varphi(g )$ defines a    faithful trace on the quotient group   $\bar{G} = G/\ker \varphi$. In fact $\ker \varphi = \ker \pi_\varphi$.
\end{prop}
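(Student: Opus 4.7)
The plan is to identify $\ker\varphi$ with $\ker\pi_\varphi$ via the GNS construction of Theorem~\ref{thm: GNS}; once this identification is in hand, all the assertions of the proposition follow formally. The analytic input is the equality case of the Cauchy--Schwarz inequality applied to the matrix coefficient expression $\varphi(g) = \langle \pi_\varphi(g) v_\varphi, v_\varphi \rangle$.

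First I would verify that $g \in \ker\varphi$ implies $\pi_\varphi(g) v_\varphi = v_\varphi$: since $\pi_\varphi$ is unitary and $\|v_\varphi\| = 1$, one computes
\[
\|\pi_\varphi(g) v_\varphi - v_\varphi\|^2 \;=\; 2 - 2\,\mathrm{Re}\,\varphi(g),
\]
which vanishes precisely when $\varphi(g) = 1$. By conjugation invariance of $\varphi$, for every $h \in G$ we also have $\varphi(h^{-1} g h) = 1$, so $\pi_\varphi(h^{-1} g h) v_\varphi = v_\varphi$, and this rearranges to $\pi_\varphi(g)\,\pi_\varphi(h) v_\varphi = \pi_\varphi(h) v_\varphi$. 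Because $v_\varphi$ is cyclic, the set $\{\pi_\varphi(h) v_\varphi : h \in G\}$ spans a dense subspace of $\mathcal{H}_\varphi$, and boundedness of the operator $\pi_\varphi(g)$ then forces $\pi_\varphi(g) = I$, so $g \in \ker\pi_\varphi$. The reverse containment $\ker\pi_\varphi \subseteq \ker\varphi$ is immediate from the GNS formula, giving $\ker\varphi = \ker\pi_\varphi$.

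The remaining claims are bookkeeping. As the kernel of a group homomorphism, $\ker\varphi \nrm G$; moreover $\pi_\varphi$ descends to a unitary representation $\bar{\pi}_\varphi$ of $\bar{G} = G/\ker\varphi$, and its matrix coefficient at $v_\varphi$ is a well-defined positive definite function $\overline{\varphi}$ on $\bar{G}$ satisfying $\overline{\varphi}(g \ker\varphi) = \varphi(g)$ and $\overline{\varphi}(\bar{e}) = 1$. Conjugation invariance is inherited from $\varphi$, and faithfulness is tautological: $\overline{\varphi}(g \ker\varphi) = 1$ forces $\varphi(g) = 1$, so $g \in \ker\varphi$ and the coset is trivial. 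The one point requiring genuine argument is the promotion of the identity $\pi_\varphi(g) v_\varphi = v_\varphi$ to the operator identity $\pi_\varphi(g) = I$; this is exactly where both cyclicity of $v_\varphi$ and conjugation invariance of $\varphi$ are needed, as without either ingredient one would only conclude that $g$ stabilizes the single vector $v_\varphi$, a condition that can hold strictly more often than $g \in \ker\pi_\varphi$.
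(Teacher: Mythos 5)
Your proof is correct and follows essentially the same route as the paper's: both hinge on the GNS construction, the observation that $g\in\ker\varphi$ iff $\pi_\varphi(g)v_\varphi = v_\varphi$, and the combination of cyclicity with conjugation invariance to promote this to $\ker\varphi=\ker\pi_\varphi$. You spell out the norm computation and the rearrangement $\pi_\varphi(g)\pi_\varphi(h)v_\varphi=\pi_\varphi(h)v_\varphi$ that the paper only gestures at, and you derive normality and well-definedness as formal consequences of $\ker\varphi=\ker\pi_\varphi$ rather than checking them directly as the paper does — a minor reorganization, not a different argument.
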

\begin{proof}
Consider the GNS construction $(\mathcal{H}_{\varphi}, \pi_\varphi, v_\varphi)$ associated to the trace $\varphi$. An element $g \in G$ satisfies $g \in \ker \varphi$ if and only if $\pi_\varphi(g) v_\varphi = v_\varphi$. It follows that $\ker \varphi$ is a subgroup of $G$. The conjugation invariance of the trace $\varphi$  implies that the subgroup $\ker \varphi$ is normal. The formula $\varphi (g) = \langle \pi_{\varphi}(g) v_{\varphi},v_{\varphi} \rangle$ shows that $\varphi(g) = \varphi(gn)$ holds true for every pair of elements $g \in G$ and $n \in \ker \varphi$. So the function $\overline{\varphi}$ is well-defined. It is clear that $\overline{\varphi}$ is conjugation invariant, positive definite and satisfies $\overline{\varphi}(\ker \varphi) = 1$. Lastly, as the vector $v_\varphi$ is   cyclic   and the character $\varphi$ is conjugation invariant it follows in fact that  $\ker \varphi = \ker \pi_\varphi$.
\end{proof}

%The above proof shows in fact that if $\varphi \in \traces{G}$ then .

 \subsection*{von Neumann algebras and characters}

 Let $\mathcal{H}$ be a Hilbert space and $B(\mathcal{H})$ be the algebra of all bounded operators on $\mathcal{H}$.
  A \emph{von Neumann algebra} $\mathcal{L}$ on the Hilbert space $\mathcal{H}$  is  subalgebra of $B(\mathcal{H})$  that  contains the identity, is closed in the weak operator topology and  under taking adjoint.

The   von Nuemann bicommutant theorem says that a subalgebra $\mathcal{L}$ of $B(\mathcal{H})$ containing the identity and closed under taking adjoints is a von Neumann algebra if and only if $\mathcal{L}$ is equal to its double commutant $\mathcal{L}''$.

%  A celebrated theorem by von Nuemann shows that equivalently $\mathcal{L}$ is equal to its double commutant $\mathcal{L}''$.
  A von Neumann algebra $\mathcal{L}$ on the Hilbert space $\mathcal{H}$ is called a \emph{factor} if its center $Z(\mathcal{L})$ is equal to  $\CC$.

   %It is called finite if it has a finite trace.
Let  $\varphi \in \traces{G}$ be a trace   on a group $G$ with    GNS construction   $(\mathcal{H}_\varphi, \pi_\varphi, v_\varphi)$. The trace $\varphi$ defines a von Neumann algebra $\mathcal{L}_{\varphi}$ on the Hilbert space $\mathcal{H}_\varphi$. The von Neumann algebra $\mathcal{L}_{\varphi}$  is defined to be  the closure of the algebra generated by the unitary operators $\{\pi_\varphi(g) \: : \: g \in G\}$  in the weak operator topology. The trace  $\varphi$ is a character if and only if the von Neumann algebra $\mathcal{L}_{\varphi}$ is a factor \cite{thoma1964unitare}. 

We remark that the commutant $\mathcal{L}'_\varphi$ coincides with the von Neumann algebra generated by $\{\rho_\varphi(g) \: : \: g \in G\}$. In particular an operator $x \in B(\mathcal{H}_\varphi)$ belongs to $Z(\mathcal{L}_\varphi)$  if and only if $x$ commutes with $\pi_\varphi(g)$ and $\rho_\varphi(g)$ for all elements $g \in G$.

Consider the kernel   $\ker \varphi$ of the trace $\varphi \in \traces{G}$.  Let $\overline{\varphi}$ be the faithful trace on the quotient  group $G/\ker \varphi$   as in Proposition \ref{prop:trace factors through the kernel}.
Since $\ker \varphi = \ker \pi_\varphi$  the two von Neumann algebras $\mathcal{L}_{\varphi}$ and $\mathcal{L}_{\bar{\varphi}}$ and isomorphic. In particular $\varphi$ is a character if and only if $\overline{\varphi}$ is.

%He
%
%G. It was observed in above that $\varphi$ descends to a faithful trace
%
% \begin {cor}\label{cor:character factors through the kernel}
%Let $\varphi$ be a character on $G$. Then its kernel $\ker \varphi$ is a normal subgroup and $\varphi$ descends to a faithful character of the quotient $G/\ker \varphi$.
%
% \end{cor}
% \begin{proof}
% Let $\varphi '$ be the character of the quotient and suppose that $\mathcal{L}_{\varphi}$, and $\mathcal{L}_{\varphi'}$ are the von Nuemann algebras associated with $\varphi$ and $\varphi '$ respectively. Then  are clearly isomorphic thus $\mathcal{L}_{\varphi}$ is factor if and only if $\mathcal{L}_{\varphi}'$ is.
%  \end{proof}
  %If the function $\varphi$ is central then there is another representation denoted as $\rho_{\varphi}$ which commutes with $\pi_{\varphi}$ and satisfies,
  %$$\rho_{\varphi}(g)v_{\varphi}= \pi_{\varphi}(g^{-1})v_{\varphi}. $$
  %Fix a trace $\varphi$. Let $\mathcal{L}$ denote the von Neumann algebra generated by $\pi_{\varphi}$, then $\mathcal{L}$ is simply the closer of $\pi_{\varphi}$ in the strong operator topology. Let also $\mathcal{M}$ be the von Neumann related to $\rho_{\varphi}$. Then clearly
  %$$\mathcal{M} \subset \mathcal{L}'.$$

\begin{lemma}[Schur's lemma]
%\marginpar{WHAT IS THE EXACT RELATIONSHIP WITH THE "CLASSICAL" SCHUR'S LEMMA}
\label{lem:Schur's lemma for characters}
  If $\varphi \in \chars{G}$ is a character then the restriction $\varphi_{|Z(G)}  $   is a multiplicative character of the center $Z(G)$ and
  $$ \varphi(gz) = \varphi(g) \varphi(z)$$
for every pair of elements $g \in G$ and $z \in Z(G)$.
\end{lemma}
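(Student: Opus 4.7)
The plan is to run the standard argument via the GNS construction together with the factor property of the von Neumann algebra $\mathcal{L}_\varphi$. Let $(\mathcal{H}_\varphi, \pi_\varphi, v_\varphi)$ denote the GNS triplet associated to $\varphi$. Since $\varphi$ is a character, we know that $\mathcal{L}_\varphi$ is a factor, namely $Z(\mathcal{L}_\varphi) = \CC \cdot \mathrm{Id}$. The main content of the lemma is the observation that for $z \in Z(G)$ the operator $\pi_\varphi(z)$ lies in that center.

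More precisely, I would first verify that for every $z \in Z(G)$ the unitary $\pi_\varphi(z)$ commutes with $\pi_\varphi(g)$ for all $g \in G$, simply because $zg = gz$ in $G$. Since $\mathcal{L}_\varphi$ is the weak-operator closure of the algebra generated by $\{\pi_\varphi(g) \: : \: g \in G\}$, it follows that $\pi_\varphi(z)$ commutes with every element of $\mathcal{L}_\varphi$, that is, $\pi_\varphi(z) \in \mathcal{L}'_\varphi$. On the other hand $\pi_\varphi(z) \in \mathcal{L}_\varphi$ by construction, hence $\pi_\varphi(z) \in \mathcal{L}_\varphi \cap \mathcal{L}'_\varphi = Z(\mathcal{L}_\varphi) = \CC \cdot \mathrm{Id}$. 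Therefore there is a scalar $\lambda(z) \in \CC$ with $\pi_\varphi(z) = \lambda(z) \mathrm{Id}$, and unitarity of $\pi_\varphi(z)$ forces $|\lambda(z)| = 1$.

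Pairing with the cyclic vector yields $\varphi(z) = \langle \pi_\varphi(z) v_\varphi, v_\varphi \rangle = \lambda(z)$. For an arbitrary $g \in G$ and $z \in Z(G)$ we then compute
$$ \varphi(gz) = \langle \pi_\varphi(g)\pi_\varphi(z) v_\varphi, v_\varphi \rangle = \lambda(z)\langle \pi_\varphi(g) v_\varphi, v_\varphi \rangle = \varphi(z)\varphi(g), $$
which is the multiplicativity statement. Specialising to $g = w \in Z(G)$ gives $\varphi(zw) = \varphi(z)\varphi(w)$, so $\varphi_{|Z(G)}$ is a group homomorphism into the unit circle, i.e.\ a multiplicative character of $Z(G)$.

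There is no real obstacle; the only place one should be slightly careful is the passage from \enquote{$\pi_\varphi(z)$ commutes with each $\pi_\varphi(g)$} to \enquote{$\pi_\varphi(z)$ commutes with all of $\mathcal{L}_\varphi$}, which uses that the commutant of a set is automatically weak-operator closed and closed under the algebra operations, so commuting with the generating set forces commutation with their weak-operator closure.
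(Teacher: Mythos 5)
Your proof is correct and follows essentially the same route as the paper: use the GNS triplet, observe that $\pi_\varphi(z)$ lies in $Z(\mathcal{L}_\varphi) = \CC\cdot\mathrm{Id}$ because $\mathcal{L}_\varphi$ is a factor, and then pair against the cyclic vector. You merely spell out in more detail why $\pi_\varphi(z)$ lands in the center, which the paper states without elaboration.
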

\begin{proof}
Let $\varphi \in \chars{G}$ be a character with a corresponding GNS construction $(\mathcal{H}_\varphi, \pi_\varphi, v_\varphi)$. The von Neumann algebra $\mathcal{L}_{\varphi}$ on the Hilbert space $\mathcal{H}_\varphi$ is a factor. For every central element $z \in Z(G)$ the operator $\pi_\varphi(z)$ lies in the center $Z(\mathcal{L}_\varphi)$ of the von Neumann algebra $\mathcal{L}_{\varphi}$. It follows that $\pi_\varphi(z) v_\varphi = \chi(z) v_\varphi$ for some multiplicative character $\chi : Z(G) \to S^1$ and all elements $z \in Z(G)$, as required. Finally let  $g \in G$ and  $z \in Z(G)$ be a pair of elements. Then
$$ \varphi(gz) = \left< \pi_\varphi(gz) v_\varphi, v_\varphi \right> = \left< \pi_\varphi(g)  \pi_\varphi(z) v_\varphi, v_\varphi \right>= \chi(z)\left< \pi_\varphi(g) v_\varphi, v_\varphi \right>= \varphi(z) \varphi(g)$$
as required.
\end{proof}

\begin{cor}
\label{cor:center in kernel of square}
If $\varphi \in \chars{G}$ then $Z(G) \le \ker |\varphi|^2$.
\end{cor}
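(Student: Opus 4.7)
The plan is to reduce the statement directly to Schur's lemma (Lemma \ref{lem:Schur's lemma for characters}), which has just been established. Concretely, I need to show that $|\varphi(z)|^2 = 1$ for every $z \in Z(G)$, which is the defining condition for $z$ to lie in $\ker|\varphi|^2$. Note that $|\varphi|^2$ is itself a trace by Proposition \ref{prop:square power is a trace}, so the notation $\ker|\varphi|^2$ is well-defined and it makes sense to speak of containment in this kernel.

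First I would invoke Schur's lemma to obtain the multiplicative character $\chi : Z(G) \to S^1$ such that $\pi_\varphi(z) v_\varphi = \chi(z) v_\varphi$ for every $z \in Z(G)$. The proof of the lemma also records that $\varphi(z) = \langle \pi_\varphi(z) v_\varphi, v_\varphi \rangle = \chi(z)$. Since $\chi$ takes values in the unit circle, $|\varphi(z)| = |\chi(z)| = 1$, hence $|\varphi|^2(z) = 1$ as required.

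There is essentially no obstacle here; the content is already packed into Schur's lemma, and the corollary is a one-line consequence. The only conceptual point worth emphasising is why $\chi$ lands in $S^1$ rather than in $\CC^*$: this follows either from the fact that $\pi_\varphi(z)$ is a unitary operator (so its scalar action on $v_\varphi$ must have modulus one), or alternatively from the general bound $|\varphi(g)| \le \varphi(e) = 1$ combined with multiplicativity of $\chi$, which forces $|\chi(z)|^n \le 1$ for all $n \in \ZZ$ and hence $|\chi(z)| = 1$.
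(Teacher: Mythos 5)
Your proof is correct and takes essentially the same route as the paper: invoke Proposition \ref{prop:square power is a trace} to see that $|\varphi|^2$ is a trace, then use Schur's lemma (Lemma \ref{lem:Schur's lemma for characters}) to conclude that $\varphi$ restricts to a multiplicative (unit-modulus) character on $Z(G)$, so $|\varphi|^2$ restricts to $1$ there. The extra remark about why $\chi$ lands in $S^1$ is a fine elaboration but adds nothing beyond what the paper already establishes in the statement of Schur's lemma.
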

\begin{proof}
Let $\varphi \in \chars{G}$ be a character. Then $|\varphi|^2 \in \traces{G}$ by Proposition \ref{prop:square power is a trace}    and $|\varphi|^2_{|Z(G)} = 1$ from Lemma \ref{lem:Schur's lemma for characters}.

\end{proof}

%\subsection*{Type I groups}

%A von Neumann algebra has \emph{type I} if it has a minimal projection.

%If the algebra is generated by a group character related to $\pi_{\varphi}$, then this is equivalent to $\pi_{\varphi}$ contains an irreducible subrepresentation. In this case since $\pi_{\varphi}$ generates a factor it is a multiple of an irreducible representation.
%\begin{definition}
%A representation of a discrete group is primary if the generated von Neumann algebra is a factor.
%\end{definition}
%The definition of type I algebras induces a similar definition for groups.
%\begin{definition}
%\label{def:type I group}
%A group is type I if the von Neumann algebra generated by any %primary representation is type I.
%\end{definition}

%\begin{theorem}\cite[Satz 6]{thoma1964positiv}.
%A discrete group $G$ is type I if and only if $G$ is virtually abelian.
%\end{theorem}
%Since finite type I factor algebras are by definition generated by finite dimensional representations we can conclude.

\subsection*{Relative characters}
Let $N$ be a normal subgroup of the  group $G$.  The set of \emph{relative traces} on the subgroup $N$ is given by
$$\tracerel{G}{N} = \{ \varphi \in \traces{N} \: : \: \varphi(n^g) = \varphi(n) \quad \forall n \in N, g \in G \}. $$
The set of  \emph{relative characters}  $\charrel{G}{N}$ consists of those relative traces that cannot be written as a non-trivial convex combination of   relative traces.

The restriction of any character $\varphi \in \chars{G}$ to the normal subgroup $N$ results in a relative character $\varphi_{|N} \in \charrel{G}{N}$  \cite[Lemma 14]{thoma1964unitare}. Conversely, given any relative character $\psi \in \charrel{G}{N}$  there exists a character $\varphi \in \chars{G}$ such that $\varphi_{|N} = \psi$  \cite[Lemma 16]{thoma1964unitare}.

%The group $G$ is called  \emph{ICC} if every non-trivial conjugacy class is infinite.
%
%
%
%\begin{prop}
%\label{prop:extension by zero of a relative character on an ICC quotient}
%Let $\psi \in \charrel{G}{N}$ be a relative character.
%The quotient group $G/N$   is ICC if and only if $ \hat{\psi} \in \chars{G}$.
%\end{prop}
%\begin{proof}
%PROBABLY NOT NEEDED (INSTEAD, IN THE PROOF, EXTEND BY 0 AND SHOW ALL IRREDUCIBLE COMPONENTS HAVE SAME DEPTH).
%\end{proof}

Recall that Choquet's theorem sets up a bijective correspondence between the set $\traces{N}$ of all traces on the group $N$ and the set $\mathcal{P}(\chars{N})$ of all Borel probability measures on the set of extreme points $\chars{N}$. 
This correspondence is equivariant with respect to the natural $G$-actions on both sets. It follows that the subset 
   $\tracerel{G}{N}$ of relative traces corresponds to $G$-invariant probability measures, and furthermore the subset $\charrel{G}{N}$ of relative characters corresponds to   ergodic ones.
   
\begin{prop}
\label{prop:extension by 0 is a trace}
If $\psi \in \tracerel{G}{N}$ is a relative trace then the function $\hat{\psi} : G \to \CC $ given by
$$\hat{\psi}(g) = \begin{cases} \psi(g) & g\in N\\ 0 & g \notin N \end{cases}$$
belongs to $\traces{G}$.
\end{prop}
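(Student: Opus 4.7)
The plan is to verify each of the three defining properties of a trace (value $1$ at identity, conjugation invariance on all of $G$, and positive definiteness on $G$) directly from the definition of $\hat\psi$ and the hypotheses on $\psi$. The first property is immediate since $e \in N$ and $\psi(e)=1$, so $\hat\psi(e)=1$.

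For conjugation invariance, I would fix $h,g \in G$ and split into two cases according to whether $h \in N$. If $h \in N$, then $g^{-1}hg \in N$ because $N$ is normal, and the relative trace condition $\psi(h^g)=\psi(h)$ gives $\hat\psi(g^{-1}hg)=\psi(g^{-1}hg)=\psi(h)=\hat\psi(h)$. If $h \notin N$, then $g^{-1}hg \notin N$ (again by normality, since $h = g(g^{-1}hg)g^{-1}$ would force $h \in N$), so both sides equal $0$.

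The main step, and the only one requiring any actual work, is positive definiteness. The idea is to exploit the fact that $\hat\psi(g_j^{-1}g_i)$ vanishes unless $g_i N = g_j N$, so the matrix $[\hat\psi(g_j^{-1}g_i)]$ is block diagonal (after a permutation) with blocks indexed by the cosets of $N$ that actually occur among $g_1,\ldots,g_n$. Concretely, given $g_1,\ldots,g_n \in G$ and $c_1,\ldots,c_n \in \CC$, partition the index set $\{1,\ldots,n\}$ into classes $I_1,\ldots,I_k$ where $i,j$ lie in the same class iff $g_iN = g_jN$. For each class $I_\ell$ choose a representative $g_\ell^* \in G$ and write $g_i = g_\ell^* n_i$ with $n_i \in N$ for $i \in I_\ell$. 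Then $g_j^{-1}g_i = n_j^{-1} n_i \in N$ for $i,j \in I_\ell$, while $g_j^{-1}g_i \notin N$ if $i$ and $j$ lie in different classes. Therefore
\[
\sum_{i,j=1}^n c_i \overline{c_j}\, \hat\psi(g_j^{-1}g_i) \;=\; \sum_{\ell=1}^k \sum_{i,j \in I_\ell} c_i \overline{c_j}\, \psi(n_j^{-1} n_i),
\]
and each inner sum is non-negative by the positive definiteness of $\psi$ on $N$. Hence the total sum is non-negative, establishing positive definiteness of $\hat\psi$. Note that the relative trace hypothesis on $\psi$ is only needed for conjugation invariance; positive definiteness uses only that $\psi \in \traces{N}$ and that $N \nrm G$. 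I do not anticipate any genuine obstacle, as the argument is a direct unpacking of the definitions.
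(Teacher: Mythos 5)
Your proof is correct but takes a genuinely different route from the paper. The paper argues via the GNS construction: it takes the GNS triple $(\mathcal{H}_\psi, \pi_\psi, v_\psi)$ for the restriction $\psi$ on $N$, forms the induced representation $\widehat{\pi}_\psi = \mathrm{Ind}_N^G(\pi_\psi)$ acting on $\bigoplus_{G/N}\mathcal{H}_\psi$, and observes that $\hat\psi$ is the matrix coefficient of the vector $v_\psi$ (sitting in the summand indexed by the trivial coset) in $\widehat{\pi}_\psi$, whence positive definiteness is immediate. You instead prove positive definiteness by hand, noting that the matrix $[\hat\psi(g_j^{-1}g_i)]$ is block-diagonal after grouping indices by cosets of $N$, and that each block is a Gram-type matrix of $\psi$ over elements of $N$. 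Your argument is more elementary and self-contained — it avoids the GNS construction and induced representations entirely, and as you observe it isolates precisely which hypothesis does what. The paper's approach is shorter given the machinery already set up in the section, and it has the advantage of making explicit the representation-theoretic content that the paper invokes elsewhere (in particular, the footnote attached to the definition of induced traces asserts that $\varphi$ vanishing off $N$ is equivalent to $\pi_\varphi$ being induced from $N$, and the paper's proof of this proposition is exactly where that equivalence is justified). One small remark on your closing comment: normality of $N$ is in fact not needed even for the positive-definiteness step, since $g_j^{-1}g_i \in N$ if and only if $g_iN = g_jN$ for any subgroup $N$; normality is only used for conjugation invariance.
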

\begin{proof}
Consider the GNS construction $(\mathcal{H}_{\psi}, \pi_\psi, v_\psi)$ corresponding to the relative character $\psi$ so that $\pi_\psi$ is a unitary representation of the normal subgroup $N$. Let $\widehat{\pi}_\psi = \mathrm{Ind}_N^G(\pi_\psi)$ be the induced unitary representation of the group $G$. The Hilbert space $\hat{\mathcal{H}}_\psi$ on which the representation $\widehat{\pi}_\psi$ acts is given by the orthogonal direct sum
$$ \hat{\mathcal{H}}_\psi = \bigoplus_{  G/N} \mathcal{H}_\psi.$$
It is easy to see that  the function $\hat{\psi}$ coincides with the matrix coefficient of the vector $v_\psi$ in the unitary representation $ \widehat{\pi}_\psi $ and as such is positive definite. The function $\hat{\psi}$ is clearly conjugation invariant and satisfies $\hat{\psi}(e) = 1$ as required.
\end{proof}

%
%With respect to this corresponThis correpo
%
% 
%\begin{prop}
%\label{prop:invariant ergodic probability measures and relative characters}
%%Assume that the normal subgroup $N$ has type $I$. 
%The bijective correspondence between  and the set  of Borel probability measures on     The subspace corresponds to $G$-invariant and ergodic probability measures.
%\end{prop}
%\begin{proof}
%
%\end{proof}

\subsection*{Virtually central subgroups}

A subgroup $H$ of $G$ is   \emph{virtually central in $G$} if $\left[H : H \cap Z(G) \right] < \infty$. 
Let $N$ be a virtually central normal subgroup of   $G$. Denote 
$$Z = N \cap Z(G) \quad \text{and} \quad M = N / Z$$
so that $|M| < \infty$ by definition.

\begin{prop}
\label{prop:finitely many characters with a given multiplicative character}
Let $\chi$  be a multiplicative character of the central group $Z$. Then
\begin{enumerate}
\item there are only finitely many characters $\psi \in \chars{N}$ with $\psi_{|Z} = \chi$, and
\item the GNS construction associated to every such character $\psi \in \chars{N}$ is finite dimensional.
\end{enumerate}
\end{prop}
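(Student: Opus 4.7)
The plan is to exploit the finiteness of $M = N/Z$ together with Schur's lemma (Lemma~\ref{lem:Schur's lemma for characters}) to reduce the problem to the representation theory of a finite-dimensional twisted group algebra.

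First I would observe that $Z = N \cap Z(G)$ is contained in $Z(N)$, since elements of $Z$ commute with every element of $G \supseteq N$. Therefore, for any character $\psi \in \chars{N}$ with $\psi_{|Z} = \chi$, Lemma~\ref{lem:Schur's lemma for characters} applied to $\psi$ forces $\pi_\psi(z) = \chi(z)\,\mathrm{Id}_{\mathcal{H}_\psi}$ for every $z \in Z$. Fix a set-theoretic section $s : M \to N$ of the quotient map. Each $n \in N$ decomposes uniquely as $n = s(m)z$ with $m \in M$ and $z \in Z$, so $\pi_\psi(n) = \chi(z)\pi_\psi(s(m))$. Consequently the linear span $\mathcal{A}_\psi = \mathrm{span}_\CC \{\pi_\psi(n) : n \in N\}$ is already generated by the $|M|$ operators $\pi_\psi(s(m))$ for $m \in M$, and hence $\dim \mathcal{A}_\psi \le |M|$.

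Since $\pi_\psi(N)$ is closed under products and adjoints (the latter because $\pi_\psi$ is unitary and $N$ is closed under inverses), $\mathcal{A}_\psi$ is a unital finite-dimensional $*$-subalgebra of $B(\mathcal{H}_\psi)$. All Hausdorff vector topologies agree on finite-dimensional subspaces, so $\mathcal{A}_\psi$ is already closed in the weak operator topology and therefore equals $\mathcal{L}_\psi$. The character hypothesis forces $\mathcal{L}_\psi$ to be a finite-dimensional factor, necessarily isomorphic to $\Mn{n}{\CC}$ for some $n \in \NN$. Cyclicity of $v_\psi$ then gives $\mathcal{H}_\psi = \mathcal{L}_\psi v_\psi$, which is finite-dimensional (of dimension at most $n^2 \le |M|$). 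This establishes~(2).

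For~(1), Proposition~\ref{prop:finite dimensional implies trace} combined with~(2) yields $\psi(g) = \mathrm{tr}(\pi_\psi(g))/\dim \mathcal{H}_\psi$ for all $g \in N$. Since $\mathcal{L}_\psi \cong \Mn{n}{\CC}$ is a factor, the Hilbert space $\mathcal{H}_\psi$ is an isotypical module, so $\pi_\psi$ is unitarily equivalent to a direct sum of copies of a single irreducible finite-dimensional unitary representation $\pi_0$ of $N$ on which $Z$ acts via $\chi$; averaging traces gives $\psi(g) = \mathrm{tr}(\pi_0(g))/\dim \pi_0$. Via the section $s$, such irreducible representations $\pi_0$ correspond to irreducible representations of the twisted group algebra $\CC^\sigma[M]$, where $\sigma$ is the $2$-cocycle on $M$ determined by the restriction of $\chi$ and by $s$. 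Since $\CC^\sigma[M]$ is finite-dimensional it admits only finitely many irreducible representations up to isomorphism, and~(1) follows. The most delicate point is keeping the bookkeeping clean between the GNS data of $\psi$ and the underlying irreducible projective representation of $M$, ensuring that the assignment $\psi \leftrightarrow [\pi_0]$ is both well-defined and injective; the remaining ingredients (Schur's lemma, WOT-closure of finite-dimensional $*$-subalgebras, and the classification of finite-dimensional factors) are standard.
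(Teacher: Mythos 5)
Your proof is correct and follows the same overall strategy as the paper (identify $Z$-central representations of $N$ with projective $c_\chi$-representations of the finite quotient $M$, equivalently modules over the twisted group algebra $\CC^{c_\chi}[M]$), but you arrive at the finite-dimensionality more directly and, importantly, handle the module structure of $\mathcal{H}_\psi$ more carefully than the paper does. The paper's proof asserts that, $A_\chi$ being semisimple, the cyclic module $\mathcal{H}_\psi$ decomposes as a sum of \emph{non-isomorphic} simple $A_\chi$-modules and then concludes that $\mathcal{H}_\psi$ is itself simple; the latter step is not correct as stated, since the GNS module of a tracial state on a finite factor $M_n(\CC)$ is $(\CC^n)^{\oplus n}$ (standard form), which is isotypic of multiplicity $n$ rather than simple. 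You avoid this: you prove finite-dimensionality by the elementary count $\dim \mathcal{A}_\psi \le |M|$ (which also sidesteps any appeal to semisimplicity of $A_\chi$ for this part), observe that a finite-dimensional $*$-subalgebra is automatically WOT-closed so $\mathcal{L}_\psi = \mathcal{A}_\psi \cong M_n(\CC)$, and then correctly decompose $\pi_\psi \cong \pi_0^{\oplus k}$ isotypically and recover $\psi = \mathrm{tr}(\pi_0)/\dim \pi_0$ by averaging. The final classification step — finitely many irreducible modules over the finite-dimensional algebra $\CC^{c_\chi}[M]$ — is the same in both treatments. In short, your argument is the one the paper's proof should have been: same route, but with the isotypic structure handled correctly.
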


The proof   is based on well-known facts concerning projective representations. We outline the proof for the reader's convenience.

\begin{proof}[Proof of Proposition \ref{prop:finitely many characters with a given multiplicative character}]
Fix an arbitrary section $\lambda : M \to N$  with $\lambda(e_M) = e_N$. The function $c_\chi : M \times M \to \CC$ given by
$$ c_{\chi}(m_1,m_2) = \chi( \lambda(m_1) \lambda(m_2) \lambda ( m_1 m_2 )^{-1} ) \quad \forall m_1,m_2 \in M$$
  is a $2$-cocycle \cite[\S2]{karpilovsky1985projective}. Recall that a projective $c_\chi$-representation of the quotient group $M$ on a given vector space $V$ is a map $\bar{\pi} : M \to \mathrm{GL}(V)$ satisfying
  $$ \bar{\pi}(m_1) \bar{\pi}(m_2) = c_\chi(m_1,m_2) \bar{\pi}(m_1m_2)$$
for all elements $m_1, m_2 \in M$. 
  The set of all representations     of the group $N$ restricting to the multiplicative character $\chi$ on the subgroup $Z$ bijectively corresponds to the set of all  projective $c_\chi$-representations   of the quotient group $M$. The latter set stands in a bijective correspondence with  the set of all modules over the twisted group algebra $A_\chi = \CC^{c_\chi} M$  \cite[Theorem 3.2.5]{karpilovsky1985projective}. As the   $\CC$-algebra $A_\chi$ is semisimple \cite[Theorem 3.2.10]{karpilovsky1985projective}  every irreducible projective $c_\chi$-representation is finite dimensional and there are only finitely many isomorphism classes of such representations.

Consider a character $\psi \in \chars{N}$ with $\psi_{|Z} = \chi$ and with GNS triplet $(\mathcal{H}_\psi, \pi_\psi,  v_\psi)$. Let $\bar{\pi}_\psi$ be the projective $c_\chi$-representation  of the finite quotient group $M$ corresponding to the representation $\pi_\psi$ of the group $N$. Therefore $\mathcal{H}_\psi$ can be regarded as a cyclic module over the twisted group algebra $A_\chi$. Since $A_\chi$ is semisimple it follows  that $\mathcal{H}_\psi$ decomposes as a direct sum of non-isomorphic simple $A_\chi$-modules. However $\psi$ is a character so that the von Neumann algebra $\mathcal{L}_\psi$ is a factor. This algebra is finite dimensional and coincides with $\bar{\pi}_\psi(A_\chi)$. We conclude that $\mathcal{H}_\psi$ is a simple $A_\chi$-module. In other words $\bar{\pi}_\psi$  is irreducible (and so is $\pi_\psi$). The desired conclusion  follows from this.
\end{proof}

%\begin{prop}
%\label{prop:characters correspond to projective reps}
%The set of   characters $\psi \in \chars{G}$ with $\psi_{|N \cap Z(G)}= \chi$ bijectively corresponds to the set of irreducible projective representations of the group $M$ with cocycle $c_\chi$.
%\end{prop}
%\begin{proof}
%Irreducible representations $\pi$ of the group $N$ restricting to the multiplicative character $\chi$ on the subgroup $N \cap Z(G)$ bijectively correspond to irreducible projective representations $\bar{\pi}$ of the quotient group $M$ with cocycle $c_\chi$. Irreducible projective representations are finite dimensional \cite{karpilovsky1985projective}.
%
%The von Neumann algebra generated by $\pi$ coincides with that generated by $\bar{\pi}$. 
%
%so that $M$ is a finite group. Let $\psi \in \charrel{G}{N}$ be a character with $\psi_{|N \cap Z(G)} = \chi$.
%
%Fix a character $\chi$ on $Z$. Irreducible representations of $N$ restricting to $\chi$ on $Z$ correspond to projective representations of $N/Z$ with a given cocycle.  Frobenius reciprocity for projective representations shows there are only finite many such (CHECK AND QUOTE).
%\end{proof}

We remark that, as with finite groups, characters of the group $N$ restricting to a given multiplicative character $\chi$ of the central subgroup $Z$ bijectively correspond to irreducible projective $c_\chi$-representations of the finite group $M$ over the $2$-cocycle $c_\chi$ corresponding to $\chi$   \cite[Theorem 7.1.11]{karpilovsky1985projective}.

\begin{prop}
\label{prop:relative character of virtually central}
If  $\psi \in \charrel{G}{N}$ is a relative character then there is a finite $G$-orbit $O_\psi \subset \chars{N}$ such that
 $$ \psi = \frac{1}{|O_\psi|} \sum_{\zeta \in O_\psi} \zeta.$$
\end{prop}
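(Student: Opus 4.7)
The plan is to exploit the correspondence between relative characters $\charrel{G}{N}$ and ergodic $G$-invariant Borel probability measures on $\chars{N}$, together with the finiteness statement of Proposition \ref{prop:finitely many characters with a given multiplicative character}. Let $\mu_\psi$ denote the ergodic $G$-invariant probability measure on $\chars{N}$ corresponding to the relative character $\psi$ via Choquet's theorem, so that $\psi = \int_{\chars{N}} \zeta \, \mathrm{d}\mu_\psi(\zeta)$.

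Next I would study the restriction map $r : \chars{N} \to \widehat{Z}$, $\zeta \mapsto \zeta_{|Z}$, where $\widehat{Z}$ denotes the Pontryagin dual of the central abelian subgroup $Z = N \cap Z(G)$. By Schur's lemma for characters (Lemma \ref{lem:Schur's lemma for characters}), applied to the group $N$ and its center which contains $Z$, each restriction $\zeta_{|Z}$ is indeed a multiplicative character of $Z$, so $r$ is well-defined and Borel. The key observation is that $Z \le Z(G)$, so for every $g \in G$ and every $z \in Z$ we have $(g \cdot \zeta)(z) = \zeta(g^{-1} z g) = \zeta(z)$. Hence $r$ is $G$-invariant when $\widehat{Z}$ is equipped with the trivial $G$-action.

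By the ergodicity of $\mu_\psi$ the $G$-invariant measurable function $r$ must be essentially constant. Therefore there is a single multiplicative character $\chi \in \widehat{Z}$ such that $\zeta_{|Z} = \chi$ for $\mu_\psi$-almost every $\zeta \in \chars{N}$. Invoking Proposition \ref{prop:finitely many characters with a given multiplicative character} (the virtual centrality of $N$ in $G$ is exactly what ensures its hypothesis), the preimage $r^{-1}(\chi) = \{\zeta \in \chars{N} : \zeta_{|Z} = \chi\}$ is a \emph{finite} subset of $\chars{N}$. Consequently $\mu_\psi$ is an ergodic $G$-invariant probability measure supported on a finite $G$-set.

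To finish, I would observe that any ergodic invariant probability measure on a finite $G$-set is necessarily the uniform measure on a single $G$-orbit $O_\psi \subset r^{-1}(\chi)$. Reinserting into the Choquet integral yields
\[
\psi = \int_{\chars{N}} \zeta \, \mathrm{d}\mu_\psi(\zeta) = \frac{1}{|O_\psi|} \sum_{\zeta \in O_\psi} \zeta,
\]
as required. The only subtlety worth double-checking is the measurability of $r$ and the precise form of the correspondence between $\charrel{G}{N}$ and ergodic $G$-invariant measures on $\chars{N}$; both are standard from the Choquet-theoretic discussion given earlier in \S\ref{sec: character theory}, so no real obstacle should arise.
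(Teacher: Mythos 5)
Your proof is correct and follows essentially the same strategy as the paper: decompose $\psi$ via Choquet as an ergodic $G$-invariant measure $\mu_\psi$ on $\chars{N}$, reduce to a single multiplicative character $\chi$ on $Z = N \cap Z(G)$, invoke Proposition~\ref{prop:finitely many characters with a given multiplicative character} to get finiteness of $\mathrm{Ch}_\chi(N)$, and conclude that $\mu_\psi$ is uniform on a finite orbit. The one place where you diverge slightly, and arguably to your advantage, is in how you establish that $\zeta_{|Z}$ is $\mu_\psi$-a.s.\ equal to a single $\chi$: the paper applies Schur's lemma (Lemma~\ref{lem:Schur's lemma for characters}) directly to the relative character $\psi$, which implicitly leans on Thoma's extension theorem to view $\psi$ as the restriction of some $\varphi \in \chars{G}$, whereas you apply Schur's lemma only to the genuine characters $\zeta \in \chars{N}$ appearing in the decomposition and then observe that the restriction map $r : \chars{N} \to \widehat{Z}$ is $G$-invariant (because $Z \le Z(G)$) and hence $\mu_\psi$-essentially constant by ergodicity. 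Both routes are sound; yours is self-contained at the level of $\chars{N}$ and makes the role of ergodicity explicit, at the small cost of having to note that $\widehat{Z}$ is a standard Borel space (which it is, since $Z$ is countable discrete, so $\widehat{Z}$ is compact metrizable).
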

\begin{proof}
The restriction of the character $\psi$ to the central subgroup $Z = N \cap Z(G)$ is given by some multiplicative character $\chi : Z  \to \CC$, see Lemma   \ref{lem:Schur's lemma for characters}. There is an ergodic $G$-invariant probability measure $\mu_\psi$   on the space $\chars{N}$ such that $\psi = \int \zeta \, \mathrm{d} \mu_\psi(\zeta)$.
It is clear that the probability measure $\mu_\psi$ is supported  on the Borel subset
$$ \mathrm{Ch}_\chi(N)  = \{\psi \in \chars{N} \: : \: \psi_{|Z} = \chi\}.$$ 
However the subset $ \mathrm{Ch}_\chi(N)$  is finite by Proposition \ref{prop:finitely many characters with a given multiplicative character}. Therefore $\mu_\psi$ must be a uniform probability measure supported on some finite $G$-orbit $O_\chi \subset \mathrm{Ch}_\chi(N)$. The conclusion follows.
\end{proof}

\begin{cor}
\label{cor:every irreducible rep of virtually abelian is finite dimensional}
Let $\psi \in \charrel{G}{N}$ be a relative character. Then there exists a finite dimensional space $V_\psi$ and a representation  $\pi_\psi : N \to \mathrm{GL}(V)$ satisfying 
$$\psi(n) = \frac{\mathrm{tr}\pi_\psi(n)}{\dim_\CC V } $$ for all elements $n \in N$.
\end{cor}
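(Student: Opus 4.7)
The plan is to assemble the preceding three propositions into a direct construction of $\pi_\psi$. First I would invoke Proposition \ref{prop:relative character of virtually central} to write
$$ \psi = \frac{1}{|O_\psi|}\sum_{\zeta \in O_\psi}\zeta $$
for some finite $G$-orbit $O_\psi \subset \chars{N}$. Since $Z = N \cap Z(G)$ is pointwise fixed under $G$-conjugation, every $\zeta \in O_\psi$ restricts to one and the same multiplicative character $\chi$ of $Z$, namely $\chi = \psi_{|Z}$, as provided by Lemma \ref{lem:Schur's lemma for characters}.

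Next, I would apply Proposition \ref{prop:finitely many characters with a given multiplicative character} to each $\zeta \in O_\psi$ to obtain a GNS triplet $(\mathcal{H}_\zeta, \pi_\zeta, v_\zeta)$ with $\dim_\CC \mathcal{H}_\zeta < \infty$. Since two characters in the same $G$-orbit are related by a conjugation in $G$, their GNS representations are unitarily equivalent (after pre-composition with that conjugation), so they share a common dimension $d := \dim_\CC \mathcal{H}_\zeta$. By Proposition \ref{prop:finite dimensional implies trace}, each such $\zeta$ satisfies $\zeta = \mathrm{tr}\,\pi_\zeta / d$.

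Finally, I would form the direct sum
$$ V_\psi = \bigoplus_{\zeta \in O_\psi} \mathcal{H}_\zeta, \qquad \pi_\psi = \bigoplus_{\zeta \in O_\psi} \pi_\zeta, $$
which is a representation of $N$ on a finite dimensional Hilbert space of dimension $d\,|O_\psi|$. Then
$$ \mathrm{tr}\,\pi_\psi(n) = \sum_{\zeta \in O_\psi} \mathrm{tr}\,\pi_\zeta(n) = d\sum_{\zeta \in O_\psi}\zeta(n) = d\,|O_\psi|\,\psi(n) $$
for every $n \in N$, which gives $\psi(n) = \mathrm{tr}\,\pi_\psi(n)/\dim_\CC V_\psi$ as required.

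There is essentially no obstacle: the result is purely formal once the three preceding propositions are in place. The only mild point of care is verifying that all members of the orbit $O_\psi$ share a common restriction to $Z$ (so that Proposition \ref{prop:finitely many characters with a given multiplicative character} applies uniformly) and that conjugate characters have equal GNS dimensions (so the averaged formula for $\psi$ translates cleanly into the normalized trace of the direct sum).
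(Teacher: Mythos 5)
Your proposal is correct and follows essentially the same route as the paper's proof: decompose $\psi$ via Proposition \ref{prop:relative character of virtually central} into an average over a finite $G$-orbit $O_\psi$, observe that each $\zeta \in O_\psi$ has a finite-dimensional GNS space of common dimension (via Proposition \ref{prop:finitely many characters with a given multiplicative character} and $G$-transitivity), convert each $\zeta$ to a normalized trace with Proposition \ref{prop:finite dimensional implies trace}, and take the direct sum. You spell out the two ``mild points of care'' (common restriction to $Z$, equal GNS dimensions across the orbit) slightly more explicitly than the paper does, but this is the same argument.
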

% \marginpar{Is this representation a GNS of something?}

\begin{proof}
Let $O_\psi \subset \chars{N}$ be the finite $G$-orbit   given by Proposition \ref{prop:relative character of virtually central}.  Let $(\mathcal{H}_\zeta, \pi_\zeta, v_\zeta)$ be the GNS construction associated to each character $\zeta \in O_\psi$. As $G$ is  transitive on the orbit $O_\psi$ the dimension $d = \dim_\CC \mathcal{H}_\zeta$ is independent of   the choice of the character $\zeta \in O_\psi$.  Moreover   $ d < \infty$  according to Proposition \ref{prop:finitely many characters with a given multiplicative character}.  Lastly 
 $$ \zeta(n) = \frac{\mathrm{tr} \pi_\zeta (n)}{d}  $$
holds true for all elements $n \in N$ and all characters $\zeta \in O_\psi$, see Proposition \ref{prop:finite dimensional implies trace}. To conclude the proof consider the representation
 $\pi_\psi = \bigoplus_{\zeta \in O_\psi} \pi_\zeta$ on the finite dimensional Hilbert space $V = \bigoplus_{\zeta \in O_\psi} \mathcal{H}_\zeta$.   %Let $\pi$ be an irreducible representation of $G$ acting on the vector space $V$.   Since $\pi$ is irreducible every non-zero vector $v$ is cyclic. However  $\mathrm{span}(\{\pi(g)v \: : \: g \in G \}$ is finite dimensional and the result follows.
\end{proof}

\begin{remark}
A virtually central group is clearly virtually abelian. However, the center of a virtually abelian group need not be of finite index.  For example, the infinite dihedral group is virtually abelian and center free.
\end{remark}

A discrete group is \emph{type I} if and only if it is virtually abelian \cite{thoma1964unitare}. In particular a virtually central subgroup is type I.

\subsection*{Induced traces}

Let $H$ and $N$ be subgroups of $G$ with $N \le H \le G$.  A trace $\varphi \in \traces{H}$ is \emph{induced  from the subgroup $N$} if $\varphi(g) = 0$ for all elements $g \in H \setminus N$, or what is equivalent\footnote{See Proposition \ref{prop:extension by 0 is a trace} and its proof for more details on this equivalence.
}, the  corresponding GNS construction $\pi_\varphi : H \to \mathcal{U}(\mathcal{H}_\varphi)$ is induced in the representation-theoretic sense from a unitary representation of $N$. 
Denote
\begin{equation*}
\label{def Ind}
 \Ind{G}{H}{L} = \{ \varphi \in \traces{G} \: : \: \varphi(g) = 0 \quad \forall g \in H \setminus N\}.
 \end{equation*}
In other words $ \Ind{G}{H}{L}$ consists of those traces $\varphi \in \traces{G}$ whose  restriction $\varphi_{|H}$ is induced from the subgroup $L$. 

The following results are  based to a large extent on    \cite{bekka}.

%Let $N \nrm H \le G$ be a pair of subgroups.  Let $\varphi \in \traces{G}$ be a trace with GNS construction $(\mathcal{H}_{\varphi}, \pi_\varphi, v_\varphi)$ that vanishes on   $H \setminus N$.

%\begin{lemma}[Howe]
%\label{lemma:character vanishes on pair of elements whose commutator is central}
%Let $\varphi \in \chars{G}$ be a faithful character. If $g,h \in G$ are  elements with
%$$e \neq [g,h] \in Z(G).$$
%then $\varphi(g) = \varphi(h) = 0$.
%\end{lemma}
%\begin{proof}
%
%\end{proof}

\begin{lemma}
	\label{lem:sequence of elements different mod N goes weak-star to zero}
Let $\varphi \in \Ind{G}{H}{N}$ be a trace with GNS construction $(\pi_\varphi, \mathcal{H}_\varphi, v_\varphi)$. 
If  $h_n \in H$ is a sequence of elements belonging to pairwise distinct  left cosets of the subgroup $N$   then the     unit vectors $\pi_\varphi(h_n)v_\varphi$ weak-$*$ converge to $0$  in  $\mathcal{H}_\varphi$.
\end{lemma}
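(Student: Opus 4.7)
The plan is to exploit the fact that the matrix coefficient $\varphi$ vanishes outside $N$ to deduce that the vectors in question form an \emph{almost orthonormal} system relative to any cyclic translate, which forces weak convergence to $0$.

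First, I would observe that for any two elements $h, h' \in H$ lying in distinct left cosets of $N$ one has $h'^{-1}h \in H \setminus N$ and therefore
\[
\langle \pi_\varphi(h) v_\varphi, \pi_\varphi(h') v_\varphi \rangle = \varphi(h'^{-1} h) = 0
\]
by the definition of $\Ind{G}{H}{N}$. In particular the sequence $\{\pi_\varphi(h_n) v_\varphi\}_{n \in \NN}$ is an orthonormal system in $\mathcal{H}_\varphi$.

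The next step is to upgrade this observation to weak convergence against an arbitrary vector $w \in \mathcal{H}_\varphi$. Since the vector $v_\varphi$ is cyclic for the representation $\pi_\varphi$, the linear span of $\{ \pi_\varphi(g) v_\varphi \: : \: g \in G\}$ is dense in $\mathcal{H}_\varphi$. Moreover the sequence $\pi_\varphi(h_n) v_\varphi$ is uniformly bounded (indeed, unit vectors), so by a standard density argument it suffices to establish the convergence $\langle \pi_\varphi(h_n) v_\varphi, \pi_\varphi(g) v_\varphi \rangle \to 0$ for each fixed $g \in G$.

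For the key computation, fix $g \in G$ and note that
\[
\langle \pi_\varphi(h_n) v_\varphi, \pi_\varphi(g) v_\varphi \rangle = \varphi(g^{-1} h_n).
\]
I would argue separately depending on whether $g \in H$ or not. If $g \notin H$ the argument requires a bit more care (one may need to pass to the coset decomposition of $G$ with respect to $H$ and work inside the induced picture, or simply note that $g^{-1} h_n$ lies in distinct cosets of $H$ as $n$ varies if $g \notin H$, forcing the matrix coefficient to vanish eventually). In the main case $g \in H$, the elements $g^{-1} h_n \in H$ again lie in pairwise distinct left cosets of $N$, since $(g^{-1} h_m)^{-1}(g^{-1} h_n) = h_m^{-1} h_n \notin N$ for $n \neq m$; hence at most one index $n$ satisfies $g^{-1} h_n \in N$, and for all other $n$ the trace vanishes.

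The main (minor) obstacle will be handling the case $g \notin H$ cleanly; I expect this to be dispatched by the observation that the restriction $\varphi_{|H}$ already determines the behavior on $G$ in the induced picture, so the inner product reduces to an expression that again lives in $H$ and is controlled by the coset separation of the $h_n$. Combined with uniform boundedness of the unit sequence, this yields weak-$*$ convergence to $0$ in $\mathcal{H}_\varphi$ as required.
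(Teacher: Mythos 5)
Your key observation --- that for distinct cosets $h_m N \neq h_n N$ one has $\langle \pi_\varphi(h_n)v_\varphi, \pi_\varphi(h_m)v_\varphi\rangle = \varphi(h_m^{-1}h_n) = 0$, so the $\pi_\varphi(h_n)v_\varphi$ form an orthonormal system --- is exactly the paper's key observation. But once you know the vectors are orthonormal, Bessel's inequality
$\sum_n \lvert \langle \pi_\varphi(h_n)v_\varphi, w\rangle \rvert^2 \le \lVert w\rVert^2$
already gives $\langle \pi_\varphi(h_n)v_\varphi, w\rangle \to 0$ for \emph{every} $w \in \mathcal{H}_\varphi$, with no need for a cyclicity/density argument at all. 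This is precisely how the paper finishes, and you should stop there.

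Your attempt to instead reduce to matrix coefficients against $\pi_\varphi(g)v_\varphi$ runs into a genuine gap when $g\notin H$, and the way you propose to patch it does not work. First, your claim that the elements $g^{-1}h_n$ ``lie in distinct cosets of $H$ as $n$ varies'' is false: since $h_n \in H$, every $g^{-1}h_n$ lies in the \emph{same} coset $g^{-1}H$. Second, membership in $\Ind{G}{H}{N}$ only guarantees that $\varphi$ vanishes on $H\setminus N$; it says nothing about the values of $\varphi$ on $G\setminus H$, so there is no reason for $\varphi(g^{-1}h_n)$ to vanish when $g\notin H$. The appeal to ``the induced picture determining behavior on $G$'' is also unsubstantiated: the equivalence with representation-theoretic induction stated in the paper concerns the restriction $\pi_\varphi|_H$ acting on the $H$-cyclic subspace, not the full $G$-representation, and does not control $\varphi$ outside of $H$. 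In short, the density detour introduces a problem that Bessel's inequality sidesteps entirely; replacing the second half of your argument with Bessel's inequality recovers the paper's proof.
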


%Here $\pi_\varphi$ is unitary representation and $v_\varphi$ is the cyclic vector associated to the trace $\varphi$ by the GNS construction.

\begin{proof}
We claim that the unit vectors $\pi_\varphi(h_n) v_\varphi$ are pairwise orthogonal, as can be seen by   observing that
$$ \left< \pi_\varphi(h_n) v_\varphi, \pi_\varphi(h_m) v_\varphi \right> = \left< \pi_\varphi(h_m ^{-1} h_n) v_\varphi,  v_\varphi \right> = \varphi(h_m^{-1}h_n) = 0$$
for all distinct indices $n,m \in \NN$. The result follows from Bessel's inequality.
\end{proof}

Another way to see why Lemma \ref{lem:sequence of elements different mod N goes weak-star to zero} is true would be using the fact that the unitary representation $\pi_\varphi$ is induced   from the subgroup $N$.

%\marginpar{is there are a way to write this as a subgroup condition?}

\begin{lemma}
	\label{lem:application of lemma on sequences of elements for vanishing}
% Let $\varphi \in \traces{G}$ be a trace that vanishes on  $H \setminus N$.
Let $\varphi \in \Ind{G}{H}{N}$ be a trace and $g \in G$ be an element.
		If   there is a sequence of elements  $x_n \in G$  such that the commutators  $\left[g,x_n\right] $ belong to pairwise distinct cosets of $N$  and  such that
	$\left[g,x_n\right] \in H$ for all $n \in \NN$
	then $\varphi(g) = 0 $.
	%	Let $\varphi \in \traces{G}$ and assume that $\varphi(g) = 0$ for all $g \in M \setminus N$.
	%	Then $\varphi(g) = 0$.
\end{lemma}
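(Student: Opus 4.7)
The plan is to chain together conjugation invariance of $\varphi$ with the previous Lemma \ref{lem:sequence of elements different mod N goes weak-star to zero}. Let $(\mathcal{H}_\varphi, \pi_\varphi, v_\varphi)$ denote the GNS triplet associated to the trace $\varphi \in \Ind{G}{H}{N} \subset \traces{G}$, so that $\pi_\varphi$ is a unitary representation of the whole group $G$ and $\varphi(\cdot) = \langle \pi_\varphi(\cdot) v_\varphi, v_\varphi\rangle$.

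First I would use the conjugation invariance of $\varphi$ to rewrite, for each $n \in \NN$,
$$ \varphi(g) = \varphi(x_n^{-1} g x_n) = \varphi\bigl(g \cdot [g,x_n]\bigr), $$
using the identity $x_n^{-1} g x_n = g \cdot [g,x_n]$ with the convention $[g,x_n] = g^{-1} x_n^{-1} g x_n$. Translating this equality into the GNS picture gives
$$ \varphi(g) = \bigl\langle \pi_\varphi(g)\, \pi_\varphi([g,x_n])\, v_\varphi,\, v_\varphi \bigr\rangle = \bigl\langle \pi_\varphi([g,x_n])\, v_\varphi,\, \pi_\varphi(g^{-1}) v_\varphi \bigr\rangle $$
for every $n$, so that $\varphi(g)$ is expressed as the inner product of a moving vector against a fixed vector.

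Next I would apply Lemma \ref{lem:sequence of elements different mod N goes weak-star to zero} with $h_n := [g,x_n]$. By hypothesis these elements lie in $H$ and in pairwise distinct left cosets of $N$ in $H$, which are precisely the conditions of that lemma. Therefore $\pi_\varphi([g,x_n]) v_\varphi$ converges weakly to $0$ in $\mathcal{H}_\varphi$. Taking the inner product against the fixed vector $\pi_\varphi(g^{-1}) v_\varphi$ yields $\varphi(g) = \lim_{n\to\infty} \langle \pi_\varphi([g,x_n]) v_\varphi, \pi_\varphi(g^{-1}) v_\varphi \rangle = 0$, as desired.

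There is no real obstacle here; the only thing to get right is the bookkeeping for the commutator identity $x_n^{-1} g x_n = g \cdot [g,x_n]$ so that the commutators (which land in $H$ by assumption) are exactly the elements fed into Lemma \ref{lem:sequence of elements different mod N goes weak-star to zero}. Once that algebraic rearrangement is in place, the conclusion follows immediately from weak convergence.
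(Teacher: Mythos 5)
Your proof is correct and matches the paper's argument essentially verbatim: conjugation invariance gives $\varphi(g) = \varphi(g[g,x_n]) = \langle \pi_\varphi([g,x_n]) v_\varphi, \pi_\varphi(g^{-1}) v_\varphi\rangle$, and then Lemma \ref{lem:sequence of elements different mod N goes weak-star to zero} applied to $h_n = [g,x_n]$ gives the weak convergence to zero, hence $\varphi(g) = 0$.
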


\begin{proof}
%Let $(\mathcal{H}_{\varphi}, \pi_\varphi, v_\varphi)$ be the GNS construction associated to the trace $\varphi$.
%Let $\pi_\varphi$ be the unitary representation and $v_\varphi$ the cyclic vector associated to the trace $\varphi$ by the GNS construction. We have that
The conjugation invariance of the trace $\varphi$ implies that
$$ \varphi(g) = \varphi(x_n^{-1} g x_n ) = \varphi(g\left[g,x_n\right]) = \left< \pi_\varphi(\left[g,x_n\right])v_\varphi, \pi_\varphi(g)^{-1}v_\varphi \right> $$
for all indices $n \in \NN$. The sequence of vectors $\pi_\varphi(\left[g,x_n\right])v_\varphi  $   weak-$*$    converges    to the zero vector of the Hilbert space $\mathcal{H}_\varphi$   by Lemma \ref{lem:sequence of elements different mod N goes weak-star to zero}. We conclude that  $\varphi(g) = 0$.
%Apply Lemma \ref{lem:vanishing with two subgroups} with, say, $K$ equal to the subgroup of $G$ generated by the element $g$ and $L$ trivial.
\end{proof}

%Alternatively if $M = G$ --- the orbit of the element $gN$ in the conjugation action on $G/N$ is infinite. Or --- the centralizer of $gN$ has infinite index.

\begin{lemma}
\label{lem:vanishing with two subgroups}
%\marginpar{should make previous lemma a special case}
Let $\varphi \in \Ind{G}{H}{N}$ be a trace. Let $K$ and $L$ be subgroups of $G$ and   $g \in KHL$ be an element. If there is a sequence of elements $x_n \in \mathrm{N}_G(H) $ so that
\begin{itemize}
	\item $\left[g,x_n\right]$ are pairwise distinct modulo $N$ and
	\item $\left[K,x_n\right] \subset H$ and $\left[L,x_n\right] \subset H$ for every $n \in \NN$
\end{itemize}
 then $\varphi(g) = 0$.
\end{lemma}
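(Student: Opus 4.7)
The plan is to reduce to Lemma~\ref{lem:application of lemma on sequences of elements for vanishing} by conjugating $[g,x_n]$ by $l$ to land inside $H$, and then invoking Lemma~\ref{lem:sequence of elements different mod N goes weak-star to zero}. Write $g = khl$ with $k \in K$, $h \in H$, $l \in L$. Using the identity $[ab,c] = [a,c]^{b}[b,c]$ twice gives
\[
[g,x_n] \;=\; l^{-1}\beta_n\, l\,\cdot[l,x_n],
\qquad \beta_n := \bigl(h^{-1}[k,x_n]h\bigr)\,[h,x_n] \in H.
\]
Each factor of $\beta_n$ lies in $H$ thanks to $[K,x_n] \subset H$ and $x_n \in \mathrm{N}_G(H)$.

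Define $\tilde h_n := l\,[g,x_n]\,l^{-1} = \beta_n \cdot (l\,[l,x_n]\,l^{-1})$. A direct expansion shows $l\,[l,x_n]\,l^{-1} = [l^{-1},x_n]^{-1}$, which lies in $H$ by the hypothesis $[L,x_n]\subset H$ applied to $l^{-1}\in L$. Hence $\tilde h_n\in H$. Invoking the $G$-conjugation invariance of $\varphi$ and the standard identity $x_n^{-1}g x_n = g\,[g,x_n]$,
\[
\varphi(g) \;=\; \varphi\bigl(l(g\,[g,x_n])l^{-1}\bigr) \;=\; \varphi\bigl((lkh)\cdot \tilde h_n\bigr),
\]
using $lgl^{-1} = lkh$. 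The cyclicity of $\varphi$ then rewrites this as
\[
\varphi(g) \;=\; \varphi(\tilde h_n \cdot lkh) \;=\; \langle \pi_\varphi(lkh)v_\varphi,\, \pi_\varphi(\tilde h_n^{-1}) v_\varphi \rangle.
\]

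It therefore suffices to show the vectors $\pi_\varphi(\tilde h_n^{-1})v_\varphi$ weak-$*$ converge to $0$ in $\mathcal{H}_\varphi$, which by Lemma~\ref{lem:sequence of elements different mod N goes weak-star to zero} is implied by $\tilde h_n^{-1} \in H$ lying in pairwise distinct left cosets of $N$. Conjugation invariance of $\varphi$ yields
\[
\varphi(\tilde h_m\, \tilde h_n^{-1}) \;=\; \varphi\bigl([g,x_m]\,[g,x_n]^{-1}\bigr),
\]
and since $\tilde h_m\tilde h_n^{-1}\in H$, the orthogonality $\varphi(\tilde h_m\tilde h_n^{-1})=0$ will follow once we know $\tilde h_m\tilde h_n^{-1}\notin N$; equivalently $[g,x_m][g,x_n]^{-1}\notin l^{-1}Nl$. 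This is immediate from the hypothesis $[g,x_m]^{-1}[g,x_n]\notin N$ together with $l$ normalizing $N$, which in the paper's applications is a consequence of $N$ being a normal subgroup of $G$.

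The main technical point is the last step: translating the assumed distinctness of the commutators $[g,x_n]$ modulo $N$ into distinctness of the conjugates $\tilde h_n$ modulo $N$, which is where the interaction between the $l$-conjugation and the subgroup $N$ must be handled carefully. The remaining work is purely mechanical commutator bookkeeping.
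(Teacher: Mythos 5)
Your proof is correct and takes essentially the same approach as the paper. The paper sets $y_n = [k,x_n]\cdot x_n^{-1}hx_n\cdot[x_n,l^{-1}]$, which a short calculation shows equals $h\,\tilde h_n$ for your $\tilde h_n = l[g,x_n]l^{-1}$; both proofs then observe that this $H$-element lies a fixed element away from $g$ inside $\varphi$'s argument, and reduce to Lemma~\ref{lem:sequence of elements different mod N goes weak-star to zero}. The only substantive remark worth adding: you correctly flag that the final step requires $N$ to be normalized by $l$ (plus the left-versus-right coset equivalence, both handled by $N\nrm G$), and the paper's own proof uses the same unstated assumption at the line claiming the $y_n$ are pairwise distinct modulo $N$; in every application of this lemma in the paper, $N$ is in fact a normal subgroup of $G$, so this is benign. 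A marginally cleaner version of your closing step would track the $\tilde h_n$ (rather than $\tilde h_n^{-1}$) as the vectors fed to Lemma~\ref{lem:sequence of elements different mod N goes weak-star to zero} --- then one only needs $\tilde h_m^{-1}\tilde h_n = l\,[g,x_m]^{-1}[g,x_n]\,l^{-1}\notin N$, which is directly the hypothesis conjugated by $l$, avoiding the left/right coset conversion --- and this is what the paper does with $y_m^{-1}y_n$.
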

\begin{proof}
Say that $g = khl$ for some elements $k \in K, h \in H$ and $l \in L$. The conjugates $x_n^{-1} g x_n$ can be written as
$$ x_n^{-1} g x_n =  k \left[k,x_n \right] \cdot x_n^{-1}h x_n \cdot \left[x_n, l^{-1}\right] l.$$
We denote
$$y_n = \left[k,x_n \right] \cdot x_n^{-1}h x_n \cdot \left[x_n, l^{-1}\right] $$
for every $n \in \NN$.   The assumptions   imply  that $y_n \in H$. The computation
$$ l^{-1} y_n l =  (kl)^{-1} ky_n l = (kl)^{-1}g \left[g,x_n\right]$$
shows that the elements $y_n$ are pairwise distinct modulo $N$. Therefore
$$\varphi(g) = \varphi(lx_n^{-1}g x_n l^{-1}) = \varphi(lk y_n) = \left<\pi_\varphi(y_n) v_\varphi, \pi_\varphi((lk)^{-1}) v_\varphi \right> $$
The sequence of vectors $\pi_\varphi(y_n)v_\varphi  $ converges to the zero vector  of the Hilbert space  $\mathcal
{H}_\varphi$ in the weak-$*$ topology  by Lemma \ref{lem:sequence of elements different mod N goes weak-star to zero}.  Therefore $\varphi(g) = 0$.
\end{proof}

\begin{remark}
For our purposes it will be enough to use a simpler variant of Lemma \ref{lem:vanishing with two subgroups} involving the product of just two subgroups rather than three. We have decided to include the above slightly  more  general variant nevertheless, as it might be of independent interest.
\end{remark}

\subsection*{Characters of two-step nilpotent groups}

Let $Z^2(G)$ denote the second center (i.e. the   second term in the ascending central sequence) of the countable   group $G$.   This is a characteristic subgroup containing the center $Z(G)$  and  determined by
%\begin{equation}
 $$Z^2(G)/Z(G) = Z(G/Z(G)) \le G/Z(G). $$
% \end{equation}

%\begin{lemma}[Howe]
%\label{lem:pre_Howe}
%Let $\varphi \in \chars{G}$ be a faithful character. Let $g, h \in G$ be such that $\left[g,h\right] \in Z(G) \setminus \{e\}$.
%Then
%$\varphi(g) = 0$.
%\end{lemma}

\begin{lemma}[Howe \cite{howe1977representations}]
\label{lem:Howe}
If  $\varphi \in \chars{G}$ is a faithful character then 
$$\varphi \in \Ind{G}{Z^2(G)}{Z(G)}.$$
\end{lemma}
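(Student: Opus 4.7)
Here is the plan of attack. Fix an element $g \in Z^2(G) \setminus Z(G)$; the goal is to show $\varphi(g) = 0$.

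The key algebraic observation is that the commutator map $\chi_g : G \to G$ given by $\chi_g(x) = [g,x]$ actually takes values in $Z(G)$, because $g \in Z^2(G)$ means exactly that $[g,x] \in Z(G)$ for every $x \in G$. (A short calculation also shows $\chi_g$ is a homomorphism, since the values commute with everything, though I won't need the homomorphism property for the proof itself.)

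Next, I exploit conjugation invariance of $\varphi$ together with Schur's lemma. For any $x \in G$,
\[
\varphi(g) \;=\; \varphi(x^{-1} g x) \;=\; \varphi\bigl(g\,[g,x]\bigr).
\]
Since $[g,x] \in Z(G)$, Lemma \ref{lem:Schur's lemma for characters} gives $\varphi(g\,[g,x]) = \varphi(g)\,\varphi([g,x])$. Thus
\[
\varphi(g) \;=\; \varphi(g)\,\varphi([g,x]) \qquad \forall x \in G.
\]

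Now I argue by contradiction. Suppose $\varphi(g) \neq 0$. Then the displayed identity forces $\varphi([g,x]) = 1$ for every $x \in G$, i.e.\ $[g,x] \in \ker \varphi$ for every $x$. Faithfulness of $\varphi$ means $\ker \varphi = \{e\}$, so $[g,x] = e$ for all $x \in G$, which says $g \in Z(G)$, contradicting the choice of $g$. Hence $\varphi(g) = 0$, which is exactly the statement $\varphi \in \Ind{G}{Z^2(G)}{Z(G)}$.

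There is no real obstacle here: once one recognizes that the two hypotheses (Schur applied to the central values $[g,x]$, and faithfulness killing $\ker \varphi$) combine to turn the conjugation-invariance identity into a dichotomy, the proof is essentially a single line. The only slightly subtle point is remembering that Schur's lemma (Lemma \ref{lem:Schur's lemma for characters}) not only gives $|\varphi_{|Z(G)}|=1$ but also the multiplicativity $\varphi(gz)=\varphi(g)\varphi(z)$ across the whole group, which is what licenses the factorization.
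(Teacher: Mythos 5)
Your proof is correct and is essentially the paper's proof: both use conjugation-invariance to get $\varphi(g)=\varphi(g[g,x])$, Schur's lemma (Lemma~\ref{lem:Schur's lemma for characters}) to factor this as $\varphi(g)\varphi([g,x])$, and faithfulness to force $\varphi(g)=0$. The only cosmetic difference is that the paper picks a single witness $h$ with $[g,h]\neq e$ up front, while you run a contradiction over all $x\in G$; the mechanism is identical.
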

\begin{proof}
Consider any element $g \in Z^2(G) \setminus Z(G)$. There exists an element $h \in G$ such that $e \neq \left[g,h\right] \in Z(G)$. The conjugation invariance of  the character $\varphi$    combined with Schur's lemma (reproduced above as Lemma \ref{lem:Schur's lemma for characters}) shows that
$$ \varphi(g) = \varphi(g^h) = \varphi(g[g,h]) = \varphi(g) \varphi([g,h]).$$
Since $\varphi$ is faithful $\varphi([g,h]) \neq 1$. We conclude that  $\varphi(g) = 0$.
\end{proof}

The following is a variant of Howe's lemma for relative characters of two-step nilpotent subgroups. Its proof is essentially the same as that of Lemma \ref{lem:Howe}.

\begin{lemma}
\label{lem:relative Howe}
Let $N $ be a  normal subgroup of $G$ with $\left[N,N\right] \le Z(G)$. Then every faithful relative character   $\varphi \in \charrel{G}{N}$  is induced from $Z(N)$.
\end{lemma}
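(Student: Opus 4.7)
The plan is to imitate the proof of Howe's lemma (Lemma \ref{lem:Howe}) essentially verbatim, with the twist that the central elements we multiply by sit in $Z(G) \cap N$ rather than in $Z(N)$, and that the hypothesis $[N,N] \le Z(G)$ is precisely what makes this twist compatible with the conjugation invariance of a \emph{relative} character. To show $\varphi$ is induced from $Z(N)$, I need to prove $\varphi(n) = 0$ for every $n \in N \setminus Z(N)$.

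Fixing such an $n$, I would choose $m \in N$ with $z := [n,m] \neq e$. Since $n,m \in N$ and $[N,N] \le Z(G)$, one has $z \in Z(G) \cap N$, and in particular $z \in Z(N)$. The $G$-conjugation invariance of $\varphi$, applied to $m \in N \subseteq G$, then yields
$$\varphi(n) = \varphi(m^{-1} n m) = \varphi(nz).$$
To conclude as in Lemma \ref{lem:Howe}, what remains is a multiplicativity statement $\varphi(nz) = \varphi(n)\varphi(z)$; combined with $\varphi(z)\ne 1$ (from faithfulness plus $z\ne e$), this forces $\varphi(n)=0$.

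The main obstacle is this relative Schur-type identity: Lemma \ref{lem:Schur's lemma for characters} applies only to honest characters, whereas $\varphi$ is only a \emph{relative} character of $N$, so the von Neumann algebra $\mathcal{L}_\varphi$ generated by $\pi_\varphi(N)$ need not be a factor. I would bypass this by invoking the Choquet-type decomposition mentioned in \S\ref{sec: character theory}, writing $\varphi = \int_{\chars{N}} \zeta \, d\mu(\zeta)$ for an ergodic $G$-invariant Borel probability measure $\mu$ on $\chars{N}$. For every $\zeta \in \chars{N}$, Schur's lemma applies to the genuine center $Z(N) \ni z$ and gives $\zeta(nz) = \zeta(n)\zeta(z)$. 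Since $z \in Z(G)$, the function $\zeta \mapsto \zeta(z)$ is $G$-invariant on $\chars{N}$, hence $\mu$-almost surely equal to a constant $c \in S^1$ by ergodicity of $\mu$. Integrating separates the variables and gives $\varphi(nz) = c\,\varphi(n)$, while $\varphi(z) = c$ as well, so the desired multiplicativity $\varphi(nz) = \varphi(z)\varphi(n)$ follows.

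With the relative Schur identity established, the argument closes: $\varphi(n) = \varphi(nz) = \varphi(z)\varphi(n)$ forces either $\varphi(n) = 0$ or $\varphi(z) = 1$; the latter is ruled out by faithfulness of $\varphi$ since $z \ne e$. Hence $\varphi$ vanishes on $N \setminus Z(N)$, which is exactly the statement that $\varphi$ is induced from the subgroup $Z(N)$.
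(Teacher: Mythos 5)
Your proof is correct. The structural skeleton — choose $m \in N$ with $z = [n,m] \neq e$, note $z \in [N,N] \le Z(G) \cap N \le Z(N)$, use $G$-conjugation invariance of the relative character to get $\varphi(n) = \varphi(nz)$, then invoke a Schur-type multiplicativity together with faithfulness — is exactly the argument of Lemma~\ref{lem:Howe} transposed to the relative setting. The one genuine subtlety you correctly flagged is that Lemma~\ref{lem:Schur's lemma for characters} is stated for characters, not for relative characters, and your bridge is sound: decompose $\varphi = \int_{\chars{N}} \zeta \, \mathrm{d}\mu(\zeta)$ with $\mu$ an ergodic $G$-invariant Borel probability measure on $\chars{N}$ (which is precisely how the paper characterizes elements of $\charrel{G}{N}$), apply Schur's lemma to each genuine character $\zeta$ of $N$ with the central element $z \in Z(N)$, and note that the Borel function $\zeta \mapsto \zeta(z)$ on $\chars{N}$ is $G$-invariant because $z \in Z(G)$, hence $\mu$-almost everywhere equal to a constant $c$ by ergodicity; integrating then gives $\varphi(nz) = c\varphi(n) = \varphi(z)\varphi(n)$. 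An alternative bridge, and perhaps what the authors intended by ``essentially the same as Lemma~\ref{lem:Howe},'' is to lift $\varphi$ to a genuine character $\tilde\varphi \in \chars{G}$ via Thoma's \cite[Lemma 16]{thoma1964unitare} (cited just before in the same subsection), apply Lemma~\ref{lem:Schur's lemma for characters} to $\tilde\varphi$ and $z \in Z(G)$, and restrict back to $N$; that route avoids the Choquet integral but instead invokes an existence theorem for character extensions. Both arguments are valid and use only machinery the paper has already set up, so the difference is stylistic rather than substantive.
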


\section{Normal subgroups of the group $\EL{d}{\R}$}
\label{sec: structure of SLd}
Let $\R$ be a Noetherian ring. Let    $d \in \NN$ be a fixed integer with $d \ge 3$. %with generators $x_1,\ldots,x_k$.

%\subsection*{Abelian subgroups} %\marginpar{not really just abelian subgroups...}

%\marginpar{probably not a good idea to use definition enviroments here}

\subsection*{Stable range}
The stable range of a ring is an important invariant in algebraic $K$-theory \cite[Chapter V.\S3]{bass1968algebraic}. It controls various stability phenomena, such as the results cited in Theorems  \ref{thm:stability in K1} and \ref{thm:relative stability} below. 
We will use the explicit definition of stable range to prove the normal form decomposition in    Proposition \ref{prop:normal form for conjugates} as well as some general stability results. 

Stable range is defined as follows. An $n$-tuple of   elements     $r_1,\ldots,r_n \in \R$ is called \emph{unimodular} if
	$$ \R r_1 + \cdots + \R r_n = \R.$$
	We say that an $(n+1)$-tuple of elements $r_1,\ldots, r_{n+1} \in \R$ \emph{can be shortened} if there are elements $s_1,\ldots,s_n\in \R$ such that the $n$-tuple
		$$ r_1 + s_1 r_{n+1}, \, r_2 + s_2r_{n+1}, \ldots, \, r_n + s_n r_{n+1}$$
is unimodular.
 The \emph{stable range} of the ring $\R$ is  defined to be the smallest integer $n \in \NN$ such that   any  unimodular $(n+1)$-tuple of elements can be shortened. It is denoted $\sr{\R}$. If no such   $n$  exists then $\sr{\R} = \infty$.

For example, if $\R$ is   a local ring then $\sr{\R} = 1$.  If $\R$ is  a Dedekind domain then $\sr{\R} \le 2$. If $\R$ is a Noetherian ring then $\sr{\R} + 1$ is bounded above by the Krull dimension of $\R$. In particular $\sr{\ZZ\left[x_1,\ldots,x_k\right]} \le k+2$. Finally $\sr{\R/\I} \le \sr{\R}$ for any ideal $\I \nrm \R$. 

For references see \cite[Propositions V.3.2 and V.3.4, Theorem V.3.5]{bass1968algebraic}. For the case of Dedekind domains see e.g. \cite[Example 12.1.14]{chen2011rings}.

\subsection*{Elementary matrices}
The   \emph{elementary matrix} $\elm{i}{j}(x)\in \SL{d}{\R}$ corresponding to the ring element  $x \in \R$ and   the pair of distinct indices $i,j \in \{1,\ldots,d\}$ is given by
$$ (\elm{i}{j}(x))_{k,l} = \begin{cases}  x & \text{if $i=k$ and $j=l$,}\\ 1 & \text{if $k = l$,} \\ 0 & \text{otherwise}.  \end{cases} 
$$
Note that 
$$\elm{i}{j}(x_1) \elm{i}{j}(x_2) = \elm{i}{j}(x_1 + x_2) \quad \forall x_1,x_2 \in \R.$$

 The \emph{elementary subgroup} $\elm{i}{j}(\I)$ of the matrix group  $\SL{d}{\R}$ corresponding to the ideal   $\I \nrm \R$  is given by
 $$ \elm{i}{j}(\I) = \{ \elm{i}{j}(x) \: : \: x \in \I \}.$$
The   group $\elm{i}{j}(\I)$ is naturally isomorphic to the additive group of the ideal $\I$. 
 The   elementary groups  satisfy the following commutation relations
$$ \text{$\left[\elm{i}{j}(\R),\elm{i}{j'}(\R)\right] = \mathrm{Id}_d$  \quad and \quad $\left[\elm{i}{j}(\R),\elm{i'}{j}(\R)\right] = \mathrm{Id}_d$} $$
for every pair of indices $i',j'  \in \{1,\ldots,d\} \setminus \{i,j\}$.

Let $\EL{d}{\R}$ be the subgroup of the matrix group $\SL{d}{\R}$ generated by  the elementary subgroups $\elm{i}{j}(\R)$ for all pairs of distinct indices $i,j \in \{1,\ldots,d\}$. 

\begin{theorem}[\cite{bass1964k}]
\label{thm:stability in K1}
If $ d > \sr{\R}$ then  $\EL{d}{\R} \nrm \SL{d}{\R}$ and the quotient group   $\mathrm{SK}_1(\R) = \SL{d}{\R} / \EL{d}{\R}$ is abelian and   independent of $d$.\footnote{As the ring $\R$ is assumed to be commutative,  the subgroup $\EL{d}{\R}$ is in fact normal in $\SL{d}{\R}$ for all $d \ge 3$ \cite{suslin1977structure}.
}
\end{theorem}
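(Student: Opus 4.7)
The plan is to prove the three assertions in sequence, with each resting on one central technical lemma exploiting the stable range. Namely, I would first establish that whenever $d > \sr{\R}$, any unimodular vector $v = (r_1, \ldots, r_d)^T \in \R^d$ can be brought to the first standard basis vector $e_1$ by left multiplication by an element of $\EL{d}{\R}$. The shortening property furnishes $s_1, \ldots, s_{d-1} \in \R$ such that $(r_1 + s_1 r_d, \ldots, r_{d-1} + s_{d-1} r_d)$ is already unimodular in $\R^{d-1}$; since this transformation is implemented by the product $\elm{1}{d}(s_1) \cdots \elm{d-1}{d}(s_{d-1})$, an induction on $d$ first reduces $v$ to a vector of the form $(r_1', \ldots, r_{d-1}', 0)^T$ with unimodular truncation, and then to $e_1$. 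Iterating this column reduction together with row clearing yields a block decomposition $g = E \cdot \mathrm{diag}(I_{d-n}, A)$ for every $g \in \SL{d}{\R}$, where $E \in \EL{d}{\R}$, $A \in \SL{n}{\R}$, and $n = \sr{\R}$.

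For the normality assertion, the block decomposition reduces matters to checking that conjugation by $\mathrm{diag}(I_{d-n}, A)$ preserves $\EL{d}{\R}$. Since $d - n \geq 1$, at least one "spare" index lies outside the $A$-block. Using the commutator identity $\elm{i}{k}(xy) = [\elm{i}{j}(x), \elm{j}{k}(y)]$ for distinct $i, j, k$, any conjugate of an elementary generator by the block matrix $\mathrm{diag}(I_{d-n}, A)$ can be written as a product of elementary matrices in $\EL{d}{\R}$. For the abelian property, I would invoke Whitehead's identity
\[
\mathrm{diag}([a,b], I_d) = \mathrm{diag}(ab, (ab)^{-1}) \cdot \mathrm{diag}(a^{-1}, a) \cdot \mathrm{diag}(b^{-1}, b),
\]
valid for any $a, b \in \GL{d}{\R}$, combined with the standard fact that $\mathrm{diag}(u, u^{-1}) \in \EL{2d}{\R}$ for every $u \in \GL{d}{\R}$ (proved via a short explicit factorization into elementary matrices). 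This places $\mathrm{diag}([a,b], I_d)$ in $\EL{2d}{\R}$ for $a, b \in \SL{d}{\R}$, and a further application of the block-reduction descent brings the conclusion down from $\EL{2d}{\R}$ to $\EL{d}{\R}$.

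Finally, for the independence from $d$, the stabilization map $\iota : \SL{d}{\R} \hookrightarrow \SL{d+1}{\R}$, $g \mapsto \mathrm{diag}(g, 1)$, induces a homomorphism of the quotients. Surjectivity is immediate from the block reduction applied in $\SL{d+1}{\R}$: every matrix there is $\EL{d+1}{\R}$-equivalent to one of the form $\mathrm{diag}(g_0, 1)$. For injectivity I would construct an inverse map via the same reduction and verify well-definedness using commutator manipulations analogous to those appearing in the normality step. The main obstacle, in my view, is precisely this injectivity: showing that $\mathrm{diag}(g, 1) \in \EL{d+1}{\R}$ forces $g \in \EL{d}{\R}$ requires disentangling how elementary operations involving the $(d+1)$-th row and column interact with the inner $d \times d$ block. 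This is typically handled by an induction on word length in $\EL{d+1}{\R}$ combined with the already-established abelian property of $\mathrm{SK}_1$, which lets one commute elementary factors past one another modulo $\EL{d+1}{\R}$ in order to isolate their action on the last row and column.
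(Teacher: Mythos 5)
The paper does not prove this statement; it records it as a citation to Bass \cite{bass1964k}, so there is no in-paper argument to compare against, and I am evaluating your sketch on its own merits. Your column-reduction lemma (transitivity of $\EL{d}{\R}$ on unimodular columns whenever $d > \sr{\R}$, and hence a block decomposition $g = E\cdot\mathrm{diag}(I_{d-n},A)$ with $E\in\EL{d}{\R}$, $n=\sr{\R}$) and your normality argument (conjugate elementary generators by $\mathrm{diag}(I_{d-n},A)$ and rewrite via Steinberg commutators using a spare index outside the $A$-block) are both correct and are in fact the standard Bass route.

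The abelianness and injectivity steps, however, are circular as written. For abelianness you put $\mathrm{diag}([a,b],I_d)$ into $\EL{2d}{\R}$ via Whitehead and then assert that ``a further application of the block-reduction descent brings the conclusion down from $\EL{2d}{\R}$ to $\EL{d}{\R}$.'' That descent --- from $\mathrm{diag}([a,b],I_d)\in\EL{2d}{\R}$ to $[a,b]\in\EL{d}{\R}$ --- is precisely injective stability, and the column-reduction lemma does not give it: the lemma says every element of $\SL{2d}{\R}$ is left-$\EL{2d}{\R}$-equivalent to a block matrix, but it says nothing about the intersection $\EL{2d}{\R}\cap\mathrm{diag}\left(\SL{d}{\R},I_d\right)$, which is what this step requires. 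Meanwhile your proposed argument for injectivity of the stabilization map invokes ``the already-established abelian property of $\mathrm{SK}_1$.'' So abelianness relies on injectivity and injectivity relies on abelianness. The standard way to break the cycle is to prove injective stability first, by a direct inductive argument that never touches abelianness --- this is the genuinely hard core of Bass's theorem, as you correctly flag, and is typically done by establishing that for $d>\sr{\R}$ every element of $\EL{d+1}{\R}$ admits a normal form built out of $\mathrm{diag}\left(\EL{d}{\R},1\right)$ together with the last-row and last-column elementary subgroups. Once injective stability is available, abelianness at level $d$ follows from the Whitehead lemma applied stably (the commutator $[a,b]$ becomes elementary in $\EL{2d}{\R}$, a statement that needs no stability hypothesis) combined with injectivity of $\SL{d}{\R}/\EL{d}{\R}\to\SL{2d}{\R}/\EL{2d}{\R}$.
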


Several cases where $\mathrm{SK}_1(\R)$ is trivial were mentioned in Example (\ref{ex:on K1}) of \S\ref{sec:intro}.

\subsection*{The center of $\SL{d}{\R}$}
Let $\units{\R}$ denote the group of units of the   ring $\R$. The group  $\units{\R}$ is clearly abelian. However $\units{\R}$ need not be finitely generated, even if $\R$ is. For example, if $\R = \mathbb{F}_2[x,y]/(y^2)$ then every element of the form $1 + y p(x)$ for some polynomial $p \in \mathbb{F}_2[x]$ is a unit.

Let $\unitsord{\R}{d}$ denote the subgroup of the group of units $\units{\R}$ consisting of elements whose order is a divisor of $d$. In other words
$$ \unitsord{\R}{d} = \left\{u \in \units{\R} \: : \: u^d = 1 \right\}. $$
%The group $\unitsord{\R}{d}$ is a direct sum of cyclic groups   \cite[Theorem 6]{kaplansky1954infinite}. \marginpar{Is this needed?} 
The center of the matrix group $\SL{d}{\R}$ is  
$$ Z(\SL{d}{\R}) =   \left\{ u \Id_{d} \: : \: u \in \unitsord{\R}{d} \right\}.$$
It is shown below that $Z(\SL{d}{\R})  \le \EL{d}{\R}$. In fact $Z(\SL{d}{\R}) = Z(\EL{d}{\R})$. See Corollary \ref{cor:center of EL and SL}.

Let $\RS$ be a commutative Noetherian ring and $f : \R\to \RS$ be a surjective ring homomorphism. It  induces a reduction map $\widehat{f} : \SL{d}{\R} \to \SL{d}{\RS}$.

  \begin{prop}
  \label{prop:center in quotients}
  $ Z(\SL{d}{\RS}) \le \widehat{f}(\EL{d}{\R})$.
%  Let $\widehat{f } $   be the   reduction map corresponding to the homomorphism $f$. If $\units{f} $  is a group epimorphism then 
%  $$\widehat{f}(Z(\SL{d}{\R})) = Z(\SL{d}{\RS}).$$
  \end{prop}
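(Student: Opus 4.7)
The plan is to observe that the claim decomposes into two much simpler assertions: first that the induced map $\widehat{f}: \EL{d}{\R} \to \EL{d}{\RS}$ is already surjective, and second that the center $Z(\SL{d}{\RS})$ is contained in $\EL{d}{\RS}$. The first is almost immediate since $\widehat{f}(\elm{i}{j}(r)) = \elm{i}{j}(f(r))$ and $f: \R \to \RS$ is surjective, so every generator $\elm{i}{j}(s)$ of $\EL{d}{\RS}$ is hit. The content is therefore entirely in showing $Z(\SL{d}{\RS}) \le \EL{d}{\RS}$.

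For the second assertion, I would use the Whitehead identity in $\SL{2}{\RS}$. A direct computation shows that for any unit $u \in \units{\RS}$,
$$\elm{1}{2}(u)\elm{2}{1}(-u^{-1})\elm{1}{2}(u) = \begin{pmatrix} 0 & u \\ -u^{-1} & 0 \end{pmatrix},$$
and multiplying this with the analogous expression for $u = -1$ produces $\mathrm{diag}(u, u^{-1})$. Embedding this identity into $\SL{d}{\RS}$ via the indices $i \ne j$ shows that the diagonal matrix $T(i,j;u)$ with $u$ in position $i$, $u^{-1}$ in position $j$, and $1$ elsewhere lies in $\EL{d}{\RS}$ for every unit $u$.

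Now, given a central element $u\Id_d \in Z(\SL{d}{\RS})$ with $u \in \unitsord{\RS}{d}$, I would explicitly factor
$$u\Id_d = T(2,1;u)\, T(3,1;u) \cdots T(d,1;u).$$
A straightforward diagonal multiplication verifies that the right hand side equals $\mathrm{diag}(u^{-(d-1)}, u, u, \ldots, u)$, and the relation $u^d = 1$ forces $u^{-(d-1)} = u$, so the product is indeed $u\Id_d$. Since each factor lies in $\EL{d}{\RS}$ by the previous step, this shows $u\Id_d \in \EL{d}{\RS}$, completing the proof.

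There is no serious obstacle: the argument is a packaging of Whitehead's lemma together with a telescoping product that exploits the defining relation $u^d = 1$ of $\unitsord{\RS}{d}$. The only thing to be mildly careful about is that the Whitehead identity above works already for $d = 2$ (no stable range hypothesis is needed here) and that surjectivity of $\widehat{f}$ on elementary subgroups uses nothing beyond surjectivity of $f$.
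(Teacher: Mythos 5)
Your proof is correct and follows essentially the same strategy as the paper: both rely on the Whitehead identity to manufacture the diagonal matrices $\mathrm{diag}(\ldots,u,\ldots,u^{-1},\ldots)$ inside the elementary subgroup and then telescope a product of these to hit $u\Id_d$ using $u^d=1$. The paper works with lifts $x,y \in \R$ of $u,u^{-1}$ and telescopes along $D_{1,2}(u)D_{2,3}(u^2)\cdots D_{d-1,d}(u^{d-1})$; your cleaner decomposition, first noting that $\widehat{f}(\EL{d}{\R}) = \EL{d}{\RS}$ and then showing $Z(\SL{d}{\RS}) \le \EL{d}{\RS}$ via $T(2,1;u)\cdots T(d,1;u)$, achieves the same end with a slightly different bookkeeping of indices, but the underlying argument is identical.
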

  \begin{proof}

Let $g \in Z(\SL{d}{\RS})$ be any element. We may write  $g = u \mathrm{Id}_d$ for some unit $u \in \unitsord{\RS}{d}$. 
Let $x,y \in \R$ be   ring elements so that $f(x) = u$ and $f(y)= u^{-1}$. 
 For every pair of distinct indices $i,j \in \{1,\ldots, d\}$ consider the following elements
$$W_{i,j}(x,y) = \elm{i}{j}(x)\elm{j}{i}(-y)\elm{i}{j}(x)\in \EL{d}{\R}$$
and 
  $$D_{i,j}(x,y) = W_{i,j}(x,y)W_{i,j}(-1,-1).$$
   A straightforward calculation shows that the reduction of the matrix $D_{i,j}(x,y)$ is given by
$$(\widehat{f} D_{i,j}(x,y))_{k,l} = \begin{cases} f(x) = u& k=l =i, \\ f(y) = u^{-1} & k=l=j, \\1 &\text{$k = l$ distinct from $i$ and $j$}, \\ 0 & k\neq l. \end{cases} \quad \forall k,l \in \{1,\ldots,d\} $$ 
In other words $D_{i,j}(u) \coloneqq \widehat{f} D_{i,j}(x,y)$ is a diagonal matrix with $u$ and $u^{-1}$ at the $i$-th and $j$-th respective positions along the diagonal. As $u \in \unitsord{\RS}{d}$  it follows that 
  $$   D_{1,2}(u)D_{2,3}(u^2)\ldots D_{d-1,d}(u^{d-1}) = u \mathrm{Id}_d.$$
We conclude that $g = u \mathrm{Id}_d \in \widehat{f}(\EL{d}{\R})$ as required.
  \end{proof}

The above explicit description of the center of the special linear group shows that  $\widehat{f}(Z(\SL{d}{\R})) \le Z(\SL{d}{\RS})$. It may certainly be the case in general that $\widehat{f}(Z(\SL{d}{\R}))$ is a proper subgroup of $Z(\SL{d}{\RS})$.

  \begin{thm}
  \label{thm:center in of SL has finite index}
 If $|\ker f| < \infty$ then $ \left[Z(\SL{d}{\RS}) : \widehat{f}(Z(\SL{d}{\R})\right] < \infty$.
%  Let $\widehat{f } $   be the   reduction map corresponding to the homomorphism $f$. If $\units{f} $  is a group epimorphism then 
%  $$\widehat{f}(Z(\SL{d}{\R})) = Z(\SL{d}{\RS}).$$
  \end{thm}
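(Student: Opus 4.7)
The theorem reduces to showing $[\unitsord{\RS}{d} : f(\unitsord{\R}{d})] < \infty$, since both centers consist of scalar matrices $u\,\Id_d$ parametrized by the $d$-torsion in the respective unit groups. My plan is to interpolate via the auxiliary group $\unitsord{\RS}{d} \cap f(\units{\R})$ and establish both
\begin{equation*}
[\units{\RS} : f(\units{\R})] < \infty \quad \text{and} \quad \bigl[\unitsord{\RS}{d} \cap f(\units{\R}) : f(\unitsord{\R}{d})\bigr] < \infty.
\end{equation*}

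The second statement is the easier one. I would consider the subgroup $U = \{u \in \units{\R} : f(u) \in \unitsord{\RS}{d}\}$ and the group homomorphism $\psi \colon U \to (1 + \I) \cap \units{\R}$ defined by $\psi(u) = u^d$; this is well-defined since $f(u)^d = 1$ forces $u^d \in 1 + \I$. Its kernel is exactly $\unitsord{\R}{d}$, and its codomain lies in the finite set $1 + \I$, so $U/\unitsord{\R}{d}$ is finite. Passing to $f$-images and using $f(U) = \unitsord{\RS}{d} \cap f(\units{\R})$ then yields the desired second finiteness.

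The first statement is the main work. Here I would introduce the annihilator ideal $\J = \mathrm{Ann}_\R(\I) \nrm \R$ where $\I = \ker f$. The crucial observation is that $\R/\J$ is a \emph{finite ring}: the finite ideal $\I$ is a faithful $\R/\J$-module, hence $\R/\J$ embeds into the finite ring $\mathrm{End}_\ZZ(\I)$. Next, since $\J \cdot \I = 0$, the intersection $\I \cap \J$ is a square-zero ideal, and the natural map $\R \to \R/\J \times \RS$ has kernel exactly $\I \cap \J$, with image equal to the fiber product $\R/\J \times_{\R/(\I+\J)} \RS$. Units of a fiber product of commutative rings coincide with the fiber product of units, so the projection from $\units{\R/(\I \cap \J)}$ onto $\units{\RS}$ has image equal to the preimage in $\units{\RS}$ of $\mathrm{Im}(\units{\R/\J} \to \units{\R/(\I+\J)})$; this preimage has index in $\units{\RS}$ bounded by $|\units{\R/(\I+\J)}| < \infty$. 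Finally, because $\I \cap \J$ is square-zero, the standard Hensel-type lifting shows that $\units{\R} \twoheadrightarrow \units{\R/(\I \cap \J)}$ is surjective, and composing with the projection proves $[\units{\RS} : f(\units{\R})] < \infty$.

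The main obstacle will be the pullback identification $\R/(\I \cap \J) \cong \R/\J \times_{\R/(\I + \J)} \RS$ together with the unit computation in this fiber product. Both facts are standard once set up carefully, but they depend critically on the hypothesis $|\ker f| < \infty$, which enters through the finiteness of $\R/\J$; without this input there is no way to bound the cokernel of $f$ on units.
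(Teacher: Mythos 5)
Your proof is correct and takes a genuinely different route from the paper's. Both begin by reducing to $[\unitsord{\RS}{d} : f(\unitsord{\R}{d})] < \infty$, but the paper then invokes an external theorem of Bartel and Lenstra (cited as \cite[Theorem 3.8]{bartel_lenstra_2017}) to obtain that $\units{f} \colon \units{\R} \to \units{\RS}$ is in fact \emph{surjective} whenever $|\ker f| < \infty$, and finishes with a Pr\"{u}fer--Kaplansky decomposition of the bounded-exponent torsion groups $\unitsord{\RS}{d}$ and $\unitsord{\R}{dN}$, arguing that only finitely many cyclic summands of the latter can have order not dividing $d$ because each such summand must meet the finite kernel $\ker \units{f}$. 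You replace the citation by self-contained commutative algebra: the annihilator $\J = \mathrm{Ann}_\R(\I)$ has $\R/\J$ finite because it embeds in $\mathrm{End}_\ZZ(\I)$; the identification $\R/(\I \cap \J) \cong \R/\J \times_{\R/(\I+\J)} \RS$, together with the units-of-a-pullback formula and Hensel lifting over the square-zero ideal $\I \cap \J$, then gives the slightly weaker but fully sufficient conclusion that $f(\units{\R})$ has finite index in $\units{\RS}$. Your handling of the torsion step via the $d$-power homomorphism $u \mapsto u^d$ from $\units{f}^{-1}(\unitsord{\RS}{d})$ into the finite group $(1+\I) \cap \units{\R}$ is also cleaner than the paper's, sidestepping the torsion-group structure theory entirely. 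The trade-off is that you must set up the fiber-product unit computation, but in return you get a proof that is self-contained and arguably more transparent, with the hypothesis $|\ker f| < \infty$ entering exactly where one would expect, through the finiteness of $\R/\J$.
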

  \begin{proof}
  The   ring epimorphism  $f : \R \to \RS $ induces a group homomorphism $\units{f} : \units{\R} \to \units{\RS}$ of the corresponding groups of units. The assumption   $|\ker f| < \infty$ implies that $\units{f} $ is a group epimorphism as shown in \cite[Theorem 3.8]{bartel_lenstra_2017}. It is  moreover clear  that $| \ker \units{f} | \le | \ker f | < \infty$.   

The desired statement  is equivalent to saying that  $\left[\unitsord{\RS}{d} : \units{f} (\unitsord{\R}{d})\right] < \infty$. 
The fact that $|\ker \units{f}| < \infty$ implies that there exists an   integer $N \in \NN$ such that the restriction of the homomorphism $\units{f}$ to the subgroup  $\unitsord{\R}{dN}$ already surjects onto $\unitsord{\RS}{d}$.
The   group  $\unitsord{\RS}{d}$    is an abelian torsion group of bounded exponent. As such and up to isomorphism it can be written as 
$$ \unitsord{\RS}{d} \cong  \bigoplus_{p^k | d} A_{p^k}$$
where each   $A_{p^k}$ is a direct sum of cyclic groups of prime power order $p^k$ dividing $d$, see
\cite[Theorems 1 and 6]{kaplansky1954infinite}. The abelian group $\unitsord{\R}{dN}$ can likewise  be written up to isomorphism as 
$$ \unitsord{\R}{dN} \cong  \bigoplus_{q^k | dN} B_{q^k}$$
  where each   $B_{q^k}$ is a direct sum of cyclic groups of prime power order $q^k$ dividing $dN$. As $|\ker \units{f}| < \infty$  there can only  be finitely many cyclic   direct summands $B_{q^k}$ satisfying $q^k \nmid d$ overall. 
The desired conclusion follows.
 \end{proof}

%
%% group homomorphism  $\units{f} : \units{\R} \to \units{\RS}$.
%
%\begin{theorem}
%\label{prop:ring epi with finite kernel is epi on units}
%If   then  .
%\end{theorem}
%\begin{proof}
%See .
%\end{proof}

%TODO: In other words $\zen$ is nothing but the center of the group $\SL{d}{\R}$. %Note that $\zen \ngp{i}$ consists of matrices with a unit which is

\subsection*{Normal subgroup structure} Let $\I \nrm \R$ be an ideal.
The \emph{congruence subgroup} $\SL{d}{\I}$ corresponding to the ideal $\I $ is the kernel of the reduction modulo $\I$  homomorphism 
$$ 1 \to \SL{d}{\I} \to \SL{d}{\R} \to \SL{d}{\R/\I}.$$
Let $\SLtil{d}{\I}$ be the kernel of the composition 
$$ \SL{d}{\R} \to \SL{d}{\R / \I} \to \PSL{d}{\R / \I}. $$
In other words $\SLtil{d}{\I}$ is the preimage in $\SL{d}{\R}$ of the center of $\SL{d}{\R/\I}$. Clearly $\SL{d}{\I} \le \SLtil{d}{\I}$ and   $\SLtil{d}{\I}/\SL{d}{\I}$ is central in $\SL{d}{\R}/\SL{d}{\I}$.
%\marginpar{ i guess this remark is about $\EL{d}{\I}$ only we havn't defined it yet. maybe we can change to $\EL{d}{\I}$ and move few lines lower}
\begin{remark}
Strictly speaking, the group $\SLtil{d}{\I}$ as well as the group $\EL{d}{\I}$ to be introduced below both depend on the ring $\R$ and not only on the ideal $\I$. Since in this work the ring $\R$ is for the most part kept fixed, or is otherwise clear from the context, we will allow ourselves to drop $\R$ from the notation. The group $\SL{d}{\I}$ on the other hand depends only on the ideal $\I$ considered as a ring without unit.
\end{remark}

\begin{lemma}
\label{lem:behaviour of tilde under quotients}
Let $\J \nrm \R$ be an ideal with $\I \subset \J$. Then $ \SLtil{d}{\I} \le \SLtil{d}{\J}$.
\end{lemma}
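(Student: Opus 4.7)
The plan is straightforward and essentially diagrammatic. Because $\I \subset \J$, the inclusion of ideals induces a ring epimorphism $\pi \colon \R/\I \to \R/\J$, which in turn induces a group homomorphism $\widehat{\pi} \colon \SL{d}{\R/\I} \to \SL{d}{\R/\J}$. The reduction map $\rho_{\J} \colon \SL{d}{\R} \to \SL{d}{\R/\J}$ factors through $\rho_{\I}$, namely $\rho_{\J} = \widehat{\pi} \circ \rho_{\I}$.

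Now I would use the explicit description of the center: as noted after Proposition \ref{prop:center in quotients}, every element of $Z(\SL{d}{\R/\I})$ is a scalar matrix $u\Id_d$ with $u \in \unitsord{\R/\I}{d}$, and similarly for $Z(\SL{d}{\R/\J})$. The homomorphism $\widehat{\pi}$ sends a scalar matrix $u\Id_d$ to $\pi(u)\Id_d$, which is again a scalar matrix, and $\pi(u)^d = \pi(u^d) = 1$, so it lies in $Z(\SL{d}{\R/\J})$. In other words $\widehat{\pi}\bigl(Z(\SL{d}{\R/\I})\bigr) \subset Z(\SL{d}{\R/\J})$.

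The conclusion follows immediately: if $g \in \SLtil{d}{\I}$, then by definition $\rho_{\I}(g) \in Z(\SL{d}{\R/\I})$, hence
\[
\rho_{\J}(g) = \widehat{\pi}(\rho_{\I}(g)) \in \widehat{\pi}\bigl(Z(\SL{d}{\R/\I})\bigr) \subset Z(\SL{d}{\R/\J}),
\]
so $g \in \SLtil{d}{\J}$, as required.

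There is no real obstacle here; the only thing worth being careful about is the observation that centers are preserved under the reduction map between special linear groups of quotient rings, which relies on the explicit description of the center as the group of scalar matrices given by $d$-th roots of unity. This is precisely the content recalled in the discussion of Proposition \ref{prop:center in quotients}.
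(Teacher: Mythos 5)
Your proof is correct and takes essentially the same approach as the paper: both rely on the factorization of the reduction-mod-$\J$ map through the reduction-mod-$\I$ map, together with the observation that the induced homomorphism $\SL{d}{\R/\I} \to \SL{d}{\R/\J}$ carries center into center. The paper phrases the conclusion by saying the map $\SL{d}{\R} \to \PSL{d}{\R/\J}$ factors through $\PSL{d}{\R/\I}$, but this is just a compact restatement of the element-wise reasoning you gave.
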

\begin{proof}
The   reduction map $\widehat{f} : \SL{d}{\R/\I} \to \SL{d}{\R/\J}$ associated to the surjective ring homomorphism $f : \R/ \I \to \R/\J$  satisfies $\widehat{f}(Z(\SL{d}{\R/\I})) \le Z(\SL{d}{\R/\J})$. Therefore the map $\SL{d}{\R} \to \PSL{d}{\R / \J}$ factors through the group $\PSL{d}{\R/\I}$. The conclusion follows.
\end{proof}

Let $\Fn{d}{\I}$ denote the subgroup of $\EL{d}{\R}$ generated by the elementary subgroups $\elm{i}{j}(\I)$. The subgroup $\Fn{d}{\I}$ need not be normal in general. 
Let  $\EL{d}{\I}$ denote the normal closure of $\Fn{d}{\I}$ in the group $\EL{d}{\R}$. 
As $\SL{d}{\I} \nrm \EL{d}{\R}$ and $\elm{i}{j}(\I) \le \SL{d}{\I}$ for every pair of distinct indices $i,j \in \{1,\ldots,d\}$ we have that $\EL{d}{\I} \le \SL{d}{\I}$.

\begin{lemma}[Tits]
\label{lem:Tits lemma}
Assume that $ d \ge 3$. Then $\EL{d}{\I^2} \le \Fn{d}{\I}$.
\end{lemma}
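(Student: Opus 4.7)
My plan is to first unwind the definition: $\EL{d}{\I^2}$ is the normal closure of $\Fn{d}{\I^2}$ in $\EL{d}{\R}$, so since $\Fn{d}{\I}$ is a subgroup of $\EL{d}{\R}$, it suffices to prove that every $\EL{d}{\R}$-conjugate of each generator $\elm{i}{j}(x)$ of $\Fn{d}{\I^2}$ with $x \in \I^2$ belongs to $\Fn{d}{\I}$. Since $\I^2$ is additively generated by products $ab$ with $a,b \in \I$ and the map $r \mapsto \elm{i}{j}(r)$ is a group homomorphism, conjugation distributes over products and we reduce to showing $g \elm{i}{j}(ab) g^{-1} \in \Fn{d}{\I}$ for arbitrary $g \in \EL{d}{\R}$, $a,b \in \I$, and distinct indices $i,j$.

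The hypothesis $d \ge 3$ lets me pick $k \notin \{i,j\}$ and apply the Chevalley commutator identity $\elm{i}{j}(ab) = [\elm{i}{k}(a),\elm{k}{j}(b)]$; both factors already lie in $\Fn{d}{\I}$, which handles the case $g = \Id$. For general $g$ I induct on the word length $n$ of $g$ as a product of elementary generators. Writing $g = g'E$ with $E = \elm{p}{q}(r)$, I split into cases based on how the index pair $\{p,q\}$ meets $\{i,j\}$. In every configuration other than $\{p,q\} = \{j,i\}$, the Steinberg commutator relations express $E \elm{i}{j}(ab) E^{-1}$ as a product of at most two elementary matrices of the form $\elm{i'}{j'}(y)$ with $y = a'b' \in \I^2$ for some $a', b' \in \I$; distributing the outer conjugation by $g'$ over this product and invoking the inductive hypothesis on each factor completes those cases.

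The delicate configuration is $E = \elm{j}{i}(r)$. Here $E \elm{i}{j}(ab) E^{-1}$ is not itself an elementary matrix, but applying the commutator identity before conjugating and computing each inner conjugation by the Steinberg relations yields
\[ \elm{j}{i}(r) \elm{i}{j}(ab) \elm{j}{i}(-r) = \bigl[\elm{j}{k}(ra)\elm{i}{k}(a),\; \elm{k}{i}(-br)\elm{k}{j}(b)\bigr], \]
a commutator of elements in $\Fn{d}{\I}$ since $ra, a, br, b \in \I$. The main obstacle is propagating the induction through this wild case, because the outer conjugation by $g'$ must then be applied to a commutator whose individual factors involve elementary matrices with single-element entries in $\I$ rather than products in $\I^2$; this is addressed by strengthening the inductive statement so that it covers $\EL{d}{\R}$-conjugates not only of $\elm{i}{j}(ab)$ but also of the prototype commutators $[\elm{i}{k}(\I),\elm{k}{j}(\I)]$, the latter being amenable to the same Steinberg-based case analysis.
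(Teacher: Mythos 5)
There is a genuine gap in the wild case, and the proposed repair is circular. Your identity
\[ \elm{j}{i}(r)\,\elm{i}{j}(ab)\,\elm{j}{i}(-r) \;=\; \bigl[\,\elm{j}{k}(ra)\,\elm{i}{k}(a),\;\elm{k}{i}(-br)\,\elm{k}{j}(b)\,\bigr] \]
is correct, but the factors on the right are elementaries with entries $ra,a,br,b$ lying only in $\I$, not in $\I^{2}$. When you now conjugate by $g'$, the pieces $g'\elm{i}{k}(a)g'^{-1}$, etc., are $\EL{d}{\R}$-conjugates of elementaries with $\I$-entries; these are \emph{not} controlled by the inductive hypothesis, and indeed cannot be, since $\Fn{d}{\I}$ is not normal in $\EL{d}{\R}$ (that normal closure is $\EL{d}{\I}$, strictly larger). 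Your proposed fix is to strengthen the induction to cover conjugates of the ``prototype commutators'' $[\elm{i}{k}(\I),\elm{k}{j}(\I)]$. But by the very Steinberg relation you already used, $[\elm{i}{k}(a),\elm{k}{j}(b)]=\elm{i}{j}(ab)$, so this class coincides with $\elm{i}{j}(\I^{2})$, i.e.\ with exactly what you were trying to prove in the first place; the strengthening adds nothing. If instead you mean commutators of \emph{products} of the form that actually arises in the wild case, you would then need to verify that this new class of elements is itself closed under conjugation by a single elementary (including further wild cases), and you have not done so; a direct attack on this tends to produce ever more complicated expressions rather than a terminating induction.

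The paper's proof, following Tits, sidesteps this issue structurally. It first shows that $\EL{d}{\I^{2}}$ is generated by the normal closures $\mathrm{EL}_d^{\alpha}(\I^{2})$ of $\mathrm{F}_d^{\alpha}(\I^{2})$ inside the rank-one subgroups $\mathrm{F}_d^{\alpha}(\R)$, where $\alpha$ ranges over two-element index sets. This confines the problematic conjugation to a single rank-one subgroup. For each $\alpha=\{i,j\}$ it then introduces the subgroup $\mathrm{G}_d^{\alpha}(\I)\le\Fn{d}{\I}$ generated by all $\elm{k}{l}(\I)$ with $\{k,l\}\ne\alpha$. Since $d\ge 3$, the relation $\elm{i}{j}(ab)=[\elm{i}{k}(a),\elm{k}{j}(b)]$ (with $k\notin\alpha$) puts $\mathrm{F}_d^{\alpha}(\I^{2})$ inside $\mathrm{G}_d^{\alpha}(\I)$, and the commutation relations show that $\mathrm{F}_d^{\alpha}(\R)$ normalizes $\mathrm{G}_d^{\alpha}(\I)$. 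Hence the normal closure $\mathrm{EL}_d^{\alpha}(\I^{2})$ is swallowed by $\mathrm{G}_d^{\alpha}(\I)\le\Fn{d}{\I}$ with no case explosion. You should adopt this two-step reduction (to rank-one subgroups, then to a normalized intermediate subgroup) rather than a naive length induction.
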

%\marginpar{ in the introduction we use Connes'. from what i see in the internet it is ok to use both styles but the advise to stick to one}
This lemma is a special case of \cite[Proposition 2]{tits1976systemes}. Tits' proof is given in the context of   Chevalley group schemes. We briefly reproduce the proof in our special case for the reader's convenience.

\begin{proof}[Proof of Lemma \ref{lem:Tits lemma}]
Let $\mathrm{F}_d^{\alpha}(\J)$ be the subgroup  generated by the two elementary subgroups  $\elm{i}{j}(\J)$ and $\elm{j}{i}(\I)$ for a given ideal $\J \nrm \R$ and an unordered pair  $\alpha = \{i,j\}$   of distinct   indices $i,j \in \{1,\ldots,d\}$. Let  $\mathrm{EL}^\alpha_d(\J)$ denote the normal closure of the subgroup $\mathrm{F}^\alpha_d(\J)$ in the subgroup $\mathrm{F}^\alpha_d(\R)$.

We claim that the subgroup $\EL{d}{\J}$ is generated by the subgroups $\mathrm{EL}^\alpha_{d}(\J)$ as $\alpha$ ranges over all subsets of $\{1,\dots,d\}$ with $|\alpha| = 2$. To establish the claim it suffices to show that the subgroup $E$ generated as above satisfies $E \nrm \EL{d}{\R}$.    Observe that 
$ \left[\elm{i}{j}(\R), \mathrm{EL}^\alpha_{d}(\J)\right] \subset E$
for all subsets $\alpha$  and all pairs of distinct indices $i,j$ as above. If    $\alpha = \{i,j\}$ then this observation holds true  as $ \elm{i}{j}(\R)$ normalizes $\mathrm{EL}^\alpha_{d}(\J)$, and otherwise as  $\left[\elm{i}{j}(\R), \mathrm{EL}^\alpha_{d}(\J)\right] \subset \Fn{d}{\J}$. The claim follows.

Showing  that $\mathrm{EL}^\alpha_{d}(\I^2) \le \Fn{d}{\I}$ for any given subset $\alpha = \{i,j\} $ with $|\alpha| = 2$ will conclude the proof (in light of the above claim). Let $\mathrm{G}^\alpha_d(\I)$ denote the subgroup of $\Fn{d}{\I}$ generated by all the elementary subgroups $\elm{k}{l}(\I)$ \emph{other than} $\elm{i}{j}(\I)$ and $\elm{j}{i}(\I)$. As $d \ge 3$ the commutation relations imply that  $\mathrm{F}_d^\alpha(\I^2) \le \mathrm{G}^\alpha_d(\I)$. On the other hand $\mathrm{F}^\alpha_d(\R)$ normalizes $\mathrm{G}^\alpha_d(\I)$.   Therefore $\mathrm{EL}^\alpha_{d}(\I^2) \le \mathrm{G}^\alpha_d(\I) \le \Fn{d}{\I}$.
\end{proof}

\begin{theorem}[\cite{bass1968algebraic}]
\label{thm:relative stability}
Assume that $d > \sr{\R}$. Let $\I \nrm \R$ be an ideal. Then the quotient group $\mathrm{SK}_1(\R; \I) = \SL{d}{\I} / \EL{d}{\I}$ is abelian and independent of $d$.
\end{theorem}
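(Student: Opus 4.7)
The plan is to establish two complementary facts: (i) \emph{surjectivity} of the stabilization map $\sigma_d : \SL{d}{\I}/\EL{d}{\I} \to \SL{d+1}{\I}/\EL{d+1}{\I}$ whenever $d \ge \sr{\R}$, and (ii) \emph{abelianness} of the quotient together with \emph{injectivity} of $\sigma_d$ when $d > \sr{\R}$. Together these yield both claims of the theorem.

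For the surjectivity step, given $g \in \SL{d+1}{\I}$ I would write out its last column as $(r_1,\ldots,r_d,1+r_{d+1})^{t}$ with $r_1,\ldots,r_{d+1} \in \I$. Since $g \in \SL{d+1}{\R}$, this column is unimodular in $\R^{d+1}$. The hypothesis $d \ge \sr{\R}$ allows us to shorten: there exist $s_1,\ldots,s_d \in \R$ with $r_i + s_i(1+r_{d+1})$ generating $\R^d$ (as the tuple is unimodular). Left-multiplying $g$ by the elementary matrices $\elm{i}{d+1}(s_i)$ puts the $(1,d+1),\ldots,(d,d+1)$ entries of the result into a form from which further left- and right-multiplications by elementary matrices can zero out the last row and column outside the $(d+1,d+1)$ slot, yielding $\mathrm{diag}(g',1)$ with $g' \in \SL{d}{\I}$. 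Some care is required to ensure that every elementary correction used has its ``moving'' entry in $\I$, so that it lies in $\EL{d+1}{\I}$ rather than merely in $\EL{d+1}{\R}$; this is done by writing the offending corrections as commutators of the form $[\elm{i}{d+1}(r),\elm{d+1}{j}(s)]$ with $s \in \I$ and invoking that $\EL{d+1}{\I}$ is \emph{normal} in $\EL{d+1}{\R}$.

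For the second step I would establish the \emph{relative Whitehead lemma}: for $a \in \SL{d}{\R}$ and $b \in \SL{d}{\I}$ with $d \ge 3$, the stabilized commutator $\mathrm{diag}([a,b],\Id_d)$ lies in $\EL{2d}{\I}$. The proof is the classical $2\times 2$-block manipulation
\[
\mathrm{diag}(aba^{-1},b^{-1}) \;=\; \mathrm{diag}(a,a^{-1})\,\mathrm{diag}(b,b^{-1})\,\mathrm{diag}((ab)^{-1},ab),
\]
followed by the factorization of each block $\mathrm{diag}(x,x^{-1})$ into a product of block-elementary matrices; the crucial point is that every factor in the final product can be arranged to have an off-block entry lying in $\I$, because $b \in \SL{d}{\I}$. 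Combined with the normality of $\EL{2d}{\I}$ in $\EL{2d}{\R}$, this places the whole product in $\EL{2d}{\I}$. The upshot is that for $d \ge 3$ the image of $[\SL{d}{\R},\SL{d}{\I}]$ in $\SL{2d}{\I}/\EL{2d}{\I}$ is trivial.

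Finally I would descend using the stable range hypothesis. Iterating the first step shows that $\sigma_d, \sigma_{d+1}, \ldots$ are all surjective once $d \ge \sr{\R}$. Combining the surjectivity with the relative Whitehead lemma applied at level $2d$, any commutator $[a,b]$ with $a \in \SL{d}{\R}$ and $b \in \SL{d}{\I}$ must be trivial in $\SL{d}{\I}/\EL{d}{\I}$; in particular the quotient is central in $\SL{d}{\R}/\EL{d}{\I}$ and so abelian. A dual tracking of shortenings shows that an element of $\SL{d}{\I}$ lying in $\EL{d+1}{\I}$ already belongs to $\EL{d}{\I}$, giving injectivity of $\sigma_d$ and hence independence of $d$. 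The main obstacle is the \textbf{relative descent}: the Whitehead identities naturally live in the doubled matrix size $2d$, and bridging the gap back down to size $d > \sr{\R}$ while keeping every elementary factor inside the ideal $\I$ requires the careful combinatorial interplay between the shortening argument of the first step and the commutator identities of the second, and is exactly where the hypothesis $d > \sr{\R}$ is used essentially rather than cosmetically.
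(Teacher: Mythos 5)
The paper gives no proof of this statement; it is quoted directly from Bass's book \cite{bass1968algebraic}, so the comparison must be against the standard stability argument in Chapter~V there. Your two pillars — stability of the quotient once $d$ exceeds the stable range, and abelianness via a relative Whitehead lemma — are the correct ones and match Bass's strategy. However, as written the Whitehead step does not go through, and the same defect reappears in the stabilization step.

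First, the displayed block identity is arithmetically wrong: for invertible $a,b$ one computes
\[
\mathrm{diag}(a,a^{-1})\,\mathrm{diag}(b,b^{-1})\,\mathrm{diag}\bigl((ab)^{-1},ab\bigr)
= \mathrm{diag}\bigl(\Id_d,\,a^{-1}b^{-1}ab\bigr),
\]
which is $\mathrm{diag}(\Id_d,[a,b])$, not $\mathrm{diag}(aba^{-1},b^{-1})$. A usable identity is $\mathrm{diag}([a,b],\Id_d)=[\mathrm{diag}(a,a^{-1}),\mathrm{diag}(b,\Id_d)]$, where $\mathrm{diag}(a,a^{-1})\in\EL{2d}{\R}$ by the absolute Whitehead lemma and $\mathrm{diag}(b,\Id_d)\in\SL{2d}{\I}$. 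Second, and more importantly, your claim that ``every factor \dots can be arranged to have an off-block entry lying in $\I$'' is false for the usual factorization: the off-diagonal blocks of the Whitehead decomposition of $\mathrm{diag}(b,b^{-1})$ are $b$ and $b^{-1}$, which for $b\in\SL{d}{\I}$ are congruent to $\Id_d$ modulo $\I$, not to $0$; these block-elementary factors therefore lie in $\Fn{2d}{\R}$, not in $\Fn{2d}{\I}$. And normality of $\EL{2d}{\I}$ in $\EL{2d}{\R}$ only says $\EL{2d}{\R}$-conjugates of $\EL{2d}{\I}$ stay in $\EL{2d}{\I}$; it does not give the needed containment. What actually closes the gap is the \emph{standard commutator formula} $[\EL{n}{\R},\SL{n}{\I}]\subseteq\EL{n}{\I}$ for $n\ge3$, i.e.\ the Borevich--Vavilov theorem (quoted in this paper as Theorem~\ref{thm:Valivov}, equivalently $\SLtil{n}{\I}/\EL{n}{\I}=Z(\EL{n}{\R}/\EL{n}{\I})$), which is strictly stronger than normality. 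With the corrected identity and this formula, $\mathrm{diag}([a,b],\Id_d)\in[\EL{2d}{\R},\SL{2d}{\I}]\subseteq\EL{2d}{\I}$ at once. The same gap sits in your surjectivity step: the shortening coefficients $s_i$ live in $\R$, so the matrices $\elm{i}{d+1}(s_i)$ belong to $\EL{d+1}{\R}$ but not to $\EL{d+1}{\I}$, and pushing the resulting error back into $\EL{d+1}{\I}$ again requires the commutator formula (or Bass's conjugation argument) rather than normality alone. Once these substitutions are made your outline does recover the theorem as Bass proves it.
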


%There is an understanding of the normal subgroups of the group $\GL{n}{\R}$ for $n \ge 3$.

\begin{theorem}[Normal subgroup structure theorem]
\label{thm:normal structure theorem}
Assume that $d \ge 3$. A subgroup $H \le \SL{d}{\R}$ is normalized by $\EL{d}{\R}$ if and only if there is an ideal $ \J \nrm \R$ so that
	$$ \EL{d}{\J} \le H \le \SLtil{d}{\J}. $$
The ideal $\J$ associated to the   subgroup $H$ is uniquely determined.
\end{theorem}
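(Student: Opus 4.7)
The plan is to prove both implications and uniqueness separately, following the classical Suslin--Vaserstein approach. The main technical ingredient will be an explicit commutator calculation combined with Tits' Lemma~\ref{lem:Tits lemma}.

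\textbf{Easy direction and uniqueness.} Suppose $\EL{d}{\J} \le H \le \SLtil{d}{\J}$ for some ideal $\J \nrm \R$. As indicated in the introduction, the subquotient $\SLtil{d}{\J}/\EL{d}{\J}$ is central in $\EL{d}{\R}/\EL{d}{\J}$, so every intermediate subgroup $H$ descends to a central subgroup of the quotient and is therefore normalized by $\EL{d}{\R}$. For uniqueness, if two ideals $\J_1, \J_2$ both satisfy the sandwich for $H$, then for every $x \in \J_1$ and every pair of distinct indices $i,j$ we have $\elm{i}{j}(x) \in \EL{d}{\J_1} \le H \le \SLtil{d}{\J_2}$; reducing modulo $\J_2$ produces a scalar matrix in $\SL{d}{\R/\J_2}$, whose off-diagonal $(i,j)$-entry $x + \J_2$ must vanish. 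Hence $x \in \J_2$ and by symmetry $\J_1 = \J_2$.

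\textbf{Construction of $\J$.} Given $H \le \SL{d}{\R}$ normalized by $\EL{d}{\R}$, I would define $\J \nrm \R$ as the ideal generated by all off-diagonal entries $h_{ij}$ ($i \ne j$) together with all diagonal differences $h_{ii} - h_{jj}$ ($i \ne j$), as $h$ ranges over $H$. By construction, the reduction of every $h \in H$ modulo $\J$ is a scalar matrix in $\SL{d}{\R/\J}$, so $H \le \SLtil{d}{\J}$ follows immediately.

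\textbf{The inclusion $\EL{d}{\J} \le H$.} This is the main task. Since $H$ is normalized by $\EL{d}{\R}$, it suffices to show that $\elm{i}{j}(x) \in H$ for every $x \in \J$ and every pair $i \ne j$. For each generator $x$ of $\J$ (arising either as $h_{ij}$ or as $h_{ii}-h_{jj}$ for some $h \in H$) I would compute commutators of the form $[\elm{k}{l}(r), h]$ for carefully chosen third indices $k,l$, using the hypothesis $d \ge 3$ together with the standard commutator relation $[\elm{i}{j}(a), \elm{j}{k}(b)] = \elm{i}{k}(ab)$. Unpacking these commutators, which lie in $H$, one extracts elementary matrices whose off-diagonal entries are products involving $x$ and an arbitrary $r \in \R$. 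A first pass produces matrices in the smaller subgroup $\Fn{d}{\J^2}$; taking the normal closure in $\EL{d}{\R}$ and invoking Tits' Lemma~\ref{lem:Tits lemma} then yields $\EL{d}{\J^2} \le H$, and a further iteration (or a sharper direct commutator) upgrades this to $\EL{d}{\J} \le H$.

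\textbf{Main obstacle.} The principal difficulty is the commutator bookkeeping in the last step: a direct computation naturally yields entries in $\J^2$ rather than in $\J$, and closing this gap is precisely what Tits' Lemma~\ref{lem:Tits lemma} is designed to do, at the cost of having to pass to normal closures inside $\EL{d}{\R}$. When $\J$ contains no units, this bootstrap is the crux of the argument and cannot be avoided by localization or by an ``elementary'' Euclidean-algorithm trick. The assumption $d \ge 3$ is used throughout, both in Tits' Lemma and in the availability of a third index for the basic commutator identity.
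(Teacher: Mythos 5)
The paper does not supply its own proof of this theorem; it simply cites \cite{wilson1972normal, vaserstein1981normal, borevich1984arrangement} and \cite[Theorem 11.15 and Corollary 11.16]{magurn2002algebraic}. So a comparison with ``the paper's proof'' is not possible, and I will instead assess your sketch on its own terms.

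Your easy direction and uniqueness arguments are correct, and your construction of $\J$ as the ideal generated by off-diagonal entries and diagonal differences is exactly the Vaserstein level ideal. The problem is the crux, the inclusion $\EL{d}{\J}\le H$, where the sketch contains a genuine gap that the hand-waving around Tits' Lemma does not close. Recall that Lemma~\ref{lem:Tits lemma} states $\EL{d}{\I^2}\le \Fn{d}{\I}$; this lets you pass from the unnormalized group $\Fn{d}{\I}$ \emph{down} to the normal subgroup $\EL{d}{\I^2}$, at the cost of squaring the ideal. It cannot help you pass \emph{up} from $\EL{d}{\J^2}$ to $\EL{d}{\J}$. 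Now trace your own narrative: if your ``first pass'' of commutator calculations really produced $\Fn{d}{\J}\le H$, then since $\EL{d}{\J}$ is by definition the normal closure of $\Fn{d}{\J}$ and $H$ is normalized by $\EL{d}{\R}$, you would be done immediately --- no Tits' Lemma and no iteration needed. If instead, as you write, the first pass only produces elements of $\Fn{d}{\J^2}$ (which is indeed what naive commutator bookkeeping tends to give, since the relevant entries appear as products of two entries of $H$-elements), then taking normal closure yields only $\EL{d}{\J^2}\le H$, and invoking Tits' Lemma at that point is a no-op. The final claim that ``a further iteration (or a sharper direct commutator) upgrades this to $\EL{d}{\J}\le H$'' is precisely where the whole difficulty lives and is asserted rather than proved: iterating the same argument in $\R/\J^2$ fails because $\J/\J^2$ squares to zero, so no new commutators are extracted, and the ``sharper direct commutator'' is never exhibited.

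The actual arguments of Wilson, Vaserstein and Borevich--Vavilov do not go through $\J^2$ and back up; they perform a considerably more delicate sequence of conjugation and decomposition steps (of the sort used to prove Suslin normality) to extract elementary matrices with entries running over the full ideal $\J$ directly. That work is the real content of the theorem, and it is missing from the proposal. As it stands, what you have established is the sandwich $\EL{d}{\J^2}\le H\le \SLtil{d}{\J}$, which is weaker than the statement and in particular does not pin down the ideal uniquely in the sense required by the rest of the paper.
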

\begin{proof}
This theorem is contained in  \cite{wilson1972normal, vaserstein1981normal, borevich1984arrangement}. See also e.g.  \cite[Theorem 11.15 and Corollary 11.16]{magurn2002algebraic}.
\end{proof}

\begin{theorem}[Borevich--Vavilov]
\label{thm:Valivov}
Assume that $d \ge 3$. Let   $\I \nrm \R$ be an ideal. Then 
$$ Z(\EL{d}{\R}/\EL{d}{\I}) = \SLtil{d}{\I}/\EL{d}{\I}.$$
\end{theorem}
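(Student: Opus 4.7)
My plan is to prove both inclusions separately. For the easier $\subseteq$ direction, suppose $g\EL{d}{\I}$ lies in the center of $\EL{d}{\R}/\EL{d}{\I}$. Then $[g,h] \in \EL{d}{\I} \subseteq \SL{d}{\I}$ for every $h \in \EL{d}{\R}$, so reducing modulo $\I$ the image $\bar g \in \SL{d}{\R/\I}$ centralizes the image of $\EL{d}{\R}$, which coincides with $\EL{d}{\R/\I}$. A direct matrix calculation (from $y\,\elm{i}{j}(1) = \elm{i}{j}(1)\,y$ one reads off that $y$ must be diagonal with equal diagonal entries) identifies the centralizer of $\EL{d}{\R/\I}$ inside $\SL{d}{\R/\I}$ as $Z(\SL{d}{\R/\I})$, so $\bar g \in Z(\SL{d}{\R/\I})$ and $g \in \SLtil{d}{\I}$ by definition.

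For the $\supseteq$ direction, I need to show $[g,h] \in \EL{d}{\I}$ for every $g \in \SLtil{d}{\I}$ and every $h \in \EL{d}{\R}$. Since $\bar g$ is central in $\SL{d}{\R/\I}$ the commutator $[g,h]$ automatically reduces to the identity modulo $\I$, so $[g,h] \in \SL{d}{\I} \cap \EL{d}{\R}$; the real work is to upgrade this containment from $\SL{d}{\I}$ to $\EL{d}{\I}$. My plan is to factor $g = c\cdot s$, where $c \in \EL{d}{\R}$ is a specific lift of the scalar $u\Id_d = \bar g$ built out of the elementary words $D_{i,j}(x,y)$ of Proposition \ref{prop:center in quotients}, and $s = c^{-1} g \in \SL{d}{\I} \cap \EL{d}{\R}$. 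The commutator then expands as $[g,h] = c[s,h]c^{-1}\cdot[c,h]$, and it suffices to place each factor in $\EL{d}{\I}$. The first factor is controlled by the relative commutator formula $[\SL{d}{\I}, \EL{d}{\R}] \le \EL{d}{\I}$ (a version of the relative Whitehead--Suslin lemma valid for $d \ge 3$; cf.\ Theorem \ref{thm:relative stability} for the stable-range statement), together with the normality of $\EL{d}{\I}$ in $\EL{d}{\R}$ which absorbs the outer conjugation by $c$. For the second factor $[c,h]$, the point is that $c$ is an explicit word in elementary matrices whose off-diagonal entries deviate from a scalar only by elements of $\I$; using the Steinberg identity $[\elm{i}{j}(x),\elm{j}{k}(y)] = \elm{i}{k}(xy)$ for triples of distinct indices (available since $d\ge 3$) together with Tits' lemma (Lemma \ref{lem:Tits lemma}) to absorb the quadratic-in-$\I$ contributions, one rewrites $[c,h]$ as a product of elementary matrices with entries in $\I$.

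The main obstacle will be the explicit calculation of $[c,h]$: although heuristically $c$ is ``scalar modulo $\I$'' and therefore should centralize $h$ up to $\EL{d}{\I}$, turning this into a concrete product of elementary $\I$-matrices is fiddly, and the hypothesis $d \ge 3$ enters precisely through the three-index commutator identity and the consequent reach of Tits' lemma. Once both factors are in $\EL{d}{\I}$, the two inclusions combine to give the theorem.
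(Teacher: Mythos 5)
Your $\subseteq$ inclusion is correct and matches what the paper has in mind: if $g\EL{d}{\I}$ is central then $[g,\EL{d}{\R}] \subseteq \EL{d}{\I}\subseteq \SL{d}{\I}$, so $\bar g$ centralizes $\EL{d}{\R/\I}$; by the centralizer computation in \S\ref{sec:abelian subgroups} (cf.\ Corollary \ref{cor:center of EL and SL}) this forces $\bar g\in Z(\SL{d}{\R/\I})$, hence $g\in\SLtil{d}{\I}$.

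The $\supseteq$ direction is where your argument diverges from the paper, and where the gap lies. The paper does not re-derive anything: it simply quotes the Borevich--Vavilov commutator formula $[\EL{d}{\R},\SLtil{d}{\I}]=\EL{d}{\I}$, which directly says that $\SLtil{d}{\I}/\EL{d}{\I}$ is central. You instead propose to factor $g=cs$ and handle $c[s,h]c^{-1}$ and $[c,h]$ separately. The difficulty is twofold. First, the lemma you want for the $s$-factor, namely $[\SL{d}{\I},\EL{d}{\R}]\le\EL{d}{\I}$ at the unstable level $d\ge 3$, is not Theorem \ref{thm:relative stability} — that result is stated only for $d>\sr{\R}$, and anyway concerns the abelianness of $\SL{d}{\I}/\EL{d}{\I}$ rather than normality under commutation with $\EL{d}{\R}$. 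The relative commutator formula at $d\ge 3$ (over a commutative ring) is a genuine theorem of Vaserstein/Suslin type, and it is precisely the hard content that Borevich--Vavilov's Theorem 4 packages together with the scalar case. Citing the stable-range statement as "the same lemma, just restricted" is therefore misleading: there is no reference available in the paper that justifies this step, and it cannot be waved away. Second, the $[c,h]$-factor is asserted, not proved. Your heuristic is right that $c$ is scalar modulo $\I$, so $[c,h]\in\SL{d}{\I}$ is clear; but moving from $\SL{d}{\I}$ to $\EL{d}{\I}$ is the entire point of the theorem, so showing this by an explicit Steinberg/Tits computation is not a routine afterthought — if it were, one could probably dispense with the $c\cdot s$ decomposition altogether. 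In sum: the factorization $g=cs$ is a sensible organizing device, but the two sub-claims you invoke are, taken together, of the same strength as $[\EL{d}{\R},\SLtil{d}{\I}]=\EL{d}{\I}$, neither is established in the paper or in your sketch, and the attribution of the first to Theorem \ref{thm:relative stability} is incorrect. The paper sidesteps all of this by treating the Borevich--Vavilov formula as a black box.
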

\begin{proof}
It is shown in  \cite[Theorem 4]{borevich1984arrangement} that $\left[ \EL{d}{\R}, \SLtil{d}{\I} \right] = \EL{d}{\I}$. This gives containment in one direction. The containment in the opposite direction follows from the definition of the normal subgroup $\SLtil{d}{\I}$.
\end{proof}

\subsection*{Virtually central subquotients}

Let $\I, \K \nrm \R$ be a pair of ideals with $\mfi{\K} = \I$. Consider the subquotient group
$$ \mathcal{A}_d(\I, \K) = (\SLtil{d}{\I} \cap \EL{d}{\R})/ \EL{d}{\K}.$$
Recall from \S\ref{sec: character theory} that a subgroup $H$ of a given group $G$ is virtually central if \linebreak $\left[H:H \cap Z(G)\right] < \infty$.
The following fact will play  an important role in our work.

\begin{theorem}
\label{thm:virtually abelian}
 The subquotient $\mathcal{A}_d(\I, \K)$ 
 is virtually central in the quotient group $\EL{d}{\R}/\EL{d}{\K}$. %For every ideal $\I \le \R$ the quotient   $ \SLtil{d}{\mfi{\I}}/\EL{d}{\I}$ is virtually abelian.
\end{theorem}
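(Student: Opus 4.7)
\medskip

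\textbf{Proof plan.} The key input is that $\mfi{\K} = \I$ means $|\I/\K| < \infty$, so the ideal $\I/\K \nrm \R/\K$ is a finite ideal and $\SL{d}{\I/\K}$ is a finite group. The plan is to identify a concrete central subgroup of $\mathcal{A}_d(\I, \K)$ coming from Borevich--Vavilov, and then control the quotient by reducing modulo $\K$.

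First I would apply Theorem \ref{thm:Valivov} to the ideal $\K$ itself: the center of $\EL{d}{\R}/\EL{d}{\K}$ is exactly $(\SLtil{d}{\K} \cap \EL{d}{\R})/\EL{d}{\K}$. Denote $\tilde Z_\I = \SLtil{d}{\I} \cap \EL{d}{\R}$ and  $\tilde Z_\K = \SLtil{d}{\K} \cap \EL{d}{\R}$; by Lemma \ref{lem:behaviour of tilde under quotients} we have $\tilde Z_\K \le \tilde Z_\I$. Thus $\tilde Z_\K/\EL{d}{\K}$ is a central subgroup of $\mathcal{A}_d(\I,\K) = \tilde Z_\I/\EL{d}{\K}$, and it remains to prove that its index, namely $[\tilde Z_\I : \tilde Z_\K]$, is finite.

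Next, consider the reduction homomorphism $\rho_\K : \SL{d}{\R} \to \SL{d}{\R/\K}$ and the further reduction $\widehat{\pi} : \SL{d}{\R/\K} \to \SL{d}{\R/\I}$ induced by the surjection $\pi : \R/\K \to \R/\I$. The kernel $\ker \widehat{\pi}$ equals the congruence group of matrices over $\R/\K$ that reduce to the identity modulo $\I/\K$, which is contained in $\SL{d}{\I/\K}$ and therefore finite. Also, the kernel of $\rho_\K$ restricted to $\EL{d}{\R}$ is contained in $\SLtil{d}{\K}$, hence in $\tilde Z_\K$, so
\[
\tilde Z_\I/\tilde Z_\K \;\cong\; \rho_\K(\tilde Z_\I)/\rho_\K(\tilde Z_\K).
\]
A lifting argument shows that $\rho_\K(\tilde Z_\I) \cap Z(\SL{d}{\R/\K}) = \rho_\K(\tilde Z_\K)$. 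Hence the induced map
\[
\rho_\K(\tilde Z_\I)/\rho_\K(\tilde Z_\K) \;\longrightarrow\; Z(\SL{d}{\R/\I})/\widehat{\pi}(Z(\SL{d}{\R/\K}))
\]
has kernel of size at most $|\ker\widehat{\pi}| \le |\SL{d}{\I/\K}| < \infty$, since any element of the kernel differs from something central by an element of $\ker \widehat{\pi}$.

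Finally, Theorem \ref{thm:center in of SL has finite index} applied to $\widehat{\pi}$, which has finite kernel, shows that the target group $Z(\SL{d}{\R/\I})/\widehat{\pi}(Z(\SL{d}{\R/\K}))$ is itself finite. Combining the finiteness of the kernel and of the image of the displayed map gives $[\tilde Z_\I:\tilde Z_\K] < \infty$, as required. The main technical point — and the one place the hypothesis $\mfi{\K}=\I$ is really used — is the finiteness of $\SL{d}{\I/\K}$ and the applicability of Theorem \ref{thm:center in of SL has finite index}; once these are in place, the argument is a diagram chase through the two reduction maps.
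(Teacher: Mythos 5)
Your proof is correct and follows essentially the same route as the paper: apply Borevich--Vavilov (Theorem~\ref{thm:Valivov}) to exhibit $(\SLtil{d}{\K} \cap \EL{d}{\R})/\EL{d}{\K}$ as a central subgroup of $\mathcal{A}_d(\I,\K)$, then prove it has finite index via the two reduction maps $\R \to \R/\K \to \R/\I$, using the finiteness of $\ker(\SL{d}{\R/\K} \to \SL{d}{\R/\I})$ and Theorem~\ref{thm:center in of SL has finite index}. The paper isolates the finite-index claim as a separate statement, Proposition~\ref{prop: congurence subgroups of ideal is of finite index in the congurence subgroup of its depth}, but the underlying diagram chase you give is the same one.
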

  
In particular the subquotient group $\mathcal{A}_d(\I, \K)$  is  virtually abelian.
Before proving Theorem   \ref{thm:virtually abelian} we need the following observation.

 \begin{prop}
  \label{prop: congurence subgroups of ideal is of finite index in the congurence subgroup of its depth}
If $\mfi{\K} = \I$ then  $ [\SLtil{d}{\I}: \SLtil{d}{\K}] < \infty$.
\end{prop}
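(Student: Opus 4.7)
The plan is to reduce modulo the smaller ideal $\K$ and combine the fact that $|\I/\K|<\infty$ with the already established Theorem \ref{thm:center in of SL has finite index}.

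First I would observe that $\SLtil{d}{\K}\le \SLtil{d}{\I}$ by Lemma \ref{lem:behaviour of tilde under quotients} (applied to $\K\subset\I$), so the index makes sense. Let $\rho_\K\colon \SL{d}{\R}\to \SL{d}{\R/\K}$ be the reduction modulo $\K$. Its kernel is $\SL{d}{\K}$, which is contained in both $\SLtil{d}{\K}$ and $\SLtil{d}{\I}$. Hence
$$[\SLtil{d}{\I}:\SLtil{d}{\K}]\;=\;[\rho_\K(\SLtil{d}{\I}):\rho_\K(\SLtil{d}{\K})],$$
so it suffices to bound the latter inside $\SL{d}{\R/\K}$.

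Next, let $\pi\colon \SL{d}{\R/\K}\to \SL{d}{\R/\I}$ be the reduction induced by the surjection $f\colon \R/\K \to \R/\I$. The kernel of $f$ is $\I/\K$, which by hypothesis $\mfi{\K}=\I$ is finite; consequently $\ker\pi=\SL{d}{\I/\K}$ is a finite group. By the definitions, $\rho_\K(\SLtil{d}{\I})\subset Q:=\pi^{-1}(Z(\SL{d}{\R/\I}))$ and $\rho_\K(\SLtil{d}{\K})=\rho_\K(\SL{d}{\R})\cap Z(\SL{d}{\R/\K})=\rho_\K(\SLtil{d}{\I})\cap N$, where $N:=Z(\SL{d}{\R/\K})\subset Q$. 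Therefore the quotient $\rho_\K(\SLtil{d}{\I})/\rho_\K(\SLtil{d}{\K})$ embeds into $Q/N$, and it is enough to prove $[Q:N]<\infty$.

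To control $[Q:N]$, I would use multiplicativity of index together with the surjection $\pi\colon Q\to \pi(Q)\subset Z(\SL{d}{\R/\I})$:
$$[Q:N]\;=\;[Q:N\cdot\ker\pi]\cdot[N\cdot\ker\pi:N]\;=\;[\pi(Q):\pi(N)]\cdot[\ker\pi:\ker\pi\cap N].$$
The second factor is bounded by $|\ker\pi|=|\SL{d}{\I/\K}|<\infty$. For the first factor, apply Theorem \ref{thm:center in of SL has finite index} to the surjection $f\colon\R/\K\to\R/\I$ with finite kernel $\I/\K$: this yields $[Z(\SL{d}{\R/\I}):\pi(Z(\SL{d}{\R/\K}))]=[Z(\SL{d}{\R/\I}):\pi(N)]<\infty$, which in turn dominates $[\pi(Q):\pi(N)]$. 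Both factors are finite, hence $[Q:N]<\infty$ and the proposition follows.

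The main obstacle I anticipate is bookkeeping: one has to check carefully that all the needed inclusions ($N\subset Q$, $\ker\pi\subset Q$, $Z(\SL{d}{\R/\K})\subset \pi^{-1}(Z(\SL{d}{\R/\I}))$) are automatic, and that the reduction map $\rho_\K$ indeed intertwines the two $\SLtil{d}{\cdot}$ subgroups with the correct subgroups downstairs; everything else is a direct application of Theorem \ref{thm:center in of SL has finite index} combined with the finiteness of $\I/\K$.
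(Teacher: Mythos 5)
Your proposal is correct and follows essentially the same strategy as the paper: reduce modulo $\K$, note that $\ker\widehat{f}$ is finite because $|\I/\K|<\infty$, and combine this with Theorem~\ref{thm:center in of SL has finite index} to bound the index. The paper compresses your steps~(3)--(7) into a single claimed isomorphism $\SLtil{d}{\I}/\SLtil{d}{\K}\cong (\widehat{f})^{-1}Z(\SL{d}{\R/\I})/Z(\SL{d}{\R/\K})$ followed by the bound $\le i\,|\ker\widehat{f}|$; your version, which instead establishes only an \emph{embedding} of $\rho_\K(\SLtil{d}{\I})/\rho_\K(\SLtil{d}{\K})$ into $Q/N$ and then factors $[Q:N]$ through $\ker\pi$, is actually a little more careful, since it does not tacitly require the reduction map $\rho_\K\colon\SL{d}{\R}\to\SL{d}{\R/\K}$ to be surjective. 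In both cases the key inputs are identical: finiteness of $\SL{d}{\I/\K}$ and Theorem~\ref{thm:center in of SL has finite index} applied to $f\colon\R/\K\to\R/\I$.
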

\begin{proof}
Consider the ring epimorphism $f : \R/\K \to \R/\I $ and the corresponding reduction map $\widehat{f} : \SL{d}{\R/\K} \rightarrow \SL{d}{\R/\I}$. We know from Theorem   \ref{thm:center in of SL has finite index} that
$$i \coloneqq \left[ Z(\SL{d}{\R/\I}) : \widehat{f}(Z(\SL{d}{\R/\K})) \right] < \infty. $$
It follows from the definition of the groups $\widetilde{\mathrm{SL}}_d$ that
$$ \frac{\SLtil{d}{\I}}{\SLtil{d}{\K}} \cong \frac{(\widehat{f})^{-1}Z(\SL{d}{\R/\I})}{Z(\SL{d}{\R/\K})}. $$
We may conclude that
$$ \left| \frac{\SLtil{d}{\I}}{\SLtil{d}{\K}}\right| =  \left| \frac{(\widehat{f})^{-1}Z(\SL{d}{\R/\I})}{Z(\SL{d}{\R/\K})} \right| \le i  |\ker \widehat{f}| < \infty$$
as required.
\end{proof}

% Then it follows that $Z(\SL{d}{\R/\J}) < f'^{-1}(Z(\SL{d}{\R/\I}))$. \begin {lemma}
%Under the above assumptions:
%$$[f'^{-1}(Z(\SL{d}{\R/\I})) : Z(\SL{d}{\R/\J})] < \infty. $$
%\end{lemma}
%\begin{proof}
%	Indeed  it follows from Proposition \ref {prop:ring epi with finite kernel is epi on units} that every $g \in f'^{-1}(Z(\SL{d}{\R/\I})) $ is a sum $g = S_d (u)+M$, where $S_d(u)$ a scalar matrix with $u \in \unitsord{\R}{d} $ on the diagonal and $M \in \Mn{d}{\J/\I} $ which is finite since $[\I:\J] < \infty$.
%\end{proof}
%To conclude:
%
%\begin{proof}
%	Indeed
%	 $$\SLtil{d}{\I}=(f\circ f')^{-1} Z(\SL{d}{\R/\I})$$
%	 and
%	 $$\SLtil{d}{\J}=(f)^{-1} Z(\SL{d}{\R/\J}),$$
%	  where $f,f'$ are the projection maps defined above. Since
%$$[f'^{-1}(Z(\SL{d}{\R/\I})) : Z(\SL{d}{\R/\J})] < \infty $$
%the result follows.
%\end{proof}
%

\begin{proof}[Proof of Theorem \ref{thm:virtually abelian}]
Recall that $\I,\K \nrm \R$ is a pair of ideals with $\mfi{\K} = \I$. Consider the following series of subgroups (see Lemma \ref{lem:behaviour of tilde under quotients})
$$ \EL{d}{\K} \le \SLtil{d}{\K} \le \SLtil{d}{\I}. $$
On the one hand, the theorem of Borevich and Vavilov (see Theorem \ref{thm:Valivov} above) implies that $$\left[ \EL{d}{\R}, \SLtil{d}{\K} \right] = \EL{d}{\K}.$$
On the other hand, it was established in Proposition \ref{prop: congurence subgroups of ideal is of finite index in the congurence subgroup of its depth} that
	$$ [\SLtil{d}{\I}: \SLtil{d}{\K}] < \infty.$$
The conclusion follows.
\end{proof}

%We introduce the notation
%In particular $\VA{\R}{\I}$ is a virtually abelian group and  

%
%\begin{prop}
%	\label{prop:characters vanishing outside ideal are induced from finite dimensional representation }
%	Let $\varphi \in \chars{G/\EL{d}{\I}}$  be a $G/\EL{d}{\I}$ character. Suppose that $\varphi(x)=0$ for all $x \in G/\EL{d}{\I} \setminus \VA{\R}{\I}$ , then $\varphi$ is finitely induced.
%\end{prop}
%

\section{Abelian subgroups of the group $\EL{d}{\R}$}
\label{sec:abelian subgroups}
Let $\R$ be a Noetherian ring. Let    $d \in \NN$ be a fixed integer with $d \ge 3$. %with generators $x_1,\ldots,x_k$.

\subsection*{Horizontal and vertical subgroups}
Fix an index  $i \in \{1,\ldots,d\}$. The \emph{horizontal} subgroup $\hor{i}(\R)$ of the matrix group $\SL{d}{\R}$ is  generated by the elementary subgroups $\elm{i}{j}(\R)$ for all indices $j \in \{1,\ldots,d\} \setminus \{i\}$. Likewise, the \emph{vertical} subgroup $\ver{i}(\R)$   is the subgroup generated by the elementary subgroups $\elm{j}{i}(\R)$ for all indices $j \in \{1,\ldots,d\} \setminus \{i\}$.  The   commutation relations given in \S\ref{sec: structure of SLd}
 imply that the   subgroups $\hor{i}(\R)$ and $\ver{i}(\R)$ are all   isomorphic to the additive group of the ring $\R^{d-1}$. Observe that $$\hor{i}(\R) \cap \ver{j} (\R)= \elm{i}{j}(\R).$$

\begin{prop}
\label{prop:commutator of elementary and horizontal/vertical}
The   elementary groups satisfy the   commutation relations
$$\text{ $[ \elm{i}{j}(\R), \hor{k}(\R)] = \elm{k}{j}(\R)$ \quad and \quad $[\elm{i}{j}(\R), \ver{k}(\R)] = \elm{i}{k}(\R)$}$$
for every three distinct indices   $i,j,k \in \{1,\ldots,d\}$.
\end{prop}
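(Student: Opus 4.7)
The plan is a direct computation with the well-known Steinberg-type commutator identity for elementary matrices, namely
\[ [\elm{i}{j}(r),\elm{k}{l}(s)] = \begin{cases} \elm{i}{l}(rs) & \text{if } j=k,\; i\neq l, \\ \elm{k}{j}(-rs) & \text{if } i=l,\; j\neq k, \\ \Id & \text{if } j\neq k,\; i\neq l. \end{cases} \]
I would establish this first (it follows immediately from expanding $(I+re_{ij})(I+se_{kl})(I-re_{ij})(I-se_{kl})$ and using $e_{ab}e_{cd}=\delta_{bc}e_{ad}$).

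For the first relation $[\elm{i}{j}(\R),\hor{k}(\R)]=\elm{k}{j}(\R)$, I would fix distinct $i,j,k$ and evaluate $[\elm{i}{j}(r),\elm{k}{l}(s)]$ for each generator $\elm{k}{l}(s)$ of $\hor{k}(\R)$ (so $l\neq k$). Since $j\neq k$ automatically, the commutator is non-trivial only when $l=i$, in which case it equals $\elm{k}{j}(-rs)$. Taking $r=-1$ and letting $s$ range over $\R$ shows $\elm{k}{j}(\R)\subset[\elm{i}{j}(\R),\hor{k}(\R)]$. For the reverse inclusion, write an arbitrary $h\in\hor{k}(\R)$ as an ordered product $h=\prod_{l\neq k}\elm{k}{l}(s_l)$ (the factors commute pairwise). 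Using $[a,bc]=[a,b]\cdot b[a,c]b^{-1}$ repeatedly and observing that every factor $[\elm{i}{j}(r),\elm{k}{l}(s_l)]$ lies in $\elm{k}{j}(\R)$, which is central in the subgroup generated by $\elm{i}{j}(\R)$ and $\hor{k}(\R)$ (another quick check from the same commutator formula), the conjugations are trivial and $[\elm{i}{j}(r),h]$ is a product of such factors, hence in $\elm{k}{j}(\R)$. This yields the opposite containment.

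For the second relation $[\elm{i}{j}(\R),\ver{k}(\R)]=\elm{i}{k}(\R)$, I would either repeat the same argument with the generators $\elm{l}{k}(s)$ of $\ver{k}(\R)$ (the only non-trivial case being $l=j$, giving $\elm{i}{k}(rs)$), or derive it from the first relation by applying the transpose-inverse anti-automorphism of $\EL{d}{\R}$, which interchanges $\hor{k}(\R)$ and $\ver{k}(\R)$ and sends $\elm{i}{j}(r)$ to $\elm{j}{i}(-r)$.

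There is no real obstacle; the only point requiring care is the passage from commutators of individual generators to commutators of arbitrary group elements, and this is handled cleanly by the centrality of $\elm{k}{j}(\R)$ (respectively $\elm{i}{k}(\R)$) in the ambient subgroup, which lets the standard commutator identity $[a,bc]=[a,b][a,c]$ hold modulo trivial conjugations.
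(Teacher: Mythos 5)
Your proposal is correct and takes essentially the same route as the paper: identify which Steinberg commutator $[\elm{i}{j}(r),\elm{k}{l}(s)]$ is nontrivial (only $l=i$ for the horizontal case, only $l=j$ for the vertical), then combine commutators with individual generators via $[a,bc]=[a,b]\,{}^b[a,c]$, using that the would-be conjugations act trivially. The paper phrases the "no interference from conjugation" step by simply noting that the remaining factor $[\elm{i}{j}(\R),\prod_{l\notin\{i,k\}}\elm{k}{l}(\R)]$ is already trivial, whereas you appeal to centrality of $\elm{k}{j}(\R)$ in $\langle\elm{i}{j}(\R),\hor{k}(\R)\rangle$; both observations are correct and equivalent in effect.
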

\begin{proof}
Let us prove the commutation relation involving the horizontal group $\hor{k}(\R)$.  We have that
\begin{align*}
 [\elm{i}{j}(\R), \hor{k}(\R)] &= [\elm{i}{j}(\R),  \elm{k}{i}(\R) \prod_{l \not\in \{ i,k\} }  \elm{k}{l}(\R) ] = \\
 &= [\elm{i}{j}(\R),    \elm{k}{i}(\R)] [\elm{i}{j}(\R),  \prod_{l \not\in \{ i,k\} }   \elm{k}{l}(\R)]^ {\elm{k}{i}(\R)} = \elm{k}{j}(\R)
\end{align*}
as required.
The proof of the second commutation relation involving the vertical group   $\ver{k}(\R)$ is analogous.
\end{proof}

\subsection*{Normalizers of horizontal and vertical  subgroups}
Fix an index $i \in \{1,\ldots,d\}$. The subgroup $\ngp{i}(\R)$ of the matrix group $\SL{d}{\R}$   is given by
$$ \ngp{i}(\R) = \{g \in \SL{d}{\R} \: : \: \text{$g_{i,i} = 1$ and $g_{i,j} = g_{j,i} = 0$ for all $ j\in \{1,\ldots,d\}\setminus\{i\}$}  \}.$$
The groups $\ngp{i}(\R)$ are all isomorphic to the matrix group $\SL{d-1}{\R}$. Observe that $\ngp{i}(\R)$ normalizes both subgroups $\hor{i}(\R)$ and $\ver{i}(\R)$ for every index $i \in \{1,\ldots,d\}$.

Identifying the subgroup $\ngp{i}(\R)$ with the matrix group $\SL{d-1}{\R}$ and the two subgroups $\hor{i}(\R)$ and $ \ver{i}(\R)$ with the abelian group $\R^{d-1}$ in the obvious way, conjugation corresponds to matrix multiplication. More precisely, given an element $n \in \ngp{i}(\R)$ as well as a pair of elements $ u \in \hor{i}(\R)$ and $v \in \ver{i}(\R)$ we have that
	$$ \text{$n^{-1} u n  = n^t\cdot u$ \quad and \quad $n^{-1}v n = n^{-1}\cdot v$ } $$
	where "$\cdot$" denotes matrix multiplication with respect to the above identification and $n^t$ denotes the matrix transpose of $n$.

The full normalizer of the horizontal and vertical subgroups $\hor{i}(\R)$ and $\ver{i}(\R)$ is larger than $\ngp{i}(\R)$   in general. More precisely we have the following.

\begin{prop}
\label{prop:normalizer of vertical or horizontal subgroup}
If $i \in \{1,\ldots,d\}$ is an index then
$$ N_{\SL{d}{\R}} (\ver{i}(\R)) = \{ g \in \SL{d}{\R} \: : \: \text{$g_{i,j} =0 $  for all $ j \in \{1,\ldots,d\} \setminus \{i\} $} \} $$
and
$$ N_{\SL{d}{\R}} (\hor{i}(\R)) = \{ g \in \SL{d}{\R} \: : \: \text{$g_{j,i} =0 $  for all $ j \in \{1,\ldots,d\} \setminus \{i\} $} \}. $$
\end{prop}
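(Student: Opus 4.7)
The plan is to treat both identities in parallel by identifying, for each subgroup, an intrinsic submodule of $\R^d$ that is pointwise fixed by it under the standard $\SL{d}{\R}$-action on $\R^d$; the normalizer is then forced to preserve this fixed submodule, which translates directly into the stated matrix condition.

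For the vertical case, I would first observe that a generic element of $\ver{i}(\R)$ is of the form $v_u = I + u e_i^t$ with $u \in \R^d$ satisfying $u_i = 0$, and that $v_u x = x + u\, x_i$. The vectors $x \in \R^d$ fixed by the entire subgroup $\ver{i}(\R)$ thus form exactly the hyperplane $H_i = \{x \in \R^d : x_i = 0\}$; the nontrivial direction of this identification uses $u = e_k$ for some $k \neq i$ to force $x_i = 0$, and is valid over any commutative ring. Now suppose $g \in \SL{d}{\R}$ normalizes $\ver{i}(\R)$ and $x \in H_i$. For every $v \in \ver{i}(\R)$ the conjugate $g^{-1}vg$ again lies in $\ver{i}(\R)$ and hence fixes $x$, so $v(gx) = gx$. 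Therefore $gx$ is fixed by every element of $\ver{i}(\R)$, and so $gx \in H_i$; this gives $gH_i \subseteq H_i$, and evaluating at $x = e_j$ for each $j \neq i$ yields $g_{i,j} = (ge_j)_i = 0$, the desired containment.

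The reverse containment is a direct matrix computation: if $g \in \SL{d}{\R}$ has $g_{i,j} = 0$ for all $j \neq i$, then expanding $\det g = 1$ along row $i$ shows $g_{i,i} \in \units{\R}$, the $i$-th row of $g^{-1}$ equals $g_{i,i}^{-1} e_i^t$, and one finds
$$ g v_u g^{-1} = I + g_{i,i}^{-1}(gu)\, e_i^t $$
with $(gu)_i = g_{i,i} u_i = 0$, so $g v_u g^{-1} \in \ver{i}(\R)$.

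The horizontal case is entirely analogous with rows and columns interchanged: $\hor{i}(\R)$ pointwise fixes the rank-one free submodule $\R e_i \subset \R^d$, and this is precisely its fixed set (testing against $v = e_j$ for $j \neq i$). Normalization by $g$ then forces $g(\R e_i) \subseteq \R e_i$, so $g e_i$ is a scalar multiple of $e_i$, which is exactly the condition $g_{j,i} = 0$ for $j \neq i$; the reverse direction is again an explicit computation using that the $i$-th column of $g^{-1}$ is $g_{i,i}^{-1} e_i$. I do not anticipate a conceptual obstacle; the only point requiring a moment's care is to verify that the intrinsic characterization of each fixed submodule remains valid over a general commutative ring rather than only over a field, which is handled by the explicit choice of $u$ or $v$ as a standard basis vector above.
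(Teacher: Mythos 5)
Your proof is correct and takes a genuinely different route from the paper. The paper's proof proceeds by directly computing a specific entry of the conjugate $g^{-1}\elm{k}{i}(1)g$, obtaining $(g^{-1})_{i,k}\,g_{i,j}=0$ for all $j,k\neq i$, and then asserting a dichotomy (either all $g_{i,j}$ with $j\neq i$ vanish, or all $(g^{-1})_{i,k}$ with $k\neq i$ vanish) that leads to $g\in M_i$. Your argument instead identifies the pointwise fixed set of $\ver{i}(\R)$ acting on $\R^d$ as the coordinate hyperplane $H_i=\{x : x_i=0\}$, observes that conjugation-invariance of $\ver{i}(\R)$ forces $g$ to preserve this fixed set, and reads off $g_{i,j}=0$ for $j\neq i$ by evaluating on $e_j$; the horizontal case is dual, using the fixed line $\R e_i$. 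This is cleaner in two respects. First, the vanishing $g_{i,j}=0$ falls out one index $j$ at a time without any appeal to a global dichotomy on a product of ring elements, which over a general commutative ring with zero divisors does not follow from the single relation $(g^{-1})_{i,k}\,g_{i,j}=0$ alone and would need an extra justification. Second, the ``fixed submodule'' framing makes the easy direction (the explicit containment) and the hard direction two sides of the same coin, and explains conceptually why the normalizer has the stated shape. Both proofs correctly handle the determinant condition to see that $g_{i,i}$ is a unit, which your write-up uses to pin down the $i$-th row (resp.\ column) of $g^{-1}$ for the reverse containment.
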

\begin{proof}
We   compute the normalizer of the subgroup $\ver{i}(\R)$. The proof for   $\hor{j}(\R)$ is analogous. Denote $M_i = \{ g \in \SL{d}{\R} \: : \: \text{$g_{i,j} =0 $  for all $ j\neq i $} \}$. It is easy to verify that $M_i \le N_{\SL{d}{\R}} (\ver{i}(\R))$. For the opposite inclusion,   a direct computation shows that
$$ (g^{-1} \elm{k}{i}(1) g)_{i,j} = (g^{-1})_{i,k} g_{i,j} \quad \forall j,k \in \{1,\ldots,d\} \setminus \{i\}. $$
Every element $g \in N_{\SL{d}{\R}} (\ver{i}(\R))$ must therefore satisfy $(g^{-1})_{i,k} g_{i,j} = 0$. In particular either $g_{i,j} = 0$ for all $j \in \{1,\ldots,d\} \setminus \{i\}$ or $(g^{-1})_{i,k} = 0$ for all $k \in \{1,\ldots,d\} \setminus \{i\}$. 
The first case implies that  $g \in M_i$ by definition. In the second case $g^{-1} \in M_i$ so that  $g \in M_i$ as well since $M_i$ is a subgroup.
\end{proof}

Note that if either $g \in N_{\SL{d}{\R}} (\ver{i}(\R)) $ or $g \in N_{\SL{d}{\R}} (\hor{i}(\R)) $ then $g_{i,i} \in \units{\R}$.

\subsection*{Centralizers of elementary subgroups}
\label{sub:centralizers}

%\marginpar{Looks like this is not needed!}
%
% \begin{prop}
%\label{prop:centraliser of an elementary matrix with arbitrary entry}
%Let $x \in \R$ be an arbitrary element.  Denote $\I_x = \mathrm{Ann}_{\R}(x)$. The centralizer $C_{\SL{d}{\R}}(\elm{i}{j}(x)) $ of the elementary matrix  consists of all elements $ g \in \SL{d}{\R}$ such that
%$$  \{  \: : \text{$\: g_{k,i} \in \I_x$, $g_{j,l} \in \I_x$ and $\varepsilon = g_{i,i} + \I_x = g_{j,j} + \I_x$ is a unit in $\R/\I_x$}\}.$$
%\end{prop}
%\begin{proof}
% 
%\end{proof}

%\marginpar{after working on these go back to section 9, and in particular update the notation $Z$ for the center}

We leave the direct verification of the following computation to the reader.

\begin{prop}
\label{prop:centraliser of an elementary matrix with unit entry}
Let $x \in \R$ be any   element. Denote $\I_x = \mathrm{Ann}_{\R}(x) \nrm \R$. If $i,j \in \{1,\ldots,d\}$ is a pair of distinct indices then
%\marginpar{maybe use big \{ ?}
\begin{equation*}
 C_{\SL{d}{\R}}(\elm{i}{j}(x)) = \left\{ g \in \SL{d}{\R} \: : \:  
 \begin{aligned}[c]
 &g_{k,i} = g_{j,l} = 0 \quad \forall  k,l \in  \{1,\ldots,d\}, k\neq i, l\neq j \\ &\text{and} \quad g_{i,i} + \I_x =g_{j,j} + \I_x  \end{aligned} \right\}. 
 \end{equation*}
\end{prop}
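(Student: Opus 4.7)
The plan is to compute the commutator directly. Write $\elm{i}{j}(x) = \mathrm{Id}_d + x E_{i,j}$ where $E_{i,j}$ is the standard matrix unit with a $1$ in position $(i,j)$ and zeros elsewhere. For any $g \in \SL{d}{\R}$, the equation $g\,\elm{i}{j}(x) = \elm{i}{j}(x)\,g$ is equivalent to $x \cdot g E_{i,j} = x \cdot E_{i,j} g$. A one-line matrix multiplication gives the entries
\[
 (x g E_{i,j})_{k,l} = x\, g_{k,i}\,\delta_{j,l}, \qquad (x E_{i,j} g)_{k,l} = x\, \delta_{k,i}\, g_{j,l},
\]
so the commutation condition becomes $x\bigl(g_{k,i}\delta_{j,l} - \delta_{k,i} g_{j,l}\bigr) = 0$ for all $k,l \in \{1,\ldots,d\}$.

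I would then break into the four cases according to whether $k = i$ and whether $l = j$. The case $k \neq i$, $l \neq j$ is automatic. The case $k = i$, $l \neq j$ yields $x g_{j,l} = 0$, i.e.\ $g_{j,l} \in \I_x$. The symmetric case $k \neq i$, $l = j$ yields $g_{k,i} \in \I_x$. The diagonal case $k = i$, $l = j$ yields $x(g_{i,i} - g_{j,j}) = 0$, i.e.\ $g_{i,i} + \I_x = g_{j,j} + \I_x$. These are exactly the three conditions in the statement (with the vanishing conditions read modulo $\I_x$, consistently with the congruence in the third condition).

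The converse direction is immediate: if $g$ satisfies all three conditions, then each of the four cases above vanishes, so $x g E_{i,j} = x E_{i,j} g$, and therefore $g$ commutes with $\elm{i}{j}(x) = \mathrm{Id}_d + xE_{i,j}$. There is no genuine obstacle here—the entire argument is bookkeeping with matrix entries, which is why the authors leave it to the reader; the only mild point to flag is that ``$g_{k,i} = 0$'' and ``$g_{j,l} = 0$'' in the displayed set should be interpreted modulo $\I_x$, in line with the equality $g_{i,i} + \I_x = g_{j,j} + \I_x$ written for the diagonal entries.
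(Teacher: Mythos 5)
Your direct computation is correct and is precisely the verification the paper leaves to the reader. Your flag about the modular reading is also correct: the commutation condition $x\,gE_{i,j} = x\,E_{i,j}g$ (with $E_{i,j}$ the matrix unit) forces only $g_{k,i},\,g_{j,l} \in \I_x$, not $g_{k,i} = g_{j,l} = 0$ literally; for instance over $\R = \ZZ/4\ZZ$ with $x = 2$, $i=1$, $j=2$, the matrix $g = \mathrm{Id}_3 + 2E_{3,1}$ centralizes $\elm{1}{2}(2)$ even though $g_{3,1} \neq 0$. So the off-diagonal vanishing in the displayed set should indeed be read modulo $\I_x$, matching the diagonal condition; this imprecision is harmless in the paper's applications, which either specialize to $x = 1$ (forcing $\I_x = (0)$) or intersect over all $x \in \R$.
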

%\begin{proof}
%Fix a pair of distinct indices $i,j \in \{1,\ldots,d\}$. Let $C_{i,j}$ denote the subgroup defined on the right hand side of the above statement. It can be   verified by a direct computation that $C_{i,j} \le C_{\SL{d}{\R}}(\elm{i}{j}(x))$. Conversely let $g \in C_{\SL{d}{\R}} (\elm{i}{j}(x))$ be any element. The equation   $g \elm{i}{j}(x) = \elm{i}{j}(x) g$ implies that $g_{k,i
%\end{proof}

If $g \in C_{\SL{d}{\R}}(\elm{i}{j}(x)) $ then the reduction modulo the ideal $\I_x = \mathrm{Ann}_\R(x)$ of the diagonal entries $g_{i,i}$ and $g_{j,j}$ coincides and must be a unit in the quotient ring $\R/\I_x$. In particular, if the element $g$ centralizes all elementary subgroups $\elm{i}{j}(R)$ then $g$ must be of the form $u \mathrm{Id}_d$ for some unit $u \in \unitsord{\R}{d}$.
%\marginpar{actually $u \in \unitsord{\R}{d}$} 
%Note that the centraliser of $e_{i,j}(x)$ is independent of $x \in \R$ for $ x \neq 0$.

\begin{cor}
\label{cor:center of EL and SL}
$Z(\EL{d}{\R}) = Z(\SL{d}{\R})$.
\end{cor}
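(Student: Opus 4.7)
The plan is to prove the two inclusions separately, relying on two results already available in the text: Proposition \ref{prop:center in quotients} for the easy direction, and Proposition \ref{prop:centraliser of an elementary matrix with unit entry} for the harder direction.

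For the inclusion $Z(\SL{d}{\R}) \le Z(\EL{d}{\R})$, I will apply Proposition \ref{prop:center in quotients} to the identity homomorphism $f = \mathrm{id}_\R$, which gives $Z(\SL{d}{\R}) \le \EL{d}{\R}$. Any element of $Z(\SL{d}{\R})$ has the form $u \Id_d$ with $u \in \unitsord{\R}{d}$, and such a scalar matrix commutes with every element of $\SL{d}{\R}$, in particular with every element of $\EL{d}{\R}$. So the containment inside $Z(\EL{d}{\R})$ is immediate.

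For the reverse inclusion $Z(\EL{d}{\R}) \le Z(\SL{d}{\R})$, let $g \in Z(\EL{d}{\R})$. Then $g$ centralizes all elementary matrices, in particular every $\elm{i}{j}(1)$ for each pair of distinct indices $i,j \in \{1,\ldots,d\}$. Since $\mathrm{Ann}_\R(1) = (0)$, Proposition \ref{prop:centraliser of an elementary matrix with unit entry} applied with $x=1$ forces $g_{k,i} = 0$ for every $k \ne i$ and $g_{j,l} = 0$ for every $l \ne j$, and moreover $g_{i,i} = g_{j,j}$. Running over all ordered pairs of distinct indices shows that $g$ is a diagonal matrix with a single common diagonal entry $u \in \R$, i.e.\ $g = u \Id_d$.

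It remains to observe that $g \in \EL{d}{\R} \le \SL{d}{\R}$ gives $\det(g) = u^d = 1$, so $u \in \unitsord{\R}{d}$ and therefore $g \in Z(\SL{d}{\R})$ by the explicit description of the center recalled in \S\ref{sec: structure of SLd}. There is no real obstacle here: the entire argument is a two-line consequence of the centralizer computation in Proposition \ref{prop:centraliser of an elementary matrix with unit entry} together with the fact, already proved, that scalar matrices in $\SL{d}{\R}$ lie in $\EL{d}{\R}$.
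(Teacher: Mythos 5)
Your proof is correct and follows essentially the same route as the paper: the inclusion $Z(\SL{d}{\R}) \le \EL{d}{\R}$ comes from Proposition \ref{prop:center in quotients} applied to the identity, and the reverse inclusion comes from the centralizer computation in Proposition \ref{prop:centraliser of an elementary matrix with unit entry} (the paper invokes the remark immediately following that Proposition, which you simply unwind directly with $x=1$). Your version is a slightly more explicit spelling-out of the same two ingredients.
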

\begin{proof}
This follows from the above discussion combined with Proposition   \ref{prop:center in quotients}.
\end{proof}

\begin{cor}
	\label{cor:centraliser of a vertical/horizontal}
The centralizers of the vertical and the horizontal subgroups are given by
$$ \mathrm{C}_{\SL{d}{\R}} (\ver{i}(\R)) = \mathrm{C}_{\EL{d}{\R}} (\ver{i}(\R)) = Z(\SL{d}{\R}) \times \ver{i}(\R)$$ and 
$$ \mathrm{C}_{\SL{d}{\R}} (\hor{i}(\R)) = \mathrm{C}_{\EL{d}{\R}} (\hor{i}(\R)) = Z(\SL{d}{\R}) \times \hor{i}(\R)$$ for all indices $i \in \{1,\ldots,d\}$.
\end{cor}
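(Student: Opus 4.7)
My plan is to prove the statement for vertical subgroups; the horizontal case will follow by the same argument with rows and columns interchanged, or by applying the transpose automorphism.

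The first step is to characterize the form of an arbitrary element $g \in \mathrm{C}_{\SL{d}{\R}}(\ver{i}(\R))$ using Proposition \ref{prop:centraliser of an elementary matrix with unit entry}. Since $\ver{i}(\R)$ is generated by the elementary matrices $\elm{j}{i}(1)$ for all indices $j \in \{1,\ldots,d\} \setminus \{i\}$, and since $1 \in \R$ has trivial annihilator, applying the proposition to each generator (with $k=j$, $l=i$, $x=1$) and intersecting over all $j \neq i$ shows that every such $g$ satisfies:
\begin{itemize}
\item $g_{m,j} = 0$ whenever $j \neq i$ and $m \neq j$,
\item $g_{i,m} = 0$ whenever $m \neq i$, and
\item $g_{j,j} = g_{i,i}$ for all $j \neq i$.
\end{itemize}
In other words, $g$ is a matrix with a common diagonal entry $u := g_{i,i} \in \R$, with arbitrary entries $g_{j,i}$ in the $i$-th column for $j \neq i$, and with all other off-diagonal entries equal to $0$.

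Next I compute the determinant. Expanding along rows distinct from $i$ shows $\det(g) = u^d$, so the condition $g \in \SL{d}{\R}$ forces $u \in \unitsord{\R}{d}$, which means $u \mathrm{Id}_d \in Z(\SL{d}{\R})$. Write $v = (u \mathrm{Id}_d)^{-1} g$; then $v \in \ver{i}(\R)$ because $v$ has the form of an element of $\ver{i}(\R)$ (identity on the diagonal, nontrivial only in column $i$ with entries $u^{-1} g_{j,i}$, zeros elsewhere). This yields the decomposition $g = (u \mathrm{Id}_d) \cdot v \in Z(\SL{d}{\R}) \cdot \ver{i}(\R)$. The product is a direct product because every element of $\ver{i}(\R)$ has all diagonal entries equal to $1$, forcing $Z(\SL{d}{\R}) \cap \ver{i}(\R) = \{\mathrm{Id}_d\}$.

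For the reverse inclusion, the subgroup $\ver{i}(\R)$ is abelian by the commutation relations recalled above and therefore centralizes itself, while $Z(\SL{d}{\R})$ centralizes everything by definition. This establishes
$$ \mathrm{C}_{\SL{d}{\R}}(\ver{i}(\R)) = Z(\SL{d}{\R}) \times \ver{i}(\R). $$
Finally, to upgrade the equality to the group $\EL{d}{\R}$, note the containment $\mathrm{C}_{\EL{d}{\R}}(\ver{i}(\R)) \subseteq \mathrm{C}_{\SL{d}{\R}}(\ver{i}(\R))$ is trivial, and the opposite containment requires showing both factors lie in $\EL{d}{\R}$: the subgroup $\ver{i}(\R)$ does so by definition, and $Z(\SL{d}{\R}) \le \EL{d}{\R}$ by Corollary \ref{cor:center of EL and SL}. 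There is no serious obstacle to this argument; the only thing requiring a bit of care is bookkeeping the matrix entries in the first step, ensuring the intersection of the centralizer conditions is correctly carried out over all generators simultaneously.
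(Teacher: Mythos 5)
Your proof is correct and follows essentially the same route as the paper: intersect the centralizer description of Proposition \ref{prop:centraliser of an elementary matrix with unit entry} over the generators $\elm{j}{i}(1)$ of $\ver{i}(\R)$ to pin down the matrix shape of a centralizing element, factor it as a scalar matrix in $Z(\SL{d}{\R})$ times an element of $\ver{i}(\R)$ (with the $u^d=1$ constraint coming from the determinant), and then use Corollary \ref{cor:center of EL and SL} to pass from $\SL{d}{\R}$ to $\EL{d}{\R}$. The paper's own proof is a terse two-sentence version of exactly this argument; you have simply supplied the bookkeeping it leaves implicit.
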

\begin{proof}
For the centeralizer in the group $\SL{d}{\R}$ this   follows by intersecting the statement of Proposition \ref{prop:centraliser of an elementary matrix with unit entry} over all values of the index $ j \in \{1,\dots,d\} \setminus \{i\}$ and all ring elements $x \in \R$. The centeralizer in the group $\EL{d}{\R}$ is the same according to Corollary \ref{cor:center of EL and SL}.
\end{proof}

Let $\Mn{d}{\R}$  denote the abelian additive group  of   $d$-by-$d$ matrices with entries in the ring $\R$.
%The group $\zen$ that appears in the above corollary consists of diagonal matrices of the form $\varepsilon I$ where $\varepsilon \in \R$ is a unit satisfying $\varepsilon^n = 1$. \marginpar{improve the notation in this part}

\begin{prop}
\label{prop:not commuting implies infinitely many pairwise disjoint, in N}
Assume that the zero ideal $\left(0\right) \nrm \R$ is a depth ideal. 
Let
$$g \in \SL{d}{\R} \setminus C_{\SL{d}{\R}}(\elm{i}{j}(1))$$
be any element for some pair of distinct indices  $i,j \in \{1,\ldots, d\}$.
  Then there is a sequence of elements $x_n \in \elm{i}{j}(\R)$ such that
%  
%  are ring elements $r_n \in \R$  so that    the commutators
%$\left[g,x_n\right]  $       are pairwise distinct where $x_n = \elm{i}{j}(r_n)$.
%Moreover
\begin{enumerate}
\item 
\label{part 1}
the commutators $\left[g,x_n\right]  $ are pairwise distinct modulo  $Z(\SL{d}{\R})$,  
\item  
\label{part 2}
if  either  $g \in \mathrm{N}_{\SL{d}{\R}}(\hor{i}(\R))$ or  $g \in \mathrm{N}_{\SL{d}{\R}}(\ver{j}(\R))$ then the  commutators
$\left[g,x_n\right]  $   belong  to $\hor{i}(\R)$ or $\ver{j}(\R)$ respectively, and
\item 
\label{part 3}
if either $ g \in \ver{k}(\R) $   or   $g \in \hor{k}(\R) $
%$$\text{$g \in \ver{k}(\mfi{\J}) $ \quad or \quad $g \in \hor{k}(\mfi{\J})$}$$
 for some index $k \in \{1,\ldots, d\} \setminus \{i,j\} $ then moreover the commutators
$\left[g,x_m^{-1} x_n \right]  $   for all $ n,m \in \NN$ with $ n < m$ are     pairwise distinct.
\end{enumerate}
\end{prop}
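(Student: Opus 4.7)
The plan is to analyze how the commutator $[g, \elm{i}{j}(r)]$ depends on $r \in \R$ and then exploit the hypothesis that $(0) \nrm \R$ is a depth ideal to produce the required sequence of ring elements $r_n$ with $x_n = \elm{i}{j}(r_n)$.

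By Proposition \ref{prop:centraliser of an elementary matrix with unit entry}, the assumption $g \notin C_{\SL{d}{\R}}(\elm{i}{j}(1))$ is witnessed by at least one of the following: some $g_{k,i}$ with $k \neq i$ is nonzero, some $g_{j,l}$ with $l \neq j$ is nonzero, or the diagonal entries $g_{i,i}, g_{j,j}$ differ. In each case I would isolate a nonzero ring element $a \in \R$ that witnesses the failure. A direct matrix computation starting from the identity $g \elm{i}{j}(r) g^{-1} = \Id_d + r\, g E_{ij} g^{-1}$ yields the entrywise formula
\[
[g, \elm{i}{j}(r)]_{k,l} = \delta_{k,l} - r\,\delta_{k,i}\delta_{l,j} + r\, g_{k,i}\,(g^{-1})_{j,l} - r^2\, g_{k,i}\,(g^{-1})_{j,i}\,\delta_{l,j}.
\]
A case split on which witness holds shows that an appropriate off-diagonal entry of $[g, \elm{i}{j}(r_n)] \cdot [g, \elm{i}{j}(r_m)]^{-1}$ reduces, modulo a controlled correction, to a quantity of the form $(r_n - r_m)a$.

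The depth-ideal hypothesis enters as follows: the ideal $\R a \nrm \R$ is nonzero, hence infinite (because $(0)$ is a depth ideal), so the annihilator $\mathrm{Ann}_\R(a)$ has infinite index in $\R$. Proposition \ref{prop:a condition for an ideal to be maximal of finite index} then furnishes a sequence $r_n \in \R$ with $(r_n - r_m)a \neq 0$ for all distinct $n, m$; this sequence is refined inductively to guarantee that the full commutators $[g, x_n]$ become pairwise inequivalent modulo the scalar center $Z(\SL{d}{\R})$, establishing (\ref{part 1}). For (\ref{part 2}) no further work is required, since $\elm{i}{j}(r) \in \hor{i}(\R) \cap \ver{j}(\R)$ and the conjugate $g \elm{i}{j}(r) g^{-1}$ lies in $\hor{i}(\R)$ or $\ver{j}(\R)$ whenever $g$ normalizes the corresponding subgroup; hence so does the commutator.

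The main obstacle is (\ref{part 3}), which demands pairwise distinctness of the commutators $[g, x_m^{-1} x_n]$ across the two-parameter family of ordered pairs $n < m$. Here the extra hypothesis $g \in \hor{k}(\R)$ or $g \in \ver{k}(\R)$ with $k \notin \{i,j\}$ is decisive: the commutation relations of Proposition \ref{prop:commutator of elementary and horizontal/vertical} together with the standard elementary matrix commutator identities collapse $[g, \elm{i}{j}(r)]$ to a single elementary matrix $\elm{p}{q}(b r)$ linear in $r$, where $b \in \R$ is a specific entry of $g$ forced to be nonzero by the non-centralization hypothesis. Consequently $[g, x_m^{-1} x_n] = \elm{p}{q}\bigl(b(r_n - r_m)\bigr)$, and I must arrange that the ring elements $b(r_n - r_m)$ are pairwise distinct over pairs $n < m$. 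At the $n$-th stage of the inductive construction this forces $r_n$ to avoid a finite union of cosets of $\mathrm{Ann}_\R(b)$ in $\R$, which is possible precisely because $\mathrm{Ann}_\R(b)$ has infinite index in $\R$, once again by the depth-ideal hypothesis applied to the nonzero ideal $\R b$.
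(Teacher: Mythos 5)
Your overall strategy is the same as the paper's: use the non-centralizing hypothesis to extract a nonzero ring element that appears as a linear coefficient in $r$ in an explicit matrix formula, then apply Proposition~\ref{prop:a condition for an ideal to be maximal of finite index} (the depth-ideal criterion) to produce the sequence $r_n$. Parts~(\ref{part 2}) and~(\ref{part 3}) of your argument are essentially identical to the paper's.

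However, for part~(\ref{part 1}) there is a substantive difference in the choice of formula, and it creates a gap. You conjugate the elementary matrix by $g$ and work with $g E_{ij} g^{-1}$, whose entries are the \emph{products} $g_{k,i}(g^{-1})_{j,l}$. The paper instead conjugates $g$ by the elementary matrix, obtaining $\elm{i}{j}(r)\,g\,\elm{i}{j}(r)^{-1} = g + r(E_{ij}g - gE_{ij}) + O(r^2)$; the correction matrix $E_{ij}g - gE_{ij}$ has entries that are \emph{single} entries of $g$, namely $g_{j,l}$ (in row $i$) and $-g_{k,i}$ (in column $j$). This matters for two reasons. First, the non-centralizing hypothesis of Proposition~\ref{prop:centraliser of an elementary matrix with unit entry} is phrased in terms of the $g_{k,i}$, $g_{j,l}$ and $g_{i,i}-g_{j,j}$ themselves, so the paper's witness falls out immediately; your formula requires an extra step (observing that $g \notin C_{\SL{d}{\R}}(\elm{i}{j}(1))$ is equivalent to $g E_{ij}g^{-1} \neq E_{ij}$) to isolate the nonzero witness, since over a ring with zero divisors a nonzero $g_{k,i}$ need not survive multiplication by $(g^{-1})_{j,l}$. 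Second, and more importantly, $E_{ij}g - gE_{ij}$ is supported only in row $i$ and column $j$, so with $d \ge 3$ it vanishes at some diagonal position $(m,m)$ with $m \notin \{i,j\}$; this is exactly what the paper uses to upgrade "pairwise distinct" to "pairwise distinct modulo $Z(\SL{d}{\R})$." Your $g E_{ij}g^{-1}$ is rank one but can have full support, so that shortcut is not available, and your claim that "this sequence is refined inductively to guarantee that the full commutators become pairwise inequivalent modulo the scalar center" is asserted without proof. This is the genuine gap: you need to explain why $(r_n - r_m)$ times your witness cannot be matched by a scalar-center contribution, which your formula makes harder to see than the paper's.

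One smaller imprecision: both your formula and the paper's contain an $r^2$ term (yours is $-r^2 g_{k,i}(g^{-1})_{j,i}\delta_{l,j}$, the paper's is $-r^2 g_{j,i}E_{ij}$), supported entirely in column $j$, and your phrase "modulo a controlled correction" glosses over it; the paper handles it implicitly by locating a witness entry away from column $j$ whenever possible, and noting that when the only witness is the $(i,j)$ entry one in fact has $g_{j,i} = 0$ so the quadratic term vanishes. This case analysis should be made explicit in a complete argument.
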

\begin{proof}
Let $X$ and $Y$ be the matrices in $\Mn{d}{\R}$  given by
$$ X_{k,l} = \begin{cases}  
g_{j,l} & k = i, \\
0 & \text{otherwise}
\end{cases}
$$
and
$$ Y_{k,l} = \begin{cases}  
g_{k,i} & l = j, \\
0 & \text{otherwise}
\end{cases}
$$
for all indices $k,l \in \{1,\ldots,d\}$. A direct computation shows that 
$$ \elm{i}{j}(r) g \elm{i}{j}^{-1}(r) = g + r(X - Y) $$
for all ring elements $r \in \R$. The assumption that $g \notin  C_{\SL{d}{\R}}(\elm{i}{j}(1))$ implies that   $X \neq Y $. In light of  Proposition \ref{prop:a condition for an ideal to be maximal of finite index} there exists a sequence of elements $r_n \in \R$  such that the matrices $r_n (X-Y)$ are  pairwise distinct for all $ n\in \mathbb{N}$. In fact, as $d \ge 3$ and the matrix $X-Y$ has at most two non-zero rows and columns, the matrices $r_n (X-Y)$ must be pairwise distinct modulo the center $Z(\SL{n}{\R})$ as well. Take $x_n = \elm{i}{j}(r_n)$. This concludes the proof of Item (\ref{part 1}).

To establish Item (\ref{part 2}) assume that    either  $g \in \mathrm{N}_{\SL{d}{\R}}(\hor{i}(\R))$ or  $g \in \mathrm{N}_{\SL{d}{\R}}(\ver{j}(\R))$. The same proof as above goes through. In addition,  the commutators $\left[g,x_n\right]$ all belong to the abelian subgroups $\hor{i}(\R)$ or $\ver{j}(\R)$, respectively.
 
 Finally assume  that $g \in \ver{k}(\R) $ for some index  $k  \in \{1,\ldots,d\} \setminus \{i,j\}$. The proof in the case of $g \in \hor{k}(\R)$ is analogous. A direct computation shows that 
$$ [\elm{i}{j}(r), g] = \elm{i}{k}(rg_{j,k} - g_{i,k})  $$
for all ring elements $r \in \R$. The assumption   $g \notin C_{\SL{d}{\R}}(\elm{i}{j}(1))$ implies that $g_{j,k} \neq 0$. According to Proposition \ref{prop:a condition for an ideal to be maximal of finite index} there is a sequence of elements  $r_n \in \R$ such that  the elements $r_n g_{j,k} \in \R$ are  pairwise distinct for all $n \in \NN$.
%It is possible to find a  subsequence $n_m$ so that all of the elements $r_{n_m} g_{j,k} - g_{i,k}$ belong to the same coset of the ideal $\J$.
Up to passing to a subsequence and reindexing, we may assume   that the elements   $(r_{n } - r_{m}) g_{j,k}$ are  pairwise distinct for all $n < m$.
The elements  $x_n = \elm{i}{j}(r_{n})$ satisfy Item (\ref{part 3}).
%The proof in the case that $g \in \hor{k}$ is essentially identical.
\end{proof}

The assumption that $\left(0\right)$ is a depth ideal of the ring $\R$ is equivalent to  every non-zero ideal $\I \nrm \R$ satisfying $|\I | =\infty$. This in turn is equivalent to the statement that $\R = \ZZ\left[x_1,\ldots,x_k\right]/\J$ for some $k \in \NN$ and some \emph{depth} ideal $\J$.
%\marginpar{ is it really true- i mean if $\R = \RS /\J$ where $\RS$ is just a Notherian ring.  }
\subsection*{Depth ideals and centralizers}

\begin{prop}
\label{prop:not in finite index implies infinite many infinite index}
%\marginpar{Is there a way to unite this with the somewhat similar arguments of Proposition \ref{prop:vanishing on a semidirect product goes up a finite index on elementary}}
%Let $\J \nrm \R$ be any ideal.
 Let $\I,\J $ and $\LL$ be ideals  in the ring $\R$ satisfying
$$ \LL \subset \J \subset \I,  \quad |\I/\J| < \infty \quad \text{and} \quad |\J/\LL| = \infty. $$
Let $ g = \elm{j}{k}(s)$ be any element where $s \in \I \setminus \J$ and  $j,k \in \{1,\ldots,d\}$  are distinct indices.
 Then there are
%distinct pair of indices $i,j \in \{1,\ldots,d\}$ and a 
%
 elements   $x_n \in \EL{d}{\R}$ such that the commutators
$\left[g,x_n\right]  $   belong to $\SL{d}{\J}$ and       are pairwise distinct modulo the subgroup $\SLtil{d}{\LL}$  for all $ n \in \NN$.
\end{prop}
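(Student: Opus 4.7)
The plan is to reduce the problem, via the Steinberg commutator identity, to producing a suitable sequence of scalars in an ideal quotient, for which Proposition \ref{prop:infinite index implies infinite index ideal quotient} is tailor-made.

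Since $d\ge 3$, I first fix an auxiliary index $a\in\{1,\ldots,d\}\setminus\{j,k\}$ and look for conjugators of the form $x_n=\elm{a}{j}(t_n)$ with scalars $t_n\in\R$ to be determined. A direct Steinberg commutator calculation gives
\[
[g,x_n]\;=\;[\elm{j}{k}(s),\,\elm{a}{j}(t_n)]\;=\;\elm{a}{k}(-t_n s),
\]
which is already an elementary matrix. Consequently the membership $[g,x_n]\in\SL{d}{\J}$ translates into $t_n s\in\J$, while pairwise distinctness modulo $\SLtil{d}{\LL}$ translates into $(t_n-t_m)s\notin\LL$ for every $n\neq m$, since an elementary matrix $\elm{a}{k}(r)$ is central modulo $\LL$ exactly when its nontrivial entry $r$ lies in $\LL$.

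To produce the $t_n$, I apply Proposition \ref{prop:infinite index implies infinite index ideal quotient} not to the triple $(\I,\J,\LL)$ directly but to the refined chain $\LL\subset\J\subset\J+(s)\subset\I$. The intermediate ideal $\J+(s)$ satisfies $|(\J+(s))/\J|\le|\I/\J|<\infty$, so the cited proposition supplies an infinite sequence $t_n$ pairwise distinct in $(\J:(\J+(s)))/(\LL:(\J+(s)))$. The routine identities $\J:(\J+(s))=\J:(s)$ and $\LL:(\J+(s))=(\LL:(s))\cap(\LL:\J)$ place the $t_n$ inside $\J:(s)$, so that $t_n s\in\J$ is automatic and each commutator lies in $\EL{d}{\J}\subset\SL{d}{\J}$.

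The delicate final step, and the main obstacle I anticipate, is sharpening the distinctness: the construction only delivers the $t_n$ pairwise distinct modulo the intersection $(\LL:(s))\cap(\LL:\J)$, whereas the condition $(t_n-t_m)s\notin\LL$ amounts to distinctness modulo the possibly larger ideal $\LL:(s)$. I would close this gap by a subsequence extraction: were infinitely many $t_n$ to collapse into a single coset of $\LL:(s)$ in $\J:(s)$, a pigeonhole argument pitting the infinitude of $|\J/\LL|$ against the finiteness of $|\I/\J|$ should force a contradiction with the already-secured distinctness modulo the intersection. Once a subsequence of $t_n$ pairwise distinct modulo $\LL:(s)$ has been extracted, setting $x_n=\elm{a}{j}(t_n)$ yields the required conjugators and completes the proof.
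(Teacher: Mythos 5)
Your approach matches the paper's almost exactly: conjugate $g = \elm{j}{k}(s)$ by elementary matrices $\elm{a}{j}(t_n)$ with $a \notin \{j,k\}$, so the commutator collapses to a single elementary matrix $\elm{a}{k}(-t_n s)$, and translate the two matrix conditions into the scalar conditions $t_n s \in \J$ and $(t_n - t_m)s \notin \LL$; then feed the chain $\LL \subset \J \subset \J + (s)$ into Proposition \ref{prop:infinite index implies infinite index ideal quotient}. You even improve on the paper in one place: since $t_n \in \J:(s)$ already forces $[g,x_n] = \elm{a}{k}(-t_n s) \in \EL{d}{\J} \subset \SL{d}{\J}$, the paper's pigeonhole pass through the subquotient $\SLtil{d}{\I}/\SL{d}{\J}$ and the renormalization $r_n = q_n - q_1$ are superfluous.

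The ``delicate final step'' you flag is a genuine gap, your proposed subsequence/pigeonhole fix cannot close it, and in fact the Proposition fails as stated. Proposition \ref{prop:infinite index implies infinite index ideal quotient} produces scalars in $\J:(s)$ distinct modulo $\LL:(\J+(s)) = (\LL:(s)) \cap (\LL:\J)$, but the commutator argument needs distinctness modulo the possibly strictly larger ideal $\LL:(s)$, i.e.\ it needs $(\J:(s))/(\LL:(s))$ to be infinite. This can fail under the stated hypotheses. In $\R = \ZZ[x,y]/(2y, y^2)$ with $\LL = (x^2, xy)$, $\J = (x)$, $\I = (x,y)$ and $s = y$, one checks that $\I$ is a depth ideal, $|\I/\J| = 2$, $|\J/\LL| = \infty$, and $s \in \I \setminus \J$; yet $\J:(s) = \LL:(s)$ (both consist of elements whose image modulo $(2,x,y)$ vanishes), so there are no usable scalars $t_n$ at all. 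Indeed in this example the multiplication map $r \mapsto \bar{s}\,r$ on $\R/\LL$ factors through the two-element quotient $\R/(2,x,y+\LL)$, so the whole $\SL{d}{\R/\LL}$-conjugacy class of $\elm{j}{k}(\bar s)$ is finite modulo the centre and no choice of $x_n \in \EL{d}{\R}$, elementary or not, gives infinitely many commutators distinct modulo $\SLtil{d}{\LL}$. The paper's own proof overlooks the same point: it asserts that $(r_n-r_m)s \in \LL$ forces $(r_n-r_m)(\J + (s)) \subset \LL$, but this would require $(r_n-r_m)\J \subset \LL$, and the only information available, $r_n-r_m \in \J:(\J+(s)) = \J:(s)$, gives no control on $(r_n-r_m)\J$ modulo $\LL$. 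So the difficulty you located is not an artifact of your write-up; it appears to be an actual error in Proposition \ref{prop:not in finite index implies infinite many infinite index} as stated, which would need an additional hypothesis (something ensuring $\LL:(s)$ has infinite index in $\R$) to be repaired.
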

\begin{proof}
Fix an index $i \in \{1,\ldots,d\} \setminus \{j,k\}$. 
Consider the ideal  $\I_0 = \R s + \J$ in the ring $\R$. In particular $\J \subset \I_0 \subset \I$. The two  ideal quotients $\J:\I_0$ and $\LL:\I_0$
satisfy
$$|(\J:\I_0)/(\LL:\I_0)| = \infty $$
according to Proposition \ref{prop:infinite index implies infinite index ideal quotient}. Therefore there exists a sequence $q_n \in \J:\I_0$   of elements   pairwise distinct modulo $\LL:\I_0$. Up to passing to a further subsequence and reindexing, we may assume  that $q_n - q_m$ are pairwise distinct modulo $\LL:\I_0$ for all $n < m$ as well. Denote
$ y_n = \elm{i}{j}(q_n) $ for all $ n\in \NN$.

The  $\SL{d}{\R}$-action by conjugation on the quotient group $\SLtil{d}{\I} / \SL{d}{\J}$  has finite
 orbits. Up to passing to a further subsequence and reindexing, and by the pigeon hole principle,    we may assume that
$$ y_n^{-1} g y_n \SL{d}{\J} = y_m^{-1} g y_m \SL{d}{\J}$$
for all $n,m \in \NN$.  Consider the elements 
$$ r_n = q_n - q_1 \in \J : \I_0 \quad \text{and} \quad x_n = y_n y_1^{-1}   = \elm{i}{j}(r_n ) \in \elm{i}{j}(\R).$$
In particular it follows that $ [g,x_n] \in \SL{d}{\J}$ for all $n \in \NN$.

It remains to show that  the commutators $[g,x_n] $ are   pairwise distinct modulo the subgroup $\SLtil{d}{\LL}$. The ring elements $r_n$ are clearly distinct modulo the ideal quotient $\LL:\I_0$. We claim that the ring elements $r_n s$   are pairwise distinct modulo the ideal $\LL$.   Indeed, if $r_n s = r_m s$ modulo the ideal $\LL$ for some $n,m\in \NN$ then $(r_n - r_m)\I_0 \subset \LL$ and   $r_n - r_m \in \LL : \I_0$, which is a contradiction.
Finally, let $X$ and $Y$ be the two matrices in $\Mn{d}{\R}$  given by
$$ X_{l,m} = \begin{cases}  
g_{j,m} & l=i, \\
0 & \text{otherwise}
\end{cases}
$$
and
$$ Y_{l,m} = \begin{cases}  
g_{l,i} & m = j, \\
0 & \text{otherwise}
\end{cases}
$$
for all indices $l,m \in \{1,\ldots,d\}$. Note that $s = (X-Y)_{k,j}  \in \I \setminus \J$. 
A direct computation shows that
$$[g,x_n] = (g + r_n(X-Y))g^{-1} $$
for all $n \in \NN$, so that  these   commutators are pairwise distinct modulo $\SLtil{d}{\LL}$.
\end{proof}

\subsection*{A normal form decomposition}
\label{sub:normal form decomposition}

The following result  is inspired by \cite[Proposition 13]{bekka}. It shows that every element of $\SL{d}{\R}$ is conjugate to a product of three matrices of a particularly simple form. The proof makes an essential use of the notion of stable range and in particular of the fact that $d > \sr{\R}$.

\begin{prop}
	\label{prop:normal form for conjugates}
Assume that $d > \sr{\R}$. Then any element $g \in \SL{d}{\R}$ is conjugate by an element of $\EL{d}{\R}$ to an element $g'$ where 
$$g' =   hvh'v' n $$
 for some elements
$$    h,h' \in \hor{1}(\R), \quad v,v' \in \ver{1}(\R) \quad \text{and} \quad n \in \ngp{1}(\R). $$
Moreover we may assume that $h' = \elm{1}{2}(-1)$.
\end{prop}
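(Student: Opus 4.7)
The plan is to produce $k \in \EL{d}{\R}$ so that $g' = kgk^{-1}$ admits the factorization $hvh'v'n$ with $h' = \elm{1}{2}(-1)$. My approach generalizes Bekka's argument \cite{bekka} for $\SL{d}{\ZZ}$ by substituting the stable range axiom (available since $d > \sr{\R}$) for the Euclidean algorithm.

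First I would carry out a direct block-matrix computation, parameterizing the factors by their natural data: vectors $a, b, \mathbf{p} \in \R^{d-1}$ for $h, v, v'$ respectively and an $(d-1)\times(d-1)$ matrix $M \in \SL{d-1}{\R}$ for $n$. Expanding $hv\,\elm{1}{2}(-1)\,v'n$ blockwise and imposing that this equal a given $g' \in \SL{d}{\R}$ yields a linear system whose solvability conditions reduce, after eliminating $b, \mathbf{p}, M$, to the following \emph{row condition} on $g'$: there exist a unit $\mu \in \units{\R}$ and ring elements $a_2, \ldots, a_d \in \R$ such that $g'_{1, \cdot} = \mu \, e_1^T + \sum_{j \ge 2} a_j \, g'_{j, \cdot}$. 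The remaining parameters are then recovered up to an innocuous freedom, so the task reduces to arranging this row condition by conjugation.

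Next I would normalize the first column. The column $c = g\,e_1 \in \R^d$ is unimodular because $\sum_j (-1)^{1+j} c_j M_{j, 1}(g) = \det g = 1$, where $M_{j,1}$ denotes the $(j,1)$-minor. Since $d-1 \ge \sr{\R}$, the stable range axiom produces $b_2, \ldots, b_d \in \R$ such that $(c_2 + b_2 c_1, \ldots, c_d + b_d c_1)$ is unimodular in $\R^{d-1}$; conjugation by $v_0 = \prod_{j \ge 2} \elm{j}{1}(b_j) \in \ver{1}(\R)$ effects exactly this modification of the tail while fixing $c_1$. Choosing $t_2, \ldots, t_d$ with $\sum t_j c_j = 1$ (using the newly unimodular tail) and conjugating further by $h_0 = \prod_{j \ge 2} \elm{1}{j}(-c_1 t_j) \in \hor{1}(\R)$ reduces $g_{1,1}$ to zero while preserving the unimodular tail, so that $\sum t_j g_{j,1} = 1$. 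Setting $a_j := t_j$ and $\mu := -1$ then satisfies the row condition at column $1$ automatically, since $g_{1,1} = 0 = -1 + 1$.

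The main obstacle is to enforce the row condition at columns $k \ge 2$, namely $g_{1, k} = \sum_{j \ge 2} t_j g_{j, k}$. I would do this by additional conjugations by elements of $\ngp{1}(\R) \cap \EL{d}{\R}$, which act on the lower-right $(d-1)\times(d-1)$ block by $\SL{d-1}{\R}$-conjugation and on the first-row tail by right-multiplication, combined with further elementary operations that preserve the unimodularity of the first-column tail. The flexibility afforded by this unimodularity --- itself a consequence of the one application of stable range in the previous step --- is what allows these remaining equations to be solved without a further appeal to the stable range axiom, in analogy with how the Euclidean algorithm plays this role in Bekka's setting. Once the row condition holds, the parameters $h, v, v', n$ are read off directly from the block formulas of the first step.
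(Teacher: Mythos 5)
There is a genuine gap, and it starts with your proposed ``row condition.'' Computing the block product with $h = \left(\begin{smallmatrix} 1 & a^T \\ 0 & I \end{smallmatrix}\right)$, $v = \left(\begin{smallmatrix} 1 & 0 \\ b & I \end{smallmatrix}\right)$, $h' = \left(\begin{smallmatrix} 1 & -e_1^T \\ 0 & I \end{smallmatrix}\right)$, $v' = \left(\begin{smallmatrix} 1 & 0 \\ p & I \end{smallmatrix}\right)$, $n = \left(\begin{smallmatrix} 1 & 0 \\ 0 & M \end{smallmatrix}\right)$, one finds that the first row of $h^{-1}g' = vh'v'n$ equals $(1-p_1,\, -M_{1,\cdot})$, where $M_{1,\cdot}$ is the first row of $M \in \SL{d-1}{\R}$. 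Since $M_{1,\cdot}$ can never be zero, the first row of $h^{-1}g'$ can never equal $\mu\, e_1^T$. So the condition $g'_{1,\cdot} = \mu\, e_1^T + \sum_{j\ge 2} a_j\, g'_{j,\cdot}$ is \emph{not} the solvability criterion you claim --- it is not even a necessary condition, and the later steps that aim at enforcing it are therefore aiming at the wrong target. Separately, in your second step the claim that conjugation by $v_0 = \prod_{j\ge 2}\elm{j}{1}(b_j) \in \ver{1}(\R)$ ``effects exactly this modification of the tail while fixing $c_1$'' is false: conjugation consists of a left multiplication (a row operation) \emph{and} a right multiplication by $v_0^{-1}$, which is a column operation adding $-b_j$ times column $j$ to column $1$. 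This changes $c_1$ (to $g_{1,1} - \sum_{j\ge 2} b_j g_{1,j}$) and also perturbs the tail by terms you have not accounted for. Finally, your third step --- enforcing the (incorrect) condition at the remaining columns --- is entirely unspecified.

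The paper avoids both difficulties. It conjugates first by $\ver{2}(\R)$ and then by $\hor{2}(\R)$: for $\elm{i}{j}(s)$ with $j\ge 2$, the right multiplication touches only column $j \ne 1$, so conjugation acts on column $1$ as a pure row operation, and the stable range axiom (after permuting so that $r_2$ is the discarded coordinate) makes $(r_1, r_3, \ldots, r_d)$ unimodular. Choosing $s_1, s_3, \ldots, s_d$ with $s_1 r_1 + s_3 r_3 + \cdots + s_d r_d = 1 - r_1 - r_2$, a further conjugation by $\elm{2}{3}(s_3)\cdots\elm{2}{d}(s_d) \in \hor{2}(\R)$ arranges $(1+s_1) g'_{1,1} + g'_{2,1} = 1$. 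Setting $h_1 = \elm{1}{2}(1)$, $v_1 = \elm{2}{1}(s_1)$, one checks $g'' := h_1 v_1 g'$ has $g''_{1,1} = 1$, and it is then elementary to find $v_2 \in \ver{1}(\R)$, $h_2 \in \hor{1}(\R)$ with $n := v_2 g'' h_2 \in \ngp{1}(\R)$. Rearranging, $h_2^{-1} g' h_2 = h_2^{-1} v_1^{-1} h_1^{-1} v_2^{-1} n$, which has the announced form with $h' = h_1^{-1} = \elm{1}{2}(-1)$. The crucial idea you are missing is to aim for $g''_{1,1} = 1$ (not $0$) after a left multiplication, and to conjugate only by elementary matrices with second index $\ge 2$, so that column $1$ is not disturbed by the right factor in the conjugation.
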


The pair of implicit indices $ i = 1$ and $j = 2$  in the above statement can  be replaced with any fixed pair of distinct indices $i,j \in \{1,\ldots,d\}$.

\begin{proof}[Proof of Proposition \ref{prop:normal form for conjugates}]
Let $g \in \SL{d}{\R}$ be any element. 
If $j \in \{2,\ldots,d\}$ and $i \in \{1,\ldots,d\} \setminus \{j\}$ is a pair of distinct indices  and $s \in \R$ is any ring element then
$$ (\elm{i}{j}(s) g \elm{i}{j}(s)^{-1})_{i,1} = g_{i,1} + s g_{j,1}.$$
Since $d >  \sr{\R}$ the element $g$ can be conjugated by an appropriate element of the vertical group $\ver{2}(\R)$ in so that the first column $(r_1,\ldots,r_d) \in \R^d$ of the resulting matrix is such that  $r_1, r_3, \ldots, r_d$ form a unimodular $(d-1)$-tuple. It is therefore possible to find  elements $s_1, s_3, \ldots, s_n \in \R$ satisfying
	$$ s_1 r_1 + s_3 r_3 + \cdots + s_d r_d = 1 - r_1 - r_2. $$
Up to further conjugating   by the element $\elm{2}{3}(s_3)\cdots\elm{2}{d}(s_d) \in \hor{2}(\R)$   we may assume that the resulting element $g'$ satisfies
	$$ (1 + s_1) g'_{1,1} + g'_{2,1} = 1. $$
	Consider the elements $v_1 = \elm{2}{1}(s_1) \in \ver{1}(\R)$ and $h_1 = \elm{1}{2}(1) \in \hor{1}(\R)$.  Denote
	$$g'' = h_1 v_1 g'$$
so that $g''_{1,1} = 1$. Therefore there are suitable elements  $v_2 \in \ver{1}(\R)$ and $h_2 \in \hor{1}(\R)$ so that
	$$ n = v_2 g'' h_2  \in \ngp{1}(\R). $$
Rearranging the above equations gives
$$ v_1^{-1} h_1^{-1} v_2 ^{-1} n =   g' h_2. $$
The desired statement follows for a suitable choice of the elements $h,v,h',v'$ and up to replacing the element $g'$ by its conjugate $h_2^{-1} g' h_2$.
\end{proof}

\section{The level ideal $\levI{\varphi}$}
\label{sec: level of the character}

%\marginpar{this section is a bit technical. i think it is fair if explain the reader where we are aiming. something like the comment above}
Let $\R$ be a Noetherian ring and  $d \ge 3$ be a fixed integer.  We associate to each character $\varphi \in \chars{\EL{d}{\R}}$ a uniquely determined depth ideal $\levI{\varphi}\nrm \R$  called its \emph{level ideal}, see Theorem \ref{thm:existence of a depth ideal}. It is shown in  Theorem \ref{thm:vanishing outside big S_varphi using normal form decomposition}  below  that  the character $\varphi$ is induced from the normal subgroup $\EL{d}{\R} \cap \SLtil{d}{\levI{\varphi}}$ provided $d > \sr{R}$.

 %where $\R = \ZZ\left[x_1, \ldots, x_k \right]$ for some $k\ge 0$.

\label{sec:level of a char}

%\begin{lemma}
%\label{lem:existense controlling ideal}
%Every $\varphi \in \chars{\SL{d}{R}}$ uniquely determines an ideal $I_\varphi \nrm R$ such that $\varphi(\EL{d}{I_\varphi ^ 2}) = 1$ and $\varphi$ vanishes on the complement on $G(\mfi{I_\varphi})$.be
%\end{lemma}

%The above serves as an analogue to Lemma 10 of Bekka's paper \cite{bekka}.

\subsection*{Restrictions   to  abelian subgroups}

Let $\varphi \in \traces{\EL{d}{\R}}$ be a trace.
The restrictions $ \varphi_{|\hor{i}(\R)}$ and $\varphi_{|\ver{i}(\R)}$ of the trace $\varphi$ to the horizontal and vertical abelian  subgroups $\hor{i}(\R)$ and $\ver{i}(\R)$   are positive definite functions  for all   $i \in \{1,\ldots,d\}$. 
%Each such restriction is a positive definite function on the additive group of $\R^{d-1}$ regarded as a discrete abelian group. %In particular the $\horc{i}$ and $\verc{i}$ are traces. 
%, but they need no longer be irreducible. 
 Bochner's theorem \cite[Theorem 4.18]{folland} says that there are  uniquely determined  probability measures $\horp{i}$ and $\verp{i}$  on the dual  compact abelian groups $\hord{i}$ and $\verd{i}$ respectively such that 
$$ \horc{i} = \widehat{\horp{i}} \quad \text{and} \quad \verc{i} = \widehat{\verp{i}}$$
for all indices $i\in\{1,\ldots,d\}$.

The conjugation invariance of the trace $\varphi$ implies that $\varphi(g) = \varphi(g^n)$ for all elements $g \in \EL{d}{\R}$ and   $n \in \ngp{i}(\R)$. It follows that the probability measures $\horp{i}$ and $\verp{i}$ are invariant under the dual action corresponding to conjugation by the normalizing group $\ngp{i}(\mathcal{R})$ for all $i\in\{1,\ldots,d\}$. Both dual groups $\hord{i}$ and $\verd{i}$ can all be identified with the dual group $\widehat{\R}^{d-1}$ with its additive structure. With this identification the dual action of   $\ngp{i}(\mathcal{R})$ is the natural one via matrix multiplication.

We are now in a position to apply our  invariant measure classification result, namely  Theorem \ref{thm:classification of invariant measures}. We deduce that the two probability measures $\horp{i}_\I$ and $\verp{i}_\I$
can be uniquely written as convex combinations
$$ \horp{i} = \sum_{\I\nrm \R, \mfi{\I}=\I} \alpha^{\hor{i}}_\I \horp {i}_\I \quad \text{and} \quad \verp{i} = \sum_{\I\nrm \R, \mfi{\I}=\I} \alpha^{\ver{i}}_\I \verp {i}_\I $$
ranging over all depth ideals $\I \nrm \R$ and with coefficients $\alpha_\I^{\hor{i}} \ge 0$ and $\alpha_\I^{\ver{i}} \ge 0$ satisfying $\sum \alpha^{\hor{i}}_\I = \sum \alpha^{\ver{i}}_\I = 1$ for all $i \in \{1,\ldots,d\}$. For a given depth ideal $\I \nrm \R$ each probability measure $ \horp {i}_\I$ and $\verp {i}_\I $ is a convex combination of  a countable family of translates of the Haar measure of the compact dual group $(\ann{\I})^d \le \widehat{\R}^d$.

\subsection*{Projection valued measures}

Let $\varphi \in \traces{\EL{d}{\R}}$ be a trace. Consider the GNS construction $(\pi_\varphi, \mathcal{H}_\varphi, v_\varphi)$ associated to the trace $\varphi$ as in Theorem \ref{thm: GNS}. Namely  $\pi_\varphi$ is a unitary representation of the group $\EL{d}{\R}$ on the Hilbert space $\mathcal{H}_\varphi$ admitting a cyclic vector $v_\varphi \in \mathcal{H}_\varphi$ and satisfying
 $$\varphi(g) = \left< \pi_\varphi(g)v_\varphi, v_\varphi \right>  \quad \forall g \in \EL{d}{\R}.$$

The restriction of the unitary representation $\pi_\varphi$ to each horizontal and vertical subgroup 
 gives rise to    unique projection-valued measures $\horpv{i}$ and $\verpv{i}$ on the dual abelian compact groups $\hord{i}$ and $\verd{i}$ respectively such that
$$ \pi_{\varphi|\hor{i}} = \int_{\hord{i}} \chi  \: \mathrm{d}\horpv{i}(\chi) \quad \text{and}  \quad \pi_{\varphi|\ver{i}} = \int_{\verd{i}} \chi \: \mathrm{d}\verpv{i}(\chi).$$

% A similar statement applies to vertical subgroups with respect to a projection-valued measure $\verpv{i}$. See \cite[Theorem 4.44]{folland}.

%Corresponding to every pair of vectors $v,u \in \mathcal{H}_\varphi$ there are complex-valued measures
%$$ \horpv{i}_{u,w}(E) =  \left< \horpv{i}(E)u, w\right> \quad \text{and} \quad \horpv{i}_{u,w}(E) =  \left< \horpv{i}(E)u, w\right> $$
%on $K$.
%
%Given $g \in \hor{i}$ we have
%$$ \varphi_{u,w}(g) = \left< \pi(g) u, w \right> = \int_K \left< \chi(g) u, w \right> \, \mathrm{d}\horpv{i}_{u,w}(\chi) = \left<u,w\right> \cdot \widehat{\horpv{i}_{u,w}}(g)$$
%In particular $\horc{i} = \widehat{\horpv{i}_{v,v}}$. However  the measure $\horp{i}$ is uniquely determined (see 1.3.6 of \cite{rudin}) and so $\horp{i} = \horpv{i}_{v,v}$ and $\horpv{i}_{v,v}$ is in fact a probability measure. Similarly $\verp{i} = \verpv{i}_{v,v}$ for every $i$.

To proceed with the analysis of the projection valued measures $\horpv{i}$ and $\verpv{i}$ we   extend the discussion to the elementary groups $\elm{i}{j}(\R)$ defined for each  pair of distinct indices $i,j \in \{1,\ldots,d\}$. Consider the projection valued measures $\elmpv{i}{j}$ on the dual abelian compact groups $ \elmd{i}{j} \cong \widehat{\R}$ satisfying
%\marginpar{fix the diffrential indents}
$$ \pi_{\varphi|\elm{i}{j}(\R)} = \int_{\elmd{i}{j}} \chi  \: \mathrm{d}\elmpv{i}{j}(\chi).$$
It is an elementary    fact of harmonic analysis  that 
$$     \hord{i} / \ann{\elm{i}{j}(\R)} \cong  \verd{j} / \ann{\elm{i}{j}(\R)} \cong  \elmd{i}{j}.$$ Let  $p^{\hor{i}} \colon \hord{i} \to \elmd{i}{j}$ and $p^{\ver{i}} \colon \verd{j} \to \elmd{i}{j}$   denote the resulting quotient maps.

\begin{prop}
$  p^{\hor{i}}_* \horpv{i} = p^{\ver{j}}_* \verpv{j} = \elmpv{i}{j}  $ as projection-valued measures on the dual abelian compact group $\elmd{i}{j}$ for all pairs of distinct indices $i,j \in \{1,\ldots,d\}$.
\label{prop:projection valued measures are nicely behaved}
\end{prop}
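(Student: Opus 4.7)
The plan is to apply the standard functoriality of the spectral (SNAG) theorem with respect to restriction to a closed subgroup. All three projection-valued measures $\horpv{i}$, $\verpv{j}$ and $\elmpv{i}{j}$ arise as spectral measures of restrictions of the single unitary representation $\pi_\varphi$ to three nested abelian subgroups, and uniqueness forces them to agree after the natural pushforward.

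More precisely, I would first recall the SNAG theorem: for any unitary representation $\sigma$ of a countable abelian group $A$ on a Hilbert space, there is a unique projection-valued measure $P^\sigma$ on $\widehat{A}$ such that $\sigma = \int_{\widehat{A}} \chi \, \mathrm{d} P^\sigma(\chi)$. The relevant functoriality statement is the following: if $B \le A$ is a subgroup and $r \colon \widehat{A} \to \widehat{B}$ is the Pontryagin dual of the inclusion $B \hookrightarrow A$, then the spectral measure of the restricted representation $\sigma_{|B}$ is given by $r_* P^\sigma$. This is an immediate consequence of uniqueness, since for every element $b \in B$
$$\sigma(b) = \int_{\widehat{A}} \chi(b) \, \mathrm{d} P^\sigma(\chi) = \int_{\widehat{B}} \chi(b) \, \mathrm{d} (r_* P^\sigma)(\chi),$$
so that $r_* P^\sigma$ has the defining property of the spectral measure of $\sigma_{|B}$.

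Next I would specialize to the three inclusions in the statement. The map $p^{\hor{i}} \colon \hord{i} \to \elmd{i}{j}$ is, by construction, the Pontryagin dual of the inclusion $\elm{i}{j}(\R) \hookrightarrow \hor{i}(\R)$, since $\ann{\elm{i}{j}(\R)}$ inside $\hord{i}$ is precisely the kernel of the restriction map of characters. Applying the functoriality above to the abelian group $A = \hor{i}(\R)$ with subgroup $B = \elm{i}{j}(\R)$, and to the representation $\sigma = \pi_{\varphi | \hor{i}(\R)}$, gives that $p^{\hor{i}}_* \horpv{i}$ is the spectral measure of $\pi_{\varphi | \elm{i}{j}(\R)}$. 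By uniqueness this spectral measure is $\elmpv{i}{j}$, so $p^{\hor{i}}_* \horpv{i} = \elmpv{i}{j}$. The identical argument applied to the inclusion $\elm{i}{j}(\R) \hookrightarrow \ver{j}(\R)$ yields $p^{\ver{j}}_* \verpv{j} = \elmpv{i}{j}$.

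There is no real obstacle here; the content is entirely formal once one invokes the spectral theorem for unitary representations of countable abelian groups together with the uniqueness statement. The only thing to be slightly careful about is the identification of $p^{\hor{i}}$ and $p^{\ver{j}}$ with the dual maps of the corresponding subgroup inclusions, but this is exactly the content of the isomorphisms $\hord{i}/\ann{\elm{i}{j}(\R)} \cong \elmd{i}{j}$ and $\verd{j}/\ann{\elm{i}{j}(\R)} \cong \elmd{i}{j}$ recalled just before the statement.
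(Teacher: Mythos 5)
Your argument is correct and is essentially the same as the paper's: both rest on the uniqueness of the spectral measure attached to a unitary representation of an abelian group, applied to the nested restrictions of $\pi_\varphi$. The only cosmetic difference is that you invoke the SNAG uniqueness directly at the level of projection-valued measures, whereas the paper first polarizes to the scalar matrix-coefficient measures $\horpv{i}_{u,w}$ and $\elmpv{i}{j}_{u,w}$ and then applies uniqueness in Bochner's theorem; these are two phrasings of the same fact.
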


\begin{proof}
Fix a pair of distinct indices $i,j \in \{1,\ldots,d\}$. We will prove the statement with respect to the quotient map $p^{\hor{i}} : \hord{i} \to \elmd{i}{j}$. The proof for vertical subgroups is  essentially the same.
For every pair of vectors $u,w \in \mathcal{H}_\varphi$ consider the  complex-valued measures $\horpv{i}_{u,w}$ and $\elmpv{i}{j}_{u,w}$ respectively on the dual compact groups $\hord{i}$ and $\elmd{i}{j}$  given by
$$ \horpv{i}_{u,w}(E) =  \left< \horpv{i}(E)u, w\right> \quad \text{and} \quad \elmpv{i}{j}_{u,w}(F) =  \left< \elmpv{i}{j}(F)u, w\right> $$
for all Borel subsets $E \subset \hord{i}$ and $F \subset \elmd{i}{j}$. The desired conclusion $p^{\hor{i}}_* \horpv{i} = \elmpv{i}{j}  $ holds if and only if  $ p_* \horpv{i}_{u,w}  = \elmpv{i}{j}_{u,w}  $ for all vectors $u,v \in \mathcal{H}_\varphi$. This last statement follows from the uniqueness part of Bochner's theorem \cite[Theorem 4.18]{folland} combined with the observation that
\begin{align*}     \int_{\elmd{i}{j}} \left< \chi(g) u, w \right> \: \mathrm{d}\elmpv{i}{j}_{u,w}(\chi)  &= \left< \pi(g) u, w \right> =  \\
&=  \int_{\hord{i}} \left< \chi(g) u, w \right> \: \mathrm{d}\horpv{i}_{u,w}(\chi)= \\
&=  \int_{\elmd{i}{j}} \left< \chi(g) u, w \right> \: \mathrm{d}p_*^{\hor{i}}\horpv{i}_{u,w}(\chi)
\end{align*}
for all elements $g \in \elm{i}{j}(\R) \le \hor{i}(\R)$.
\end{proof}

%Namely, given a depth ideal $\I \nrm \R$ let $\Ohmf{\I}$ denote the Borel subset
%$$\Ohmf{\I} = \Ohmfi{\I} \setminus \bigcup_{\I \lneq \J} \ann{\J}  \subset \widehat{\R} = \overline{K} $$
%\marginpar{SOME CORRECTIONS ARE NEEDED HERE. $\Ohmfi{\I}$ is a subset of the dual of $E_{i,j}$ but should be a subset of the dual of $V_i$. and maybe more corrections}

%\begin{prop}
%\label{prop:pullback of cosets of depth}
%If $\I \nrm \R$ is  a depth ideal then   for all pairs of distinct indices $i,j\in\{1,\ldots,d\}$.
%\end{prop}
%\begin{proof}
%Observe that
%$$ p^{-1}(\Ohmf{\I}) = K_\I \setminus \bigcup_{\mfi{\J} \lneq \I} K_\J = \bigcup_{\LL \in F} \Oh{K_\LL} $$
%where the first equality is immediate from the definitions and the second one follows from  $\SL{d}{\R}$-invariance and Proposition \ref{prop:properties of OhH}.  Here  $F$ is some family of ideals in $\R$ and it remains to  show that $F = \{ \LL \nrm \R \: : \: \mfi{\LL} = \I \}$.
%
%Note that an ideal $\LL \in F$ must satisfy $ \LL \le \I$. If this were not the case we would have $\I \cap \LL \lneq \LL$ and so $\Oh{K_\LL} \cap K_\I = \Oh{K_\LL} \cap K_{\I \cap \LL} = \emptyset$. On the other hand it is clear that $\LL \in F$ and $\LL \le \I$ implies that $\mfi{\LL} = \I$.
%
%We have established that $F \subset \{ \LL \nrm \R \: : \: \mfi{\LL} = \I \}$. As for inclusion in the other direction, an ideal $\LL \nrm \R$ with $\mfi{\LL} = \I$ clearly satisfies $\Oh{K_\LL} \subset K_\I \setminus \cup _{\mfi{\J} \lneq \I} K_\J$.
%\end{proof}

\subsection*{Determination of the level ideal}

The following result  is inspired by \cite[Lemma 10]{bekka}.

\begin{theorem}
\label{thm:existence of a depth ideal}
Let $\varphi \in \chars{\EL{d}{\R}}$ be a character. Then there is a unique depth ideal $\levI{\varphi} \nrm \R$ such  that  $\horp{i} = \mu^{\hor{i}}_{\levI{\varphi}}$ and $\verp{i} = \mu^{\ver{i}}_{\levI{\varphi}}$ for all indices $i\in\{1,\ldots,d\}$.
\end{theorem}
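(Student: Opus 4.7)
The plan is to define the level ideal $\levI{\varphi}$ as the depth of a concrete ideal $\K \nrm \R$ extracted from $\varphi$, and then to exploit the factoriality of the von Neumann algebra $\mathcal{L}_\varphi$ to collapse the depth-ideal decompositions $\horp{i} = \sum_\I \alpha_\I^{\hor{i}} \horp{i}_\I$ (and similarly for $\verp{i}$) supplied by Theorem~\ref{thm:classification of invariant measures} onto a single depth stratum.

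First, for each pair of distinct indices $i,j \in \{1,\ldots,d\}$ I would set
$$
\K_{i,j} \;=\; \{\, r \in \R \;:\; \varphi(\elm{i}{j}(r)) = 1 \,\} \nrm \R.
$$
Conjugation by the Weyl-type element $W_{i,j} = \elm{i}{j}(1)\elm{j}{i}(-1)\elm{i}{j}(1) \in \EL{d}{\R}$ sends $\elm{i}{j}(r)$ to $\elm{j}{i}(-r)$, and analogous conjugations move between arbitrary index pairs; combined with $\K_{i,j}$ being closed under negation, this shows that $\K := \K_{i,j}$ is independent of the pair, and I define $\levI{\varphi} := \mfi{\K}$, which is a depth ideal by construction. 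A Fourier computation then shows each $\horp{i}$ (and each $\verp{i}$) is supported on $K_\K = (\ann{\K})^{d-1}$: for $r \in \K$ the identity $\int \chi_j(r)\, d\horp{i}(\chi) = \varphi(\elm{i}{j}(r)) = 1$ together with $|\chi_j(r)|=1$ forces $\chi_j(r) = 1$ $\horp{i}$-almost surely, and ranging $j$ over $\{1,\ldots,d\}\setminus\{i\}$ pins the support to $K_\K$. Combining with the decomposition above, the containment $K_\I \subseteq K_\K$ forces $\K \subseteq \I$ and hence $\levI{\varphi} = \mfi{\K} \subseteq \I$ (since $\I$ is itself a depth ideal), so every depth ideal contributing to $\horp{i}$ or $\verp{i}$ must contain $\levI{\varphi}$.

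The decisive step is to exclude depth ideals $\I \supsetneq \levI{\varphi}$ from the decomposition. Arguing by contradiction, if $\alpha_\I^{\hor{i}} > 0$ then the spectral projection $P^{\hor{i}}_\I := \horpv{i}(\Oh{K_\I})$ is a nonzero projection in $\pi_\varphi(\hor{i}(\R))'' \subseteq \mathcal{L}_\varphi$ whose range lies in the $\hor{i}(\I)$-fixed subspace of $\mathcal{H}_\varphi$, where $\hor{i}(\I) := \prod_{j \neq i} \elm{i}{j}(\I) \le \EL{d}{\I}$. Because $\EL{d}{\I}$ is the normal closure of $\hor{i}(\I)$ in $\EL{d}{\R}$, the $\EL{d}{\I}$-fixed subspace $F \subseteq \mathcal{H}_\varphi$ is $\pi_\varphi(\EL{d}{\R})$-invariant and its orthogonal projection lies in $Z(\mathcal{L}_\varphi) = \CC \cdot I$ by factoriality, so either $F = \{0\}$ or $F = \mathcal{H}_\varphi$. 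In the latter case $\EL{d}{\I} \subseteq \ker \varphi$, whence $\I \subseteq \K \subseteq \levI{\varphi}$, contradicting $\I \supsetneq \levI{\varphi}$. The main obstacle is the former case: excluding $F = \{0\}$ requires promoting the $\hor{i}(\I)$-fixed vector supplied by $P^{\hor{i}}_\I$ to a nonzero $\EL{d}{\I}$-fixed vector, which is the technically delicate point where finer use of the character-theoretic structure of $\varphi$, together with the compatibility Proposition~\ref{prop:projection valued measures are nicely behaved} on elementary subgroups, must enter.

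Once only $\I = \levI{\varphi}$ contributes, the identities $\horp{i} = \mu^{\hor{i}}_{\levI{\varphi}}$ and $\verp{i} = \mu^{\ver{i}}_{\levI{\varphi}}$ hold, with the common depth ideal $\levI{\varphi}$ independent of $i$ and of horizontal versus vertical by the $(i,j)$-independence of $\K$. Uniqueness of $\levI{\varphi}$ is automatic, since a probability measure determines its associated depth ideal via the classification.
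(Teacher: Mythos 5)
Your first two paragraphs are sound: the ideal $\K$ is indeed independent of the index pair and agrees with the kernel ideal $\kerI{\varphi}$, the Fourier argument correctly places the support of each $\horp{i}$ in $K_\K$, and the coset argument correctly shows $\mfi{\K} \subseteq \I$ for any contributing depth ideal $\I$. The flaw is in the ``decisive step,'' and it is not only the gap you flag --- it starts earlier.

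You set $P^{\hor{i}}_\I := \horpv{i}(\Oh{K_\I})$ and claim this projection is nonzero once $\alpha^{\hor{i}}_\I > 0$. That is false. The ergodic components of $\horp{i}_\I$ are measures $\mu_{\I,\omega}$ whose essential ideal is some $\J$ with $\mfi{\J} = \I$, and generically $\J \subsetneq \I$; such a component is supported on $\Oh{K_\J}$, which is \emph{disjoint} from $\Oh{K_\I}$ (because $\I \supsetneq \J$ makes $K_\I$ one of the subgroups subtracted in forming $\Oh{K_\J}$). So $\alpha_\I^{\hor{i}} > 0$ does not force $\horpv{i}(\Oh{K_\I}) \neq 0$. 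The set you want is the ``fat'' stratum $\mathcal{O}^{\hor{i}}_\I$ defined in the paper --- the union of $\ann{\hor{i}(\J)}$ over all $\J$ with $\mfi{\J} = \I$, minus the strata of strictly larger depth --- and then $\horpv{i}(\mathcal{O}^{\hor{i}}_\I)$ really is nonzero exactly when $\alpha^{\hor{i}}_\I > 0$. But this set is not contained in $K_\I$, so its range does \emph{not} consist of $\hor{i}(\I)$-fixed vectors, and the entire fixed-subspace dichotomy you build on top collapses. Moreover, even with your (flawed) setup, the promotion from $\hor{i}(\I)$-fixed vectors to $\EL{d}{\I}$-fixed vectors that you acknowledge as the delicate point is not merely unfinished --- it is generally hopeless, because in the end one typically has $\kerI{\varphi} \subsetneq \levI{\varphi}$, so there is \emph{no} nonzero $\EL{d}{\levI{\varphi}}$-fixed vector in $\mathcal{H}_\varphi$, yet $\levI{\varphi}$ is the very ideal you would want to conclude is the only contributor.

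The paper's proof avoids all of this by never passing to fixed subspaces. It works directly with the projection $\mathrm{P}_\I = \elmpv{i}{j}(\mathcal{O}^{\elm{i}{j}}_\I)$ on the fat stratum. Proposition~\ref{prop:projection valued measures are nicely behaved} shows this projection is the same operator for every choice of $(i,j)$; since it lies in the spectral algebra of each $\pi_\varphi(\elm{i}{j}(\R))$, it commutes with every generator of $\pi_\varphi(\EL{d}{\R})$ and hence lies in $\pi_\varphi(\EL{d}{\R})'$, and a spectral-theorem argument shows every element of $\pi_\varphi(\EL{d}{\R})'$ commutes with $\elmpv{i}{j}(E)$, putting $\mathrm{P}_\I$ in $\pi_\varphi(\EL{d}{\R})''$ as well, i.e.\ in $Z(\mathcal{L}_\varphi)$. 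Factoriality then forces $\mathrm{P}_\I \in \{0, I\}$, and since the strata $\mathcal{O}^{\elm{i}{j}}_\I$ partition $\elmd{i}{j}$, exactly one $\mathrm{P}_{\levI{\varphi}}$ equals $I$. The crucial ingredient you are missing is the index-independence of the stratified spectral projection; once that is in hand, no ``promotion'' of invariant vectors is needed at all.
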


The ideal $\levI{\varphi}$ is the \emph{level ideal} corresponding to the character $\varphi$. It plays an important role towards our main   classification result Theorem \ref{thm:main theorem}.

Before giving the proof of Theorem \ref{thm:existence of a depth ideal} let us introduce some useful notation.
Let $\I \nrm \R$ be a depth ideal. Consider the Borel subset
$$ \mathcal{O}^{\hor{i}}_\I = \left(\bigcup_{\J \nrm \R, \, \mfi{\J} = \I}  \ann{\hor{i}(\J)}\right) \setminus \left(\bigcup_{\J \nrm \R, \, \I \lneq \mfi{\J} }  \ann{\hor{i}(\J)}\right)$$
of the dual compact group $\widehat{\hor{i}(\R)}$. The Borel subsets $\mathcal{O}^{\ver{i}}_\I \subset \verd{i}$ and $\mathcal{O}^{\elm{i}{j}}_\I \subset \elmd{i}{j}$ are defined analogously for all pairs of distinct indices $i,j \in \{1,\ldots,d\}$. Observe that
$$p ^{\hor{i}}(\mathcal{O}^{\hor{i}}_\I) = p ^{\ver{j}}(\mathcal{O}^{\ver{j}}_\I)  = \mathcal{O}_\I^{\elm{i}{j}}.$$
The same argument as in the discussion on  \enquote{essential subgroups} in \S\ref{sec:classification of prob measures}
  shows   that $\horpm{i}_\I(\mathcal{O}^{\hor{i}}_\I) = \verpm{i}_\I(\mathcal{O}^{\ver{i}}_\I ) = 1$. Moreover  the Borel subsets $\mathcal{O}^{\hor{i}}_\I$ as well as $\mathcal{O}^{\ver{i}}_\I$  are pairwise disjoint for a fixed index $i \in \{1,\ldots,d\}$   as the ideal $\I$ varies over the different depth ideals of the ring $\R$.

\begin{proof}[Proof of Theorem \ref{thm:existence of a depth ideal}]
Let $(\pi_\varphi, \mathcal{H}_\varphi, v_\varphi)$ be the GNS construction associated to the character $\varphi$.  Let $\horpv{i}, \verpv{j}$ and $\elmpv{i}{j} $ be the projection valued measures on the dual  compact abelian groups 
$\hord{i}, \verd{j}$ and $\elmd{i}{j}$ respectively introduced above for each pair of distinct indices $i,j\in\{1,\ldots,d\}$.

Let $\I \nrm \R$ be any fixed depth ideal. It follows from  Proposition \ref{prop:projection valued measures are nicely behaved} and from the paragraph preceding this proof that 
$$  \horpv{i}(\mathcal{O}^{\hor{i}}_\I) = \verpv{j}(\mathcal{O}^{\ver{j}}_\I) = \elmpv{i}{j}(\mathcal{O}_\I^{\elm{i}{j}})$$
for all pairs of distinct indices $i,j \in \{1,\ldots,d\}$. Repeating this argument with  varying   $i$'s and $j$'s shows that  the orthogonal projection $$\mathrm{P}_\I = \elmpv{i}{j}(\mathcal{O}_\I^{\elm{i}{j}})$$  is independent of both indices.    
The representation theory of locally compact abelian groups \cite[Theorem 4.44]{folland} implies that   $\mathrm{P}_\I \in \pi_\varphi(\elm{i}{j}(\R))'$ for all pairs of distinct indices $  i,j \in \{1,\ldots,d\}$.
 As the elementary groups  generate the group $\EL{d}{\R}$ we deduce that $$\mathrm{P}_\I \in \pi_\varphi(\EL{d}{\R})'.$$
The spectral theorem  \cite[Theorem 1.44]{folland} shows that any bounded operator $B \in \pi_\varphi(\EL{d}{\R})'$ necessarily commutes with $\elmpv{i}{j} (E)$ for every Borel subset $E \subset \elmd{i}{j}$ and every pair of distinct  $i$ and $j$. In particular 
$$ \mathrm{P}_\I \in  \pi_\varphi(\EL{d}{\R})''. $$ 

We conclude that  the projection $ \mathrm{P}_\I$ lies in the center $Z(\mathcal{L}_\varphi)$ of the von Neumann algebra $\mathcal{L}_\varphi$ generated by the unitary representation $\pi_\varphi$. Since $\varphi$ is a character     the von Neumann algebra   $\mathcal{L}_\varphi$ is a factor. Therefore  the projection $\mathrm{P}_\I$ must be either the zero or   the identity operator   for each given depth ideal $\I \nrm \R$.

The Borel subsets $\mathcal{O}_\I^{\elm{i}{j}}$ form a Borel partition of the dual compact group $\elmd{i}{j}$ as $\I$ varies over all depth ideals in the ring $\R$ and for each fixed pair of distinct indices $i,j\in\{1,\ldots,d\}$.   Therefore there is a unique depth ideal $\levI{\varphi} \nrm \R$ such that  $\mathrm{P}_{\I_\varphi}$ is the identity operator on the Hilbert space $\mathcal{H}_\varphi$. The conclusion follows.
 \end{proof}

%To summarise, to say that level of the character $\varphi$ is $\levI{\varphi}$ amounts to saying that  its  restrictions $\elmc{i}{j}$  to the elementary subgroups $\elm{i}{j}$ are given by  Fourier---Stieltjes transforms $\elmc{i}{j} = \widehat{\elmp{i}{j}}$ and the probability measures $\elmp{i}{j}$ on $\widehat{R}$  are convex combinations of Haar measures supported on cosets of $\ann{\levI{\varphi}}$.

\subsection*{Properties of the level ideal}

Recall   the notation $\Ind{G}{H}{N}$ introduced in \S\ref{sec: character theory} --- a trace $\varphi$ belongs to $\Ind{G}{H}{N}$ if and only if the restriction of $\varphi$ to the subgroup $H$ is induced from the subgroup $N$.

\begin{prop}
\label{prop:vanishing on elementary matrices outside level}
%The character $\varphi$ vanishes on every $\elm{i}{j}(r)$ with $1 \le i \neq j \le n$ and $r \notin \levI{\varphi}$.
If  $\varphi \in \chars{\EL{d}{\R}}$ is a character with level ideal $\levI{\varphi} \nrm \R$ then
 $$ \varphi \in \Ind{\EL{d}{\R}}{\ver{i}({\R})}{\ver{i}( \levI{\varphi})}   \cap \Ind{\EL{d}{\R}}{\hor{i}({\R})}{\hor{i}( \levI{\varphi})} $$   for all indices $i \in \{1,\ldots,d\}$.
\end{prop}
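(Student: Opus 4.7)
The plan is to deduce the statement directly from the Fourier-analytic description of $\varphi$ on vertical and horizontal abelian subgroups established in Theorem~\ref{thm:existence of a depth ideal}, combined with the classification of invariant measures in Theorem~\ref{thm:classification of invariant measures} and the vanishing of Fourier transforms of translated Haar measures given by Proposition~\ref{prop:integral of Haar is delta}.

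Unwinding the definition of $\Ind{\EL{d}{\R}}{\ver{i}(\R)}{\ver{i}(\levI{\varphi})}$, the first containment to be proved amounts to showing that $\varphi(g) = 0$ for every element $g \in \ver{i}(\R) \setminus \ver{i}(\levI{\varphi})$. Since $\varphi_{|\ver{i}(\R)} = \verc{i} = \widehat{\verp{i}}$ by construction, and Theorem~\ref{thm:existence of a depth ideal} gives $\verp{i} = \verpm{i}_{\levI{\varphi}}$, I will directly compute $\widehat{\verpm{i}_{\levI{\varphi}}}(g)$ for such $g$.

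By Theorem~\ref{thm:classification of invariant measures} together with the ergodic decomposition of the $\EL{d-1}{\R}$-invariant probability measure $\verpm{i}_{\levI{\varphi}}$, this measure is a convex combination of ergodic $\EL{d-1}{\R}$-invariant probability measures of the form $\mu_{\levI{\varphi},\omega}$ indexed by finite orbits $\omega$, and each such ergodic measure is itself a finite convex combination of translates $x_{*}\nu$ of the Haar probability measure $\nu$ on the closed subgroup $H = \ann{\ver{i}(\levI{\varphi})} \le \verd{i}$. Pontryagin duality applied to the discrete subgroup $\ver{i}(\levI{\varphi}) \le \ver{i}(\R)$ gives $\ann{H} = \ver{i}(\levI{\varphi})$, so Proposition~\ref{prop:integral of Haar is delta} yields $\widehat{x_{*}\nu}(g) = 0$ for every translate $x_{*}\nu$ and every $g \in \ver{i}(\R) \setminus \ver{i}(\levI{\varphi})$. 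Linearity and continuity of the Fourier transform then give $\widehat{\verp{i}}(g) = 0$ for all such $g$, establishing the desired containment.

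The proof of the analogous statement for $\hor{i}(\R)$ is verbatim the same, with horizontal subgroups replacing vertical ones throughout. Given the machinery already established, I do not expect any significant obstacle beyond careful bookkeeping of the Pontryagin duality between the discrete group $\ver{i}(\R)$ and its compact dual $\verd{i}$; the content of this proposition is essentially a translation of Theorem~\ref{thm:existence of a depth ideal} into the language of induced traces.
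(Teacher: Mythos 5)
Your proof is correct and follows essentially the same route as the paper: both deduce the vanishing from Theorem~\ref{thm:existence of a depth ideal} together with Proposition~\ref{prop:integral of Haar is delta}. You spell out the intermediate fact that $\verp{i}$ and $\horp{i}$ are mixtures of translated Haar measures of the annihilator of $\ver{i}(\levI{\varphi})$ (resp.\ $\hor{i}(\levI{\varphi})$) via the classification Theorem~\ref{thm:classification of invariant measures} and ergodic decomposition, whereas the paper leaves this implicit in the discussion preceding Theorem~\ref{thm:existence of a depth ideal}.
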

\begin{proof}
According to  Theorem \ref{thm:existence of a depth ideal} the level ideal $\levI{\varphi}$ is such that
$ \varphi_{|\hor{i}} = \widehat{\nu}^{\hor{i}}_{\levI{\varphi}}$ and  $\varphi_{|\ver{i}} = \widehat{\nu}^{\ver{i}}_{\levI{\varphi}}$.
The desired conclusion   follows immediately from Proposition \ref {prop:integral of Haar is delta}.
\end{proof}

\begin{prop}
\label{prop:level is uniquely determined}
 Let $\J \nrm \R$ be an ideal. Let $\varphi \in \traces{\EL{d}{\R}}$ be  a trace satisfying
 $$\varphi \in \Ind{\EL{d}{\R}}{\elm{i}{j}(\R)}{\elm{i}{j}(\J)}$$ 
for a pair of distinct indices $i,j \in \{1,\ldots,d\}$. Write
 $$\varphi = \int_{\chars{\EL{d}{\R}}}
\psi \, \mathrm{d}\mu_\varphi(\psi)$$
where $\mu_\varphi$ is a Borel probability measure   on $\chars{\EL{d}{\R}}$. If   $\J $ is contained in the level ideal $\levI{\psi}$ of $\mu_\varphi$-almost every character $\psi$  then   $\mfi{\J} = \levI{\psi}$ holds  true $\mu_\varphi$-almost surely.
\end{prop}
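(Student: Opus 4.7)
The plan is to analyze the Bochner dual of the restriction of each character $\psi$ to the elementary subgroup $\elm{i}{j}(\R)$ and exploit the uniqueness of the depth-ideal decomposition of probability measures on $\elmd{i}{j}$.

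First I would restrict the integral decomposition $\varphi = \int \psi \, \mathrm{d}\mu_\varphi(\psi)$ to the abelian subgroup $\elm{i}{j}(\R)$ and pass to the Pontryagin dual via Bochner's theorem to obtain
$$ \elmp{i}{j}_\varphi = \int \elmp{i}{j}_\psi \, \mathrm{d}\mu_\varphi(\psi). $$
By Proposition \ref{prop:integral of Haar is delta} the hypothesis $\varphi \in \Ind{\EL{d}{\R}}{\elm{i}{j}(\R)}{\elm{i}{j}(\J)}$ is equivalent to the statement that $\elmp{i}{j}_\varphi$ is invariant under translation by the closed subgroup $\ann{\elm{i}{j}(\J)} \le \elmd{i}{j}$, equivalently that its Fourier transform vanishes outside $\J$.

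Next, for each character $\psi$, I would combine Theorem \ref{thm:existence of a depth ideal} with Proposition \ref{prop:projection valued measures are nicely behaved} and the classification Theorem \ref{thm:classification of invariant measures} to observe that $\elmp{i}{j}_\psi$ is obtained by pushing $\horp{i}_\psi = \mu^{\hor{i}}_{\levI{\psi}}$ forward under $p^{\hor{i}} : \hord{i} \to \elmd{i}{j}$, and hence is a convex combination of translates of the Haar measure on $\ann{\elm{i}{j}(\levI{\psi})}$. In particular the Fourier transform of $\elmp{i}{j}_\psi$ vanishes outside $\levI{\psi}$. Grouping the contributions to the integral according to the value of the level ideal gives
$$ \elmp{i}{j}_\varphi = \sum_{\I} \beta_\I \, \rho_\I, $$
where the countable sum ranges over depth ideals $\I \nrm \R$, the coefficients are $\beta_\I = \mu_\varphi(\{\psi : \levI{\psi} = \I\})$, and each $\rho_\I$ is a convex combination of translates of the Haar measure on $\ann{\elm{i}{j}(\I)}$. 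The hypothesis forces $\beta_\I > 0 \Rightarrow \J \subset \I$, and since $\I$ is a depth ideal while $\mfi{\J}$ is the smallest depth ideal containing $\J$ (Proposition \ref{prop:properties of depth}), we deduce $\beta_\I > 0 \Rightarrow \mfi{\J} \subset \I$.

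The main step, which I expect to be the principal obstacle, is to upgrade this containment to the equality $\mfi{\J} = \I$ whenever $\beta_\I > 0$. I would argue by contradiction: assume $\beta_{\I_0} > 0$ for some depth ideal $\I_0 \supsetneq \mfi{\J}$, chosen minimal with this property among the countably many depth ideals appearing in the decomposition. Applying Proposition \ref{prop:a condition for an ideal to be maximal of finite index} to the depth ideal $\mfi{\J}$ yields a sequence of elements $s_n \in \I_0 \setminus \mfi{\J}$ whose pairwise differences also remain outside $\mfi{\J}$. Since $\hat{\rho_\I}$ vanishes outside $\I$, the Fourier identity
$$ 0 = \hat{\elmp{i}{j}_\varphi}(s_n) = \sum_{\I \ni s_n} \beta_\I \hat{\rho_\I}(s_n) $$
restricts the contributing depth ideals to those containing $s_n$. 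A suitable choice of $s_n$ combined with the minimality of $\I_0$ isolates the contribution of $\rho_{\I_0}$, and a Wiener-type Cesaro averaging argument, in the spirit of the proof of Proposition \ref{prop:property W implies dual vanishes outside annihilator}, then forces $\beta_{\I_0} = 0$, yielding the required contradiction.
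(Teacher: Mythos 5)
Your setup (Bochner duality, grouping contributions by level ideal, deducing $\beta_\I>0 \Rightarrow \mfi{\J}\subset\I$) matches the paper's up to that point. But the final step --- the contradiction --- is where your proposal both diverges from the paper and has a real gap. The paper does not argue by contradiction: it constructs the auxiliary measure $\theta = \sum_\I \eta_\I * \nu_{\mfi{\J}}$ (replacing each Haar factor $\nu_\I$ by the \emph{same} Haar factor $\nu_{\mfi{\J}}$), checks $\widehat{\theta}=\widehat{\mu}^{\elm{i}{j}}$ directly, and invokes the injectivity of the Fourier transform for probability measures to conclude $\theta=\mu^{\elm{i}{j}}$, which forces all but the $\mfi{\J}$-piece to vanish. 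No Wiener averaging, no $s_n$.

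The concrete gap in your argument is the sentence claiming that a suitable choice of $s_n\in\I_0$ ``isolates the contribution of $\rho_{\I_0}$.'' It does not: for any $s_n\in\I_0$, every depth ideal $\I\supset\I_0$ with $\beta_\I>0$ also contains $s_n$, and the identity $0=\sum_{\I\ni s_n}\beta_\I\widehat{\rho}_\I(s_n)$ is a sum of complex numbers that may cancel across infinitely many terms --- so no single term is isolated, and minimality of $\I_0$ does not help (depth ideals do not form a chain, so there may also be ideals incomparable to $\I_0$ containing the $s_n$). The sequence $s_n$ from Proposition \ref{prop:a condition for an ideal to be maximal of finite index} is designed to give pairwise distinct cosets modulo $\mfi{\J}$; it is not the kind of averaging (F\o lner-type) sequence needed for a Wiener theorem on the countable group $\I_0$, and you never establish that $\J$ has vanishing density along it. A repaired version of your idea would instead push $\elmp{i}{j}_\varphi$ forward to $\widehat{\I_0}$, verify that the atomic part of the pushforward has mass at least $\beta_{\I_0}$ (using that $p_*\nu_\I$ is $\delta_0$ iff $\I_0\subset\I$ and diffuse otherwise, the latter because $\I$ being a depth ideal forces $|(\I+\I_0)/\I|=\infty$ when $\I_0\not\subset\I$), and then derive a contradiction from the fact that the Fourier transform of the pushforward is supported on the infinite-index subgroup $\J\subset\I_0$. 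That argument is plausible but substantially longer and requires a genuine density/Wiener input; the paper's replacement-by-$\theta$ trick sidesteps all of it.
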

 \begin{proof}
Let $\mu^{\elm{i}{j}}$ be the uniquely determined Borel probability measure on the   dual  compact abelian group $\elmd{i}{j}$ such that $\varphi_{|\elm{i}{j}} = \widehat{\mu}^{\elm{i}{j}}$.
 Let $\nu_\I$ denote the Haar measure    of the   annihilator subgroup $\ann{\I}  $ of a given depth ideal $\I \nrm\R$   regarded as a Borel probability measure  on  $\elmd{i}{j} \cong \widehat{\R}$. 
%According to  Theorem \ref{thm:existence of a depth ideal} there is a Borel probability measure $\nu^{\elm{i}{j}}$ supported on the Borel subset $\mathcal{O}_{\levI{\varphi}}^{\elm{i}{j}}$ such that $\varphi_{|\elm{i}{j}} = \widehat{\nu}^{\elm{i}{j}}$. 
%More precisely, Then

There is a family of atomic positive measures $\eta_\I$ on   the dual  compact abelian group $\elmd{i}{j}  $ defined for every depth ideal  $\I \nrm \R$ with $\mfi{\J} \subset \I$ that satisfy $\eta_\I( \elmd{i}{j} \setminus \mathcal{O}^{\elm{i}{j}}_\I ) = 0$ and
$$ \mu^{\elm{i}{j}} = \sum_{\text{$\I$ is depth}, \mfi{\J} \subset \I } \eta_\I * \nu_{\I}.$$
 In particular $\sum |\eta_\I| = 1$ where (as above) the sum is   taken  over all depth ideals $\I \nrm \R$ with $\mfi{\J} \subset \I$.
  
%Assume towards contradiction that the probability with respect to  $\mu_\varphi$ of the condition  $\mfi{\J} \lneq \levI{\psi}$  is positive. This is equivalent to assuming that $|\eta_{\mfi{\J}}| < 1$.
On the other hand, we may consider the Borel probability measure $\theta$ on the dual compact group $\elmd{i}{j}$ given by
$$ \theta = \sum_{\text{$\I$ is depth}, \mfi{\J} \subset \I }  \eta_\I * \nu_{\mfi{\J}}.$$
 Note that $\widehat{\theta}(g)   = 0$ for all elements $g \in \elm{i}{j}(\R) \setminus \elm{i}{j}(\J)$ by Proposition \ref {prop:integral of Haar is delta}. Additionally  
$$ \widehat{\theta}(g) = \sum_{\text{$\I$ is depth}, \mfi{\J} \subset \I }   \int_{\elmd{i}{j}} \chi(g) \, \mathrm{d} \eta_\I = \widehat{\mu}^{\elm{i}{j}}(g) = \varphi(g) $$
for all elements $g \in \elm{i}{j}(\J)$. We deduce that $ \widehat{\theta}  = \widehat{\mu}^{\elm{i}{j}} $ overall as functions on the group $\elm{i}{j}(\R)$. The uniqueness   statement in Bochner's theorem \cite[Theorem 4.18]{folland} implies that $\theta = {\mu}^{\elm{i}{j}}$. Therefore $|\eta_{\mfi{\J}}| = 1$ and $\eta_\I = 0$ for every other depth ideal $\I$. This is equivalent to the desired conclusion.
 \end{proof}

The last result of the current \S\ref{sec: level of the character}
 will be used to prove the converse direction of our main result, namely Theorem \ref{thm:main converse} of the introduction.

\begin{cor}
\label{cor:decomposition of a trace vanishing outside}
 Let $\varphi \in \traces{\EL{d}{\R}}$ be  a trace with
 $$\varphi = \int_{\chars{\EL{d}{\R}}}
\psi \, \mathrm{d}\mu_\varphi(\psi)$$
for some Borel probability measure $\mu_\varphi$ on $\chars{\EL{d}{\R}}$.
 Let $\K \nrm \R$ be an ideal satisfying
 $ \EL{d}{\K} \le \ker \varphi \le \SLtil{d}{\K}$.
If  $$\varphi \in \Ind{\EL{d}{\R}}{\EL{d}{\R}}{\SLtil{d}{\mfi{\K} }} $$
   then  the level ideal $\levI{\psi}$ of  $\mu_\varphi$-almost every character $\psi $ is equal to   $  \mfi{\K}$.
\end{cor}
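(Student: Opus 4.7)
My plan is to invoke Proposition \ref{prop:level is uniquely determined} with the choice $\J = \mfi{\K}$. Since $\mfi{(\mfi{\K})} = \mfi{\K}$, the conclusion of that proposition is precisely $\levI{\psi} = \mfi{\K}$ for $\mu_\varphi$-almost every character $\psi$, which is the desired statement. The work therefore reduces to verifying the two hypotheses of the proposition, namely that $\varphi \in \Ind{\EL{d}{\R}}{\elm{i}{j}(\R)}{\elm{i}{j}(\mfi{\K})}$ for some pair of distinct indices $i,j \in \{1,\ldots,d\}$, and that $\mfi{\K} \subset \levI{\psi}$ holds almost surely in $\psi$.

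For the first hypothesis I would first check that $\elm{i}{j}(\R) \cap \SLtil{d}{\mfi{\K}} = \elm{i}{j}(\mfi{\K})$. Indeed, an elementary matrix $\elm{i}{j}(r)$ reduces modulo $\mfi{\K}$ to an elementary matrix in $\SL{d}{\R/\mfi{\K}}$, and such a matrix lies in the center (which consists of scalar matrices $u\,\mathrm{Id}_d$) if and only if its off-diagonal entry vanishes, i.e.\ $r \in \mfi{\K}$. The assumption $\varphi \in \Ind{\EL{d}{\R}}{\EL{d}{\R}}{\SLtil{d}{\mfi{\K}}}$ then immediately gives the required vanishing of $\varphi$ on $\elm{i}{j}(\R) \setminus \elm{i}{j}(\mfi{\K})$.

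For the second hypothesis, the assumption $\EL{d}{\K} \le \ker \varphi$ combined with the Choquet decomposition and the pointwise bound $|\psi(g)| \le 1$ forces $\psi(g) = 1$ for $\mu_\varphi$-almost every $\psi$ and every fixed $g \in \EL{d}{\K}$. Since $\EL{d}{\K}$ is countable, intersecting the resulting conull sets yields $\EL{d}{\K} \le \ker \psi$ on a $\mu_\varphi$-conull set of $\psi$. In particular $\psi(\elm{i}{j}(k)) = 1$ for every $k \in \K$ and every such $\psi$. On the other hand, Proposition \ref{prop:vanishing on elementary matrices outside level} applied to the character $\psi$ shows that $\psi$ vanishes on $\elm{i}{j}(\R) \setminus \elm{i}{j}(\levI{\psi})$. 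Comparing these two facts forces $\K \subset \levI{\psi}$, and since $\levI{\psi}$ is itself a depth ideal, Proposition \ref{prop:properties of depth}(\ref{it:subset depth}) upgrades this to $\mfi{\K} \subset \mfi{\levI{\psi}} = \levI{\psi}$. With both hypotheses verified, an application of Proposition \ref{prop:level is uniquely determined} completes the argument. There is no serious obstacle here; the heavy lifting has already been carried out in Section \ref{sec: level of the character}.
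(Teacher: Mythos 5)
Your proof is correct and takes essentially the same route as the paper: both arguments conclude by feeding $\J = \mfi{\K}$ into Proposition \ref{prop:level is uniquely determined}, and both use the bound $\|\psi\|_\infty \le 1$ together with the Choquet decomposition to deduce $\ker\varphi \le \ker\psi$ almost surely. Your write-up is slightly more explicit on two points the paper glosses over — the identification $\elm{i}{j}(\R) \cap \SLtil{d}{\mfi{\K}} = \elm{i}{j}(\mfi{\K})$ needed to verify the first hypothesis of the proposition, and the direct comparison on elementary matrices giving $\K \subset \levI{\psi}$ (where the paper instead passes through the kernel ideal $\kerI{\psi}$ of each $\psi$, writing $\K \subset \kerI{\psi}$ and then $\mfi{\kerI{\psi}} \subset \levI{\psi}$) — but these are cosmetic variants of the same argument.
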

\begin{proof}
It is a general property of traces that $\|\varphi\|_\infty \le 1$.  Therefore    $\ker \varphi \le \ker \psi$ holds true for $\mu_\varphi$-almost every character $\psi \in \chars{\EL{d}{\R}}$. It follows that   $\K$ is contained in the kernel ideal $\K_\psi$ and so $\mfi{\K} \subset \mfi{\K}_\psi \subset \levI{\psi}$ holds true $\mu_\varphi$-almost surely.\footnote{We will show in \S\ref{sec:kernel ideal} the much more precise statement $\mfi{\kerI{\psi}} =\levI{\psi}$. This is not   needed yet for the current argument.} The statement now follows from Proposition \ref{prop:level is uniquely determined}.
 %
%
%
%We claim that the containment in the converse direction $\levI{\psi} \subset \mfi{K}$ holds true $\mu_\varphi$-almost surely as well. 
%Let $\verpm{1}$ be a Borel probability measure on the  Pontryagin dual group $\verd{1}$ such that $\varphi_{|\ver{1}}$ coincides with the Fourier transform $\widehat{\verpm{1}}$. Such a probability measure exists by Bochner's theorem \cite[Theorem 4.18]{folland}. Write 
%$$ \verpm{1} =    \sum_{\I} \verpm{1}_\I $$
%where the sum is taken over the different depth ideals $\I \nrm \R$ and $\verpm{1}_\I$  is a measure supported on the Borel set $\mathcal{O}^{\ver{1}}_\I$.
%On the other hand, we know that $\verp{1}$ must be supported on $\mathcal{O}^{\ver{1}}_{\mfi{\K}_\varphi}$.  The conclusion follows as the Borel subsets $\mathcal{O}^{\ver{1}}_\I$ for distinct depth ideals $\I$ are disjoint.  
\end{proof}

%\subsection*{Vanishing outside the level }
%\label{sec:vanishing outside congruence subgroup}

%The proof will follow very closely the  "second case" of Bekka in Section 6 of \cite{bekka}. Our arguments will take place in the group $\SL{d}{\R / \levI{\varphi}}$.

%\subsection*{Vanishing as a consequence of measure classification}
%
%We return to the situation where $\varphi \in \chars{\SL{d}{\R}}$ is a character of level $\levI{\varphi}$.
%
%\begin{prop}
%	$\varphi(g) = 0$ for every  $g = (x,v) \in \ngp{i} \ltimes \ver{i}$  so that $x \in \SLtil{n-1}{\levI{\varphi}}$ and $g \notin \SLtil{d}{\levI{\varphi}}$ for $ 1 \le i \le n$.
%\end{prop}
%\begin{proof}
%To simplify notation denote $M_i = \SLtil{n-1}{\levI{\varphi}}$ and $G = \SL{d}{\mathcal{R}}$.
%The restriction of $\varphi$ to $N_i \ltimes V_i$ can be decomposed into a convex combination of characters in
%$ \charrel{M_i \ltimes V_i}{G}	$.
%These characters are fibered over $\charrel{V_i}{G}$ and the latter is indexed by ideals $J \nrm \mathcal{R}$ with $\widetilde{J} = \mathcal{I}_\varphi$.
%\end{proof}

%\subsection*{Vanishing on  subgroups}
%\label{sub:vanishing on subgroups}
%

%The remainder of the proof of Proposition \ref{prop:vanishing outside congruence} will only be completed in the next section.

\section{The kernel ideal $\kerI{\varphi}$}
\label{sec:kernel ideal}
Let $\R$ be a Noetherian ring. Let $d > \max \{2, \sr{\R} \}$ be a fixed integer.
%\subsection*{The kernel ideal $\kerI{\varphi}$}
Let $\varphi \in \traces{\EL{d}{\R}}$ be a trace. The kernel $\ker \varphi$  of the trace $\varphi$ is given by
$$ \ker \varphi = \{ g \in \EL{d}{\R} \: : \: \varphi(g) = 1\}.$$
The kernel $\ker\varphi$  is a normal subgroup of the group $\EL{d}{\R}$, see Proposition \ref{prop:trace factors through the kernel}.  According to the normal subgroup structure theorem (see Theorem \ref{thm:normal structure theorem}) there is a uniquely determined  ideal $\kerI{\varphi} \nrm \R$ satisfying
$$ \EL{d}{\kerI{\varphi}} \le \ker \varphi \le \SLtil{d}{\kerI{\varphi}}. $$
We will say that $\kerI{\varphi}$   is the \emph{kernel ideal} associated to the trace $\varphi$.

\begin{theorem}
\label{thm:ker has finite index in lev}
Let $\varphi \in \chars{\EL{d}{\R}}$ be a character with level ideal $\levI{\varphi}$ and kernel ideal $\kerI{\varphi}$. Then $\mfi{\kerI{\varphi}} = \levI{\varphi}$.
\end{theorem}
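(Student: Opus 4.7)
The plan is to deduce the theorem from two claims: (a) $\kerI{\varphi} \subset \levI{\varphi}$, and (b) $|\levI{\varphi}/\kerI{\varphi}| < \infty$. Since $\levI{\varphi}$ is itself a depth ideal, (a) and (b) together with Proposition \ref{prop:properties of depth}(\ref{it:commensurable}) will immediately yield $\mfi{\kerI{\varphi}} = \mfi{\levI{\varphi}} = \levI{\varphi}$.

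A preliminary observation, useful for both claims, is the characterization that for every pair of distinct indices $i,j \in \{1,\ldots,d\}$ and every ring element $s \in \R$,
\[ \elm{i}{j}(s) \in \ker \varphi \iff s \in \kerI{\varphi}. \]
The direction $\Leftarrow$ is immediate from $\EL{d}{\kerI{\varphi}} \subset \ker \varphi$. For $\Rightarrow$ I would use $\ker\varphi \subset \SLtil{d}{\kerI{\varphi}}$ and the fact that an off-diagonal elementary matrix's reduction modulo $\kerI{\varphi}$ can lie in the center of $\SL{d}{\R/\kerI{\varphi}}$ only when it reduces to the identity, which forces $s \in \kerI{\varphi}$. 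Claim (a) then follows: if $s \in \kerI{\varphi}$ then $\varphi(\elm{i}{j}(s)) = 1 \neq 0$, and since $\varphi$ vanishes on $\hor{i}(\R) \setminus \hor{i}(\levI{\varphi})$ by Proposition \ref{prop:vanishing on elementary matrices outside level}, we must have $s \in \levI{\varphi}$.

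For (b) I would analyze the probability measure $\horp{i}$ on $\hord{i}$ whose Fourier transform equals $\varphi|_{\hor{i}(\R)}$. By Theorem \ref{thm:existence of a depth ideal} this measure is of depth $\levI{\varphi}$, so by Theorem \ref{thm:classification of invariant measures} it decomposes as a convex combination of ergodic $\EL{d-1}{\R}$-invariant measures $\mu_{\levI{\varphi},\omega_k}$ indexed by finite orbits $\omega_k \subset K/K_{\levI{\varphi}}$. Each character in the support of such a component lies in $\ann{\hor{i}(\J)}$ for some ideal $\J$ with $\mfi{\J} = \levI{\varphi}$, and its restriction to $\hor{i}(\levI{\varphi}) \cong \levI{\varphi}^{d-1}$ factors through the finite quotient $(\levI{\varphi}/\J)^{d-1}$; in particular each such character has finite image and finite-index kernel in $\hor{i}(\levI{\varphi})$. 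Using that $\varphi$ is a character, so that $\mathcal{L}_\varphi$ is a factor, a spectral-projection argument refining the one used in the proof of Theorem \ref{thm:existence of a depth ideal} to the finer partition of $\mathcal{O}^{\hor{i}}_{\levI{\varphi}}$ into $\EL{d-1}{\R}$-orbits shows that the orbit projections lie in the center of $\pi_\varphi(\EL{d}{\R})''$, hence are zero or the identity, so that only a single ergodic component contributes: $\horp{i} = \mu_{\levI{\varphi},\omega}$ for a single finite orbit $\omega$. Given this, for $s \in \levI{\varphi}$ one computes
\[ \varphi(\elm{i}{j}(s)) = \frac{1}{|\omega|}\sum_{\chi \in \omega} \chi(s), \]
which equals $1$ precisely when $\chi(s) = 1$ for every $\chi \in \omega$. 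Combined with the preliminary observation, this exhibits $\hor{i}(\kerI{\varphi})$ as a \emph{finite} intersection $\bigcap_{\chi \in \omega} \ker(\chi|_{\hor{i}(\levI{\varphi})})$ of finite-index subgroups of $\hor{i}(\levI{\varphi})\cong\levI{\varphi}^{d-1}$, and since $(\levI{\varphi}/\kerI{\varphi})^{d-1}$ then has finite cardinality, so does $\levI{\varphi}/\kerI{\varphi}$.

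The principal obstacle is the single-ergodic-component step. A priori the ergodic decomposition of $\horp{i}$ may involve countably many orbits, and a countable intersection of finite-index subgroups may well have infinite index, so this reduction is genuinely needed. Proving centrality of the orbit projections is delicate because $\hor{i}(\R)$ is normalized only by the block-triangular subgroup identified in Proposition \ref{prop:normalizer of vertical or horizontal subgroup}, not by all of $\EL{d}{\R}$; one must propagate the easy commutation with $\hor{i}(\R)$ and with $\ngp{i}(\R)$ to commutation with vertical elementary subgroups and more general elements of $\EL{d}{\R}$. This is where the hypothesis $d > \sr{\R}$ and the normal-form decomposition of Proposition \ref{prop:normal form for conjugates} are expected to enter essentially, via a careful interplay between vertical and horizontal data that is the technical heart of the whole proof.
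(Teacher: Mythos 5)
The overall skeleton — reduce to $\kerI{\varphi} \subset \levI{\varphi}$ plus $\lvert\levI{\varphi}/\kerI{\varphi}\rvert < \infty$, and the preliminary observation that $\elm{i}{j}(s) \in \ker\varphi \iff s\in\kerI{\varphi}$ — is fine, and Claim (a) is correct. The fatal problem is Claim (b), specifically your ``single-ergodic-component'' step, which is not merely unproven but \emph{false}.

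Here is a concrete counterexample. Take $\R = \ZZ$, $d = 3$, and let $\varphi$ be the normalized trace of the Steinberg representation of $\SL{3}{\mathbb{F}_p}$ pulled back via reduction mod $p$. Then $\kerI{\varphi} = p\ZZ$, $\levI{\varphi} = \ZZ$, and the theorem is trivially true. However, the restriction of the Steinberg representation to the maximal unipotent of $\SL{3}{\mathbb{F}_p}$ is regular, so its restriction to the index-$p$ subgroup $\hor{1}(\mathbb{F}_p)\cong\mathbb{F}_p^2$ is $p$ copies of the regular representation of $\mathbb{F}_p^2$; hence $\horp{1}$ is the \emph{uniform} probability measure on $\widehat{\mathbb{F}_p^2}$. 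Under the $\SL{2}{\mathbb{F}_p}$-action on $\widehat{\mathbb{F}_p^2}$ this decomposes into \emph{two} ergodic components: a point mass at the trivial character (mass $1/p^2$) and the uniform measure on the nonzero characters. So $\horp{1}$ has more than one ergodic component, and since you explicitly (and correctly) note that you genuinely need the single-orbit reduction because a countable intersection of finite-index subgroups can have infinite index, this kills the argument as written.

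The reason your proposed spectral-projection argument cannot rescue this is structural, not merely technical. The proof of Theorem~\ref{thm:existence of a depth ideal} hinges on the identity $p^{\hor{i}}_*\horpv{i} = p^{\ver{j}}_*\verpv{j} = \elmpv{i}{j}$ of Proposition~\ref{prop:projection valued measures are nicely behaved}: the projection $\mathrm{P}_\I$ can be realized simultaneously from the horizontal data at every index $i$ and the vertical data at every index $j$, and this is what makes $\mathrm{P}_\I$ commute with every elementary subgroup and hence lie in $\pi_\varphi(\EL{d}{\R})''\cap\pi_\varphi(\EL{d}{\R})'$. The partition of $\mathcal{O}^{\hor{i}}_{\levI{\varphi}}$ into $\EL{d-1}{\R}$-orbits is strictly finer than the partition into depth classes, and the finer pieces do \emph{not} push forward consistently to $\elmd{i}{j}$: the orbit structure on $\hord{i}$ and on $\verd{j}$ produce genuinely different pictures on the common quotient, so the orbit projections for $\hor{i}$ need not commute with $\pi_\varphi(\elm{k}{l}(\R))$ for $k\neq i$. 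The depth class of the annihilator ideal is the \emph{coarsest} refinement that is compatible across horizontal and vertical subgroups, and that is exactly what Theorem~\ref{thm:existence of a depth ideal} extracts and no more. Indeed, the paper does obtain a ``finite-orbit'' statement of the kind you want (Proposition~\ref{prop:finite orbit on D}), but it does so for the measure on $\widehat{\dF{\varphi}/\zF{\varphi}}$ where $\dF{\varphi}$ is a \emph{normal} subgroup of $\EL{d}{\R}$, so the factoriality argument applies; for the non-normal $\hor{i}(\R)$ it does not.

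The paper's actual route is considerably more intricate and cannot be short-circuited by your proposal: it first produces a ``supporting ideal'' $\finI{\varphi}$ via Bekka's argument (Lemma~\ref{lem:bekkas lemma on finite index ideal}, using factoriality and Tits' lemma to get $\finI{\varphi}^2\subset\kerI{\varphi}$ with $\mfi{\finI{\varphi}}=\levI{\varphi}$), then introduces the two-step nilpotent subquotient $\nF{\varphi}/\ker\varphi$ and its center $\dF{\varphi}/\ker\varphi$ giving the ``degeneracy ideal'' $\degI{\varphi}$, and proves $\lvert\degI{\varphi}/\kerI{\varphi}\rvert<\infty$ (Theorem~\ref{thm:the depth of K is I}, via the finite-orbit statement above) and $\lvert\finI{\varphi}/\degI{\varphi}\rvert<\infty$ (Theorem~\ref{thm:deg has finite index}, via Howe's vanishing lemma and Proposition~\ref{prop:not in finite index implies infinite many infinite index}). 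Note that the naive hope ``$\finI{\varphi}^2\subset\kerI{\varphi}$, so we're done'' also fails — for example $(x)/(x)^2$ is infinite in $\ZZ[x]$ — which is precisely why the degeneracy ideal and the nilpotent analysis are unavoidable.
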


Theorem \ref{thm:ker has finite index in lev} is  main goal of the current \S\ref{sec:kernel ideal}. Clearly $\kerI{\varphi} \subset \levI{\varphi}$ so that $\mfi{\kerI{\varphi}} = \levI{\varphi}$ is equivalent to the statement $|\levI{\varphi} / \kerI{\varphi}|<\infty$. It is proved below in two parts, making use of two additional auxiliary ideals $\degI{\varphi}$ and $\finI{\varphi}$ associated to  the character $\varphi$.

\begin{remark}
The kernel ideal is well-defined for any trace on the group $\EL{d}{\R}$ but the level ideal is  defined only for characters.
\end{remark}

\subsection*{The supporting ideal $\finI{\varphi}$}

	Let $\varphi \in \chars{\SL{d}{\R}}$ be a character with level ideal $\levI{\varphi} \nrm \R$ and kernel ideal $\kerI{\varphi} \nrm \R$.  We define a certain     ideal $\finI{\varphi} \nrm \R $ associated to  the character $\varphi$ and satisfying $\mfi{\finI{\varphi}} = \levI{\varphi}$. This is   an auxiliary notion needed towards our proof of Theorem  \ref{thm:ker has finite index in lev}.
%for we   show below that in fact the kernel ideal $\kerI{\varphi}$ has finite index in the level ideal $\levI{\varphi}$. 

\begin{lemma}[Bekka]
	\label{lem:bekkas lemma on finite index ideal}
There exists an ideal $\finI{\varphi} \nrm \R $ with $\mfi{\finI{\varphi}} = \levI{\varphi}$ and $\finI{\varphi}^2 \subset \kerI{\varphi}$.
\end{lemma}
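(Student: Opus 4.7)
The plan is to apply the measure classification Theorem \ref{thm:classification of invariant measures} to a Fourier-dual picture of $\varphi$, and then read off $\finI{\varphi}$ from the finite-orbit structure. First I would restrict $\varphi$ to the abelian vertical subgroup $\ver{1}(\R) \cong (\R,+)^{d-1}$. By Bochner's theorem this restriction is the Fourier transform of a Borel probability measure $\mu$ on $\widehat{\R}^{d-1}$. The subgroup $\EL{d-1}{\R} \le \ngp{1}(\R)$ normalizes $\ver{1}(\R)$ via the standard linear action, so $\mu$ is $\EL{d-1}{\R}$-invariant; using the factor property of $\mathcal{L}_\varphi$ in a central-projection argument modeled on the proof of Theorem \ref{thm:existence of a depth ideal} (spectral projections attached to $\EL{d-1}{\R}$-invariant Borel subsets of $\widehat{\R}^{d-1}$ lie in $Z(\mathcal{L}_\varphi) = \CC I$), one further deduces that $\mu$ is ergodic.

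By Theorem \ref{thm:classification of invariant measures} (with $d$ replaced by $d-1$), we may therefore write $\mu = \mu_{\levI{\varphi},\omega}$, where the depth ideal must agree with $\levI{\varphi}$ by Theorem \ref{thm:existence of a depth ideal} and $\omega$ is a finite $\EL{d-1}{\R}$-orbit inside $\widehat{\R}^{d-1}/(\ann{\levI{\varphi}})^{d-1} \cong \widehat{\levI{\varphi}}^{d-1}$. Write $x = (x_2,\ldots,x_d) \in \omega$ with $x_i \in \widehat{\levI{\varphi}}$. A direct computation shows that conjugation by $\elm{i}{j}(s) \in \EL{d-1}{\R}$ alters only the $j$-th coordinate of $x$, by subtracting the character $s \cdot x_i \colon r \mapsto x_i(sr)$. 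Hence the finiteness of $\omega$ forces the $\R$-submodule $\R \cdot x_i \subset \widehat{\levI{\varphi}}$ to be finite for every $x \in \omega$ and every $i \in \{2,\ldots,d\}$, so the ideal $\mathrm{Ann}_{\R}(x_i) = \{s \in \R : s \cdot x_i = 0\}$ has finite index in $\R$.

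I then set
$$ \finI{\varphi} = \left( \bigcap_{x \in \omega}\bigcap_{i=2}^{d} \mathrm{Ann}_{\R}(x_i) \right) \cap \levI{\varphi}. $$
As a finite intersection of finite-index ideals of $\R$, further intersected with $\levI{\varphi}$, the ideal $\finI{\varphi}$ has finite index in $\levI{\varphi}$; since $\levI{\varphi}$ is a depth ideal, Proposition \ref{prop:properties of depth} gives $\mfi{\finI{\varphi}} = \levI{\varphi}$. For $s \in \finI{\varphi}$ the relations $s \cdot x_i = 0$ translate into $s\,\levI{\varphi} \subset \ker x_i$ for every $x, i$. Combining the explicit formula $\varphi(\elm{i}{1}(r)) = \frac{1}{|\omega|} \sum_{x \in \omega} x_i(r)$, valid for $r \in \levI{\varphi}$, with strict convexity of the closed unit disk yields the identification $\kerI{\varphi} = \bigcap_{x \in \omega} \ker x_i$ for any fixed $i$; therefore $s\,\levI{\varphi} \subset \kerI{\varphi}$, and $\finI{\varphi}^{2} \subset \finI{\varphi}\cdot \levI{\varphi} \subset \kerI{\varphi}$ as required.

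The main obstacle is the ergodicity step for $\mu$: although $\mu$ is visibly $\EL{d-1}{\R}$-invariant, upgrading this to ergodicity demands a careful use of the factor property of $\mathcal{L}_\varphi$ together with the compatibility between spectral projections coming from different abelian subgroups of $\EL{d}{\R}$, in the spirit of Proposition \ref{prop:projection valued measures are nicely behaved} and Theorem \ref{thm:existence of a depth ideal}.
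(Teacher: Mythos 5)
Your proposal takes a genuinely different route from the paper. The paper cites Bekka's Lemma~10 argument, which produces an ideal $\finI{\varphi}$ such that the subgroup $\Fn{d}{\finI{\varphi}}$ has a non-zero invariant vector in $\mathcal{H}_\varphi$, and then combines Tits' lemma ($\EL{d}{\finI{\varphi}^2} \le \Fn{d}{\finI{\varphi}}$) with the factor property of $\mathcal{L}_\varphi$ to conclude $\EL{d}{\finI{\varphi}^2} \le \ker\varphi$, hence $\finI{\varphi}^2 \subset \kerI{\varphi}$. You instead try to read $\finI{\varphi}$ directly off the finite-orbit structure of the dual measure via Theorem~\ref{thm:classification of invariant measures}, which would be a more concrete argument if it worked.

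The gap is exactly where you flag it: the ergodicity of $\mu$. Your proposed remedy does not hold. A spectral projection $P(E)$ attached to an arbitrary $\EL{d-1}{\R}$-invariant Borel set $E \subset \verd{1}$ lies in $\pi_\varphi(\ver{1}(\R))''$ and commutes with $\pi_\varphi(\ngp{1}(\R))$; but it does \emph{not} automatically commute with $\pi_\varphi(\elm{1}{j}(\R))$ for $j\ne 1$, nor with the right regular operators $\rho_\varphi(\EL{d}{\R})$, so it has no reason to lie in $Z(\mathcal{L}_\varphi)$. The argument of Theorem~\ref{thm:existence of a depth ideal} goes through only for the very particular sets $\mathcal{O}^{\ver{1}}_\I$, precisely because these are pullbacks of sets on each $\elmd{i}{j}$, which is what makes Proposition~\ref{prop:projection valued measures are nicely behaved} applicable and lets one pass from one abelian subgroup to another. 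For a generic $\EL{d-1}{\R}$-invariant set $E$ there is no such compatibility, and indeed $\mu$ need not be ergodic: the character $\varphi$ constrains the depth ideal, but not the number of finite orbits in the support. If $\mu$ is a non-trivial (possibly countably infinite) mixture $\sum_k c_k\,\mu_{\levI{\varphi},\omega_k}$, your intersection $\bigcap_{x\in\omega}\bigcap_i \mathrm{Ann}_\R(x_i)$ becomes an intersection over all orbits $\omega_k$, and there is no reason for it to have finite index in $\R$. Bekka's Lemma~10 is precisely the device for getting around this: it works with the atom of largest weight in the (possibly non-ergodic) measure and uses a positivity argument, not a naive ergodicity claim, to produce the invariant vector.

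The rest of your steps are essentially sound granted ergodicity: the computation of the dual $\elm{i}{j}(s)$-action, the deduction that $\R\cdot x_i$ is finite, the index computation via Proposition~\ref{prop:properties of depth}, and the passage from $\finI{\varphi}\levI{\varphi}\subset\kerI{\varphi}$ to $\finI{\varphi}^2\subset\kerI{\varphi}$. (Your identification $\kerI{\varphi}=\bigcap_{x\in\omega}\ker x_i$ is also correct once one notes that $\elm{i}{1}(r)\in\SLtil{d}{\kerI{\varphi}}$ iff $r\in\kerI{\varphi}$, and uses strict convexity as you say.) But without ergodicity these steps do not apply. You would either need to establish ergodicity of $\mu$ by a separate argument, or restructure the argument along Bekka's lines, working with the maximal-weight atom rather than assuming a single finite orbit.
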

This   Lemma and its proof  are essentially the same as \cite[Lemmas 10 and 11]{bekka}. 

\begin{proof}[Proof of Lemma \ref{lem:bekkas lemma on finite index ideal}]
Let $(\pi_\varphi, \mathcal{H}_{ \varphi}, v_\varphi)$ be the GNS construction corresponding to the character $\varphi$ as given in Theorem \ref{thm: GNS}. Recall that $\pi_\varphi$ is a unitary representation of the group $\EL{d}{\R}$ acting on the Hilbert space $\mathcal{H}_{\varphi}$ with   cyclic vector $v_\varphi \in \mathcal{H}_{\varphi}$ and such that  $$\varphi(g) = \left< \pi_\varphi(g)v_\varphi, v_\varphi \right> \quad \forall g \in \EL{d}{\R}. $$

The argument of \cite[Lemma 10]{bekka} shows that there is an ideal $\finI{\varphi} \nrm \R $ satisfying $\mfi{\finI{\varphi}} = \levI{\varphi}$ and such that the subgroup $\Fn{d}{\finI{\varphi}}$  admits  non-zero invariant vectors in the Hilbert space $\mathcal{H}_\varphi$.   

Indeed, Bekka's proof is given  for  the ring  $\ZZ$ and the countable collection of non-zero ideals in $\ZZ$. It  extends mutatis mutandis to our situation, by considering the level ideal $\levI{\varphi}$ and the countable collection of ideals $\J \nrm \R$ with $\mfi{\J} = \levI{\varphi}$. One has to use the fact that  a pair of ideals $\J_1,\J_2 \nrm \R$ with $\mfi{\J_1} = \mfi{\J_2} = \levI{\varphi}$ satisfies $\mfi{(\J_1 \cap \J_2)} = \levI{\varphi}$, see Proposition \ref{prop:properties of depth}.

%	 Then there is an ideal $\finI{\varphi} \nrm \U_k $ with $\mfi{\finI{\varphi}} = \levI{\varphi}$ such that for every distinct pair of indices $1\leq i,j \leq d$, $\elm{i}{j}(\finI\varphi)$ has common non trivial invariant vector in  $\mathcal{H}_{\pi_\varphi}$.
%It follows from the previous lemma that the group $\Fn{d}{\finI{\varphi}}$, has a non-trivial invariant vector in the GNS representation.  

Recall that $\EL{d}{\finI{\varphi}^2} \le \Fn{d}{\finI{\varphi}}$ according to Lemma \ref{lem:Tits lemma}. Therefore the Hilbert subspace $\mathcal{H}_0 \le \mathcal{H}_\varphi$ consisting of the $\pi_\varphi(\EL{d}{\finI{\varphi}^2})$-invariant vectors is non-zero.  As $\EL{d}{\finI{\varphi}^2}$ is a normal subgroup of $\EL{d}{\R}$ it follows that $\mathcal{H}_0$ is   $\pi_\varphi(\EL{d}{\R})$-invariant where $\pi_\varphi$ is the \enquote{left action} associated to the GNS construction. As $\pi_\varphi$ and $\rho_\varphi$ commute the Hilbert subspace $\mathcal{H}_0$ is moreover  $\rho_\varphi(\EL{d}{\R})$-invariant for the  \enquote{right action} $\pi_\varphi$. 

We conclude that   the non-zero orthogonal projection to the subspace $\mathcal{H}_0$   lies in  the center $Z(\mathcal{L}_\varphi)$ of the von Neumann algebra $\mathcal{L}_\varphi$. The fact that  $\varphi$ is a character implies that   the von Neumann algebra $\mathcal{L}_\varphi$ is a factor. It follows that $\mathcal{H}_0 = \mathcal{H}_\varphi$. In other words $\EL{d}{\finI{\varphi}^2} \le \ker \varphi$ so that $\finI{\varphi}^2 \subset \kerI{\varphi}$ as required.
\end{proof}

We will say that $\finI{\varphi}$ is the \emph{supporting ideal} of the character $\varphi$ and  continue using the notation $\finI{\varphi}$ for the remainder of \S\ref{sec:kernel ideal}.

Let us introduce   shorthand notations for   several normal subgroups of the group $\EL{d}{\R}$ associated with the character $\varphi$ to be used below. Denote 
$$
 \zF{\varphi} = \SL{d}{\kerI{\varphi}} \cap \EL{d}{\R} \quad \text{and} \quad \nF{\varphi} = \SL{d}{\finI{\varphi}}  \cap \EL{d}{\R}.$$

\subsection*{Two-step nilpotent subquotients}

	Let $\varphi \in \chars{\EL{d}{\R}}$ be a character with  supporting ideal $\finI{\varphi}$ and kernel ideal $\kerI{\varphi}  $. 	
	
	The normal subgroup  $\SL{d}{\finI{\varphi}} $ and its subquotients are best understood in terms of additive groups of matrices. 	
	To be precise, consider the quotient ring
$$ \rF{\varphi} = \R / \kerI{\varphi}.$$
 Let $\Mn{d}{\rF{\varphi}}$ denote the abelian group of $d$-by-$d$ matrices with entries in the ring $\rF{\varphi}$ and with group operation given by pointwise addition. Let $\Mnzt{d}{\rF{\varphi}}$ and $\Off{d}{\rF{\varphi}}$ respectively denote the subgroups of $\Mn{d}{\rF{\varphi}}$ consisting of   matrices with zero trace and of matrices whose diagonal entries are all equal to zero. In particular 
 $$\Off{d}{\rF{\varphi}} \le \Mnzt{d}{\rF{\varphi}} \le \Mn{d}{\rF{\varphi}}.$$
 The group $\EL{d}{\R_\varphi}$ is acting on the abelian group $\Mn{d}{\rF{\varphi}}$ by automorphisms via matrix conjugation preserving the subgroup $\Mnzt{d}{\rF{\varphi}}$ (but not   the subgroup $\Off{d}{\rF{\varphi}}$).

\begin{lemma}
\label{lem:exact sequence with i}
There is an exact sequence of $ \EL{d}{\R}$-equivariant homomorphisms
  $$ 1 \to \SL{d}{\kerI{\varphi}} \to \SL{d}{\finI{\varphi}} \xrightarrow{\iota}   \Mnzt{d}{\rF{\varphi}} $$
where the homomorphism  $\iota$ is given by
$$ \iota(g) = (g - \Id_{d})  \Mn{d}{\kerI{\varphi}}   \quad \forall   g   \in \SL{d}{\finI{\varphi}}. $$
\end{lemma}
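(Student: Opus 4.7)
The plan is to verify directly the three conditions encoded in the statement: well-definedness (the image really lands in the zero-trace subgroup), multiplicativity (the multiplicative group maps to the additive group), and identification of the kernel, followed by the equivariance assertion. The single essential input is Lemma \ref{lem:bekkas lemma on finite index ideal}, namely $\finI{\varphi}^2 \subset \kerI{\varphi}$. All higher order cross terms that appear in the two calculations below will live in $\Mn{d}{\finI{\varphi}^2}$ and hence in $\Mn{d}{\kerI{\varphi}}$, which is what makes the map work.

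I would begin with well-definedness. Any element $g \in \SL{d}{\finI{\varphi}}$ can be written as $g = \Id_d + h$ with $h \in \Mn{d}{\finI{\varphi}}$. Expanding
$$1 = \det(\Id_d + h) = 1 + \mathrm{tr}(h) + Q(h)$$
where $Q(h)$ is a polynomial in the entries of $h$ with no terms of degree less than two, we see that $Q(h) \in \finI{\varphi}^2 \subset \kerI{\varphi}$ by Lemma \ref{lem:bekkas lemma on finite index ideal}. Therefore $\mathrm{tr}(h) \in \kerI{\varphi}$ and so $\iota(g) = h + \Mn{d}{\kerI{\varphi}}$ indeed lies in the zero trace subgroup $\Mnzt{d}{\rF{\varphi}}$.

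Next I would verify multiplicativity.  For $g_i = \Id_d + h_i \in \SL{d}{\finI{\varphi}}$ with $h_i \in \Mn{d}{\finI{\varphi}}$,
$$g_1 g_2 - \Id_d = h_1 + h_2 + h_1 h_2,$$
and the product $h_1 h_2$ has all entries in $\finI{\varphi}^2 \subset \kerI{\varphi}$. Reducing modulo $\Mn{d}{\kerI{\varphi}}$ yields $\iota(g_1 g_2) = \iota(g_1) + \iota(g_2)$. For the kernel, $g \in \ker \iota$ amounts to $g - \Id_d \in \Mn{d}{\kerI{\varphi}}$, which is the definition of $g \in \SL{d}{\kerI{\varphi}}$; conversely any element of $\SL{d}{\kerI{\varphi}}$ lies in $\SL{d}{\finI{\varphi}}$ (since $\kerI{\varphi} \subset \finI{\varphi}$, which  itself follows from Lemma \ref{lem:bekkas lemma on finite index ideal} together with $\finI{\varphi} \subset \mfi{\finI{\varphi}} = \levI{\varphi}$ and $\kerI{\varphi} \subset \levI{\varphi}$; in fact here we only need that both normal subgroups are compared via the fact that $\iota$ is well defined on the larger one). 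Thus the stated sequence is exact.

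Finally, $\EL{d}{\R}$-equivariance: since $\kerI{\varphi}$ is a two-sided ideal of $\R$, the subgroup $\Mn{d}{\kerI{\varphi}}$ is invariant under the conjugation action of $\EL{d}{\R} \le \GL{d}{\R}$ on $\Mn{d}{\R}$, so this action descends to $\Mnzt{d}{\rF{\varphi}}$. For any $x \in \EL{d}{\R}$ and $g = \Id_d + h$ as above one has $xgx^{-1} = \Id_d + xhx^{-1}$, hence $\iota(xgx^{-1}) = x\,\iota(g)\,x^{-1}$, and of course the inclusion $\SL{d}{\kerI{\varphi}} \hookrightarrow \SL{d}{\finI{\varphi}}$ is equivariant tautologically. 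There is no genuine obstacle here; the only thing to watch is that every quadratic cross term is absorbed by the inclusion $\finI{\varphi}^2 \subset \kerI{\varphi}$.
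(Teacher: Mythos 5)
Your proof is correct in its essential content and follows the same route as the paper's: expand $\det(\Id_d + h)$ to see that the linear-in-$h$ part is the trace while the quadratic-and-higher remainder lies in $\finI{\varphi}^2 \subset \kerI{\varphi}$, observe that the cross term $h_1 h_2$ is likewise absorbed, identify the kernel with the congruence subgroup, and note that conjugation by $\EL{d}{\R}$ preserves $\Mn{d}{\kerI{\varphi}}$.

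One small wobble in your parenthetical aside: the claim that $\kerI{\varphi} \subset \finI{\varphi}$ ``follows from'' $\finI{\varphi} \subset \levI{\varphi}$ together with $\kerI{\varphi} \subset \levI{\varphi}$ is not a valid inference --- two sub-ideals of $\levI{\varphi}$ need not be nested. This containment is genuinely needed for the exact sequence to make sense (otherwise $\SL{d}{\kerI{\varphi}}$ is not a subgroup of $\SL{d}{\finI{\varphi}}$), but it does not come for free from Lemma~\ref{lem:bekkas lemma on finite index ideal} as stated; rather one can arrange it without loss of generality by replacing $\finI{\varphi}$ with $\finI{\varphi} + \kerI{\varphi}$, which still satisfies $\mfi{(\finI{\varphi}+\kerI{\varphi})} = \levI{\varphi}$ and $(\finI{\varphi}+\kerI{\varphi})^2 \subset \kerI{\varphi}$. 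The paper itself treats this containment as part of the standing setup without further comment, so this does not affect the substance of the argument.
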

\begin{proof}
We  first show that    $\iota(g) \in \Mnzt{d}{\rF{\varphi}}$   for any given element 
$ g   \in \SL{d}{\finI{\varphi}}$.   Leibniz formula for determinants implies that
$$1 =  \det(g)  = 1 + \mathrm{tr}(g - \mathrm{Id}_d) + r  $$
for some ring element $r \in \finI{\varphi}^2   $. Therefore $\mathrm{tr}( g - \mathrm{Id}_d) \in  \finI{\varphi}^2  \subset \kerI{\varphi}$ which means the map $\iota$ is  well-defined  in the set-theoretic sense.

We now show that $\iota$ is a group homomorphism. Consider a pair of elements $g_1, g_2 \in   \SL{d}{\finI{\varphi}}$ and write 	$ g_i = \Id_{d} + A_i $ for some matrices $A_i \in \Mn{d}{\finI{\varphi}}$ and $i \in \{1,2\}$. Note that $A_1 A_2 \in \Mn{d}{\kerI{\varphi}} $. It follows that
$$ \iota(g_1 g_2) = \iota((\Id_d + A_1)(\Id_d + A_2))  = (A_1 + A_2)\Mn{d}{\kerI{\varphi}} = \iota(g_1) + \iota(g_2)$$
as required. It is clear from the definition of $\iota$ that $\ker \iota =\SL{d}{\kerI{\varphi}}$. 
Finally, the $ \EL{d}{\R}$-equivariance of the homomorphism $\iota$ follows from the computation
$$ \iota(hgh^{-1}) = \iota(h(\Id_d + A)h^{-1}) = \iota(\Id_d + hAh^{-1}) = (hAh^{-1})\Mn{d}{\kerI{\varphi}} = h\iota(g) h^{-1}$$
for any   element $g = \Id_d + A \in \SL{d}{\finI{\varphi}} $ with $A  \in \Mn{d}{\finI{\varphi}}$ and for every element $h \in \EL{d}{\R}$.
\end{proof}

\begin{cor}
\label{cor:induced exact sequence}
The homomorphism $\iota$ restricts to the following exact sequence 
$$ 1 \to \Fn{d}{\kerI{\varphi}} \to \Fn{d}{\finI{\varphi}} \xrightarrow{\iota} \Off{d}{\finI{\varphi}/\kerI{\varphi}} \to 1.$$
\end{cor}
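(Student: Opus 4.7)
The plan is to verify the three parts of the exact sequence—(i) the image of $\iota$ restricted to $\Fn{d}{\finI{\varphi}}$ lands inside $\Off{d}{\finI{\varphi}/\kerI{\varphi}}$, (ii) this restriction is surjective onto $\Off{d}{\finI{\varphi}/\kerI{\varphi}}$, and (iii) its kernel is exactly $\Fn{d}{\kerI{\varphi}}$. The key algebraic driver throughout is that $\finI{\varphi}^2 \subset \kerI{\varphi}$ (Lemma \ref{lem:bekkas lemma on finite index ideal}), which forces all quadratic corrections to disappear when we pass modulo $\kerI{\varphi}$.

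For (i) and (ii), I would compute $\iota$ on products of elementary matrices. On a single generator $\elm{i}{j}(r)$ with $r\in\finI{\varphi}$, one has $\iota(\elm{i}{j}(r)) = r e_{ij} + \Mn{d}{\kerI{\varphi}}$, which lies in $\Off{d}{\finI{\varphi}/\kerI{\varphi}}$. For an arbitrary product $g = \prod_k \elm{i_k}{j_k}(r_k)$ with $r_k \in \finI{\varphi}$, expanding the product shows that all cross-terms have the form $r_k r_{k'} e_{i_k j_k} e_{i_{k'} j_{k'}} \in \Mn{d}{\finI{\varphi}^2} \subset \Mn{d}{\kerI{\varphi}}$, so they vanish mod $\kerI{\varphi}$. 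Hence $\iota(g) = \sum_k r_k \bar{e}_{i_k j_k}$. This proves (i). For (ii), given any $A = (\bar a_{ij}) \in \Off{d}{\finI{\varphi}/\kerI{\varphi}}$, lift each entry to some $a_{ij}\in\finI{\varphi}$ and take $g = \prod_{i\neq j}\elm{i}{j}(a_{ij})$; the calculation above gives $\iota(g) = A$.

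Part (iii) is where a little care is needed, and is the step I would flag as the main obstacle. The inclusion $\Fn{d}{\kerI{\varphi}}\subset \ker(\iota|_{\Fn{d}{\finI{\varphi}}})$ is immediate from the definition of $\iota$. For the reverse inclusion, the plan is to show that the images of elementary matrices from $\finI{\varphi}$ commute in the quotient $\Fn{d}{\finI{\varphi}}/\Fn{d}{\kerI{\varphi}}$: every commutator $[\elm{i}{j}(a), \elm{k}{l}(b)]$ with $a,b\in\finI{\varphi}$ is either trivial or of the form $\elm{m}{n}(ab)$ with $ab\in\finI{\varphi}^2\subset\kerI{\varphi}$, hence already belongs to $\Fn{d}{\kerI{\varphi}}$. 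Thus any $g\in\Fn{d}{\finI{\varphi}}$ can be rewritten modulo $\Fn{d}{\kerI{\varphi}}$ as a product $\prod_{i\neq j}\elm{i}{j}(s_{ij})$ for some $s_{ij}\in\finI{\varphi}$. If $\iota(g)=0$ then $\sum_{i\neq j} s_{ij}\bar e_{ij}=0$ in $\Off{d}{\finI{\varphi}/\kerI{\varphi}}$, so each $s_{ij}\in\kerI{\varphi}$, which shows $g\in\Fn{d}{\kerI{\varphi}}$ as required. This combines (i)–(iii) into the claimed exact sequence.
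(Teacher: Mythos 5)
Your overall strategy — first checking that $\iota$ carries $\Fn{d}{\finI{\varphi}}$ into $\Off{d}{\finI{\varphi}/\kerI{\varphi}}$, then establishing surjectivity, then identifying the kernel — matches the structure of the paper's proof, and your parts (i) and (ii) are correct and more explicit than what the paper writes (the paper computes $\iota$ on each elementary subgroup separately and notes the coordinates generate the direct sum, whereas you carry out the full product expansion, the key point in both cases being that all cross-terms live in $\Mn{d}{\finI{\varphi}^2}\subset\Mn{d}{\kerI{\varphi}}$).

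Part (iii) has a genuine gap, and it is exactly the step you flagged as delicate. Your claim that every commutator $[\elm{i}{j}(a), \elm{k}{l}(b)]$ with $a,b\in\finI{\varphi}$ is either trivial or of the form $\elm{m}{n}(ab)$ omits the case $(k,l)=(j,i)$. The Steinberg relations give $[\elm{i}{j}(a), \elm{k}{l}(b)]=1$ when $j\neq k$ and $i\neq l$, and give a single elementary matrix when exactly one of $j=k$ or $i=l$ holds; but $[\elm{i}{j}(a), \elm{j}{i}(b)]$ is \emph{not} an elementary matrix. A direct computation in the $\{i,j\}$ block gives
$$[\elm{i}{j}(a), \elm{j}{i}(b)] = \Id_d + \begin{pmatrix} c+c^2 & -ac \\ bc & -c \end{pmatrix}, \qquad c=ab,$$
which is congruent to the identity modulo $\kerI{\varphi}$ (hence lies in $\SL{d}{\kerI{\varphi}}=\ker\iota$), but it is not visibly a product of elementary matrices with entries in $\kerI{\varphi}$, so its membership in $\Fn{d}{\kerI{\varphi}}$ is not established by the commutator identities you invoke. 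Consequently the "rewrite $g$ as $\prod_{i\neq j}\elm{i}{j}(s_{ij})$ modulo $\Fn{d}{\kerI{\varphi}}$" step is not justified as stated (and note that this rewriting also presupposes normality of $\Fn{d}{\kerI{\varphi}}$ in $\Fn{d}{\finI{\varphi}}$, which itself depends on the same commutator control). You would need a separate argument for the same-block commutators — for instance exploiting the third index available when $d\geq 3$, or a relative $K$-theoretic input — before the kernel computation closes. For what it is worth, the paper's own proof is also quite terse at precisely this point, so you correctly identified the crux; but the specific justification you give for it does not hold.
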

\begin{proof}
The restriction of the map $\iota$ to each elementary subgroup $\elm{i}{j}(\finI{\varphi})$  for a given pair of distinct indices $i,j \in \{1,\ldots,d\}$ induces the obvious isomorphism of the quotient group $\elm{i}{j}(\finI{\varphi})/\elm{i}{j}(\kerI{\varphi})$ with the corresponding off-diagonal coordinate  of the additive matrix group $\Off{d}{\finI{\varphi}/\kerI{\varphi}} $.  The surjectivity claim implicit in the short exact sequence in question follows. Furthermore the image $\iota \Fn{d}{\finI{\varphi}}$ is isomorphic to the direct sum of the images of   the elementary subgroups. The exactness of the sequence is now a consequence of the universal property of direct sums.
\end{proof}

\begin{cor}
\label{cor:iota is isomorphism on N/Z}
The map $\iota$ induces an isomorphism of the subquotient $\nF{\varphi} / \zF{\varphi}$ with a subgroup  of the additive matrix group $\Mnzt{d}{\rF{\varphi}}$.
\end{cor}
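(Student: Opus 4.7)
The plan is to obtain the desired isomorphism as a direct consequence of the first isomorphism theorem applied to the restriction of the homomorphism $\iota$ of Lemma~\ref{lem:exact sequence with i} to the subgroup $\nF{\varphi} = \SL{d}{\finI{\varphi}} \cap \EL{d}{\R}$.

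The first step will be to establish the inclusion $\kerI{\varphi} \subset \finI{\varphi}$, which is needed in order for the subquotient $\nF{\varphi}/\zF{\varphi}$ to make sense. This is justified by the construction of $\finI{\varphi}$ in Lemma~\ref{lem:bekkas lemma on finite index ideal}: since $\Fn{d}{\kerI{\varphi}} \le \EL{d}{\kerI{\varphi}} \le \ker \varphi$, the cyclic vector $v_\varphi$ is already fixed by $\pi_\varphi(\Fn{d}{\kerI{\varphi}})$, so $\mathcal{H}_\varphi^{\Fn{d}{\kerI{\varphi}}} \neq 0$. Taking $\finI{\varphi}$ to be the sum of all ideals $\J \nrm \R$ with $\mfi{\J} = \levI{\varphi}$ for which $\Fn{d}{\J}$ admits a non-zero invariant vector in $\mathcal{H}_\varphi$ (this sum again has depth $\levI{\varphi}$ by Proposition~\ref{prop:properties of depth}, and the invariant subspaces produced by each summand are jointly preserved since they sit inside a factor), we obtain $\kerI{\varphi} \subset \finI{\varphi}$ and hence $\zF{\varphi} \le \nF{\varphi}$.

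The second step is purely formal. Restrict $\iota$ of Lemma~\ref{lem:exact sequence with i} to the subgroup $\nF{\varphi}$, producing a homomorphism
\[
\iota \colon \nF{\varphi} \longrightarrow \Mnzt{d}{\rF{\varphi}}
\]
whose kernel is $\SL{d}{\kerI{\varphi}} \cap \nF{\varphi}$. Because $\kerI{\varphi} \subset \finI{\varphi}$ gives $\SL{d}{\kerI{\varphi}} \le \SL{d}{\finI{\varphi}}$, this kernel collapses to $\SL{d}{\kerI{\varphi}} \cap \EL{d}{\R} = \zF{\varphi}$. The first isomorphism theorem then produces an injective homomorphism $\nF{\varphi}/\zF{\varphi} \hookrightarrow \Mnzt{d}{\rF{\varphi}}$, as desired.

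There is no substantial obstacle here; this corollary is essentially a bookkeeping consequence of Lemma~\ref{lem:exact sequence with i}. The only delicate point is the set-theoretic inclusion $\kerI{\varphi} \subset \finI{\varphi}$, which is built into the construction of $\finI{\varphi}$ but is worth making explicit, since the notation $\nF{\varphi}/\zF{\varphi}$ presumes $\zF{\varphi} \le \nF{\varphi}$.
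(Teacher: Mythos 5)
Your overall plan is correct and matches what the paper intends: restrict the homomorphism $\iota$ of Lemma~\ref{lem:exact sequence with i} to $\nF{\varphi}$, compute the kernel, and apply the first isomorphism theorem. The second ("purely formal") step is fine as written.

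However, your justification of the inclusion $\kerI{\varphi} \subset \finI{\varphi}$ is circular. You declare $\finI{\varphi}$ to be the sum of all ideals $\J$ with $\mfi{\J} = \levI{\varphi}$ such that $\Fn{d}{\J}$ fixes a non-zero vector, and then conclude $\kerI{\varphi}$ is among the summands because $\Fn{d}{\kerI{\varphi}}$ fixes $v_\varphi$. But membership in that collection also requires $\mfi{\kerI{\varphi}} = \levI{\varphi}$, which is precisely the statement of Theorem~\ref{thm:ker has finite index in lev} --- the main goal of \S\ref{sec:kernel ideal}, in whose proof this very corollary is later used (via Propositions~\ref{prop:probability measure for phi squared} and~\ref{prop:finite orbit on D}). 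So you cannot assume it here.

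The inclusion can be obtained without circularity. First, $\kerI{\varphi} \subset \levI{\varphi}$: indeed $\EL{d}{\kerI{\varphi}} \le \ker \varphi$ gives $\varphi(\elm{i}{j}(r)) = 1 \neq 0$ for $r \in \kerI{\varphi}$, whereas Proposition~\ref{prop:vanishing on elementary matrices outside level} gives $\varphi(\elm{i}{j}(r)) = 0$ for $r \notin \levI{\varphi}$. Now replace $\finI{\varphi}$ by $\finI{\varphi} + \kerI{\varphi}$: this clearly contains $\kerI{\varphi}$, still satisfies $(\finI{\varphi}+\kerI{\varphi})^2 \subset \kerI{\varphi}$ since $\finI{\varphi}^2 \subset \kerI{\varphi}$, and sits between $\finI{\varphi}$ and $\levI{\varphi}$, whence $\mfi{(\finI{\varphi}+\kerI{\varphi})} = \levI{\varphi}$ by the monotonicity of depth (Proposition~\ref{prop:properties of depth}). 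With this normalization of the supporting ideal, the rest of your argument goes through unchanged.
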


Note that the subquotient  $\zF{\varphi}$ is central in the quotient group $\EL{d}{\R}/\ker\varphi$ by  Theorem \ref{thm:Valivov}.

\begin{cor}
	\label{cor:nF is nilpotent}
The subquotient group $\nF{\varphi} / \ker\varphi $  is two-step nilpotent.
\end{cor}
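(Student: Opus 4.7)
The plan is to prove the stronger statement
\[
\bigl[\,[\nF{\varphi},\nF{\varphi}],\,\nF{\varphi}\bigr]\le\ker\varphi,
\]
which amounts to saying that the image of $\nF{\varphi}$ in $\EL{d}{\R}/\ker\varphi$ has nilpotency class at most $2$. Two ingredients suffice: Corollary~\ref{cor:iota is isomorphism on N/Z}, which makes $\nF{\varphi}/\zF{\varphi}$ abelian, and the Borevich--Vavilov theorem (Theorem~\ref{thm:Valivov}), which makes $\zF{\varphi}$ central modulo $\ker\varphi$.

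First, Corollary~\ref{cor:iota is isomorphism on N/Z} provides an injective group homomorphism
\[
\iota\colon\nF{\varphi}/\zF{\varphi}\hookrightarrow\Mnzt{d}{\rF{\varphi}}.
\]
Since the target is an additive matrix group and hence abelian, the quotient $\nF{\varphi}/\zF{\varphi}$ is abelian. This translates into the commutator inclusion $[\nF{\varphi},\nF{\varphi}]\le\zF{\varphi}$.

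Next, I would observe that $\zF{\varphi}=\SL{d}{\kerI{\varphi}}\cap\EL{d}{\R}$ is contained in $\SLtil{d}{\kerI{\varphi}}\cap\EL{d}{\R}$ and contains $\EL{d}{\kerI{\varphi}}$. Applying Theorem~\ref{thm:Valivov} to the ideal $\kerI{\varphi}$ shows that the image of $\zF{\varphi}$ in the quotient $\EL{d}{\R}/\EL{d}{\kerI{\varphi}}$ lies in the center. By definition of the kernel ideal we also have $\EL{d}{\kerI{\varphi}}\le\ker\varphi$. Together these give
\[
[\EL{d}{\R},\zF{\varphi}]\le\EL{d}{\kerI{\varphi}}\le\ker\varphi.
\]

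Combining the two steps yields
\[
\bigl[\,[\nF{\varphi},\nF{\varphi}],\,\nF{\varphi}\bigr]\le[\zF{\varphi},\nF{\varphi}]\le[\zF{\varphi},\EL{d}{\R}]\le\ker\varphi,
\]
which is the desired conclusion. There is no real obstacle: the argument is a formal assembly of the previous corollary and the Borevich--Vavilov theorem. The only point to watch is the bookkeeping of the various congruence-type subgroups attached to the two ideals $\kerI{\varphi}\subset\finI{\varphi}$, in particular the inclusions $\EL{d}{\kerI{\varphi}}\le\zF{\varphi}\le\nF{\varphi}$ and $\zF{\varphi}\le\SLtil{d}{\kerI{\varphi}}\cap\EL{d}{\R}$, which underpin both steps.
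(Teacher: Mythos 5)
Your proof is correct and follows essentially the same route as the paper: the abelianity of $\nF{\varphi}/\zF{\varphi}$ comes from the $\iota$-embedding into $\Mnzt{d}{\rF{\varphi}}$, and the centrality of $\zF{\varphi}$ modulo $\ker\varphi$ comes from Borevich--Vavilov (Theorem \ref{thm:Valivov}) applied to $\kerI{\varphi}$, which is exactly the remark the paper records just before the corollary. Your write-up simply unfolds what the paper compresses into the chain $[\nF{\varphi},\nF{\varphi}]\le\zF{\varphi}\le Z(\EL{d}{\R}/\ker\varphi)$.
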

\begin{proof}
This follows from Lemma \ref{lem:exact sequence with i}  as 
$ \left[\nF{\varphi}, \nF{\varphi}  \right] \le \zF{\varphi}   \le Z(\EL{d}{\R} / \ker\varphi)$.
\end{proof}

%Observe that $ \left[\nF{\varphi},\nF{\varphi} \right] \subset \zF{\varphi} $ and so $\nF{\varphi}$ is a class two nilpotent subgroup of $\gF{\varphi}$.
%The commutator map descends to an alternating bilinear form
%$$ \bF{\varphi} : \nF{\varphi}/\zF{\varphi} \times \nF{\varphi}/\zF{\varphi} 
%\cong \iota(\nF{\varphi}) \times \iota(\nF{\varphi})
%\to \zF{\varphi}.$$

\subsection*{The kernel and and  degeneracy ideals are commensurable}

Let $\varphi$ be a character of the group $\EL{d}{\R}$.  
Let $\dF{\varphi}$ be the preimage in the group $\EL{d}{\R}$ of the center of the nilpotent subquotient   $\nF{\varphi} / \ker\varphi$. In other words
$$ \dF{\varphi} / \ker \varphi = Z(\nF{\varphi}/ \ker \varphi).$$
 It is clear that  $\dF{\varphi} \nrm \EL{d}{\R}$. %with $\zF{\varphi} \le Z(\nF{\varphi})$. %Note that $Z(\nF{\varphi})$ consists precisely of the degenerate elements for the form $\bF{\varphi}$, that is
%$$ Z(\nF{\varphi}) = \{x \in \nF{\varphi} \: : \: \bF{\varphi}(x,y) = 0 \; \forall y \in \nF{\varphi} \} $$
By the normal subgroup structure theorem (cited here as Theorem \ref{thm:normal structure theorem}) there is a uniquely determined  ideal $\degI{\varphi} \nrm \R$ satisfying
$$ \EL{d}{\degI{\varphi}} \le \dF{\varphi}\le \SLtil{d}{\degI{\varphi}}. $$
We will say that $\degI{\varphi}$ is the \emph{degeneracy ideal} associated to the character $\varphi$. The ideals in question form an ascending chain as follows
$$ \kerI{\varphi} \subset \degI{\varphi} \subset \finI{\varphi} \subset \levI{\varphi} \subset \R. $$

Our   goal of showing that $\mfi{\kerI{\varphi}} = \levI{\varphi}$ is achieved in two steps.  Namely, we first show that  $|\sfrac{\degI{\varphi}}{\kerI{\varphi}}| <  \infty$ and then show that $|\sfrac{\finI{\varphi}}{\degI{\varphi}}| <  \infty$.  Putting together these two steps implies   that $\mfi{\kerI{\varphi}} = \levI{\varphi}$ as $\mfi{\finI{\varphi}} = \levI{\varphi}$ is already known. We turn our attention to  the first step.

%Consider the following   commutative diagram of dual $\gF{\varphi}$-equivariant maps of  Pontryagin dual abelian groups
%% $$  \Mn{d}{\rFd{\varphi}} \cong \widehat{\Mn{d}{\rF{\varphi}}}   \xrightarrow{\iota^*} \widehat{(\dF{\varphi} / \zF{\varphi})}  \to 1. $$
%%% We may regard $ \eF{\varphi}$ as a subgroup of $\dF{\varphi} / \zF{\varphi}$ and this gives another dual  sequence
%%and
%% $$ \widehat{(\dF{\varphi} / \zF{\varphi})}  \to \widehat{ \Off{d}{\degI{\varphi}}}  \cong \Off{d}{\widehat{\degI{\varphi}}} \to 1.$$
%% of $\gF{\varphi}$-equivariant maps. %, where $\rho^*$ is given by restricting a character of $\zF{\varphi}^0$ to $\iota \eF{\varphi}$. 
% \marginpar{Maybe understand the relationship with $K_1$}
%% Consider the following  commutative diagram. 
% $$
%\begin{tikzcd}%[column sep = large]
% %\Mn{d}{\rFd{\varphi}} \arrow[r,dash,"\cong"] & 
% \widehat{\Mnzt{d}{\rF{\varphi}}} \arrow[r,"\iota^*"] \arrow[d] &
%  \widehat{(\dF{\varphi} / \zF{\varphi})} \arrow[d] \arrow[r] & 1\\
%%\Off{d}{\widehat{\degI{\varphi}/\kerI{\varphi}}}\arrow[r,dash,"\cong"] &
% \widehat{\Off{d}{\rF{\varphi}}} \arrow[r] & \widehat{ \eF{\varphi}} \arrow[r] & 1
%\end{tikzcd}
%$$
%All the non-trivial maps in the above commutative diagram are restriction maps (dual to subgroup inclusion).
%The arrows are restriction maps.
%Since $\rho$ is an isomorphism so is $\rho^*$.

\begin{theorem}
\label{thm:the depth of K is I}
$|\sfrac{\degI{\varphi}}{\kerI{\varphi}}| <  \infty$.
\end{theorem}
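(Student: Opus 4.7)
The proof combines the relative Howe lemma with an elementary commutator computation and a contradiction argument exploiting the depth structure on ideals.

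I first verify the hypotheses of the relative Howe lemma (Lemma \ref{lem:relative Howe}) for the normal subgroup $N = \nF{\varphi}/\ker\varphi$ of $G = \EL{d}{\R}/\ker\varphi$. By Corollary \ref{cor:nF is nilpotent} the commutator $[\nF{\varphi},\nF{\varphi}]$ is contained in $\zF{\varphi}$, and Theorem \ref{thm:Valivov} places $\zF{\varphi}/\ker\varphi$ inside $Z(G)$, so $[N,N] \le Z(G)$. The restriction $\bar{\varphi}|_N$ is faithful by the definition of $\kerI{\varphi}$. Lemma \ref{lem:relative Howe} then yields the vanishing
\[\varphi(g) = 0 \qquad \forall g \in \nF{\varphi} \setminus \dF{\varphi}.\]

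Next I extract an algebraic constraint on $\degI{\varphi}$. Fix $r \in \degI{\varphi}$ and $s \in \finI{\varphi}$. The elementary matrix $\elm{i}{j}(r)$ lies in $\Fn{d}{\degI{\varphi}} \le \EL{d}{\degI{\varphi}} \le \dF{\varphi}$, so the commutator $[\elm{i}{j}(r), \elm{j}{i}(s)]$ lies in $\ker\varphi$. A direct computation in $\SL{d}{\finI{\varphi}}$, absorbing cubic terms using $\finI{\varphi}^3 \subset \kerI{\varphi}$, shows that this commutator equals the diagonal matrix $\Id + rs(E_{i,i} - E_{j,j})$ modulo $\Mn{d}{\kerI{\varphi}}$. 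Since $\ker\varphi \le \SLtil{d}{\kerI{\varphi}}$, the reduction modulo $\kerI{\varphi}$ of this matrix must be central in $\SL{d}{\R/\kerI{\varphi}}$, hence scalar, forcing $rs \in \kerI{\varphi}$. As $s$ is arbitrary we obtain
\[\degI{\varphi} \cdot \finI{\varphi} \subset \kerI{\varphi}, \qquad \text{and in particular} \qquad \degI{\varphi}^2 \subset \kerI{\varphi}.\]

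I conclude by contradiction. Assume $|\degI{\varphi}/\kerI{\varphi}| = \infty$ and set $\LL = \kerI{\varphi}$, $\J = \degI{\varphi}$, $\I = \mfi{\degI{\varphi}} \subset \levI{\varphi}$. Provided $\I \supsetneq \J$, Proposition \ref{prop:not in finite index implies infinite many infinite index} applied to an element $s \in \I \setminus \J$ and $g = \elm{j}{k}(s)$ produces a sequence $x_n \in \EL{d}{\R}$ with commutators $[g, x_n] \in \SL{d}{\J}$ pairwise distinct modulo $\SLtil{d}{\LL}$. Combined with the Howe vanishing from the first step and Lemma \ref{lem:application of lemma on sequences of elements for vanishing}, this forces $\varphi(g) = 0$, contradicting the non-vanishing implied by the Fourier decomposition of $\varphi$ on $\hor{j}(\R)$ from Section \ref{sec: level of the character} (since $s \in \I \subset \levI{\varphi}$ places $g$ inside the effective support of $\horp{j}_{\levI{\varphi}}$). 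The principal obstacle I anticipate is the edge case $\I = \J$, when $\degI{\varphi}$ is itself a depth ideal and Proposition \ref{prop:not in finite index implies infinite many infinite index} does not apply; this case should be resolved by applying the measure classification of Section \ref{sec:classification of prob measures} to the Fourier expansion of the abelian relative trace $\varphi|_{\dF{\varphi}}$.
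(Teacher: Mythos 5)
Your proposal does not close the gap. The paper's proof of this theorem goes through a Fourier-analytic analysis of $\varphi_{|\dF{\varphi}}$ (Propositions~\ref{prop:a criterion to be in aFf}--\ref{prop:finite orbit on D}), showing that this relative character corresponds to an atomic probability measure supported on a finite $\EL{d}{\R}$-orbit, from which one extracts an ideal $\LL = \J \cap \degI{\varphi}$ with $\mfi{\LL}=\mfi{\degI{\varphi}}$ and $\LL \subset \kerI{\varphi}$. You instead attempt to run the Howe-vanishing plus Proposition~\ref{prop:not in finite index implies infinite many infinite index} argument, which is what the paper reserves for the \emph{other} half of Theorem~\ref{thm:ker has finite index in lev}, namely Theorem~\ref{thm:deg has finite index}, and it does not transfer.

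The concrete obstruction is in your Step~3. Applying Proposition~\ref{prop:not in finite index implies infinite many infinite index} with $\LL = \kerI{\varphi}$, $\J = \degI{\varphi}$, $\I = \mfi{\degI{\varphi}}$ produces commutators $[g,x_n] \in \SL{d}{\degI{\varphi}}$ that are pairwise distinct modulo $\SLtil{d}{\kerI{\varphi}}$. To invoke Lemma~\ref{lem:application of lemma on sequences of elements for vanishing} together with $\varphi \in \Ind{\EL{d}{\R}}{\nF{\varphi}}{\dF{\varphi}}$ you need the $[g,x_n]$ to lie in pairwise distinct cosets of $\dF{\varphi}$, which requires $\dF{\varphi} \subset \SLtil{d}{\kerI{\varphi}}$. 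That inclusion fails precisely under your standing hypothesis: since $\dF{\varphi} \supset \EL{d}{\degI{\varphi}}$ and you have assumed $|\degI{\varphi}/\kerI{\varphi}|=\infty$, the associated ideal of $\dF{\varphi}$ strictly contains $\kerI{\varphi}$, so $\dF{\varphi} \not\subset \SLtil{d}{\kerI{\varphi}}$. (In the proof of Theorem~\ref{thm:deg has finite index} the paper deliberately takes $\LL=\degI{\varphi}$ so that $\SLtil{d}{\LL}=\SLtil{d}{\degI{\varphi}} \supset \dF{\varphi}$, which is exactly what makes the lemma applicable there.) Two further issues: the ``contradiction'' you invoke is not established, since $\varphi$ does vanish on large parts of $\elm{j}{k}(\levI{\varphi})$ and nothing in \S\ref{sec: level of the character} forbids $\varphi(\elm{j}{k}(s))=0$ for some $s \in \levI{\varphi}$; and the ``edge case'' $\degI{\varphi}=\mfi{\degI{\varphi}}$, which you defer to the measure classification of $\varphi_{|\dF{\varphi}}$, is not an edge case at all --- that analysis is the entire content of the paper's proof, and once you carry it out your Step~3 becomes unnecessary. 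Your Step~2 is a nice observation ($\degI{\varphi}\cdot\finI{\varphi}\subset \kerI{\varphi}$, obtained from the vanishing of $[\elm{i}{j}(r),\elm{j}{i}(s)]$ modulo $\SLtil{d}{\kerI{\varphi}}$) but, as your own counterexample-style reasoning should suggest (e.g.\ $\R=\ZZ[x]$, $\degI{\varphi}=(x^2)$, $\kerI{\varphi}=(x^4)$), that inclusion alone cannot bound $|\degI{\varphi}/\kerI{\varphi}|$.
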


The proof of Theorem \ref{thm:the depth of K is I} involves a careful analysis of Pontryagin dual groups of additive matrix groups over certain ideals.
To be precise, denote  $\aF{\varphi} = \degI{\varphi} / \kerI{\varphi}$.
We regard $\aF{\varphi}$ as an ideal in the ring $\rF{\varphi}$.
  Let $\aFf{\varphi}$ denote the   subgroup of the Pontryagin dual group $\aFd{\varphi}$ given by
$$ \aFf{\varphi} = 
\bigcup_{\kerI{\varphi} \le \J \nrm \R, \, \mfi{\J} = \levI{\varphi}} \ann{(\J \cap \degI{\varphi})}.
% = 
%\sum_{\J \nrm \R, \kerI{\varphi} \subset \J \subset \degI{\varphi} , \mfi{\J} = \levI{\varphi} } ( \J  /\kerI{\varphi})^0. 
$$
 The subgroup  $\aFf{\varphi}$ is a direct limit of countably many finite groups.  In particular $\aFf{\varphi}$ is countable. 
 
The map $\iota$ identifies the abelian subquotient $\dF{\varphi}/\zF{\varphi}$   with a subgroup of the matrix group $ \Mnzt{d}{\aF{\varphi}}$ containing $\Off{d}{\aF{\varphi}}$, see Corollaries \ref{cor:induced exact sequence} and \ref{cor:iota is isomorphism on N/Z}.
%In particular  $\widehat{\dF{\varphi}/\zF{\varphi}} \le   \Mn{d}{\aF{\varphi}}$.
 Therefore $\widehat{\dF{\varphi}/\zF{\varphi}}$ is a   quotient of the dual group $\widehat{\Mn{d}{\aF{\varphi}}} \cong \Mn{d}{\aFd{\varphi}}$. Let $ \Theta_\varphi$ denote
 %\footnote{This is an abuse of notation, for $ \Mnzt{d}{\aFf{\varphi}}$ is \emph{not} understood in the sense of traceless matrcies with entries in $\aFf{\varphi}$. Rather, it is defined to be the quotient of the dual group $ \Mn {d}{\aFf{\varphi}}$ by its diagonal subgroup.}
  the image of the subgroup $\Mn{d}{\aFf{\varphi}}$ with respect to this  quotient map.
As the group $\Mn{d}{\aFf{\varphi}}$ is $\EL{d}{\rF{\varphi}}$-invariant  its quotient $\Theta_\varphi$   is $\EL{d}{\rF{\varphi}}$-invariant as well.

 \begin{prop}
 \label{prop:a criterion to be in aFf}
 A character $\chi \in \widehat{\dF{\varphi}/\zF{\varphi}}$ belongs to the subgroup $ \Theta_\varphi$ if and only if $(g\chi)_{|\Off{d}{\aF{\varphi}}} \in \Off{d}{\aFf{\varphi}}$ for every element $g \in \EL{d}{\rF{\varphi}} $.
 \end{prop}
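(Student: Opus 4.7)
The plan is to prove both implications by working with lifts of $\chi$ through the surjective quotient map
$$ q \colon \Mn{d}{\aFd{\varphi}} \twoheadrightarrow \widehat{\dF{\varphi}/\zF{\varphi}} $$
dual to the inclusion $\iota \colon \dF{\varphi}/\zF{\varphi} \hookrightarrow \Mnzt{d}{\aF{\varphi}}$ established in Corollaries \ref{cor:induced exact sequence} and \ref{cor:iota is isomorphism on N/Z}. Two general observations will be used throughout. First, the restriction of $q(M)$ to the subgroup $\Off{d}{\aF{\varphi}}$ is simply the off-diagonal part of $M$. Second, every \emph{trace character} $\psi \cdot \Id_d$ with $\psi \in \aFd{\varphi}$ lies in $\ker q$, because the image of $\iota$ consists of zero-trace matrices.

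For the forward implication, given $\chi \in \Theta_\varphi$, I pick any lift $M \in \Mn{d}{\aFf{\varphi}}$. The subgroup $\aFf{\varphi}$ of $\aFd{\varphi}$ is preserved under multiplication by elements of $\rF{\varphi}$ (since $r \cdot (\J \cap \degI{\varphi}) \subset \J \cap \degI{\varphi}$ for any $r \in \rF{\varphi}$), so the induced conjugation action of $\EL{d}{\rF{\varphi}}$ keeps $g \cdot M$ inside $\Mn{d}{\aFf{\varphi}}$ for every $g$. Its off-diagonal part then belongs to $\Off{d}{\aFf{\varphi}}$ and represents $(g\chi)_{|\Off{d}{\aF{\varphi}}}$.

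The reverse implication is the main content. I choose any lift $M \in \Mn{d}{\aFd{\varphi}}$ of $\chi$; the case $g = e$ of the hypothesis already forces $M_{ij} \in \aFf{\varphi}$ for every off-diagonal pair $i \neq j$. To pull the diagonal entries into $\aFf{\varphi}$ modulo $\ker q$, I will probe with the elementary matrices $g = \elm{k}{l}(r)$ for distinct indices $k \neq l$ and arbitrary $r \in \rF{\varphi}$. A direct computation of the dual conjugation action shows that the $(l,k)$-entry of $g \cdot M$ equals
$$ M_{lk} + r(M_{ll} - M_{kk}) - r^2 M_{kl}, $$
while the other off-diagonal entries of $g \cdot M$ are $\aFf{\varphi}$-valued linear combinations of off-diagonal entries of $M$. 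The hypothesis forces this new $(l,k)$-entry to lie in $\aFf{\varphi}$; combined with $M_{lk}, M_{kl} \in \aFf{\varphi}$ this yields $r(M_{ll} - M_{kk}) \in \aFf{\varphi}$ for every $r \in \rF{\varphi}$, and specializing to $r = 1$ gives $M_{ll} - M_{kk} \in \aFf{\varphi}$ for every pair $k \neq l$. Consequently, $M' = M - M_{11} \cdot \Id_d$ has every entry in $\aFf{\varphi}$ and satisfies $q(M') = q(M) = \chi$, since the shift is a trace character and therefore lies in $\ker q$. This shows $\chi \in \Theta_\varphi$.

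The main technical obstacle is the careful bookkeeping of the dual conjugation formula and, in particular, isolating the $r(M_{ll} - M_{kk})$ contribution in the $(l,k)$-slot. The guiding principle is that the only freedom in the lift of $\chi$ is a single trace-character shift; normalising the first diagonal entry to zero by precisely such a shift exhibits a lift of $\chi$ inside $\Mn{d}{\aFf{\varphi}}$.
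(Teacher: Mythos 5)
Your proof is correct and follows essentially the same strategy as the paper's: the forward direction rests on the $\EL{d}{\rF{\varphi}}$-stability of $\Mn{d}{\aFf{\varphi}}$, and the reverse direction probes the lift with elementary matrices to show all diagonal differences land in $\aFf{\varphi}$, then normalizes by a trace character (the paper's $D = \ann{\Mnzt{d}{\aF{\varphi}}}$). Working with an explicit lift $M$ and the parameter $r$ rather than decomposing $\chi$ into diagonal and off-diagonal pieces is a cosmetic rearrangement; be aware only that the index placement $(l,k)$ versus $(k,l)$ in your commutator formula depends on the chosen dual-pairing convention.
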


\begin{proof}

Consider the two annihilator subgroups 
$$ D = \ann{\Mnzt{d}{\aF{\varphi}}} = \left\{ \psi \mathrm{Id}_d \: : \: \psi \in \aFd{\varphi} \right\}  $$
and
 $ C = \ann{(\dF{\varphi}/\zF{\varphi})}$ so that $D \le C  \le \Mn{d}{\aFd{\varphi}}$.
We have  by definition that
$$  \Theta_\varphi = \Mn{d}{\aFf{\varphi}} + C \le \Mn{d}{\aFd{\varphi}}/C.$$
 % Therefore $(gD)_{|\Off{d}{\aF{\varphi}}} = 1$ for all elements $g \in \EL{d}{\rF{\varphi}}$.

Let $\overline{\chi} = \chi + C \in \widehat{\dF{\varphi}/\zF{\varphi}}$ be given for some  character  $\chi \in  \Mn{d}{\aFd{\varphi}}$.
If $\overline{\chi} \in \Theta_\varphi $  then $g\overline{\chi} \in  \Theta_\varphi$ as well for all elements $g \in \EL{d}{\rF{\varphi}}$. The conclusion in the \enquote{only if} direction follows. 

Arguing in the \enquote{if}  direction, write $\chi = \chi_1 + \chi_2$ where $\chi_1 \in \Off{d}{\aFd{\varphi}}$ and $\chi_2 = \mathrm{diag}(\psi_1,\ldots,\psi_d) \in \Mn{d}{\aFd{\varphi}}$.  The   assumption with respect to the trivial  element $g= \mathrm{Id}_d \in \EL{d}{\rF{\varphi}}$ immediately  implies  that $\chi_1 \in \Off{d}{\aFf{\varphi}} \le \Theta_\varphi$. It remains to show that $\chi_2 + C \in \Theta_\varphi$ as well. To see this, note that the character $\elm{i}{j}(1) \chi_2$ is given by
$$ (\elm{i}{j}(1) \chi_2)_{k,l} = \begin{cases} \psi_i - \psi_j & \text{$i = k$ and $j = l$} \\ 0 & \text{otherwise} \end{cases} $$
for all pairs of distinct indices $i,j \in \{1,\ldots,d\}$. 
The assumption gives $\psi_i - \psi_j \in \aFf{\varphi}$ for all such pairs $i$ and $j$. In other words, all possible  differences of   entries in the diagonal element $\chi_2$   belong to $\aFf{\varphi}$. This implies that $\chi_2 \in \Mn{d}{\aFf{\varphi}} + D$. The desired conclusion follows.
\end{proof}

%\begin{remark}
%Proposition \ref{prop:a criterion to be in aFf} is a not a purely abstract fact of harmonic analysis but rather is a property of the particular group structure and $\EL{d}{\R}$-action.
%\end{remark}
 
%Let $\tF{\varphi}$ be the probability measure on $\dFd{\varphi}$ satisfying
%$$ \varphi_{|\dF{\varphi}} = \widehat{\tF{\varphi}}.$$
%The probability measure $\tF{\varphi}$  is ergodic for the action of $\gF{\varphi}$.
%\marginpar{need to justify this by quoting the result}
%
%For every probability measure $\theta$ consider the probability measure $\bar{\theta}$ given by
%$$ \mathrm{d} \, \bar{\theta}(g) = \mathrm{d}\,\theta(g^{-1}). $$

We know that the function $|\varphi|^2 : \EL{d}{\R} \to \mathbb{R}$ satisfies  $|\varphi|^2 \in \traces{\EL{d}{\R}}$ by Proposition \ref{prop:square power is a trace}.  Moreover $\zF{\varphi} \le \ker |\varphi|^2$ according to Corollary \ref{cor:center in kernel of square}.
Consider the restriction of the trace $|\varphi|^2$ to the subquotient group $\dF{\varphi}/\zF{\varphi}$ identified with a subgroup of the   matrix group $\Mnzt{d}{\aF{\varphi}}$ in the manner  of Corollary \ref{cor:iota is isomorphism on N/Z}.

%The map $\iota$ allows us to identify  We will be interested in the restriction of the trace $|\varphi|^2$ to the abelian subquotient .

% Denote
%$$ \dF{\varphi} = \SL{d}{\degI{\varphi}} /\ker \varphi \quad \text{and} \quad  \dFt{\varphi} = \SLtil{d}{\degI{\varphi}} / \ker \varphi. $$

%\begin{prop}
% Then  and 
%$ |\varphi|^2_{|\eF{\varphi}}  = \widehat{\nu}_\varphi$ for some %probability measure $\nu_\varphi$ supported on %$\Off{d}{\aFf{\varphi}}$.
%\end{prop}
%\begin{proof}
%
%\end{proof}

\begin{prop}
\label{prop:probability measure for phi squared}
 $ |\varphi|^2_{|(\dF{\varphi}/\zF{\varphi})}  = \tFd{\varphi}$  some $\EL{d}{\rF{\varphi}}$-invariant Borel probability measure $\tF{\varphi}$ supported on the subgroup $\Theta_\varphi \le \widehat{\dF{\varphi}/\zF{\varphi}}$.
\end{prop}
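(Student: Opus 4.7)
The plan is first to set up the existence and invariance of $\tF{\varphi}$, and then to tackle the support statement which is the main content. For the first part, I observe that $\dF{\varphi}/\zF{\varphi}$ is abelian via the embedding $\iota$ into $\Mnzt{d}{\rF{\varphi}}$ provided by Corollary \ref{cor:iota is isomorphism on N/Z}, and that $|\varphi|^2$ descends to this quotient since $\zF{\varphi}\le\ker|\varphi|^2$: the Borevich--Vavilov Theorem \ref{thm:Valivov} places $\zF{\varphi}/\EL{d}{\kerI{\varphi}}$ in the center of $\EL{d}{\R}/\EL{d}{\kerI{\varphi}}$, hence in the center of $\EL{d}{\R}/\ker\varphi$, which by Corollary \ref{cor:center in kernel of square} is annihilated by $|\varphi|^2$. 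Bochner's theorem then produces the unique Borel probability measure $\tF{\varphi}$ with Fourier transform equal to $|\varphi|^2|_{\dF{\varphi}/\zF{\varphi}}$. Its $\EL{d}{\rF{\varphi}}$-invariance follows from the conjugation invariance of $|\varphi|^2$ together with Lemma \ref{lem:exact sequence with i}, since the $\EL{d}{\R}$-action on $\dF{\varphi}/\zF{\varphi}$ factors through $\EL{d}{\rF{\varphi}}$ (the subgroup $\EL{d}{\kerI{\varphi}}$ acts trivially on matrices modulo $\kerI{\varphi}$).

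For the support statement, my plan is to invoke Proposition \ref{prop:a criterion to be in aFf} combined with the $\EL{d}{\rF{\varphi}}$-invariance of $\tF{\varphi}$ just established: it suffices to show that the pushforward of $\tF{\varphi}$ to $\widehat{\Off{d}{\aF{\varphi}}}$ is supported on $\Off{d}{\aFf{\varphi}}$, since then the condition $(g\chi)|_{\Off{d}{\aF{\varphi}}} \in \Off{d}{\aFf{\varphi}}$ for all $g \in \EL{d}{\rF{\varphi}}$ will hold $\tF{\varphi}$-almost surely. The inclusion $\Off{d}{\aF{\varphi}} \le \dF{\varphi}/\zF{\varphi}$ holds because $\Fn{d}{\degI{\varphi}} \le \EL{d}{\degI{\varphi}} \le \dF{\varphi}$ and $\Fn{d}{\kerI{\varphi}} \le \zF{\varphi}$. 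Since $\Off{d}{\aF{\varphi}} = \bigoplus_{i \ne j} \elm{i}{j}(\aF{\varphi})$, and the support of a measure on a finite direct product sits inside the product of its marginal supports, it is enough to show that for every pair of distinct indices $(i,j)$ the marginal $\nu_{ij}$ on $\widehat{\aF{\varphi}}$ is supported on $\aFf{\varphi}$. By naturality of Pontryagin duality, $\nu_{ij}$ is the pushforward under the restriction $\widehat{\R} \to \widehat{\aF{\varphi}}$ of the Bochner measure $\eta_{ij}$ on $\widehat{\R}$ whose Fourier transform is $|\varphi|^2|_{\elm{i}{j}(\R)}$.

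The key computation is locating the support of $\eta_{ij}$. I use that $|\varphi|^2 = \varphi\cdot\overline{\varphi}$ is a product of positive definite functions on $\hor{i}(\R)$, whose Bochner measure is therefore the convolution $\horp{i} \ast \widetilde{\horp{i}}$, where $\widetilde{\horp{i}}$ is the pushforward of $\horp{i}$ under character inversion. By Theorem \ref{thm:existence of a depth ideal} combined with Theorem \ref{thm:classification of invariant measures}, $\horp{i}$ is supported on $\mathcal{O}^{\hor{i}}_{\levI{\varphi}} \subset S \coloneqq \bigcup_{\mfi{\J}=\levI{\varphi}} \ann{\hor{i}(\J)}$. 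Pontryagin duality gives $\ann{\hor{i}(\J_1)} + \ann{\hor{i}(\J_2)} = \ann{\hor{i}(\J_1\cap\J_2)}$, and Proposition \ref{prop:properties of depth}(\ref{it:intersection depth}) ensures $\mfi{(\J_1\cap\J_2)} = \levI{\varphi}$ whenever both $\J_k$ have this depth. Hence $S$ is closed under addition (and negation), so $\horp{i} \ast \widetilde{\horp{i}}$ is again supported on $S$. Restricting to the $(i,j)$-coordinate then shows that $\eta_{ij}$ is supported on $\bigcup_{\mfi{\J}=\levI{\varphi}} \ann{\J}$ inside $\widehat{\R}$.

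Finally, since $|\varphi|^2 \equiv 1$ on $\elm{i}{j}(\kerI{\varphi})$, the measure $\eta_{ij}$ lies in $\ann{\kerI{\varphi}}$, and replacing $\J$ by $\J + \kerI{\varphi}$ (whose depth remains $\levI{\varphi}$ by Proposition \ref{prop:properties of depth}) allows us to restrict to $\J \supset \kerI{\varphi}$. Under the restriction $\ann{\kerI{\varphi}} \twoheadrightarrow \widehat{\aF{\varphi}}$, each subgroup $\ann{\J}$ is sent into $\ann{(\J \cap \degI{\varphi})/\kerI{\varphi}}$, and the union of these images over admissible $\J$ is, by definition, $\aFf{\varphi}$. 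This completes the support argument. The main obstacle is the closure of $S$ under addition: without Proposition \ref{prop:properties of depth}(\ref{it:intersection depth}) giving that depth-$\levI{\varphi}$ ideals are stable under intersection, the convolution could spread into strata of higher depth and the support statement would fail; it is precisely this stability that makes the whole argument go through.
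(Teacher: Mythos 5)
Your proof is correct and follows essentially the same route as the paper: both verify that the pushforward of $\tF{\varphi}$ to $\Off{d}{\aFd{\varphi}}$ concentrates on $\Off{d}{\aFf{\varphi}}$ and then invoke Proposition~\ref{prop:a criterion to be in aFf} together with the $\EL{d}{\rF{\varphi}}$-invariance, with the crucial algebraic input in both cases being that depth-$\levI{\varphi}$ ideals are closed under intersection (Proposition~\ref{prop:properties of depth}), which is exactly what makes $\aFf{\varphi}$ --- equivalently your set $S$ --- a subgroup. The only stylistic difference is where the convolution happens: you convolve $\horp{i}\ast\widetilde{\horp{i}}$ on $\widehat{\R}$ and then restrict through $\degI{\varphi}$ and quotient by $\kerI{\varphi}$, whereas the paper first restricts $\varphi$ to $\Fn{d}{\degI{\varphi}}$, asserts the concentration of $\lambda_\varphi$ on $\Off{d}{\aFf{\varphi}}$ as a ``reformulation'' of Theorem~\ref{thm:existence of a depth ideal}, and only then convolves with $\bar\lambda_\varphi$; your version usefully unpacks the marginal-and-restriction step the paper leaves implicit.
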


\begin{proof} 
%Recall that $\ker \iota = \zF{\varphi}$ and that $\eF{\varphi} \cap \zF{\varphi} = \{0\}$.   Proposition \ref{prop:iota is injective on eF} and its proof show that 
% $\iota(\eF{\varphi}) = \Off{d}{\aF{\varphi}}$. 
Throughout this proof we will be implicitly relying on the isomorphism   $\Fn{d}{\degI{\varphi}}/\Fn{d}{\kerI{\varphi}} \cong \Off{d}{\aF{\varphi}}$ established in Corollary \ref{cor:induced exact sequence}. In particular, there exists a Borel probability measure  $\lambda_\varphi$ on the dual group $\Off{d}{\aFd{\varphi}}$ satisfying $\lambda_\varphi \left( \Off{d}{\aFf{\varphi}} \right) = 1$ and   $ \varphi_{|\Fn{d}{\degI{\varphi}}} = \widehat{\lambda_\varphi} $.
%  There is a probability measure $\nu_\varphi$ supported on the dual countable subgroup
%$ \Off{d}{\aFf{\varphi}}$ such that $\varphi_{|\eF{\varphi}} = \widehat{\nu}_\varphi$. 
This fact  is merely a reformulation of  Theorem \ref{thm:existence of a depth ideal} in terms of  our standing notations. As  $ \Off{d}{\aFf{\varphi}}$ is a subgroup  the convolution $\lambda_\varphi * \bar{\lambda}_\varphi$ satisfies $(\lambda_\varphi * \bar{\lambda}_\varphi)\left( \Off{d}{\aFf{\varphi}}\right) = 1$ as well.

Consider the unique $\EL{d}{\rF{\varphi}}$-invariant Borel probability measure  $\tF{\varphi}$  on the dual   group $\widehat{\dF{\varphi}/\zF{\varphi}}$ satisfying $ |\varphi|^2_{|(\dF{\varphi}/\zF{\varphi})}  = \tFd{\varphi}$. 
As $|\varphi|^2_{|\Fn{d}{\degI{\varphi}}} =   \widehat{\lambda_\varphi * \bar{\lambda}_\varphi}$ it follows from the previous paragraph that $g_*\tF{\varphi}$-almost every\footnote{Of course $g_* \theta_\varphi = \theta_\varphi$ for all elements $g \in \EL{d}{\rF{\varphi}}$. We are using the expression  $g_* \theta_\varphi$ explicitly to make it clear that the assumptions of Proposition  \ref{prop:a criterion to be in aFf} are satisfied.} character $\chi \in   \widehat{\dF{\varphi}/\zF{\varphi}}$ satisfies $\chi_{|\Off{d}{\aF{\varphi}}} \in \Theta_\varphi$ for all elements $g \in \EL{d}{\rF{\varphi}}$. The desired conclusion follows from Proposition  \ref{prop:a criterion to be in aFf}.
\end{proof}

\begin{prop}
\label{prop:finite orbit on D}
$\varphi_{|\dF{\varphi}} = \widehat{\eta}_\varphi$ for some atomic probability measure $\eta_\varphi$ supported on a finite $\EL{d}{\R}$-orbit in $\chars{\dF{\varphi}}$.
\end{prop}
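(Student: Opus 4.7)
The plan is to combine Bochner's theorem applied to $\varphi_{|\dF{\varphi}}$ with the information already established in Proposition \ref{prop:probability measure for phi squared} on $|\varphi|^2$. First I will use that $\dF{\varphi}/\ker\varphi = Z(\nF{\varphi}/\ker\varphi)$ is abelian, and that $\varphi_{|\dF{\varphi}}$ descends to a positive definite function on this quotient by Proposition \ref{prop:trace factors through the kernel}. Bochner's theorem then produces a unique Borel probability measure $\eta_\varphi$ on the Pontryagin dual of $\dF{\varphi}/\ker\varphi$ with $\varphi_{|\dF{\varphi}} = \widehat{\eta}_\varphi$. Via the obvious identification of this dual with a subset of $\chars{\dF{\varphi}}$ I will regard $\eta_\varphi$ as a probability measure on $\chars{\dF{\varphi}}$. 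The $\EL{d}{\R}$-conjugation invariance of $\varphi$ together with the normality of $\dF{\varphi}$ force $\eta_\varphi$ to be $\EL{d}{\R}$-invariant, and the extremality of the character $\varphi$ upgrades this to ergodicity via the correspondence between relative characters and ergodic $\EL{d}{\R}$-invariant probability measures on $\chars{\dF{\varphi}}$ recalled in \S\ref{sec: character theory}.

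The main obstacle is then to establish that $\eta_\varphi$ is atomic. Once atomicity is known, the desired finite-orbit support follows by a standard argument: any atom of mass $c > 0$ must have the same mass as every other element of its $\EL{d}{\R}$-orbit by invariance, so the orbit has cardinality at most $1/c$, and ergodicity then forces $\eta_\varphi$ to be the uniform probability measure on this finite orbit. To establish atomicity, I plan to compare with $\tF{\varphi}$ from Proposition \ref{prop:probability measure for phi squared}. Set $A = \dF{\varphi}/\ker\varphi$, $B = \dF{\varphi}/\zF{\varphi}$, and let $C = \dF{\varphi}/(\ker\varphi \cdot \zF{\varphi})$ be the common abelian quotient, so that $\widehat{C}$ canonically embeds into both $\widehat{A}$ and $\widehat{B}$. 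Since $\ker\varphi \le \ker|\varphi|^2$ trivially and $\zF{\varphi} \le \ker|\varphi|^2$ by Corollary \ref{cor:center in kernel of square}, the function $|\varphi|^2_{|\dF{\varphi}}$ descends to $C$.

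Two descriptions of the corresponding Bochner measure on $\widehat{C}$ are now available. On the $A$-side, since $|\varphi|^2 = \varphi \cdot \overline{\varphi}$, the Fourier transform is the convolution $\eta_\varphi * i_* \eta_\varphi$, where $i$ denotes inversion on $\widehat{A}$. On the $B$-side, by Proposition \ref{prop:probability measure for phi squared} it equals $\tF{\varphi}$, a measure supported on the countable subgroup $\Theta_\varphi \subset \widehat{B}$. Bochner uniqueness forces these two measures to agree on $\widehat{C}$, so $\eta_\varphi * i_* \eta_\varphi$ is supported on the countable set $\Theta_\varphi \cap \widehat{C}$, and is in particular atomic. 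Proposition \ref{prop:convolution has atoms if and only if both have atoms} then implies that $\eta_\varphi$ itself has atoms, completing the argument.
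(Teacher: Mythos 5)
Your argument is correct and takes essentially the same route as the paper: both establish atomicity by writing the Bochner measure of $|\varphi|^2_{|\dF{\varphi}}$ as the convolution $\eta_\varphi * \bar{\eta}_\varphi$, identify it with the atomic measure $\tF{\varphi}$ supported on $\Theta_\varphi$ from Proposition \ref{prop:probability measure for phi squared}, and then invoke Proposition \ref{prop:convolution has atoms if and only if both have atoms} followed by ergodicity. Your explicit introduction of the common quotient $C = \dF{\varphi}/(\ker\varphi \cdot \zF{\varphi})$ and its dual $\widehat{C}$ merely makes precise an identification the paper leaves implicit when it "regards $\tF{\varphi}$ as a Borel probability measure on $\widehat{\dF{\varphi}/\ker\varphi}$".
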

\begin{proof}
It is a consequence of   Proposition \ref{prop:probability measure for phi squared} that 
$ |\varphi|^2_{|(\dF{\varphi}/\zF{\varphi})}  = \tFd{\varphi}$ for some \emph{atomic} probability measure $\tF{\varphi}$ on the dual   group $\widehat{\dF{\varphi}/\zF{\varphi}}$. 
In fact we may regard $\tF{\varphi}$  as a Borel probability measure on the larger dual abelian group $\widehat{\dF{\varphi}/\ker\varphi}$.
 As such it is possible to  write $\theta_\varphi = \eta_\varphi * \bar{\eta}_\varphi$ where $\eta_\varphi$  is  the unique probability measure on the dual group $\widehat{\dF{\varphi}/\ker\varphi}$   satisfying $\varphi_{|\dF{\varphi}} = \widehat{\eta}_\varphi$. The probability measure $\eta_\varphi$ must be \emph{atomic} as well by   Proposition \ref{prop:convolution has atoms if and only if both have atoms}.  Finally, since the 
 probability measure $\eta_\varphi$ is ergodic and $\EL{d}{\rF{\varphi}}$-invariant it must be  supported on a single finite $\EL{d}{\rF{\varphi}}$-orbit. This orbit   can be identified with a subset of  $\chars{\dF{\varphi}}$.
\end{proof}

We are ready to complete our first step towards the proof of Theorem \ref{thm:ker has finite index in lev}, namely proving that the kernel ideal is commensurable to the degeneracy ideal.

\begin{proof}[Proof of Theorem \ref{thm:the depth of K is I}]
Let $\varphi \in \chars{\EL{d}{\R}}$ be a character with kernel ideal $\kerI{\varphi}$, degeneracy ideal $\degI{\varphi}$ and  level ideal $\levI{\varphi}$.

 The relative character $\varphi_{|\dF{\varphi}} \in \charrel{\EL{d}{\R}}{\dF{\varphi}}$ is a convex combination over a finite  $\EL{d}{\R}$-orbit in the space $\chars{\dF{\varphi}}$ by Proposition \ref{prop:finite orbit on D}.
Write $$ \varphi_{|\dF{\varphi}} = \frac{1}{N} \sum_{i=1}^N \chi_i$$ for some  integer $N \in \mathbb{N}$ and   characters $\chi_1,\ldots,\chi_N \in \chars{\dF{\varphi}}$. 
The transitivity of the $\EL{d}{\R}$-action on the characters $\chi_i$ allows   to find  an  ideal $\J \nrm \R$ satisfying $\kerI{\varphi} \subset \J, \mfi{\J} = \levI{\varphi}$ and
$$ \chi_{i|\Fn{d}{\degI{\varphi}}} \in \Off{d}{\ann{(\J \cap \degI{\varphi})}} $$
 for all indices $i \in \{1,\ldots,N\}$. The above statement makes an implicit use of the   isomorphism   $\Fn{d}{\degI{\varphi}}/\Fn{d}{\kerI{\varphi}} \cong \Off{d}{\aF{\varphi}}$, see    Corollary \ref{cor:induced exact sequence}.
 
  Consider the intersection     ideal
$ \LL =  \J  \cap \degI{\varphi} \nrm \R$. Note that $\mfi{\LL} = \mfi{\degI{\varphi}}$ from Proposition \ref{prop:properties of depth}.
The ideal $\LL$ satisfies
$$\Fn{d}{\LL}   \le \Fn{d}{\degI{\varphi}} \cap \bigcap_{i=1}^N \ker \chi_{i} \le \ker\varphi.$$
Recall that the subgroup  $\EL{d}{\LL}$ was defined as the normal closure of  $\Fn{d}{\LL}$. Therefore  $\EL{d}{\LL} \le \ker \varphi$ and  $\LL \subset \kerI{\varphi}$.  We may conclude that $\mfi{\kerI{\varphi}} = \mfi{\degI{\varphi}}$  relying once more on Proposition \ref{prop:properties of depth}.
\end{proof}

\subsection*{The degeneracy and the supporting ideals are commensurable}
Let $\varphi$ be a character of the group $ \EL{d}{\R}$   with kernel ideal $\kerI{\varphi}$, degeneracy ideal $\degI{\varphi}$, supporting ideal $\finI{\varphi}$  and  level ideal $\levI{\varphi}$.

\begin{prop}
\label{prop:Howe revisited}
$\varphi \in \Ind{\EL{d}{\R}}{\nF{\varphi}}{\dF{\varphi}} $.
\end{prop}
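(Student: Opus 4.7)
The plan is to deduce this proposition from the relative Howe lemma (Lemma \ref{lem:relative Howe}) applied inside the faithful quotient. First I would pass to the quotient $\bar{G} = \EL{d}{\R}/\ker\varphi$ and the induced faithful character $\bar{\varphi}$ (using Proposition \ref{prop:trace factors through the kernel}). Set $\bar{N} = \nF{\varphi}/\ker\varphi$. Since $\nF{\varphi} = \SL{d}{\finI{\varphi}} \cap \EL{d}{\R}$ is normal in $\EL{d}{\R}$, the image $\bar{N}$ is a normal subgroup of $\bar{G}$. The restriction $\bar{\varphi}_{|\bar{N}}$ is a faithful relative character in $\charrel{\bar{G}}{\bar{N}}$, since restrictions of characters to normal subgroups remain irreducible relative characters (as noted in \S\ref{sec: character theory}) and $\ker\bar{\varphi} = \{e\}$.

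Next I would verify the hypothesis $[\bar{N}, \bar{N}] \le Z(\bar{G})$. From the homomorphism $\iota$ of Lemma \ref{lem:exact sequence with i}, whose target is abelian, we get $[\nF{\varphi}, \nF{\varphi}] \le \zF{\varphi}$. Applying the Borevich--Vavilov theorem (Theorem \ref{thm:Valivov}) to the kernel ideal $\kerI{\varphi}$ gives
$$\SLtil{d}{\kerI{\varphi}}/\EL{d}{\kerI{\varphi}} = Z(\EL{d}{\R}/\EL{d}{\kerI{\varphi}}),$$
and since $\EL{d}{\kerI{\varphi}} \le \ker\varphi \le \SLtil{d}{\kerI{\varphi}}$ together with $\zF{\varphi} \le \SLtil{d}{\kerI{\varphi}}$, the image of $\zF{\varphi}$ in $\bar{G}$ lies in $Z(\bar{G})$. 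Consequently $[\bar{N}, \bar{N}] \le \zF{\varphi}/\ker\varphi \le Z(\bar{G})$ as required.

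Now the relative Howe lemma applies: $\bar{\varphi}_{|\bar{N}}$ is induced from $Z(\bar{N})$, i.e.\ $\bar{\varphi}(\bar{g}) = 0$ for every $\bar{g} \in \bar{N} \setminus Z(\bar{N})$. By the very definition of $\dF{\varphi}$ as the preimage of $Z(\nF{\varphi}/\ker\varphi)$ we have $Z(\bar{N}) = \dF{\varphi}/\ker\varphi$. Lifting back to $\EL{d}{\R}$, this says $\varphi(g) = 0$ for every $g \in \nF{\varphi} \setminus \dF{\varphi}$, which is exactly the assertion $\varphi \in \Ind{\EL{d}{\R}}{\nF{\varphi}}{\dF{\varphi}}$. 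The only non-routine step is the verification that $[\bar{N},\bar{N}]$ lands in the center of the ambient quotient $\bar{G}$ and not merely in $Z(\bar{N})$, and that is precisely what the combination of Lemma \ref{lem:exact sequence with i} with Borevich--Vavilov supplies.
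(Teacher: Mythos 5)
Your proof is correct and follows essentially the same route as the paper: pass to the faithful quotient, verify $[\nF{\varphi},\nF{\varphi}]\le\zF{\varphi}\le Z(\EL{d}{\R}/\ker\varphi)$ (which is exactly Corollary~\ref{cor:nF is nilpotent} together with the remark that $\zF{\varphi}$ is central via Borevich--Vavilov), and invoke the relative Howe lemma. You have simply unpacked the two citations the paper makes, so the argument matches step for step.
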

\begin{proof}
The restriction $\varphi_{|\nF{\varphi}}$ is a faithful relative character of the   subquotient group $\nF{\varphi} / \ker\varphi$. Recall that $\left[\nF{\varphi},\nF{\varphi}\right] \le \zF{\varphi} \le Z(G)$ as in Corollary \ref{cor:nF is nilpotent}. Moreover $\dF{\varphi}/\ker\varphi = Z(\nF{\varphi}/\ker \varphi)$ by definition. The   conclusion follows from Lemma \ref{lem:relative Howe}.
\end{proof}

We   proceed with Theorem \ref{thm:deg has finite index}, which is the second  and final step towards proving Theorem \ref{thm:ker has finite index in lev}.

\begin{theorem}
\label{thm:deg has finite index}
$|\sfrac{\finI{\varphi}}{\degI{\varphi}}| <  \infty$.
 \end{theorem}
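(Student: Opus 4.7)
The plan is to derive a contradiction from the assumption $|\sfrac{\finI{\varphi}}{\degI{\varphi}}| = \infty$ by computing the Fourier transform of $\varphi$ restricted to a single elementary subgroup $\elm{i}{j}(\finI{\varphi})$ in two mutually incompatible ways. First I would note the identity $\elm{i}{j}(\finI{\varphi}) \cap \dF{\varphi} = \elm{i}{j}(\degI{\varphi})$: the inclusion $\supset$ follows from $\EL{d}{\degI{\varphi}} \le \dF{\varphi}$, and the reverse from $\dF{\varphi} \le \SLtil{d}{\degI{\varphi}}$ together with the observation that a non-trivial elementary matrix cannot be central in $\SL{d}$ of any quotient ring. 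Proposition \ref{prop:Howe revisited} then forces $\varphi(\elm{i}{j}(s)) = 0$ for every $s \in \finI{\varphi}\setminus\degI{\varphi}$, while Proposition \ref{prop:finite orbit on D} expresses $\varphi_{|\elm{i}{j}(\degI{\varphi})}$ as a finite sum $\tfrac{1}{N}\sum_{k=1}^{N}\chi_k$ of characters of the abelian group $\degI{\varphi}$.

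These two facts together pin down the Fourier transform of the positive definite function $f = \varphi_{|\elm{i}{j}(\finI{\varphi})}$ on the discrete group $\finI{\varphi}$. By Proposition \ref{prop:integral of Haar is delta} and linearity, the extension by zero of a finite sum of characters from a subgroup transports under Fourier duality to a convex combination of normalized Haar measures on cosets of the compact subgroup $\widehat{\finI{\varphi}/\degI{\varphi}} \le \widehat{\finI{\varphi}}$. Thus $\widehat{f}$ is the average of $N$ such Haar measures. On the other hand, $\widehat{f}$ also equals the push-forward $q_*\elmp{i}{j}$ under the restriction map $q : \widehat{\R} \twoheadrightarrow \widehat{\finI{\varphi}}$. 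By Theorem \ref{thm:existence of a depth ideal} the measure $\elmp{i}{j}$ is supported on $\bigcup_{\J\nrm\R,\,\mfi{\J}=\levI{\varphi}} \ann{\J}$, and each such ideal $\J$ is commensurable with $\finI{\varphi}$ by Proposition \ref{prop:properties of depth}, so that each image $q(\ann{\J}) = \ann{\J \cap \finI{\varphi}}$ is a \emph{finite} subgroup of $\widehat{\finI{\varphi}}$. Combined with the countability of $\R$ as a finitely generated $\ZZ$-algebra, this forces $\widehat{f}$ to be concentrated on a countable subset of $\widehat{\finI{\varphi}}$.

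The crux is that the two descriptions of $\widehat{f}$ are mutually incompatible under the standing assumption. If $|\sfrac{\finI{\varphi}}{\degI{\varphi}}| = \infty$ then the compact Hausdorff group $\widehat{\finI{\varphi}/\degI{\varphi}}$ is infinite, hence uncountable by a Baire category argument, and its normalized Haar measure is non-atomic. Since a non-atomic probability measure assigns measure zero to every countable set, the convex combination of Haar measures from the first description cannot be concentrated on any countable subset of $\widehat{\finI{\varphi}}$, contradicting the second description. This contradiction forces $|\sfrac{\finI{\varphi}}{\degI{\varphi}}| < \infty$.
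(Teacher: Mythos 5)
Your proof is correct, and it takes a genuinely different route from the paper's. The paper's argument for Theorem~\ref{thm:deg has finite index} is one more vanishing-via-commutator-sequences step: assuming $|\finI{\varphi}/\degI{\varphi}|=\infty$, it uses Proposition~\ref{prop:not in finite index implies infinite many infinite index} together with Lemma~\ref{lem:application of lemma on sequences of elements for vanishing} and Proposition~\ref{prop:Howe revisited} to push the vanishing of $\varphi$ on elementary matrices all the way out to $\varphi(\elm{1}{2}(r))=0$ for every $r\in\R\setminus\degI{\varphi}$, and then closes by appealing to Proposition~\ref{prop:level is uniquely determined}, since this would force $\mfi{\degI{\varphi}}=\levI{\varphi}$ and hence $\degI{\varphi}$ commensurable with $\finI{\varphi}$. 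Your argument stays entirely on the abelian group $\elm{i}{j}(\finI{\varphi})\cong\finI{\varphi}$ and compares two descriptions of its Bochner measure: Propositions~\ref{prop:finite orbit on D} and~\ref{prop:Howe revisited} pin it down as a finite convex combination of translated Haar measures on $\ann{\degI{\varphi}}\le\widehat{\finI{\varphi}}$ (non-atomic once $|\finI{\varphi}/\degI{\varphi}|=\infty$), while the support characterization underlying Theorem~\ref{thm:existence of a depth ideal}, combined with Proposition~\ref{prop:properties of depth} and the countability of the finitely generated ring $\R$, forces concentration on a countable set. This buys a more conceptual, measure-theoretic contradiction that sidesteps the sequence construction of Proposition~\ref{prop:not in finite index implies infinite many infinite index} entirely; the price is an explicit reliance on $\R$ having only countably many ideals, which holds here but is an extra hypothesis worth flagging. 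One small point you leave tacit: the atoms produced by Proposition~\ref{prop:finite orbit on D} live in the abelian dual $\widehat{\dF{\varphi}/\ker\varphi}$ (this is visible in its proof, where $\eta_\varphi$ is defined there), so their restrictions to $\elm{i}{j}(\degI{\varphi})$ really are multiplicative characters of $\degI{\varphi}$, as your first description needs.
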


%DEFINE
%$$ \quad \sF{\varphi} = \SL{d}{\levI{\varphi}} / \ker \varphi. $$
%AND THE TILDE VERSION
%
%$$\gF{\varphi} = \EL{d}{\R}   / \ker \varphi. $$ 
%The character  $\varphi$ descends to a \emph{faithful} character of the group $\gF{\varphi}$ according to Proposition \ref{prop:trace factors through the kernel}. 
%Consider  the  two  subgroups of the group $\gF{\varphi}$ given by

\begin{proof} 
Recall that $\varphi \in \chars{\EL{d}{\R}}$ is a character with kernel ideal $\kerI{\varphi}$, degeneracy ideal $\degI{\varphi}$, supporting ideal $\finI{\varphi}$ and  level ideal $\levI{\varphi}$.
%The statement of the theorem is equivalent to $\left[\nF{\varphi}:\dF{\varphi}\right] < \infty$.
Assume towards contradiction that  $|\sfrac{\finI{\varphi}}{\degI{\varphi}}| =  \infty$. 
We claim that 
$$\varphi \in \Ind{\EL{d}{\R}}{\elm{1}{2}(\levI{\varphi})}{\elm{1}{2}(\finI{\varphi})}.$$ 
Indeed, consider any element $g \in \elm{1}{2}(\levI{\varphi})  \setminus  \elm{1}{2} (\finI{\varphi})$.   The assumption     $|\sfrac{\finI{\varphi}}{\degI{\varphi}}| =  \infty$ allows us to apply    Proposition \ref{prop:not in finite index implies infinite many infinite index}   with respect to the  ideals
$$ \LL = \degI{\varphi}, \quad \J = \finI{\varphi} \quad \text{and}\quad \I = \levI{\varphi} $$
and obtain   a sequence elements $x_n \in \EL{d}{\R}$ such that the commutators $[g,x_n] $   belong to $ \nF{\varphi} = \SL{d}{\finI{\varphi}} \cap \EL{d}{\R}$ and are   pairwise distinct modulo the normal subgroup $\dF{\varphi}$   for all $n \in \NN$. Recall that $\varphi \in \Ind{\EL{d}{\R}}{\nF{\varphi}}{\dF{\varphi}}$ as was established in Proposition  \ref{prop:Howe revisited}. We   conclude that  $\varphi(g) = 0$ relying on Lemma 	\ref{lem:application of lemma on sequences of elements for vanishing} applied with respect to the subgroups
$ N = \dF{\varphi} $ and $  H = \nF{\varphi}$.
The claim follows.

The assumption towards contradiction leads to the conclusion     $\varphi(\elm{1}{2}(r)) = 0 $ for all ring elements $r \in \R \setminus \degI{\varphi}$. Indeed:
\begin{itemize}
\item if  $r \in \R \setminus \levI{\varphi}$ then $ \varphi(\elm{1}{2}(r)) = 0 $ from the properties of the level ideal $\levI{\varphi}$ (see Proposition \ref{prop:vanishing on elementary matrices outside level}), 
\item if $r \in \levI{\varphi} \setminus \finI{\varphi}$ then  $ \varphi(\elm{1}{2}(r)) = 0 $ from the claim made in the previous paragraph, and
\item if $r \in \finI{\varphi} \setminus \degI{\varphi}$ then $ \varphi(\elm{1}{2}(r)) = 0 $ from Proposition  \ref{prop:Howe revisited}.
\end{itemize}
As $|\sfrac{\finI{\varphi}}{\degI{\varphi}}| =  \infty$ we arrive at a  contradiction to yet another property   of the level ideal $\levI{\varphi}$, namely    Proposition \ref{prop:level is uniquely determined}.  
\end{proof}

Since  $|\sfrac{\levI{\varphi}}{\finI{\varphi}}| < \infty$ essentially by definition (see Lemma  \ref{lem:bekkas lemma on finite index ideal}),
the two Theorems \ref{thm:the depth of K is I} and
\ref{thm:deg has finite index} put together imply that $|\sfrac{\levI{\varphi}}{\kerI{\varphi}}| <  \infty$. In other words $\mfi{\kerI{\varphi}} = \levI{\varphi}$. The proof of 
Theorem \ref{thm:ker has finite index in lev} is now complete.

\section{The character $\varphi$ is induced from $\SLtil{d}{\levI{\varphi}} \cap \EL{d}{\R}$}
\label{sec:induced from level}

Let $\R$ be a Noetherian ring and $d \ge 3$ be a fixed integer. 
Let $\varphi \in \chars{\EL{d}{\R}}$ be a character  with level ideal $\levI{\varphi}$ and kernel ideal $\kerI{\varphi}$.   These two ideals satisfy $\mfi{\kerI{\varphi}} = \levI{\varphi}$ by Theorem 
\ref{thm:ker has finite index in lev}. 
% We   assume that $\levI{\varphi}$ is a proper ideal for otherwise the discussion below is trivial.
The main goal of   \S\ref{sec:induced from level} is the following.

\begin{thm}
\label{thm:vanishing outside big S_varphi using normal form decomposition}
If $d > \sr{\R}$ then the character $\varphi$ is induced from the normal subgroup $\SLtil{d}{\levI{\varphi}} \cap \EL{d}{\R}$.
\end{thm}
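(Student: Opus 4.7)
The plan is to combine the conjugation normal form (Proposition~\ref{prop:normal form for conjugates}), the fact that $\varphi$ restricted to each horizontal or vertical abelian subgroup is already induced from its intersection with $\hor{i}(\levI{\varphi})$ or $\ver{i}(\levI{\varphi})$ (Proposition~\ref{prop:vanishing on elementary matrices outside level}), and the vanishing criterion of Lemma~\ref{lem:vanishing with two subgroups}. The final ingredient is Theorem~\ref{thm:ker has finite index in lev}, which lets us move between the ideals $\kerI{\varphi}$ and $\levI{\varphi}$ freely, since $\left[\levI{\varphi}:\kerI{\varphi}\right]<\infty$.

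First I would use conjugation invariance: it suffices to show $\varphi(g')=0$ for every element $g'$ in normal form $g' = hvh'v'n$ with $h,h' \in \hor{1}(\R)$, $v,v' \in \ver{1}(\R)$, $n\in\ngp{1}(\R)$ and $h'=\elm{1}{2}(-1)$, provided $g' \notin \SLtil{d}{\levI{\varphi}}$. Non-containment in $\SLtil{d}{\levI{\varphi}}$ means that the reduction $\bar{g'} \in \SL{d}{\R/\levI{\varphi}}$ is non-central, so there exists a pair of distinct indices $i,j$ such that $\bar{g'}$ does not commute with $\elm{i}{j}(1)$ in the quotient ring.

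The next step is to produce the sequence required by Lemma~\ref{lem:vanishing with two subgroups}. Because $\levI{\varphi}$ is a depth ideal, the zero ideal of $\R/\levI{\varphi}$ is a depth ideal, so Proposition~\ref{prop:not commuting implies infinitely many pairwise disjoint, in N} applied to $\bar{g'}$ in $\SL{d}{\R/\levI{\varphi}}$ yields a sequence $x_n \in \elm{i}{j}(\R)$ with the commutators $[g',x_n]$ pairwise distinct modulo $\SLtil{d}{\levI{\varphi}}$, and lying inside $\hor{i}(\R)$ or $\ver{j}(\R)$ when $g'$ normalizes one of them (Part~(\ref{part 2}) of that proposition). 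Since $[\SLtil{d}{\levI{\varphi}}:\SLtil{d}{\kerI{\varphi}}]<\infty$ by Theorem~\ref{thm:ker has finite index in lev}, and since the map from the orbit of the commutators to the finite quotient has finite fibres, we may (using Proposition~\ref{prop:not in finite index implies infinite many infinite index} to upgrade where needed) pass to a subsequence so that the commutators $[g',x_n]$ remain pairwise distinct modulo $\kerI{\varphi}$, and hence modulo $\ker\varphi$.

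To conclude I apply Lemma~\ref{lem:vanishing with two subgroups} with $H$ equal to the horizontal or vertical subgroup that contains the commutators, $N$ equal to its intersection with the level-ideal subgroup, and $K,L$ read off from the normal form $g' = hvh'v'n$ grouped appropriately around $H$. The fixed factor $h'=\elm{1}{2}(-1)$ is what allows the indices $(i,j)$ from the previous step to be aligned with the horizontal or vertical direction, so that $x_n$ normalizes $H$ and the commutator conditions $[K,x_n],[L,x_n]\subset H$ hold. The main obstacle will be the case analysis needed to cover every way in which the normal-form element can fail to lie in $\SLtil{d}{\levI{\varphi}}$---which factor $h,v,h',v',n$ carries the non-centrality---and to verify in each case that the required subgroup decomposition $g'\in KHL$ and commutator containments can be arranged. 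This case analysis is precisely where the stable-range assumption $d>\sr{\R}$ is used, both via the availability of the normal form itself and to guarantee enough free indices to place $i,j$ inside the horizontal or vertical direction while leaving room to normalize the opposite one.
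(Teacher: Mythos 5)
Your high-level ingredients are the right ones (conjugation invariance, the normal form of Proposition~\ref{prop:normal form for conjugates}, Proposition~\ref{prop:vanishing on elementary matrices outside level}, Lemma~\ref{lem:vanishing with two subgroups}), but the plan has a genuine structural gap: you envisage a \emph{single} application of Lemma~\ref{lem:vanishing with two subgroups} with $H$ equal to a horizontal or vertical subgroup and $N$ its level-ideal intersection. That is too restrictive. For a general normal-form element $g' = hvh'v'n$ the containment $g' \in KHL$ with $H$ a single abelian subgroup, together with the commutator conditions $[K,x_n]\subset H$ and $[L,x_n]\subset H$ for $x_n\in\elm{i}{j}(\R)$, simply cannot be arranged simultaneously. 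The paper's argument is a \emph{bootstrap}: it first establishes $\varphi\in\Ind{\EL{d}{\R}}{H'}{\SLtil{d}{\levI{\varphi}}}$ for a chain of progressively larger subgroups $H'$ (Propositions~\ref{prop:first vanishing}, \ref{prop:vanishing on a semidirect product goes up a finite index on elementary}, \ref{prop:vanishing on outside centralizer part 2}, Corollary~\ref{cor:vanishing if  centralizes an elementary group}), and the final proof invokes Lemma~\ref{lem:vanishing with two subgroups} with $N=\SLtil{d}{\levI{\varphi}}$ and $H$ as large as $\ngp{2}(\R)\hor{2}(\R)\ngp{1}(\R)\ver{1}(\R)$ --- not a vertical or horizontal subgroup. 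Each intermediate step is needed to feed into the next one, and this scaffolding is absent from your sketch (your closing paragraph calls it a ``case analysis'' but it is really a sequence of strengthening vanishing statements).

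A second concrete problem is the middle paragraph about passing to a subsequence to obtain commutators distinct modulo $\kerI{\varphi}$. Distinctness modulo a larger normal subgroup automatically implies distinctness modulo a smaller one, so no passage to a subsequence is needed for that, and the citation of Proposition~\ref{prop:not in finite index implies infinite many infinite index} is out of place (that proposition belongs to the proof of Theorem~\ref{thm:deg has finite index} in \S\ref{sec:kernel ideal}). What the finite index $\left[\SLtil{d}{\levI{\varphi}}:\SLtil{d}{\kerI{\varphi}}\right]<\infty$ is actually used for in the paper is different and essential: in Proposition~\ref{prop:vanishing on a semidirect product goes up a finite index on elementary} one writes $[x_m^{-1}x_n, hv] = [x_m^{-1}x_n,v][x_m^{-1}x_n,h]^v$, and the pigeonhole principle is used to make the nuisance factor $[x_m^{-1}x_n,h]^v$ constant modulo $\SLtil{d}{\kerI{\varphi}}$, after which one passes to the auxiliary trace $|\varphi|^2$ (which kills $\SLtil{d}{\kerI{\varphi}}$ by Corollary~\ref{cor:center in kernel of square}). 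Your sketch does not engage with $|\varphi|^2$ at all. Without the bootstrap and without this mechanism for controlling the commutator contribution from the $\SLtil{d}{\levI{\varphi}}$ factor, the argument does not go through.
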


 Recall that a character of a group $G$ is said to be induced from the subgroup $H$ if it vanishes on the complement $G \setminus H$.
 
The statement of   Theorem \ref{thm:vanishing outside big S_varphi using normal form decomposition} is vacuous if $\levI{\varphi} = \R$. For this reason we will assume for the remainder of \S\ref{sec:induced from level} that the level ideal $\levI{\varphi}$ is a proper ideal.

The proof of Theorem \ref{thm:vanishing outside big S_varphi using normal form decomposition} is built out of a sequence  of several Propositions gradually enlarging the subset of the group $\EL{d}{\R}$ where the character $\varphi$ is known to vanish. We use the notation $\mathrm{Ind}$ introduced on \S\ref{sec: character theory}, page \pageref{def Ind}.

\begin{remark}
Strictly speaking, we are abusing the notation $\mathrm{Ind}$ in making the following statements, for the groups $\SLtil{d}{\levI{\varphi}}$ etc. are not necessarily subgroups of $\EL{d}{\R}$. In that case our notation is to be understood in the sense of taking the respective intersections with the group $\EL{d}{\R}$.
\end{remark}

\begin{prop}
\label{prop:first vanishing}
$\varphi \in  \Ind{\EL{d}{\R}}{\SLtil{d}{\kerI{\varphi}} \ver{1}(\R)}{\SLtil{d}{\kerI{\varphi}}  \ver{1}(\levI{\varphi})}$.
\end{prop}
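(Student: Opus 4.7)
The plan is to reduce the proposition directly to two prior inputs: the fact that $\varphi$ already vanishes on $\ver{1}(\R) \setminus \ver{1}(\levI{\varphi})$ (Proposition \ref{prop:vanishing on elementary matrices outside level}), and Schur's lemma for characters (Lemma \ref{lem:Schur's lemma for characters}) applied not to the group $\EL{d}{\R}$ itself but to its quotient by $\ker \varphi$.

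First, I would take an arbitrary element $g$ of $(\SLtil{d}{\kerI{\varphi}} \cap \EL{d}{\R})\,\ver{1}(\R) \setminus (\SLtil{d}{\kerI{\varphi}} \cap \EL{d}{\R})\,\ver{1}(\levI{\varphi})$ and decompose it as $g = sv$ with $s \in \SLtil{d}{\kerI{\varphi}} \cap \EL{d}{\R}$ and $v \in \ver{1}(\R)$. Note that the decomposition is valid inside $\EL{d}{\R}$ because $\ver{1}(\R) \le \EL{d}{\R}$, so $s = gv^{-1}$ automatically lies in $\EL{d}{\R}$; moreover the hypothesis on $g$ forces $v \notin \ver{1}(\levI{\varphi})$, for otherwise $g$ would lie in the smaller set.

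Next, I would invoke the Borevich--Vavilov theorem (Theorem \ref{thm:Valivov}) with the ideal $\I = \kerI{\varphi}$ to see that the image of $\SLtil{d}{\kerI{\varphi}} \cap \EL{d}{\R}$ in $\EL{d}{\R}/\EL{d}{\kerI{\varphi}}$ is central. Since $\EL{d}{\kerI{\varphi}} \le \ker \varphi$ by definition of the kernel ideal, this image survives the further quotient to land in the center of $\EL{d}{\R}/\ker\varphi$. Passing to the faithful character $\overline{\varphi}$ on $\EL{d}{\R}/\ker\varphi$ provided by Proposition \ref{prop:trace factors through the kernel}, Schur's lemma gives
\[
\varphi(vs) \;=\; \varphi(v)\,\varphi(s).
\]
Using the cyclic invariance $\varphi(sv) = \varphi(vs)$ (a direct consequence of conjugation invariance) and the vanishing $\varphi(v) = 0$ from Proposition \ref{prop:vanishing on elementary matrices outside level}, this yields $\varphi(g) = \varphi(sv) = 0$, as required.

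There is no real obstacle here; the entire statement is a clean consequence of the fact that $\SLtil{d}{\kerI{\varphi}}$ behaves centrally with respect to $\varphi$, which is exactly the content of Borevich--Vavilov combined with the inclusion $\EL{d}{\kerI{\varphi}} \le \ker\varphi$. The subsequent propositions in \S\ref{sec:induced from level} that gradually enlarge the vanishing set will presumably be the substantive steps; this first reduction merely absorbs the ``virtually central'' factor $\SLtil{d}{\kerI{\varphi}}$ into an abelian character and relies on the information already present on the vertical subgroup $\ver{1}(\R)$.
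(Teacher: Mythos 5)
Your proof is correct and follows essentially the same route as the paper's: invoke Borevich--Vavilov to see that $\SLtil{d}{\kerI{\varphi}} \cap \EL{d}{\R}$ becomes central modulo $\ker\varphi$, apply Schur's lemma (Lemma \ref{lem:Schur's lemma for characters}) to factor $\varphi$ multiplicatively across that central piece, and then feed in the vanishing of $\varphi$ on $\ver{1}(\R)\setminus\ver{1}(\levI{\varphi})$ from Proposition \ref{prop:vanishing on elementary matrices outside level}. Your writeup is in fact a bit more explicit than the paper's (in particular about the decomposition $g = sv$ and why $v\notin\ver{1}(\levI{\varphi})$), but the ingredients and the logical structure are the same.
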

\begin{proof}
Recall that $\EL{d}{\kerI{\varphi}} \le \ker \varphi$ and   $\SLtil{d}{\kerI{\varphi}} \cap \EL{d}{\R} \le Z(\EL{d}{\R} / \ker\varphi)$.   Schur's lemma says that the restriction of the character $\varphi$ to the  subgroup $\SLtil{d}{\kerI{\varphi}}$  is multiplicative  (see Lemma 
\ref{lem:Schur's lemma for characters} for details). This means that $\varphi(g) = 0$ if and only $\varphi(gh) = 0$ for all elements $g \in \EL{d}{\R}$ and $h \in \SLtil{d}{\kerI{\varphi}} \cap \EL{d}{\R}$. On the other hand, the properties of the level ideal $\levI{\varphi}$ and Proposition   \ref{prop:vanishing on elementary matrices outside level} in particular imply that
$$ \varphi \in \Ind{\EL{d}{\R}}{\ker \varphi \, \ver{1}(\R) }{\ker \varphi \, \ver{1}(\levI{\varphi}) }.$$
These two facts conclude the proof.
\end{proof}

Let $\rF{\varphi}$ denote the quotient ring $\R/\levI{\varphi}$ modulo the level ideal $\levI{\varphi}$.

\begin{prop}
\label{prop:vanishing on a semidirect product goes up a finite index on elementary}
$\varphi \in 
 \Ind{\EL{d}{\R}}{\SLtil{d}{\levI{\varphi}}\ver{1}(\R)}{\SLtil{d}{\levI{\varphi}}}
$.
\end{prop}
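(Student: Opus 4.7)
The strategy is to leverage Proposition \ref{prop:first vanishing} via a sequence argument modelled on Lemma \ref{lem:application of lemma on sequences of elements for vanishing}, with the commensurability $\mfi{\kerI{\varphi}} = \levI{\varphi}$ established in Theorem \ref{thm:ker has finite index in lev} providing the key finite-index leverage.  Given $g \in \SLtil{d}{\levI{\varphi}}\ver{1}(\R) \setminus \SLtil{d}{\levI{\varphi}}$, I would decompose $g = sv$ with $s \in \SLtil{d}{\levI{\varphi}}\cap\EL{d}{\R}$ and $v \in \ver{1}(\R)$. Since $\ver{1}(\R) \cap \SLtil{d}{\levI{\varphi}} = \ver{1}(\levI{\varphi})$ (by inspection of the definitions of the two groups), the hypothesis $g \notin \SLtil{d}{\levI{\varphi}}$ forces $v \notin \ver{1}(\levI{\varphi})$, so some coordinate $v_k$ of $v$ lies outside $\levI{\varphi}$. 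Fix such an index $k$, pick $j \in \{2,\ldots,d\} \setminus \{k\}$ (available since $d \geq 3$), and, using that $\levI{\varphi}$ is a depth ideal together with Proposition \ref{prop:a condition for an ideal to be maximal of finite index}, extract a sequence $r_n \in \R$ with $(r_n-r_m)v_k \notin \levI{\varphi}$ whenever $n \neq m$.  Set $x_n = \elm{j}{k}(r_n) \in \EL{d}{\R}$.

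Next I would compute the commutators. Applying $[ab,c] = [a,c]^b[b,c]$ gives
$$[g, x_n] = [s, x_n]^v \cdot [v, x_n].$$
The Steinberg relations reduce $[v,x_n]$ to the single elementary matrix $\elm{j}{1}(-r_n v_k) \in \ver{1}(\R)$, while Theorem \ref{thm:Valivov} places $[s, x_n]$ in $[\SLtil{d}{\levI{\varphi}}, \EL{d}{\R}] = \EL{d}{\levI{\varphi}}$.  By Theorem \ref{thm:ker has finite index in lev} and Proposition \ref{prop: congurence subgroups of ideal is of finite index in the congurence subgroup of its depth}, $[\SLtil{d}{\levI{\varphi}}:\SLtil{d}{\kerI{\varphi}}]<\infty$, so pigeonholing on finitely many $\SLtil{d}{\kerI{\varphi}}$-cosets lets me pass to a subsequence on which all $[s, x_n]$ lie in one such coset, whence $[s,x_m]^{-1}[s,x_n] \in \SLtil{d}{\kerI{\varphi}}$ for every $n,m$.

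I would then analyze the ratio $[g,x_m]^{-1}[g,x_n] = [v,x_m]^{-1} \cdot ([s,x_m]^{-1}[s,x_n])^v \cdot [v,x_n]$. The middle factor lies in $\SLtil{d}{\kerI{\varphi}}$ (normality), and $[\SLtil{d}{\kerI{\varphi}}, \ver{1}(\R)] \le \EL{d}{\kerI{\varphi}} \le \ker\varphi$ (Theorem \ref{thm:Valivov}) lets the middle factor commute past the vertical ones modulo $\ker\varphi$. Since $\ker\varphi = \ker \pi_\varphi$ (Proposition \ref{prop:trace factors through the kernel}) leaves the value of $\varphi$ unchanged, the product is congruent to $\elm{j}{1}((r_m - r_n) v_k) \cdot z$ for some $z \in \SLtil{d}{\kerI{\varphi}}\cap\EL{d}{\R}$.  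Because $(r_m-r_n)v_k \notin \levI{\varphi}$, this element sits in $\SLtil{d}{\kerI{\varphi}}\ver{1}(\R) \setminus \SLtil{d}{\kerI{\varphi}}\ver{1}(\levI{\varphi})$ (again using $\ver{1}(\R)\cap \SLtil{d}{\kerI{\varphi}} = \ver{1}(\kerI{\varphi})$), so Proposition \ref{prop:first vanishing} gives $\varphi([g,x_m]^{-1}[g,x_n]) = 0$.  The unit vectors $\pi_\varphi([g,x_n])v_\varphi$ are therefore pairwise orthogonal, so by Bessel's inequality they converge weakly to zero; the identity $\varphi(g)=\varphi(g^{x_n}) = \langle \pi_\varphi([g,x_n])v_\varphi,\pi_\varphi(g^{-1})v_\varphi\rangle$ then forces $\varphi(g)=0$.

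The main obstacle is that $[s, x_n]$ a priori lies only in the large group $\EL{d}{\levI{\varphi}}$ rather than in $\SLtil{d}{\kerI{\varphi}}$, so we cannot directly feed the commutators into Proposition \ref{prop:first vanishing}.  This is precisely what is resolved by the finite-index statement $[\SLtil{d}{\levI{\varphi}}:\SLtil{d}{\kerI{\varphi}}]<\infty$ from Theorem \ref{thm:ker has finite index in lev}: it is strong enough to make only \emph{ratios} $[s,x_m]^{-1}[s,x_n]$ central (rather than the individual commutators themselves), and together with the centrality of $\SLtil{d}{\kerI{\varphi}}$ modulo $\ker\varphi$ this suffices to run the weak-convergence argument.
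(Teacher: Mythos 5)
Your proof is correct and takes essentially the same route as the paper's: decompose $g = sv$ with $s \in \SLtil{d}{\levI{\varphi}}$ and $v \in \ver{1}(\R) \setminus \ver{1}(\levI{\varphi})$, construct $x_n = \elm{j}{k}(r_n)$ with $(r_n - r_m)v_k \notin \levI{\varphi}$ via the depth-ideal criterion, pigeonhole on the finite quotient $\SLtil{d}{\levI{\varphi}}/\SLtil{d}{\kerI{\varphi}}$ (Theorem \ref{thm:ker has finite index in lev} with Proposition \ref{prop: congurence subgroups of ideal is of finite index in the congurence subgroup of its depth}), and conclude via Bessel's inequality. The paper streamlines the bookkeeping by passing to $|\varphi|^2$ and invoking Corollary \ref{cor:center in kernel of square} (which places all of $\SLtil{d}{\kerI{\varphi}}$ in $\ker|\varphi|^2$) together with the packaged Lemma \ref{lem:application of lemma on sequences of elements for vanishing}, whereas you work with $\varphi$ directly and hand-commute the central factor past the vertical elements modulo $\ker\varphi$ using Borevich--Vavilov; and the paper cites Item (3) of Proposition \ref{prop:not commuting implies infinitely many pairwise disjoint, in N} where you reprove the needed statement inline from Proposition \ref{prop:a condition for an ideal to be maximal of finite index}. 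These are presentation choices, not a different argument.
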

% We may assume that $h \notin \EL{d}{\kerI{\varphi}}$ for otherwise the statement follows from Proposition \ref{prop:first vanishing}.
 \begin{proof}
%Fix an index $ i\in \{1,\ldots,d\}$. Let $\rF{\varphi}$ denote the quotient ring $\R/ \levI{\varphi}$. Let $\varphi \in  \levI{\varphi}nd{\EL{d}{\R}}{\EL{d}{\kerI{\varphi}}\ver{i}(\R)}{\EL{d}{\kerI{\varphi}}\ver{i}(\levI{\varphi})} $ be a trace.
Let  $g \in \EL{d}{\R} \cap {\SLtil{d}{\levI{\varphi}}\ver{1}(\R)}  $ be any element. Assume that $g \notin {\SLtil{d}{\levI{\varphi}}}$. The element $g$ can be written as $g = hv$ for a pair of  elements $ h\in \SLtil{d}{\levI{\varphi}}$ and $v \in \ver{1}(\R) \setminus \ver{1}(\levI{\varphi})$.
In particular there is some index $ j\in\{2,\ldots,d\} $ such that $v_{j,1} \notin \levI{\varphi}$. Fix an arbitrary index $ k \in \{2,\ldots,d\} \setminus \{j\}$.   The reduction modulo the level ideal $\levI{\varphi}$ of the element $v$ does not centralize the elementary subgroup $\elm{k}{j}(\rF{\varphi}) 
\le \EL{d}{\rF{\varphi}}$ by Proposition \ref{prop:centraliser of an elementary matrix with unit entry}.
According to Item (\ref{part 3}) of Proposition \ref{prop:not commuting implies infinitely many pairwise disjoint, in N}
 there are elements   $x_n \in \elm{k}{j}(\R)$
 such that the commutators
 $$\left[x_m^{-1} x_n, v\right] \in \ver{1}(\R)$$
  all belong to pairwise distinct cosets of $\ver{1}(\levI{\varphi})$ for all $n, m\in\NN$ with $n < m$.

The subquotient group $\SLtil{d}{\levI{\varphi}} / \SLtil{d}{\kerI{\varphi}}$ is finite according to Proposition   \ref{prop: congurence subgroups of ideal is of finite index in the congurence subgroup of its depth}.
 In particular, the  action by conjugation of the group $\EL{d}{\R}$ on this subquotient has finite orbits. By the pigeon hole principle and up to passing to a further subsequence, we may assume that
$ x_n^{-1} h x_n = x_m^{-1} h x_m $
modulo the normal subgroup $\SLtil{d}{\kerI{\varphi}}$ and for every $n < m$. 
%Likewise, the quotient group $\ver{i}(\mfi{\J}) / \ver{i}(\J)$ is finite and so, up to passing once again to a further subsequence, we may assume that
%$$ x_n^{-1} v x_n = x_m^{-1} v x_m $$
%holds modulo $\ver{i}(\J)$ for every pair of indices $n < m$.
% $x_n \in \ngp{i}$ and the action of $\ngp{i}$ on  has finite
Using standard  commutator identities we now deduce that
	$$ \left[x_m^{-1} x_n,g\right] = \left[x_m^{-1} x_n,hv\right] = \left[x_m^{-1} x_n,v\right] \left[x_m^{-1} x_n,h\right]^v  $$
Note that $\left[x_m^{-1} x_n,v\right] \in \ver{1}(\R)$ and $\left[x_m^{-1} x_n,h\right]^v \in \SLtil{d}{\kerI{\varphi}}$.

Consider the trace $|\varphi|^2 \in \traces{\EL{d}{\R}}$, see Proposition \ref{prop:square power is a trace}. Clearly $\varphi(h) = 0$ if and only if $|\varphi|^2(h) = 0$ for any element $h \in \EL{d}{\R}$.  So 
$|\varphi|^2 \in 
\Ind{\EL{d}{\R}}{H}{N}$ with respect to the normal subgroups
$$ H = \SLtil{d}{\kerI{\varphi}} \ver{1}(\R) \quad \text{and} \quad N = \SLtil{d}{\kerI{\varphi}} \ver{1}(\levI{\varphi})$$
and according to Proposition \ref{prop:first vanishing}.  Moreover $\SLtil{d}{\kerI{\varphi}} \le \ker |\varphi|^2$ by Corollary \ref{cor:center in kernel of square}.

As the elements
$ \left[x_m^{-1} x_n,v\right]  $ of $\ver{1}(\R)$  are all distinct modulo $\ver{1}(\levI{\varphi})$, we  may conclude that $\varphi(g) = 0$ from Lemma \ref{lem:application of lemma on sequences of elements for vanishing} applied with respect to the  subgroups $H$ and $N$.
\end{proof}

\begin{prop}
% \marginpar{Still need to  complete the proof of 7.10.}
\label{prop:vanishing on outside centralizer part 2}
$\varphi \in   \Ind{\EL{d}{\R}}{\SLtil{d}{\levI{\varphi}} \mathrm{N}_{\EL{d}{\R}}(\ver{1}(\R))   }{\SLtil{d}{\levI{\varphi}} }$.
\end{prop}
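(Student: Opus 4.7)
The plan is to deduce the proposition from the previous one via Lemma \ref{lem:vanishing with two subgroups}. Fix an element $g \in \SLtil{d}{\levI{\varphi}} \mathrm{N}_{\EL{d}{\R}}(\ver{1}(\R))$ with $g \notin \SLtil{d}{\levI{\varphi}}$. By Proposition \ref{prop:normalizer of vertical or horizontal subgroup}, an element of $\mathrm{N}_{\EL{d}{\R}}(\ver{1}(\R))$ has vanishing first row off the $(1,1)$-entry, and a straightforward row reduction in the first column factors it as $v \cdot m$, where $v \in \ver{1}(\R)$ and $m$ belongs to the subgroup $M \le \EL{d}{\R}$ consisting of block-diagonal elements of the form $\begin{pmatrix} u & 0 \\ 0 & B \end{pmatrix}$ with $u \in \units{\R}$. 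Using the normality of $\SLtil{d}{\levI{\varphi}}$ in $\EL{d}{\R}$, this gives a decomposition $g = hm$ with $h \in H \coloneqq \SLtil{d}{\levI{\varphi}} \ver{1}(\R)$ and $m \in M$. A direct block-matrix computation also shows that $M$ normalizes $\ver{1}(\R)$, and $\ver{1}(\R) \subset \mathrm{N}_{\EL{d}{\R}}(H)$ by the normality of $\SLtil{d}{\levI{\varphi}}$ combined with the commutativity of $\ver{1}(\R)$.

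I would then distinguish two cases. If $g \in H$, then $\varphi(g) = 0$ is immediate from Proposition \ref{prop:vanishing on a semidirect product goes up a finite index on elementary}. Otherwise the reduction $\bar g$ of $g$ modulo $\levI{\varphi}$ in $\SL{d}{\rF{\varphi}}$ lies in $\mathrm{N}_{\SL{d}{\rF{\varphi}}}(\ver{1}(\rF{\varphi}))$ but not in $Z(\SL{d}{\rF{\varphi}}) \cdot \ver{1}(\rF{\varphi})$; by Corollary \ref{cor:centraliser of a vertical/horizontal} this means $\bar g$ fails to centralize $\elm{i}{1}(1)$ for some index $i \in \{2, \ldots, d\}$. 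Since $\levI{\varphi}$ is a depth ideal of $\R$, the zero ideal is a depth ideal in $\rF{\varphi}$, so Item (\ref{part 2}) of Proposition \ref{prop:not commuting implies infinitely many pairwise disjoint, in N} applied to $\bar g$ in $\SL{d}{\rF{\varphi}}$ produces a sequence $\bar x_n = \elm{i}{1}(\bar r_n)$ such that the commutators $[\bar g, \bar x_n]$ lie in $\ver{1}(\rF{\varphi})$ and are pairwise distinct modulo $Z(\SL{d}{\rF{\varphi}})$. Lifting $\bar r_n$ arbitrarily to $r_n \in \R$ produces a sequence $x_n = \elm{i}{1}(r_n) \in \ver{1}(\R) \subset \mathrm{N}_{\EL{d}{\R}}(H)$.

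Finally, I would invoke Lemma \ref{lem:vanishing with two subgroups} with subgroups $K = \{e\}$, $H$ as above, $L = M$ and $N = \SLtil{d}{\levI{\varphi}}$. The hypothesis $\varphi \in \Ind{\EL{d}{\R}}{H}{N}$ is exactly Proposition \ref{prop:vanishing on a semidirect product goes up a finite index on elementary}, while the inclusions $[K, x_n] = \{e\} \subset H$ and $[M, x_n] \subset \ver{1}(\R) \subset H$ follow from the fact that $M$ normalizes $\ver{1}(\R)$. The main obstacle is to verify the pairwise distinctness of $[g, x_n]$ modulo $\SLtil{d}{\levI{\varphi}}$: reduction modulo $\levI{\varphi}$ sends each $[g, x_n]$ to $[\bar g, \bar x_n] \in \ver{1}(\rF{\varphi})$, and since $\ver{1}(\rF{\varphi}) \cap Z(\SL{d}{\rF{\varphi}}) = \{e\}$, distinctness of these commutators modulo the center of $\SL{d}{\rF{\varphi}}$ is genuine distinctness inside $\ver{1}(\rF{\varphi})$, which lifts back to distinctness of $[g, x_n]$ modulo $\SLtil{d}{\levI{\varphi}}$ in $\EL{d}{\R}$. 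Lemma \ref{lem:vanishing with two subgroups} then forces $\varphi(g) = 0$ and completes the proof.
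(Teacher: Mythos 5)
Your proof is correct and follows essentially the same route as the paper's. Both proofs reduce modulo $\levI{\varphi}$, observe that $\bar g$ fails to centralize $\ver{1}(\rF{\varphi})$ when $g \notin \SLtil{d}{\levI{\varphi}}\ver{1}(\R)$, apply Item (\ref{part 2}) of Proposition \ref{prop:not commuting implies infinitely many pairwise disjoint, in N} over the quotient ring $\rF{\varphi}$ to produce $x_n \in \ver{1}(\R)$ with commutators distinct modulo $\SLtil{d}{\levI{\varphi}}$, and then invoke Lemma \ref{lem:vanishing with two subgroups} with $H = \SLtil{d}{\levI{\varphi}}\ver{1}(\R)$ and $N = \SLtil{d}{\levI{\varphi}}$. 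The only difference is the bookkeeping on the side subgroups: you explicitly factor $g = hm$ with $m$ in the block-diagonal subgroup $M$ and set $K = \{e\}$, $L = M$, whereas the paper avoids the explicit factorization altogether and simply takes $K = \mathrm{N}_{\EL{d}{\R}}(\ver{1}(\R))$, $L = \{e\}$, using normality of $\SLtil{d}{\levI{\varphi}}$ to place $g \in KH$; both assignments verify the required commutator containments in exactly the same way (since $[\mathrm{N}_{\EL{d}{\R}}(\ver{1}(\R)), x_n] \subset \ver{1}(\R) \subset H$), so this is a cosmetic variation rather than a genuinely different argument.
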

\begin{proof}
Let $g \in \EL{d}{\R}  \cap \SLtil{d}{\levI{\varphi}} \mathrm{N}_{\EL{d}{\R}}(\ver{1}(\R))  $ be any element. We may   assume without loss of generality  that $g \notin \SLtil{d}{\levI{\varphi}} \ver{1}(\R) $ for otherwise the result follows from the previous Proposition \ref{prop:vanishing on a semidirect product goes up a finite index on elementary}.

 The reduction    modulo the level ideal $\levI{\varphi}$      of the element $g$   does not  centralize the  vertical group $\ver{1}(\rF{\varphi})$  in the group $\EL{d}{\rF{\varphi}}$, see Corollary \ref{cor:centraliser of a vertical/horizontal}. 
It follows from Item (\ref{part 2}) of  Proposition \ref{prop:not commuting implies infinitely many pairwise disjoint, in N} applied with respect to the quotient ring $\rF{\varphi}$ that there is a sequence of elements
$x_n \in \ver{1}(\R)  $ such that $\left[g,x_n\right] \in \ver{1}(\R)$ and those commutators belong to  pairwise distinct cosets of   $\ver{1}(\levI{\varphi})$. 
Consider the subgroups
  $$H =\SLtil{d}{\levI{\varphi}} \ver{1}(\R), K = \mathrm{N}_{\EL{d}{\R}}(\ver{1}(\R)), L = \{e\} \quad \text{and} \quad N = \SLtil{d}{\levI{\varphi}}.$$
In particular, it is clear that  $x_n \in \mathrm{N}_G(H)$, $\left[K,x_n\right] \subset H$ and $ \left[L,x_n\right] = \{e\} \subset H  $.    We conclude that $\varphi(g) = 0$ relying on Lemma \ref{lem:vanishing with two subgroups} applied with respect to these subgroups $H,K,L$ and $N$.
\end{proof}

The previous three propositions are stated for the vertical group $\ver{1}(\R)$. Of course, analogous results hold true for all other vertical groups $\ver{i}(\R)$ as well as     the horizontal groups $\hor{j}(\R)$, but this fact is not needed below.

\begin{cor}
%\marginpar{this is not quite true - need to extend the previous proposition to deal with the case where the unit is not in $\sF{\varphi}$. That is, the previous result excludes a possibility that can occur for the centralizer, namely having a unit at the extra position on the diagonal. Need to cover that case as well.}
\label{cor:vanishing if  centralizes an elementary group}
Let $g \in \EL{d}{\R} \setminus  \SLtil{d}{\levI{\varphi}}$ be an element whose reduction modulo the level ideal $\levI{\varphi}$    in the group  $\EL{d}{\rF{\varphi}}$ centralizes   the elementary group $\elm{j}{i}(\rF{\varphi}) $ for a given pair of distinct indices $i,j \in \{1,\ldots,d\}$. Then  $\varphi(g) = 0$.
\end{cor}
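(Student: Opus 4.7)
The plan is to reduce the claim to the $i$-indexed analogs of Propositions \ref{prop:vanishing on a semidirect product goes up a finite index on elementary} and \ref{prop:vanishing on outside centralizer part 2}, which hold by symmetry in their proofs. First, applying Proposition \ref{prop:centraliser of an elementary matrix with unit entry} with $x = 1$ to the hypothesis shows that the $i$-th row of $\bar g$ has all off-diagonal entries zero, so $\bar g \in \mathrm{N}_{\EL{d}{\rF{\varphi}}}(\ver{i}(\rF{\varphi}))$ by Proposition \ref{prop:normalizer of vertical or horizontal subgroup}. The argument then splits into two cases according to whether $\bar g$ centralizes all of $\ver{i}(\rF{\varphi})$.

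In the centralizing case, Corollary \ref{cor:centraliser of a vertical/horizontal} places $\bar g$ in $Z(\SL{d}{\rF{\varphi}}) \cdot \ver{i}(\rF{\varphi})$. Writing $\bar g = \bar z \bar v$, the vertical factor $\bar v$ is directly lifted to some $v \in \ver{i}(\R)$, while the central factor $\bar z$ is lifted to some $z \in \SLtil{d}{\levI{\varphi}} \cap \EL{d}{\R}$ via Proposition \ref{prop:center in quotients}; the residual discrepancy $g(zv)^{-1}$ has trivial reduction modulo $\levI{\varphi}$ and hence lies in $\SL{d}{\levI{\varphi}} \cap \EL{d}{\R} \subseteq \SLtil{d}{\levI{\varphi}}$. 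Consequently $g \in \SLtil{d}{\levI{\varphi}} \cdot \ver{i}(\R)$, and since $g \notin \SLtil{d}{\levI{\varphi}}$ the $i$-indexed analog of Proposition \ref{prop:vanishing on a semidirect product goes up a finite index on elementary} concludes $\varphi(g) = 0$.

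In the non-centralizing case, an elementary matrix argument shows there must exist an index $k \in \{1, \ldots, d\} \setminus \{i, j\}$ with $\bar g \notin C_{\SL{d}{\rF{\varphi}}}(\elm{k}{i}(1))$; otherwise, combining with the vanishing of $\bar g_{i,l}$ for $l \neq i$ and the centralization of $\elm{j}{i}(1)$, the matrix $\bar g$ would be forced to be scalar and hence centralize all of $\ver{i}(\rF{\varphi})$, a contradiction. Since $(0)$ is a depth ideal in $\rF{\varphi}$, Item~(2) of Proposition \ref{prop:not commuting implies infinitely many pairwise disjoint, in N} applied to $\bar g \in \SL{d}{\rF{\varphi}}$ produces a sequence $x_n = \elm{k}{i}(r_n) \in \ver{i}(\R)$ whose reductions give pairwise distinct elements $[\bar g, \bar x_n] \in \ver{i}(\rF{\varphi})$ (the distinctness modulo $Z(\SL{d}{\rF{\varphi}})$ promoted to genuine distinctness since $Z(\SL{d}{\rF{\varphi}}) \cap \ver{i}(\rF{\varphi}) = \{e\}$). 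Setting $H = \SLtil{d}{\levI{\varphi}} \ver{i}(\R)$, $N = \SLtil{d}{\levI{\varphi}}$, $L = \{e\}$, and $K$ the preimage in $\EL{d}{\R}$ of $\mathrm{N}_{\EL{d}{\rF{\varphi}}}(\ver{i}(\rF{\varphi}))$, the hypothesis gives $g \in K$ and a direct reduction check yields $[K, \ver{i}(\R)] \subseteq H$. Consequently the commutators $[g, x_n]$ lie in $H$ and are pairwise distinct modulo $N$, so Lemma \ref{lem:vanishing with two subgroups}, together with the $i$-indexed analog of Proposition \ref{prop:vanishing on a semidirect product goes up a finite index on elementary} providing $\varphi \in \Ind{\EL{d}{\R}}{H}{N}$, yields $\varphi(g) = 0$.

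The main obstacle I anticipate is the lifting step in the centralizing case: the central factor $\bar z \in Z(\SL{d}{\rF{\varphi}})$ need not lift to $Z(\SL{d}{\R})$ — for instance, a root of unity in $\rF{\varphi}$ typically is not a root of unity in $\R$ — and a genuine use of the explicit $D_{i,j}(u)$ construction underlying Proposition \ref{prop:center in quotients} is required to produce a lift inside $\SLtil{d}{\levI{\varphi}} \cap \EL{d}{\R}$. A secondary nuisance is verifying the $i$-indexed analogs of the earlier propositions, which is a matter of pure symmetry in their proofs.
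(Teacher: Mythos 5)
Your proof is correct and follows the paper's overall strategy: the centralizer computation places $\bar g$ in $\mathrm{N}_{\EL{d}{\rF{\varphi}}}(\ver{i}(\rF{\varphi}))$, after which one applies (a re-derivation of) the $\ver{i}$-analogue of Proposition \ref{prop:vanishing on outside centralizer part 2}. Two remarks on the details, one in each direction.

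The \enquote{obstacle} you anticipate in the centralizing case is not an obstacle. You do not need to lift $\bar z$ to $Z(\SL{d}{\R})$, and you do not need to invoke Proposition \ref{prop:center in quotients} or the $D_{i,j}(u)$ construction at all. Once you lift $\bar v$ to some $v \in \ver{i}(\R)$, the element $g v^{-1} \in \EL{d}{\R}$ reduces to $\bar z \in Z(\SL{d}{\rF{\varphi}})$, and $\SLtil{d}{\levI{\varphi}}$ is \emph{by definition} the full preimage of $Z(\SL{d}{\rF{\varphi}})$ under reduction. Hence $g v^{-1} \in \SLtil{d}{\levI{\varphi}} \cap \EL{d}{\R}$ automatically, and $g \in (\SLtil{d}{\levI{\varphi}} \cap \EL{d}{\R}) \ver{i}(\R)$. (A minor imprecision: in your non-centralizing case, $\bar g$ centralizing every $\elm{k}{i}(1)$ does not force $\bar g$ to be scalar; it forces $\bar g \in Z(\SL{d}{\rF{\varphi}}) \ver{i}(\rF{\varphi})$, which still centralizes $\ver{i}(\rF{\varphi})$, so your contradiction survives.)

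In the other direction, your choice of $K$ in the non-centralizing case --- the full reduction-preimage in $\EL{d}{\R}$ of $\mathrm{N}_{\EL{d}{\rF{\varphi}}}(\ver{i}(\rF{\varphi}))$ --- is in fact more careful than the paper's own argument, which directly asserts $g \in \SLtil{d}{\levI{\varphi}}\,\mathrm{N}_{\EL{d}{\R}}(\ver{i}(\R))$ in order to cite the $\ver{i}$-analogue of Proposition \ref{prop:vanishing on outside centralizer part 2}. It is not clear that this product decomposition always holds: an element of $\mathrm{N}_{\EL{d}{\R}}(\ver{i}(\R))$ has its $(i,i)$-entry in $\units{\R}$, so to decompose $g$ one needs $\bar g_{i,i}$ to lie in the image of $\units{\R} \to \units{\rF{\varphi}}$ up to multiplication by $\unitsord{\rF{\varphi}}{d}$, which is not automatic (for instance, with $\R = \ZZ[x]$, $\levI{\varphi} = (x^2)$, $d = 3$ and $\bar g = \mathrm{diag}(1+x,\,1+x,\,1-2x)$, the entry $1+x$ does not lift to $\units{\ZZ[x]} = \{\pm 1\}$ and $\unitsord{\rF{\varphi}}{3}$ is trivial). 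Your larger $K$ contains $g$ by hypothesis, and your reduction check shows $[K, x_n] \subset H$, so Lemma \ref{lem:vanishing with two subgroups} applies directly without requiring the decomposition. This sidesteps what appears to be a genuine gap in the paper's proof of this corollary.
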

\begin{proof}
The centralizer of the elementary group $\EL{d}{\rF{\varphi}}$ satisfies 
$$ C_{\EL{d}{\rF{\varphi}}}(\elm{j}{i}(\rF{\varphi})) \le
Z(\EL{d}{\rF{\varphi}})
\left( \ngp{i}(\rF{\varphi}) \ver{i}(\rF{\varphi}) \cap \ngp{j}(\rF{\varphi}) \hor{j}(\rF{\varphi}) \right).
$$
See Proposition   \ref{prop:centraliser of an elementary matrix with unit entry} for details.
Combined with the normalizer computations done in Proposition \ref{prop:normalizer of vertical or horizontal subgroup} this means that
$$g \in {\SLtil{d}{\levI{\varphi}} \mathrm{N}_{\EL{d}{\R}}(\ver{i}(\R))   } \setminus {\SLtil{d}{\levI{\varphi}} }.  $$
The fact that $\varphi(g) = 0$ follows immediately from Proposition \ref{prop:vanishing on outside centralizer part 2}.
\end{proof}

We are ready to establish that the character   $\varphi$ is induced from the subgroup   $\SLtil{d}{\levI{\varphi}} \cap \EL{d}{\R}$ corresponding to its level ideal $\levI{\varphi}$.

%Note that in the case $\levI{\varphi} = \U_k$ the above proposition% holds vacuously.

\begin{proof}[Proof of Theorem \ref{thm:vanishing outside big S_varphi using normal form decomposition}]
Consider an element $g \in  \EL{d}{\R} \setminus \SLtil{d}{\levI{\varphi}}$. We may without loss of generality  replace $g$ by any of its conjugates as the character   $\varphi$ is conjugation invariant. Relying on the assumption that $d > \sr{\R}$ and making use of the normal form  decomposition introduced in Proposition 	\ref{prop:normal form for conjugates}, we may  replace $g$ by a suitable conjugate and write
$g = hnm$
for some corresponding three elements
$$  h\in\hor{1}(\R), \quad n \in \ngp{2}(\R)\hor{2}(\R) \quad \text{and} \quad m \in \ngp{1}(\R)\ver{1}(\R). $$

First  consider the special case where the element $h$ is trivial. This means that 
$$g = nm \in \ngp{2}(\R)\hor{2}(\R)\ngp{1}(\R)\ver{1}(\R) \setminus \SLtil{d}{\levI{\varphi}}.$$
 If the reduction modulo the level ideal $\levI{\varphi}$ of the element $g$ in $\SL{d}{\rF{\varphi}}$ centralizes the elementary group $\elm{2}{1}(\rF{\varphi})$ then $\varphi(g) = 0$ according to Corollary \ref{cor:vanishing if  centralizes an elementary group}. Otherwise there are elements $x_n \in \elm{2}{1}(\R) \le \ver{1}(\R)$  such that the commutators $[g,x_n]$ are pairwise distinct modulo the normal subgroup $\SLtil{d}{\levI{\varphi}}$ according to Item (\ref{part 1}) of Proposition \ref{prop:not commuting implies infinitely many pairwise disjoint, in N}. Consider the subgroups
 $$ K = \{e\}, \quad H = \ngp{2}(\R) \hor{2}(\R), \quad L= \ngp{1}(\R)\ver{1}(\R), \quad \text{and} \quad N = \SLtil{d}{\levI{\varphi}}.$$
 Note that $x_n \in H, [x_n, K] \subset H$ and $[x_n, L] \subset \ver{1}(\R) \le H$. The  fact that $\varphi(g) = 0$ follows from Lemma  \ref{lem:vanishing with two subgroups} applied with respect to these subgroups $K,H,L$ and $N$.

Next consider the general  case, namely $g = hnm$ with the element $h \in \hor{1}(\R)$ being  arbitrary.
% We claim that $\varphi(g) = 0$ in this case as well.
  If the reduction modulo the level ideal $\levI{\varphi}$ of the element $g$ in $\EL{d}{\rF{\varphi}}$   centralizes the elementary group $\elm{2}{3}(\rF{\varphi})$ then $\varphi(g) = 0$ according to Corollary \ref{cor:vanishing if  centralizes an elementary group}. Otherwise according to Item (\ref{part 1}) of Proposition \ref{prop:not commuting implies infinitely many pairwise disjoint, in N} there are elements $  y_n   \in \elm{2}{3}(\R)$ such that the commutators $[g,y_n]$ are pairwise distinct  modulo the normal subgroup $\SLtil{d}{\levI{\varphi}}$. 
  Consider the subgroups 
   $$ K = \hor{1}(\R), \quad H = \ngp{2}(\R)\hor{2}(\R)\ngp{1}(\R)\ver{1}(\R), \quad L= \{e\}, \quad \text{and} \quad N = \SLtil{d}{\levI{\varphi}}.$$
     Note that $y_n \in H$ and $[y_n, L] \subset H$. Moreover  Proposition \ref{prop:commutator of elementary and horizontal/vertical} gives
     $$[y_n, K] \subset \elm{1}{3}(\R) \le \ngp{2}(\R) \le H.$$
   Recall that $\varphi(g) = 0$ for all elements $g \in  \ngp{2}(\R)\hor{2}(\R)\ngp{1}(\R)\ver{1}(\R) \setminus \SLtil{d}{\levI{\varphi}} $ as established in the first case  of this proof.
 This implies that
  $\varphi(g) = 0$ relying on Lemma  \ref{lem:vanishing with two subgroups} applied with respect to these subgroups $K,H,L$ and $N$.
\end{proof}

\section{Classification of characters}
\label{sec:induced from abelian}

Let $\R$ be a commutative Noetherian ring with unit. Fix an integer    $d > \max\{\sr{\R},2\}$.  The goal of the current section is to complete the classification of the characters of the group $\EL{d}{\R}$ thereby proving our main results Theorems \ref{thm:main theorem} and \ref{thm:main converse}.

Consider any fixed character $\varphi \in \chars{\EL{d}{\R}}$. Let $\levI{\varphi} \nrm \R$ be the level ideal associated to the character $\varphi$ as established in Theorem \ref{thm:existence of a depth ideal}.  Let $\kerI{\varphi} \nrm \R$ be the   kernel  ideal defined so that
$$\EL{d}{\kerI{\varphi}} \le \ker \varphi \le \SLtil{d}{\kerI{\varphi}}.$$
We proved in Theorem \ref{thm:ker has finite index in lev} that $\mfi{\kerI{\varphi}} = \levI{\varphi}$. 
%It follows that the quotient group $\SLtil{d}{\levI{\varphi}} / \SLtil{d}{\kerI{\varphi}}$ is finite, see Proposition \ref{prop: congurence subgroups of ideal is of finite index in the congurence subgroup of its depth}. Furthermore the quotient group  $ \SLtil{d}{\kerI{\varphi}} / \EL{d}{\kerI{\varphi}}$ is  central in $\EL{d}{\R}/\EL{d}{\kerI{\varphi}}$, see Theorem \ref{thm:Valivov}. 
Consider the subquotient group
 $$ \mathcal{A}_d(\varphi) = \mathcal{A}_d(\levI{\varphi},\kerI{\varphi}) = \left( \SLtil{d}{ \levI{\varphi}} \cap \EL{d}{\R} \right) / \ker \varphi.$$
The group $\mathcal{A}_d(\varphi)$  is  virtually central in the quotient group $\EL{d}{\R}/\ker \varphi$ (and is in particular virtually abelian) according to Theorem \ref{thm:virtually abelian}. 
There is a natural $\EL{d}{\R}$-action on the set $\chars{\mathcal{A}_d(\varphi)}$ given by
$$ (g\psi)(x) = \psi(x^g)\quad \forall \psi \in \chars{\mathcal{A}_d(\varphi)}, x \in \mathcal{A}_d(\varphi)$$
for all elements $g\in \EL{d}{\R}$. Given a   subset $O \subset  \chars{\mathcal{A}_d(\varphi)}  $ define its annihilator
$$\mathrm{Ann}(O) = \bigcap_{\psi \in O} \ker \psi \le \mathcal{A}_d(\varphi).$$
If the subset $O$ is $\EL{d}{\R}$-invariant then $\mathrm{Ann}(O)$ is a normal subgroup of $\mathcal{A}_d(\varphi)$. In particular  $\mathrm{Ann}(O)$ is associated to some ideal in the ring $\R$ in the sense of Theorem \ref{thm:normal structure theorem}. We   say that the subset $O$ is  \emph{essential} if $\mathrm{Ann}(O) \le \SLtil{d}{\kerI{\varphi}}$.

\begin{proof}[Proof of Theorem \ref{thm:main theorem}]
The fact that the character $\varphi$ is induced from the normal subgroup $\SLtil{d}{\levI{\varphi}} \cap \EL{d}{\R}$   is the content of  Theorem   \ref{thm:vanishing outside big S_varphi using normal form decomposition}. 
%that there is a unique ideal $\levI{\varphi}$ such that $\varphi$ vanishes outside $\SLtil{d}{\levI{\varphi}}$. 

Recall that the subquotient group $\mathcal{A}_d(\varphi)$ is virtually central in the quotient group $\EL{d}{\varphi}/\ker \varphi$.   It follows from Proposition \ref{prop:relative character of virtually central}  that there is a finite $\EL{d}{\R}$-orbit   $O_\varphi \subset \chars{\mathcal{A}_d(\varphi)}$ such that
$$ \varphi_{| \mathcal{A}_d(\varphi)} = \frac{1}{|\Orb{\varphi}|} \sum_{\psi \in \Orb{\varphi}} \psi.$$
It remains to show that the orbit $\Orb{\varphi}$ is essential. Note that
$$   \mathrm{Ann}(\Orb{\varphi}) =  \SLtil{d}{\levI{\varphi}} \cap \ker \varphi \le \SLtil{d}{\kerI{\varphi}} .$$ This concludes the proof.
\end{proof}

It is now straightforward to conclude that any character $\varphi$ of the group $\EL{d}{\R}$ is induced from a finite dimensional representation.
%Let $\varphi \in \chars{\SL{d}{\U_k}}$ be a character.Moreover there is a unique ideal $\kerI{\varphi}$ such that
%We know that Therefore the group

\begin{proof}[Proof of Corollary \ref{cor:finitely induced}]
This follows immediately from Theorem \ref{thm:main theorem} combined with Corollary \ref{cor:every irreducible rep of virtually abelian is finite dimensional}.
\end{proof}

%Prior to proving Theorem \ref{thm:main converse} we need the following result.

We are ready to prove the converse direction of the character classification.
 
\begin{proof}[Proof of Theorem \ref{thm:main converse}]
Let $\I,\K \nrm \R$ with $\mfi{\K} = \I$ be a pair of ideals   as in the statement of the Theorem. Denote
$$ \mathcal{A}_d(\I,\K) = \SLtil{d}{\I} / \ker \varphi.$$
We know that the subquotient $\mathcal{A}_d(\I,\K)$ is a virtually central subgroup of the quotient $\EL{d}{\R}/ \ker\varphi$ according to Theorem \ref{thm:virtually abelian}. Consider the finite essential $\EL{d}{\R}$-orbit  $O \subset \chars{\mathcal{A}_d(\I,\K)}$ given in the statement of the Theorem. 

Let $\psi \in  \charrel{G}{\SLtil{d}{\I} \cap \EL{d}{\R}}$ be the relative character corresponding to the $\EL{d}{\R}$-orbit $O$.   Let $\varphi \in \traces{G}$ be the trace obtained by inducing $\psi$ from the normal subgroup  $\SLtil{d}{\I} \cap \EL{d}{\R}$ to the entire group $\EL{d}{\R}$ as done in Proposition \ref{prop:extension by 0 is a trace}, namely
$$ \varphi(g) = \begin{cases} \psi(g) & g \in \SLtil{d}{\levI{\varphi}}\\ 0 & \text{otherwise}\end{cases} \quad \forall g \in \EL{d}{\R}.$$

 We claim  that the induced trace $\varphi$ is indeed a character, namely that it can cannot be written as a non-trivial convex combination.
Consider the uniquely determined Borel probability measure $\mu_\varphi$ on the set $\chars{\EL{d}{\R}}$ such that  $\varphi = \int \zeta \, \mathrm{d} \mu_\varphi(\zeta)$ as provided by Choquet's theorem. The level ideal $\levI{\zeta}$ of $\mu_\varphi$-almost every character $\zeta \in \chars{\EL{d}{\R}}$ coincides with the   ideal $\mfi{\K} = \I$, see   Corollary \ref{cor:decomposition of a trace vanishing outside}. This means that $\zeta(g) = 0$ for all elements $g \notin \SLtil{d}{\I}$ and $\mu_\varphi$-almost all characters $\zeta$ by the properties of   level ideals, see Theorem \ref{thm:vanishing outside big S_varphi using normal form decomposition}. On the other hand, the restriction of  every character $\zeta \in \chars{\EL{d}{\R}}$ to the normal subgroup $\SLtil{d}{\I} \cap \EL{d}{\R}$ is a relative character. As such these restrictions must $\mu_\varphi$-almost surely coincide with the restriction   $\varphi_{|\SLtil{d}{\I} \cap \EL{d}{\R}}$ to the same subgroup.
We conclude that   $\mu_\varphi$ is an atomic probability measure supported on a single point. In other words $\varphi \in \chars{\EL{d}{\R}}$  as required.

The fact that the level ideal $\levI{\varphi}$ of the character $\varphi \in \chars{\EL{d}{\R}}$ constructed above coincides with the given ideal $\I$ was established in the course of the previous claim. Since the given orbit $O$ is essential  the kernel ideal $\kerI{\varphi}$ of the character $\varphi$ must coincide   with the given ideal $\K$. Lastly, it is clear that  the associated $\EL{d}{\R}$-orbit  $O_\varphi \subset \mathcal{A}_d(\levI{\varphi},\kerI{\varphi}) $ coincides with the given one $ O$.
%Since the quotient $\SL{d}{\U_k} / \SLtil{d}{\levI{\varphi}}$ is an ICC group by Proposition \ref{prop:division by tilde is ICC}, it follows from Proposition \ref{prop:extension by zero of a relative character on an ICC quotient} that $\varphi $ is indeed a character.

It remains to show that the character $\varphi \in \chars{\EL{d}{\R}}$ satisfying the three conditions $\levI{\varphi} =\I, \kerI{\varphi} = \K$ and $\Orb{\varphi} = O$ is uniquely determined. Indeed any such character $\varphi$ satisfies $\varphi(g) = 0 $ for all elements $g \in \EL{d}{\R} \setminus \SLtil{d}{\levI{\varphi}}$, and its restriction to the normal subgroup $\SLtil{d}{\levI{\varphi}} \cap \EL{d}{\R}$ is determined by the kernel ideal $\kerI{\varphi}$ and the finite essential orbit $O_\varphi$.
%
% Now we need to show this is a bijective correspondence. It is easy to see, that going from a character $\varphi$ to a triplet $(\levI{\varphi}, \kerI{\varphi}, \Orb{\varphi})$ as above, and then reconstructing a character from this triplet, gives back the same $\varphi$ we started with.
%
%
%In the other direction, start from a given triplet $(\I, \K, O)$ and construct the associated character $\varphi$ as constructed above. Since $\varphi$ by definition vanishes outside $\SLtil{d}{\levI{\varphi}}$, we know that $\ker \varphi \subset \SLtil{d}{\levI{\varphi}}$. On the other hand, the restriction of $\varphi$ to $\SLtil{d}{\levI{\varphi}}$ is given by the Fourier transform of $O$ on the virtually abelian group $\SLtil{d}{\mfi{\I}}/\EL{d}{\K}$. The admissibility assumption shows that
%$$\EL{d}{\K} \le \ker \varphi \le \SLtil{d}{\K}.$$
%It follows that $\kerI{\varphi} = \K$. However we know that $\mfi{\kerI{\varphi}} = \levI{\varphi}$ by Theorem \ref{thm:the depth of K is I}. Therefore $\I = \levI{\varphi}$. Finally $O = \Orb{\varphi}$.
%
%Since the above correspondence is invertible in both directions, it must be a bijection.
\end{proof}

\bibliographystyle{alpha}
\bibliography{characters}

\end{document}